\documentclass{amsart}
\usepackage[utf8]{inputenc}
\usepackage{amsfonts}
\usepackage{fontenc}

\pdfoutput=1
\setcounter{secnumdepth}{3}
\usepackage[headings]{fullpage}
\usepackage{amssymb,epic,eepic,epsfig,amsbsy,amsmath,amscd,color,amsthm}
\usepackage[all,cmtip]{xy}
\usepackage{amssymb,amsmath,amscd,color}
\usepackage{graphicx}
\usepackage{texdraw}
\usepackage{url}
\usepackage{mathrsfs}
\usepackage[cal=boondox,scr=boondoxo]{mathalfa}

\usepackage{hyperref, bookmark}


\newtheorem{theorem}{Theorem}[section]
\newtheorem{thm}{Theorem}
\newtheorem{lemma}[theorem]{Lemma}
\newtheorem{coro}[theorem]{Corollary}
\newtheorem{prop}[theorem]{Proposition}

\newtheorem{defn}[theorem]{Definition}
\newtheorem{rmk}[theorem]{Remark}
\newtheorem{example}[theorem]{Example}

\newtheorem{Not}[theorem]{Notation}


\usepackage{tikz}
\tikzstyle cross=[preaction={draw=white, -, line width=6pt}]
\tikzstyle normal=[thick]
\usepackage{braids}
\usetikzlibrary{arrows,decorations.markings}
\usepackage{relsize,exscale}
\usetikzlibrary{cd}

\usetikzlibrary{patterns,arrows,decorations.pathreplacing}

\newcommand{\hyellow}{yellow!80!black}

\makeatletter
\newcommand*{\encircled}[1]{\relax\ifmmode\mathpalette\@encircled@math{#1}\else\@encircled{#1}\fi}
\newcommand*{\@encircled@math}[2]{\@encircled{$\m@th#1#2$}}
\newcommand*{\@encircled}[1]{%
  \tikz[baseline,anchor=base]{\node[draw,circle,outer sep=0pt,inner sep=.2ex] {#1};}}
\makeatother

\newcommand{\BC}{\mathbb{C}}
\newcommand{\BN}{\mathbb{N}}

\newcommand{\BR}{\mathbb{R}}
\newcommand{\BS}{\mathbb{S}}
\newcommand{\BZ}{\mathbb{Z}}

\newcommand{\CB}{\mathcal{B}}
\newcommand{\CE}{\mathcal{E}}
\newcommand{\CF}{\mathcal{F}}
\newcommand{\CH}{\mathcal{H}}
\newcommand{\CK}{\mathcal{K}}
\newcommand{\CP}{\mathcal{P}}
\newcommand{\CR}{\mathcal{R}}
\newcommand{\CW}{\mathcal{W}}

\newcommand{\fg}{\mathfrak{g}}

\newcommand{\mq}{\operatorname{\mathtt{q}}}

\newcommand{\br}{\mathbf{r}}
\newcommand{\bz}{\mathbf{z}}

\newcommand{\Bn}{\mathcal{B}_n}
\newcommand{\PBn}{\mathcal{PB}_n}

\newcommand{\Sk}{\mathfrak{S}}
\newcommand{\sign}{\operatorname{sign}} 


\newcommand{\Mod}{\operatorname{Mod}}

\newcommand{\bapp}{\left. \begin{array}{ccc}}
\newcommand{\eapp}{\end{array} \right.}
\newcommand{\bfct}{\left\lbrace \begin{array}{ccc}}
\newcommand{\efct}{\end{array} \right.}

\newcommand{\Conf}{\operatorname{Conf}}
\newcommand{\Coloring}{\operatorname{Col}}
\newcommand{\flip}{\operatorname{flip}}

\newcommand{\Hlf}{\operatorname{H} ^{\mathrm{BM}}}
\newcommand{\Hnot}{\operatorname{H}}

\newcommand{\Clf}{\operatorname{C}^{\mathrm{BM}}}

\newcommand{\CHbm}{\CH^{\mathrm{BM}}}
\newcommand{\CHe}{\CH^{\mathrm{e}}}

\newcommand{\Laurent}{\CR}

\newcommand{\slt}{{\mathfrak{sl}(2)}}

\newcommand{\Uq}{{U_q\slt}}
\newcommand{\Uqg}{{U_q\fg}}
\newcommand{\Uqgm}{{U_q\fg^{<0}}}
\newcommand{\qSerre}{\operatorname{qSerre}}
\newcommand{\T}{\operatorname{\mathcal{T}}}

\newcommand{\qbin}[2]{\left[\begin{array}{c}
      #1 \\
      #2 \end{array}\right]}   
      




\newcommand{\Aut}{\operatorname{Aut}}
\newcommand{\Hom}{\operatorname{Hom}}

\def\Id{\operatorname{Id}}

\newcommand{\splits}{\mathrm{split}}
\newcommand{\del}{\operatorname{del}}
\newcommand{\rot}{\operatorname{rot}}
\newcommand{\im}{\operatorname{Im}}
\newcommand{\str}{\mathrm{stretch}}
\newcommand{\cut}{\mathrm{cut}}

\newcommand{\AM}{\mathrm{AW}}
\newcommand{\diag}{\mathrm{diag}}


\title{Quantum groups from homologies of configuration spaces}
\author{Stephen Bigelow \& Jules Martel}

\begin{document}

\begin{abstract}
We reconstruct a quantum group associated with any Lie algebra together with its representation theory from twisted homologies of generalized configuration spaces of disks. Along the way it brings new combinatorics to the theory, but our diagrams represent true submanifolds of configuration spaces and combinatorial relations between them translate actual twisted homological relations. 
\end{abstract}

\maketitle


\section{Introduction}

\subsection{Quantum groups}

A \textit{quantum group} is a Hopf algebra derived from a one parameter quantized deformation of the enveloping algebra of a given Lie algebra. If the Lie algebra is $\mathfrak{g}$ then the corresponding quantum group is denoted $\Uqg$, the notation emphasizes that it depends on a parameter $q$ which can also be treated as a formal variable. They were invented by Drinfel'd and Jimbo while Lusztig has developed the integral versions of the theory. These algebras are also known as Drinfel'd--Jimbo algebras, or Lusztig quantum groups depending on the version one uses. Their representation theory is very rich, and was applied in many contexts (see e.g. \cite{Maj,CP}). In the present paper we find quantum groups and their modules in nature, namely in twisted homologies of configuration spaces of disks. Recovering these structures from topology of smooth manifolds sheds light on the topological content of these algebras that were extensively used so to built \textit{quantum topological invariants} but using their algebraic characterizations.  

We fix notation from now on and for the whole paper. Let $\fg$ be a semisimple Lie algebra (see e.g. \cite{Hum}) with which is associated a root system denoted $\Phi$. The root system is completely described by a Cartan matrix (see \cite[11.1]{Hum}) or equivalently by a set of simple roots denoted $\Pi:=\lbrace \alpha_1,\ldots, \alpha_l \rbrace$, plus the entries of the Cartan matrix $\langle \alpha_i, \alpha_j \rangle := \frac{2(\alpha_i,\alpha_j)}{(\alpha_j,\alpha_j)}$ following notation from \cite{Hum,Jan}, where $(\cdot,\cdot)$ is the inner product. In \cite{Lus90}, things are defined equivalently with the matrix of $\langle \cdot , \cdot \rangle$ being given by $(a_{i,j})_{1\le i,j \le l }$ and the existence of a vector $(d_1 , \ldots, d_l)$ s.t. the matrix $(d_i a_{i,j})_{i,j}$ is symmetric, positive definite, $a_{i,i} = 2 $ and $a_{i,j} \le 0$. We consider these data fixed in all what follows and we denote $(\cdot, \cdot )$ the inner product given by the symmetric matrix. \textbf{These data are fixed for the the whole paper, hence all the results hold for any such Lie algebra $\mathfrak{g}$.}

With any such Lie algebra $\fg$ is thus associated the {\em quantum universal enveloping algebra} that we denote $\Uqg$. The Drinfel'd--Jimbo version is the $\mathbb{Q}(q)$-algebra generated by $E_{\alpha}, F_{\alpha}, K_{\alpha}^{\pm 1}$ for all $\alpha \in \Pi$, which are subject to the relations:

\begin{gather*}
 K_\alpha K_\alpha^{-1} = K_\alpha^{-1} K_\alpha = 1, \\*
 K_{\alpha_i} E_{\alpha_j} K_{\alpha_i}^{-1} = q_{\alpha_i}^{-\frac{a_{i,j}}{2}}E_{\alpha_j}, \qquad K_{\alpha_i} F_{\alpha_j}^{(1)} K_{\alpha_i}^{-1} = q_{\alpha_i}^{\frac{a_{i,j}}{2}} F_{\alpha_j}, \qquad
 [E_\alpha,F_\alpha] = \frac{K_\alpha - K_\alpha^{-1}}{2},
\end{gather*}
and the so called \textit{quantum Serre relations}, for $\beta\neq \alpha$:
\begin{align*}
& \sum_{l=0}^{1-a_{i,j}} (-1)^l \binom{1-a_{i,j}}{l}_{q_\alpha} F_{\alpha}^{l} F_{\beta} F_{\alpha}^{1-a_{i,j}-l} = 0, \\
& \sum_{l=0}^{1-a_{i,j}} (-1)^l \binom{1-a_{i,j}}{l}_{q_\alpha} E_{\alpha}^{l} E_{\beta} E_{\alpha}^{1-a_{i,j}-l} = 0.
\end{align*}
where $q_\alpha:=q^{d_\alpha}$ (notice that we will use slightly different conventions in the paper but we will display in detail the definitions we use). We refer the reader to \cite[Sec.~6.1.2]{KS} for this definition. Notice that some \textit{quantum binomials} appear in the definition, namely elements $\binom{1-a_{i,j}}{l}_{q_\alpha}$ are polynomials in $q$ that will be also precisely defined. In the $\slt$-case there is only one simple root, and so $\Uq$ has only three generators and no quantum Serre relations. We also denote by $\Uqgm$ the subalgebra generated only by the $F$ generators. The Lusztig quantum group is defined on the ring $\BZ[q^{\pm 1}]$ rather than a field, and (infinitely many) more generators are necessary for its definition, the so called \textit{divided powers} of $F$'s and $E$'s encoding a particular exponentiation of generators. We will recover them too and will define them precisely together with the relation they involve. 

These algebras are the main and major example of Hopf algebras meaning bialgebras with an antipode that are neither commutative nor cocommutative. It makes their representation theory very rich and all set to build topological invariants of manifolds. Their categories of modules are naturally braided monoidal which has opened the way to \textit{quantum topology} that studies the relations between low dimensional topology and monoidal categories in a broad sense. We refer to (among many others) \cite{Kas,O,CP} for the applications in topology. Notice that the constructions use mostly the semisimple part of the representation theory, while more recently new constructions arose taking advantage also of the non-semisimple part of the theory providing invariants of a new kind but still utilizing inherent tools from quantum groups to actually perform the construction. A first book on the topic is \cite{KL}. 

Notice that Lusztig has introduced a sheaf theoretic approach to quantum groups (see e.g. \cite{Lusztig_geom,LusztigBook}). One purpose was to find canonical bases (\cite{Lus90b}). On the other hand quantum groups were categorified in a series of papers (\cite{Lau_sl2,KhLa}, \cite{Rouquier1,Rouquier2}) as the decategorification of a module category on the so called Khovanov--Lauda--Rouquier algebras. It has rejoined the Lusztig geometric approach to canonical bases in \cite{Michela}. We hope that our new topological construction will fit with this whole picture and to find relations with these models. Categorifications are in close relation with link homologies, and this motivates a topological construction. 

For what concerns this topological approach, in \cite{Stephen_Hecke} the first author has recovered Iwahori--Hecke algebras representations (closely relatives of $\Uq$-algebras) from twisted homology of configuration spaces of punctured disks following ideas of Lawrence. In \cite{Jules_Verma}, the second author has rebuilt part of the representation theory of $\Uq$ using the same kind of setup, and it was inspired by a topological model by Felder and Wieczerkowski \cite{FW}. In \cite{JulesMarco} they recover non semisimple modules of $\Uq$ at roots of unity from configuration spaces on higher genus surfaces, and they use them to rebuild Kerler-Lyubashenko's topological quantum field theories \cite{KL}, inspired by a topological model by Crivelli--Felder--Wieczerkowski for the torus. In \cite{Pierre}, P. Godfard uses the evaluation of modules from \cite{Jules_Verma} at roots of unity so to study homologically the representation theory of $\Uq$ at roots of unity involved in Reshetikhin--Turaev constructions and to relate them with invariants from Hodge structures on configuration spaces. The present work generalizes these constructions for the $\slt$-case in several directions and above all to all semisimple Lie algebras. Notice that twisted homologies of configuration spaces of a surface more or less systematically carry an action of the mapping class group of the surface. On the quantum topology side, the converse has been done every time: building mapping class group representations from quantum groups modules. We find quite natural homological operators endowing homologies with a quantum group action commuting with the existing natural action of mapping class groups. 

\subsection{Content of the paper}

The first section, Section~\ref{BS:homology_etc} is devoted to defining the setup of twisted homologies on configuration spaces of disks. Let $D$ be the unit disk, and $c \in \BN[\Pi]$ that we call a \textit{coloring by simple roots}. We associate with this $c$ a \textit{colored configuration space} denoted by $\Conf_c$ which shall be thought as a space of sets of points in $D$ that are decorated by simple roots. It is a smooth manifold precisely defined in Section~\ref{S:conf_space}. We define in Section~\ref{S:local_system_empty_disk} a \textit{local system} on $\Conf_c$ that is denoted $\Laurent_c$ and takes values in the ring of Laurent polynomials $\Laurent=\BZ[q^{\pm 1}]$. It is defined by monodromies of some loops, but getting rid of using just one base point which happens to be useful in computations. Now Section~\ref{S:twisted_homology_bases_etc} defines the homology modules of interest. Let:
\[
\CHbm_c := \Hlf_{m_c}(\Conf_c,S;\Laurent_c),
\]
that is the Borel--Moore homology of $\Conf_c$ relative to part of its boundary denoted $S$, twisted by the local system $\Laurent_c$ and where $m_c:= \sum_{\alpha\in \Pi} c(\alpha)$. We also define $\CH_c$ to have the same definition but with the standard singular homology rather than the Borel--Moore one. Both are modules over $\BZ[q^{\pm 1}]$. We let $\CP_c$ be the set of \textit{partitions of $c$} that is lists of integers decorated by $c$. For any $\br \in \CP_c$ we define a \textit{pearl necklace} diagram denoted $\CF_\br$ that naturally defines a class in $\CHbm_c$, and similarly a class $\CF^{[\br]}$ that naturally defines one in $\CH_c$. Both families of homology classes happen to be dual for a perfect pairing introduced in the Appendix~\ref{A:the_pairing}. This bilinear form is a natural intersection form at the twisted homology that computes twisted algebraic intersection between manifolds. We use it to prove that both $\CHbm_c$ and $\CH_c$ are free modules of which the two mentioned dual families define bases, in Propositions~\ref{structure_result} and \ref{T:structure_standard_homology}. The Section~\ref{S:computation rules} displays a set of relations between diagrams defining homology classes describing relations in homology, they will be extensively used to do computations in $\CHbm_c$ and $\CH_c$. They give the very homological flavor to our constructions. 

In Section~\ref{BS:homological_algebra} we put an algebra structure on both:
\[
\CH:= \bigoplus_{c\in \BN[\Pi]} \CH_c \hspace{1cm} \CHbm:= \bigoplus_{c\in \BN[\Pi]} \CHbm_c.
\]
The product is given by stacking disks hence by a geometrical operation. While defining Borel--Moore classes denoted $\CF_\alpha^{(k)}$ for a given root $\alpha$ and a given integer $k$, we prove that they satisfy the divided power relation holding in integral versions of quantum groups (Prop.~\ref{prop_DividedPowers}). In Section~\ref{S:quantum_Serre} we first introduce a particular homology class that we denote $\qSerre_{\alpha,\beta}^{(k)}$ associated with two simple roots $\alpha,\beta$ and an integer $k$. Expressing it in two ways gives the following result.
\begin{theorem}[Prop.~\ref{prop_QuantumSerre}, Theorem~\ref{T:homological_version_for_Uqgm}]\label{T:Uqgm_homological_intro}
The elements $\{\CF^{(k)}_\alpha, \alpha \in \Pi, k\in \BN \}$ satisfy the quantum Serre relations, and the divided power relation. Thus there is a homomorphism of $\BZ[q^{\pm 1}]$-algeras:
\[
\Uqgm \to \CHbm := \bigoplus_{c\in \Coloring_\Pi} \CHbm_c. 
\]
\end{theorem}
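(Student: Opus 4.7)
The plan is to define the algebra map by sending each divided-power generator $F_\alpha^{(k)}$ of $\Uqgm$ to the Borel--Moore class $\CF_\alpha^{(k)}$, and then to verify the two families of defining relations of $\Uqgm$ at the level of homology: the divided-power relations among the $\CF_\alpha^{(k)}$ for a single simple root $\alpha$, and the quantum Serre relations relating classes from two different roots $\alpha,\beta$. Since $\CHbm$ is equipped with the disk-stacking product of Section~\ref{BS:homological_algebra} and since by Proposition~\ref{structure_result} each summand $\CHbm_c$ is a free $\Laurent$-module on the pearl necklace basis $\{\CF_\br\}_{\br\in \CP_c}$, each relation becomes a finite linear-algebra statement that can be checked by expanding the relevant product in that basis using the rules of Section~\ref{S:computation rules}.

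For the divided-power relation, the idea is direct: the product $\CF_\alpha^{(k)}\cdot \CF_\alpha^{(l)}$ is represented by stacking two blocks of $k$ and $l$ parallel $\alpha$-colored arcs in $\Conf_{(k+l)\alpha}$. Applying the computation rules to split and recombine strands, the total class expands over the pearl necklace basis with one term per shuffle of the two blocks, each shuffle weighted by a monodromy factor in the local system $\Laurent_{(k+l)\alpha}$. These factors collect into the Gaussian binomial $\binom{k+l}{k}_{q_\alpha}$, yielding the desired identity and hence Proposition~\ref{prop_DividedPowers}.

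The heart of the argument is the quantum Serre relation, for which the strategy is to compute a single homology class $\qSerre_{\alpha,\beta}^{(k)} \in \CHbm_{(1-a_{i,j})\alpha+\beta}$ in two ways. On the one hand, one writes down $\qSerre_{\alpha,\beta}^{(k)}$ as a specific configuration of $1-a_{i,j}$ parallel $\alpha$-arcs together with one $\beta$-arc, and for each $l\in\{0,\ldots,1-a_{i,j}\}$ uses the computation rules to slide the $\beta$-arc past $l$ of the $\alpha$-strands; each such passage contributes a power of $q_\alpha$ coming from the monodromy of $\Laurent_c$, and the sum over $l$ assembles into $\sum_l (-1)^l \binom{1-a_{i,j}}{l}_{q_\alpha} \CF_\alpha^{(l)} \CF_\beta \CF_\alpha^{(1-a_{i,j}-l)}$. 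On the other hand, the very same class represents zero in $\CHbm$ because the $\beta$-arc can be isotoped entirely past the $\alpha$-block and then pushed into the boundary component $S$ relative to which the Borel--Moore homology is taken; this produces a null-homotopy realized by an explicit chain whose relative boundary sits in $S$. Equating the two expressions gives Proposition~\ref{prop_QuantumSerre}, and combined with the divided-power relations and the presentation of $\Uqgm$, yields Theorem~\ref{T:homological_version_for_Uqgm}. The main obstacle is the careful bookkeeping in the first computation: tracking the monodromy powers of $q_\alpha$ contributed by each strand-crossing in the local system $\Laurent_{(1-a_{i,j})\alpha+\beta}$, as well as the signs arising from reordering in Borel--Moore chains, so that the coefficients assemble into the correct Gaussian binomials. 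Once this tracking is carried out, the remainder of the proof is formal.
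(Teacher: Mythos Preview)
Your outline for the divided-power relation is fine and is essentially the paper's argument: Proposition~\ref{prop_DividedPowers} is exactly the fusion rule of Proposition~\ref{prop_fusion_rules} applied to a stack of arcs, and your shuffle description is a correct paraphrase of that rule.

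For the quantum Serre relation, you have the right overall architecture (compute a single class $\qSerre_{\alpha,\beta}^{(k)}$ in two ways and equate) but your second computation---the one that is supposed to produce $0$---is where the gap lies. You claim that the $\beta$-arc can be ``isotoped entirely past the $\alpha$-block and then pushed into the boundary component $S$,'' giving a null-homotopy. This cannot be right as stated: such an isotopy argument would be insensitive to the number $k$ of $\alpha$-arcs and would force $\qSerre_{\alpha,\beta}^{(k)}=0$ for \emph{every} $k$, whereas for $k< 1-a_{i,j}$ the partitioned expression is a nontrivial linear combination of basis vectors $\CF_\br$ and is visibly nonzero. The vanishing is not topological; it is algebraic and depends on the Cartan data through the local system.

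What the paper actually does on the vanishing side is the ``squashing'' computation (Proposition~\ref{prop_squashing}): one shows, by iterated cutting and handle rules, that for \emph{every} $k$
\[
\qSerre_{\alpha,\beta}^{(k)} \;=\; q_\alpha^{-k(k-1)/2}\prod_{l=0}^{k-1}\bigl(q_\alpha^{-l}q_{\alpha,\beta}^{-1}-q_{\alpha,\beta}\bigr)\cdot(\text{a single nonzero pearl-necklace class}).
\]
The product on the right is generically nonzero. It vanishes \emph{exactly} when one of the factors does, and the factor with $l=-a_{i,j}$ vanishes precisely because $q_{\alpha,\beta}^{2}=q^{(\alpha,\beta)}=q_\alpha^{a_{i,j}}$; this forces $k=1-a_{i,j}$ to be the first (and only relevant) value at which the class dies. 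Equating this with the partitioned expression (your first computation, which in the paper is Proposition~\ref{prop_partition} and produces monomial coefficients rather than quantum binomials, since divided powers are used) then yields Proposition~\ref{prop_QuantumSerre}. So the missing idea in your proposal is this squashing step and the recognition that the zero comes from a scalar factor encoding $\langle\alpha,\beta\rangle$, not from pushing an arc into~$S$.
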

There is a canonical map that sends $\CH:=\bigoplus_{c\in\Coloring_\Pi} \CH_c$ to $\CHbm$ induced by the inclusion of chain complexes. We denote by $\overline{\CH}$ its image, and the next section (\ref{S:homology_to_Borel_Moore}) is devoted to proving the result:
\begin{theorem}[Theorems~\ref{T:free_algebra} and \ref{T:Uqgm_is_homological}]
$\CH$ is a free algebra generated by elements $\{\CF_\alpha^{[\alpha]}, \alpha \in \Pi \}$ and $\overline{\CH}$ is isomorphic to $\Uqgm$. 
\end{theorem}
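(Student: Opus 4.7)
The plan is to reduce both claims to the diagram supplied by Theorem~\ref{T:Uqgm_homological_intro}:
\[
\Uqgm \twoheadrightarrow \overline{\CH} \hookrightarrow \CHbm, \qquad \CH \twoheadrightarrow \overline{\CH},
\]
noting that $\Uqgm \to \CHbm$ lands in $\overline{\CH}$ because each $F_\alpha$ is sent to $\CF_\alpha^{(1)} = \CF_\alpha^{[\alpha]} \in \CH$. The two assertions then reduce to computing the kernels of $\CH \twoheadrightarrow \overline{\CH}$ and $\Uqgm \twoheadrightarrow \overline{\CH}$.

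For the freeness of $\CH$, I would introduce the algebra map $\varphi$ from the tensor algebra on $\bigoplus_{\alpha \in \Pi} \Laurent \cdot F_\alpha$ sending $F_\alpha \mapsto \CF_\alpha^{[\alpha]}$ and show it is a degree-wise bijection. The key geometric ingredient is that the stacking product of Section~\ref{BS:homological_algebra} acts on pearl necklaces by concatenation, so each monomial
\[
\CF_{\alpha_{i_1}}^{[\alpha_{i_1}]} \cdots \CF_{\alpha_{i_m}}^{[\alpha_{i_m}]}
\]
is identified with the atomic pearl necklace indexed by the word $(\alpha_{i_1}, \ldots, \alpha_{i_m})$. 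These atomic classes form a basis of $\CH_c$ by Proposition~\ref{T:structure_standard_homology}, with linear independence witnessed by pairing against their Borel--Moore duals via Appendix~\ref{A:the_pairing}. So $\varphi$ is bijective and $\CH$ is the tensor algebra on $\{\CF_\alpha^{[\alpha]}\}_{\alpha \in \Pi}$.

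For $\overline{\CH} \cong \Uqgm$, the map $\Uqgm \twoheadrightarrow \overline{\CH}$ is surjective by the previous step (the $\CF_\alpha^{[\alpha]}$ generate $\overline{\CH}$), so the whole issue is injectivity. Equivalently, one must show that the kernel of $\CH \twoheadrightarrow \overline{\CH}$ contains nothing beyond the quantum Serre ideal, the reverse inclusion being Theorem~\ref{T:Uqgm_homological_intro}. I would argue by a weight-by-weight rank comparison: by PBW, $(\Uqgm)_c$ has rank equal to the number of Kostant partitions of $c$, and the surjection gives the upper bound $\operatorname{rank} \overline{\CH}_c \leq \#\{\text{Kostant partitions of } c\}$. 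The matching lower bound comes from exhibiting a linearly independent family of that cardinality in $\overline{\CH}_c \subset \CHbm_c$, detected through the pairing with $\CH_c$.

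The main obstacle is precisely this lower bound. The natural plan is to pair each PBW monomial with a carefully chosen atomic test necklace in $\CH_c$ and to order the Kostant partitions — using the convex order on positive roots — so that the resulting pairing matrix is unitriangular. The divided powers $\CF_\alpha^{(k)}$ with $k \geq 2$ lie in $\CHbm_c$ but not in $\overline{\CH}_c$, and it is precisely this discrepancy that accounts for the rank gap between $\CHbm_c$ and $\overline{\CH}_c$; the computation rules of Section~\ref{S:computation rules} will be the workhorse for isolating the atomic PBW contribution from its divided-power complement, and hence for controlling the triangularity of the pairing.
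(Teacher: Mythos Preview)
Your argument for the freeness of $\CH$ is essentially the paper's: the stacked products $\CF_{\alpha_{i_1}}^{[1]}\cdots\CF_{\alpha_{i_m}}^{[1]}$ are exactly the basis elements $\CF^{[r_1,\ldots,r_m]}$ of Proposition~\ref{T:structure_standard_homology}, so nothing more is needed there.

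For $\overline{\CH}\cong\Uqgm$, however, your route diverges from the paper's and leaves a real gap. You reduce the question to a weight-by-weight rank comparison and propose to obtain the lower bound by pairing the images of PBW monomials against carefully chosen test necklaces so as to produce a unitriangular matrix. But you do not carry this out: the choice of test classes, the ordering, and the verification of unitriangularity are all left as intentions. This step is not routine. PBW monomials in $\Uqgm$ are built via Lusztig's braid group automorphisms $\T_i$ applied to the simple generators, and their homological incarnations (cf.\ Section~\ref{S:PBW}) are \emph{not} single pearl necklaces but rather complicated linear combinations; there is no evident diagrammatic reason why pairing them against atomic necklaces should be triangular in any natural order on Kostant partitions. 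The ``computation rules of Section~\ref{S:computation rules}'' are local relations and do not by themselves organise such a global triangularity.

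The paper avoids this difficulty entirely. The key observation is that, since $\langle\cdot,\cdot\rangle$ is perfect, the kernel of $\iota:\CH\to\CHbm$ is \emph{exactly} the radical of the bilinear form $(\cdot,\cdot)=\langle\cdot,\iota(\cdot)\rangle$ on $\CH$, so $\overline{\CH}\cong\CH/\mathrm{rad}(\cdot,\cdot)$. The paper then shows (Propositions~\ref{P:recovering_r} and~\ref{P:recovering_coproduct}) that $(\cdot,\cdot)$ satisfies the three axioms of \cite[Prop.~1.2.3]{LusztigBook} characterising Lusztig's form on the free algebra $'f$, by constructing the coproduct $r$ homologically (via a diagonal/Alexander--Whitney map on configuration spaces) and checking the adjunction $(xy,z)=(x\otimes y,r(z))$ through an explicit analysis of intersection loops. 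Lusztig's theorem then identifies $\CH/\mathrm{rad}$ with his algebra $f=\Uqgm$, and no PBW input is required. Your rank-counting strategy is acknowledged by the authors as an alternative (see the remark after Theorem~\ref{T:homological_version_for_Uqgm}), but they note it ``requires a little more work'' and do not pursue it to completion.
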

To prove this, along the way we recover homologically a coproduct on $\Uqgm$ introduced by Lusztig and show it satisfies the good compatibility axioms with a natural intersection pairing at homology defined on $\CH$ (Prop.~\ref{P:recovering_r} and \ref{P:recovering_coproduct}). Again, we compare this coproduct and pairing introduced by Lusztig in \cite{LusztigBook} with the perfect bilinear intersection form at homology. It is well known that Lusztig has also provided Poincaré--Birkhoff--Witt type bases for quantum groups (see e.g. \cite{Lus90}). They now give bases to $\overline{\CH}$ thanks to the above theorem. They are parametrized by the action of a braid group on $\Uqgm$ that we also recover by purely geometrical means in Section~\ref{S:PBW}, see Corollary~\ref{C:Ti_is_Lusztig}. 

Last section, Section~\ref{BS:rep_theory}, aims at rebuilding the representation theory of $\Uqg$. We extend definitions of colored configuration spaces and of twisted homologies to the case of an $n$-time punctured disk denoted $D_n$. Since we study the weight modules of $\Uqg$ they are characterized by a choice of character that is a function on simple roots. We choose one formal variable for the image of each simple root by such a character, and we choose one set of formal variables per puncture. We denote by $\boldsymbol{L}$ the whole set of formal variables and by $\Laurent_{\boldsymbol{L}}$ the ring of Laurent polynomials in variables from $\boldsymbol{L}$ and $q$. We put a local system on $\Conf_c(D_n)$ with coefficients in $\Laurent_{\boldsymbol{L}}$ and we let then $\CH(D_n)$ resp. $\CHbm(D_n)$ be the analogs of $\CH$ resp $\CHbm$ but built from $D_n$ and its local system encoding the character. We prove that both are free modules, identified by a perfect intersection pairing, and we provide them with diagrammtic bases in Propositions~\ref{P:structure_result_punctures} and \ref{P:structure_result_punctures_standard}. By $\Uqg^E$ we mean the subalgebra from the integral version of $\Uqg$ obtained by removing the quantum Serre relations for $E$ generators (but with divided powers for $F$'s). 
\begin{theorem}[Theorem~\ref{T:half_integral_modules}]
The module $\CHbm(D_n)$ is endowed with an action of $\Uqg^E$. 
\end{theorem}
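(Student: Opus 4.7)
The plan is to build the action generator by generator, leveraging the algebra structure already established on $\CHbm$ and the extra combinatorial freedom provided by the punctures of $D_n$ and their associated formal variables.

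First I would define the action of the $F$-generators and their divided powers. The stacking-of-disks product that gives $\CHbm$ its algebra structure in Section~\ref{BS:homological_algebra} extends naturally to a module structure on $\CHbm(D_n)$: stack an unpunctured disk (carrying the classes $\CF_{\alpha}^{(k)}$) on top of $D_n$. Via the homomorphism $\Uqgm\to\CHbm$ of Theorem~\ref{T:Uqgm_homological_intro}, this endows $\CHbm(D_n)$ with an action of $\Uqgm$ respecting the divided-power relations and the $F$-quantum Serre relations. For the $K$-generators, the decomposition $\CHbm(D_n)=\bigoplus_c \CHbm_c(D_n)$ indexed by colorings $c\in\BN[\Pi]$ furnishes a weight grading, and $K_\alpha$ acts as a scalar on each $\CHbm_c(D_n)$ via the formal variable in $\boldsymbol{L}$ attached to $\alpha$, twisted by $q^{-(c,\alpha)}$ as dictated by the $KEK^{-1}$ and $KFK^{-1}$ commutation rules.

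The main new ingredient, and the hardest step, is the definition of the $E_\alpha$ operators. These must lower the coloring: $E_\alpha:\CHbm_c(D_n)\to\CHbm_{c-\alpha}(D_n)$. Geometrically I would realize $E_\alpha$ as a "cap" operator, removing an $\alpha$-colored point from a configuration by pushing it along a prescribed path to the distinguished boundary part $S$. Concretely, on a pearl-necklace class $\CF_\br$ (with $\br\in\CP_c$) one selects an $\alpha$-labelled pearl and replaces the corresponding strand by an arc going to $S$; the local system $\Laurent_{\boldsymbol{L}}$ contributes the monodromy coefficients which, once summed over the choices of pearl, should assemble into the expected quantum coefficient built from the formal variable of $\alpha$ and the colors encountered along the way. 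Well-definedness at the level of classes must be checked using the diagrammatic computation rules of Section~\ref{S:computation rules}, which allow to rewrite any class into a sum of pearl necklaces.

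With the three families of operators in hand, it remains to verify the defining relations of $\Uqg^E$. The $K$--$F$ and $K$--$E$ commutation relations are automatic from the weight grading and the chosen scalar normalisation of $K$. The $F$-side quantum Serre and divided power identities are inherited from Theorem~\ref{T:Uqgm_homological_intro} applied through the stacking action. The principal obstacle is the commutator identity $[E_\alpha,F_\alpha] = (K_\alpha-K_\alpha^{-1})/(q_\alpha-q_\alpha^{-1})$ (and its vanishing for $\alpha\neq\beta$): this must be obtained by a direct diagrammatic computation comparing the two orders of "add an $\alpha$-pearl" and "cap an $\alpha$-pearl", where the difference between stacking first and capping first collapses, by the fork/flip relations of Section~\ref{S:computation rules}, to a term supported only on the original configuration and weighted by the character variable attached to $\alpha$. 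Since we only claim an action of $\Uqg^E$, no $E$-side quantum Serre relation has to be verified. Finally, compatibility with the ambient $\Laurent_{\boldsymbol{L}}$-module structure and freeness on the pearl-necklace basis (Propositions~\ref{P:structure_result_punctures} and \ref{P:structure_result_punctures_standard}) reduces all checks to finite diagrammatic identities in $\CHbm_c(D_n)$ for small $c$.
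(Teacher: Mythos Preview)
Your overall architecture matches the paper: $F$-generators act by stacking (Section~\ref{S:action_of_Uqgm}), $K_\alpha$ acts diagonally on the weight spaces $\CHbm_c(D_n)$, and $E_\alpha$ should remove an $\alpha$-colored point. The gap is in how you propose to define $E_\alpha$. The paper does \emph{not} define it by an ad~hoc recipe on pearl-necklace representatives; instead (Definition~\ref{D:homological_E_half_integral}) $\CE$ is the connecting homomorphism $\partial_*$ of the long exact sequence of the triple $\Conf_c(D_n)\supset S(D_n)\supset Y^2(D_n)$, followed by the identification $\del_{x_0}$. This is automatically a well-defined map on homology classes, so no separate check is needed. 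Your ``cap'' description---select an $\alpha$-pearl and replace its strand by an arc to $S$---is only a prescription on representatives (and is somewhat ill-posed, since necklace ends already lie in $S$); showing it descends to homology via the rules of Section~\ref{S:computation rules} is not straightforward, and those rules (handle, fusion, cutting) are relations among classes, not a mechanism for proving that an operation on chains passes to homology.

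The second consequence of missing the $\partial_*$ definition is that your plan for the commutator $[E_\alpha,F_\alpha^{(k+1)}]$ is too vague to succeed. In the paper (Proposition~\ref{P:fundamental_relation_E}) the computation works precisely because $\CF_\alpha^{(k)}$ acts by taking a cartesian product with a fixed simplex, and $\partial_*$ satisfies a Leibniz-type rule on such products: applying $\partial_*$ to the product returns the product with $\partial_*(\mathcal{D}_c)$ plus the two boundary faces of the dashed arc itself. Handle rules then read off the local-system monomials on those two extra faces, producing exactly $(q_\alpha^{-k/2}\CK_\alpha - q_\alpha^{k/2}\CK_\alpha^{-1})$. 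There are no ``fork/flip'' relations in Section~\ref{S:computation rules}; without the boundary-map formalism you have no structural reason for the difference $E F - F E$ to collapse to a single diagonal term.
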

In the above, we first put an action of $\CHbm$ that defines one of $\Uqgm$ thanks to Theorem~\ref{T:Uqgm_homological_intro} and that is given by a geometric operation on disks. We add an action of generators $E_\alpha$'s by a natural homological operator provided by the boundary map involved in the long exact sequence of a triple (Def.~\ref{D:homological_E_half_integral}). Let $\Uqg^F$ be the subalgebra of $\Uqg$ obtained by removing quantum Serre relations for $F$'s (but keeping divided powers for $E$'s). 
\begin{theorem}[Theorem~\ref{T:adjoint_modules_to_half_integral}]
There is an action of $\Uqg^F$ on $\CH(D_n)$ that is left adjoint to that of $\Uqg^E$ on $\CHbm$ regarding the intersection form.
\end{theorem}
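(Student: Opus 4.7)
The plan is to obtain the $\Uqg^F$-action on $\CH(D_n)$ by transporting the $\Uqg^E$-action on $\CHbm(D_n)$ along the perfect intersection pairing, and then repackaging the resulting anti-action through the standard anti-involution of $\Uqg$. Recall that $\Uqg$ carries an algebra anti-involution $\omega$ exchanging $E_\alpha \leftrightarrow F_\alpha$ (and hence $F_\alpha^{(k)} \leftrightarrow E_\alpha^{(k)}$) and inverting $K_\alpha$. Restricted to the half-integral subalgebras it induces an isomorphism $\omega:\Uqg^F\xrightarrow{\sim}(\Uqg^E)^{\mathrm{op}}$, because divided powers correspond under $\omega$ and the quantum Serre relations on $E$'s available in $\Uqg^F$ are swapped with the quantum Serre relations on $F$'s available in $\Uqg^E$.

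First, I would use Propositions~\ref{P:structure_result_punctures} and~\ref{P:structure_result_punctures_standard}: each weight piece of $\CH(D_n)$ and $\CHbm(D_n)$ is free of finite rank, and the intersection form pairs them perfectly with mutually dual diagrammatic bases. Consequently every weight-homogeneous endomorphism $T$ of $\CHbm(D_n)$ admits a unique left adjoint $T^{*}$ on $\CH(D_n)$, characterized by $\langle T^{*} a, b\rangle = \langle a, T b\rangle$, and the assignment $T\mapsto T^{*}$ is an algebra anti-homomorphism $\End\CHbm(D_n)\to\End\CH(D_n)$.

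Composing the $\Uqg^E$-action $\rho$ from Theorem~\ref{T:half_integral_modules} with $T\mapsto T^{*}$ yields an algebra anti-homomorphism $\Uqg^E\to\End\CH(D_n)$. Pre-composing with $\omega$ upgrades it to a genuine algebra homomorphism
\[
\Uqg^F\xrightarrow{\;\omega\;}(\Uqg^E)^{\mathrm{op}}\xrightarrow{\;\rho^{*}\;}\End\CH(D_n),
\]
which by construction is left adjoint to $\rho$ with respect to the intersection form. This simultaneously produces the action and its adjointness property, provided we prove that each element of $\Uqg^F$ actually acts through a well-defined endomorphism, which follows from the weight-homogeneity of $\rho$ together with the finite rank of each weight piece.

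The remaining task is to identify these formal adjoints geometrically, showing that the $E^{(k)}_\alpha$-action on $\CH(D_n)$ is realized by a stacking operation on the appropriate boundary arc of $D_n$, and the $F_\alpha$-action is realized by a boundary connecting homomorphism in the relevant long exact sequence, mirroring the description used in Theorem~\ref{T:half_integral_modules}. This reduces to evaluating the intersection form on pearl-necklace bases after a single stacking or boundary application, a computation governed by the machinery of the Appendix pairing. The main obstacle is bookkeeping: the $q$-power and sign normalizations produced by $\omega$ on divided powers must match those arising from orientation choices on cells in the intersection pairing, and checking this compatibility in the presence of the local system $\Laurent_{\boldsymbol{L}}$ coming from the punctures is the delicate step in the argument.
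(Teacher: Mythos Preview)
Your approach is a legitimate alternative, but it inverts the paper's logic. The paper first \emph{defines} homological operators $\CE_\alpha^{[k]}$ and $\CF_\alpha^{[1]}$ on $\CH(D_n)$ explicitly (via Poincar\'e-dual restriction maps $\partial^i_c(\alpha)$ followed by deletion isomorphisms $\del_i(\alpha)$), and then \emph{proves} these concrete operators are adjoint to $\CF_\alpha^{(k)}$ and $\CE_\alpha$ by exhibiting commutative diagrams of Poincar\'e-dual maps. The relations of $\Uqg^F$ then follow because adjoints of relations are relations. Your route takes formal adjoints first and postpones the geometric identification; this yields the bare existence statement faster, but what you call ``the remaining task'' is precisely the substance of the paper's argument, and is needed downstream (e.g.\ in the proof that $\overline{\CH}(D_n)$ is a full $\Uqg$-module).

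There is also a genuine wrinkle in your use of $\omega$. With your choice $\omega(K_\alpha)=K_\alpha^{-1}$, applying the anti-involution to the relation $[E_\alpha,F_\alpha^{(k+1)}]=F_\alpha^{(k)}(q_\alpha^{-k/2}K_\alpha-q_\alpha^{k/2}K_\alpha^{-1})$ and then commuting $K_\alpha^{\pm 1}$ past $E_\alpha^{(k)}$ produces a sign, so this $\omega$ is \emph{not} an anti-isomorphism $\Uqg^F\to\Uqg^E$ on the nose with the paper's conventions. The paper's adjointness proposition does have $\CK_\alpha$ adjoint to $\CK_\alpha^{-1}$, but the operators $\CE_\alpha$ and $\CF_\alpha^{[1]}$ carry extra factors $\prod_p s_{p,\alpha}^{\pm 1}$, so the transport along the pairing is not literally the standard Cartan anti-involution. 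You correctly flag this as ``the delicate step,'' but it is not mere bookkeeping: it is the reason the paper verifies adjointness generator-by-generator via explicit dual diagrams rather than invoking an abstract $\omega$.
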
  
In the above again, actions for divided powers of $E$'s are defined by natural cohomological operators. We also introduce $\CH(D_n^\circ)$ that is built from the disk with holes rather than punctures, so that $\CHbm(D_n^\circ)$ injects in $\CHbm(D_n)$. These last two modules are called half integral Verma and coVerma respectively. Now if $\overline{\CH}(D_n)$ denotes the image of $\CH(D_n)$ in $\CHbm(D_n)$ induced by the natural inclusion of chain complexes, we have the following result. 

\begin{theorem}[Theorem~\ref{T:full_Uqg_module}]
The module $\overline{\CH}(D_n)$ is a module on the full Drinfel'd--Jimbo $\mathbb{Q}(q)$-algebra $\Uqg$. $\overline{\CH}(D_1)$ is the Verma module.
\end{theorem}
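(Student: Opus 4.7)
The plan is to show that both sets of quantum Serre relations hold on the image $\overline{\CH}(D_n)$ of $\phi \colon \CH(D_n) \to \CHbm(D_n)$, and then to identify $\overline{\CH}(D_1)$ with a Verma module by exhibiting a highest weight vector together with the free-module structure already established in the empty-disk setting. The F-Serre relations are almost automatic on $\overline{\CH}(D_n)$: the F-action on either $\CH(D_n)$ or $\CHbm(D_n)$ is left multiplication by the classes $\CF_\alpha^{(k)}$ from the empty disk via the stacking product, which is a purely geometric operation, so $\phi$ is F-equivariant. Thus $\overline{\CH}(D_n)$ is F-stable, and the F-Serre relations descend from $\CHbm(D_n)$, where they hold by Theorem~\ref{T:half_integral_modules}.

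The heart of the matter is the E-action. I would first prove the intertwining identity $\phi \circ E_\alpha^{\CH} = E_\alpha^{\CHbm} \circ \phi$ for every simple root $\alpha$, where $E_\alpha^{\CH}$ is the E-action provided by Theorem~\ref{T:adjoint_modules_to_half_integral} and $E_\alpha^{\CHbm}$ is the boundary-map E-action from Definition~\ref{D:homological_E_half_integral}. The adjointness of Theorem~\ref{T:adjoint_modules_to_half_integral} already says both operators are adjoint to the same F-operator with respect to the perfect intersection pairing, so testing the equation against this pairing and using the established F-equivariance of $\phi$ should force the equality on $\overline{\CH}(D_n)$. Once this intertwining is in hand, $\overline{\CH}(D_n)$ is automatically E-stable in $\CHbm(D_n)$, and any relation satisfied by the E-operators on $\CH(D_n)$ descends to $\overline{\CH}(D_n)$; in particular the E-Serre relations of $\Uqg^F$ hold there. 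Combining with F-Serre, and with the mixed relations (the $K$-weight conjugation and the commutator $[E_\alpha, F_\alpha]$) that are already built into the $\Uqg^E$-module structure on $\CHbm(D_n)$, the full Drinfel'd--Jimbo algebra $\Uqg$ acts on $\overline{\CH}(D_n)$.

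For the second claim, I would take the candidate highest weight vector $v_\lambda \in \overline{\CH}(D_1)$ to be the class of the empty configuration, of weight $\lambda$ read off from the character variables $\boldsymbol{L}$ attached to the single puncture. This vector is annihilated by every $E_\alpha$ since the boundary operator defining $E_\alpha^{\CHbm}$ has no bead of color $\alpha$ to act upon. Applying the stacking-product construction from Theorem~\ref{T:Uqgm_is_homological} at the puncture level, the assignment $u \mapsto u \cdot v_\lambda$ yields a surjection $\Uqgm \twoheadrightarrow \overline{\CH}(D_1)$; its compatibility with the pearl-necklace basis (Proposition~\ref{P:structure_result_punctures}) turns it into a bijection. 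Since $\overline{\CH}(D_1)$ is thereby freely generated over $\Uqgm$ by a highest weight vector of weight $\lambda$ with $K_\alpha$-action fixed by $\boldsymbol{L}$, it satisfies the defining universal property of the Verma module $M(\lambda)$.

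The main obstacle is the intertwining $\phi \circ E_\alpha^{\CH} = E_\alpha^{\CHbm} \circ \phi$: the two E-operators are built from genuinely different chain-level constructions, namely a cohomological/adjoint construction on $\CH(D_n)$ versus the connecting homomorphism of a triple on $\CHbm(D_n)$, and bridging them requires careful accounting of the twisted coefficients and of the sign/twist factor in the adjoint definition. Once this identification is proved, which I expect to go through the adjointness with respect to the perfect intersection pairing together with the already-established F-equivariance of $\phi$, the rest of the theorem follows from structural results already in the paper.
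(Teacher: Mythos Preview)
Your strategy matches the paper's: inherit the $\Uqg^E$-action (including $F$-Serre) on $\overline{\CH}(D_n)$ from $\CHbm(D_n)$, then import the $E$-Serre relations from the adjoint $\Uqg^F$-structure on $\CH(D_n)$ (Theorem~\ref{T:adjoint_modules_to_half_integral}). The paper's proof of Theorem~\ref{T:full_Uqg_module} is extremely terse and simply asserts that $\overline{\CH}(D_n)$ carries the inherited $\CE_\alpha$-action; you are right to single out the intertwining $\phi\circ\CE_\alpha^{[1]}=\CE_\alpha\circ\phi$ (equivalently, $\CE_\alpha$-stability of $\overline{\CH}(D_n)$) as the one nontrivial step, and your Verma argument is essentially the paper's.

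There is, however, a gap in your proposed proof of the intertwining. You write that ``both operators are adjoint to the same $F$-operator'', but they are not: $\CE_\alpha$ on $\CHbm(D_n)$ is left-adjoint to $\CF^{[1]}_\alpha$ acting on $\CH(D_n)$, while $\CE^{[1]}_\alpha$ on $\CH(D_n)$ is left-adjoint to $\CF^{(1)}_\alpha$ acting on $\CHbm(D_n)$. These two $F$-operators are related by the $F$-equivariance of $\phi$, but if you try to combine the two adjunction identities with that equivariance to compare $\langle z,\CE_\alpha\phi(x)\rangle$ and $\langle z,\phi(\CE^{[1]}_\alpha x)\rangle$, you are led straight back to the intertwining you want to prove. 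The adjointness route does not close by itself. A cleaner argument is chain-level naturality: $\phi$ is induced by the inclusion of the singular complex into the Borel--Moore complex, and for $k=1$ both $\CE_\alpha$ and $\CE^{[1]}_\alpha$ are built from the \emph{same} connecting/boundary map of the triple $\bigl(\Conf_c,S(D_n),Y^2(D_n)\bigr)$ followed by $\del_{x_0}$, read in the two homology theories; naturality of long exact sequences then gives the commutation with $\phi$ directly. Once that is established, your deduction of $E$-Serre on $\overline{\CH}(D_n)$ from $E$-Serre on $\CH(D_n)$ is correct and is exactly what the paper has in mind.
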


Notice that any simple module of $\Uqg$ can be recovered from the universal Verma modules discussed above, and it will be the subject of some future notes. One main feature of $\Uqg$-modules is that they form a monoidal category. It turns out that adding punctures corresponds on the algebraic side to making tensor products, so that we recover this monoidal structure again by simple geometrical operation translated at homology. 

\begin{theorem}[Theorem~\ref{T:monoidality}]
If $D_n$ is the connected sum of two punctured disks $D$ and $D'$, then there is an isomorphism:
\[
\CHbm(D_n) \simeq \CHbm(D') \otimes \CHbm(D)
\]
that is equivariant for $\Uqg$-module structures, given by the coproduct on the right term. 
\end{theorem}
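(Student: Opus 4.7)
The plan is to realise the isomorphism geometrically by separating configurations according to which side of the connected-sum curve they lie on, and then to check that this geometric splitting implements Lusztig's coproduct on the right-hand side. I would fix a properly embedded arc $\gamma \subset D_n$ realising the decomposition $D_n = D' \# D$, with $\gamma$ separating the punctures of $D'$ from those of $D$. For every splitting $c = c' + c''$ with $c'$ supported on the punctures of $D'$ and $c''$ on those of $D$, slight shrinking of the two halves produces a natural embedding $\Conf_{c'}(D') \times \Conf_{c''}(D) \hookrightarrow \Conf_c(D_n)$. I would then argue that every pearl necklace $\CF_\br$ giving a basis class of $\CHbm(D_n)$ can, up to isotopy in $\Conf_c(D_n)$, be pushed entirely into one of these product pieces, yielding a bijection
\[
\CP_c \;\longleftrightarrow\; \bigsqcup_{c=c'+c''} \CP_{c'} \times \CP_{c''}
\]
that matches the diagrammatic basis of Proposition~\ref{P:structure_result_punctures} with the tensor product of bases on either side. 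Combined with the product behaviour of Borel--Moore homology with local coefficients on each piece, this gives the $\BZ[q^{\pm 1}]$-module isomorphism $\CHbm(D_n) \simeq \CHbm(D') \otimes \CHbm(D)$.

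Next I would analyse the restriction of the local system $\Laurent_{\boldsymbol{L}}$ to each embedded product piece. On each factor it is tautologically the local system defining $\CHbm(D')$ and $\CHbm(D)$; the remaining information is the cross-monodromy produced when an $\alpha$-coloured point on one side of $\gamma$ links with a puncture or another coloured point on the other side. A direct winding-number computation on canonical arcs crossing $\gamma$ exhibits these weights as exactly the $K_\alpha^{\pm 1}$-factors appearing in Lusztig's coproduct $\Delta(F_\alpha) = F_\alpha \otimes 1 + K_\alpha^{-1} \otimes F_\alpha$, $\Delta(E_\alpha) = E_\alpha \otimes K_\alpha + 1 \otimes E_\alpha$, $\Delta(K_\alpha) = K_\alpha \otimes K_\alpha$. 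This promotes the module isomorphism above to a candidate $\Uqg$-equivariant one.

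Equivariance is then checked generator by generator. The action of $K_\alpha$ is diagonal by total weight and manifestly respects the splitting. The action of $F_\alpha$ is geometric --- stacking an $\alpha$-pearl near $\partial D_n$ --- and the pearl can either be kept in $D$ or pushed through $\gamma$ into $D'$, producing the two summands of $\Delta(F_\alpha)$; the $K_\alpha^{-1}$ appears as exactly the monodromy picked up when the pearl crosses $\gamma$, using the computation of the previous paragraph. For $E_\alpha$, which is defined in Definition~\ref{D:homological_E_half_integral} via the connecting homomorphism of a triple, one chooses the defining triple compatibly with $\gamma$ and decomposes the created boundary cell according to which half of $D_n$ it meets; each contribution comes with the monodromy factor identified above and matches one of the two summands of $\Delta(E_\alpha)$. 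Divided-power relations then follow by multiplicativity from Theorem~\ref{T:Uqgm_homological_intro} and Theorem~\ref{T:full_Uqg_module}.

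The main obstacle is the verification for $E_\alpha$. Unlike $F_\alpha$, the connecting homomorphism is not given by a geometric operation on cycles and does not split along $\gamma$ tautologically: one has to select the defining triple and an excision-type argument so that the boundary-of-a-chain construction restricts to each product piece, and then carefully bookkeep the extra $K_\alpha$-twist that arises when the boundary cell sits on the opposite side of $\gamma$ from where its image is measured. Once this compatibility is settled, the remaining verifications reduce to finite winding-number computations parallel to the single-puncture case already treated in Section~\ref{BS:rep_theory}.
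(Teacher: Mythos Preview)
Your overall strategy is correct and matches the paper's: define the splitting map $\splits_N$ on diagrams in ``split position'' (your product pieces), observe it matches bases, and then check equivariance generator by generator, with the cross-monodromy producing the $K_\alpha$-weights. For $K_\alpha$ and $F_\alpha^{(k)}$ the paper does exactly what you sketch, using the cutting rule on the stacked dashed arc and the handle rule to read off the coefficient when the handle crosses to the other half.

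The place where you diverge is the treatment of $\CE_\alpha$. You propose to split the connecting homomorphism of the triple directly along $\gamma$, and you correctly flag this as the main obstacle. The paper avoids this difficulty entirely: it does \emph{not} analyse $\partial_*$ on product pieces. Instead it checks equivariance for the operator $\CF_\alpha^{[1]}$ acting on the standard homology $\CH(D_n)$, which is again a geometric stacking and hence yields to the same cutting/handle argument as $\CF_\alpha^{(k)}$; equivariance for $\CE_\alpha$ then follows because $\CE_\alpha$ is the adjoint of $\CF_\alpha^{[1]}$ for the perfect pairing of Proposition~\ref{P:structure_result_punctures_standard} (this adjointness is established in Section~\ref{standard_action_of_half_Uq}). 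Your direct approach could in principle be carried out, but the bookkeeping you anticipate is genuinely delicate, whereas the adjointness trick reduces everything to a second instance of the easy geometric computation you already did for $F$.
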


In the last Section~\ref{S:braiding} we discuss the existence of an action of the braid group on $n$ strands denoted $\CB_n$ on $\CHbm(D_n)$ that comes from an $R$-matrix of $\Uqg$. Again it is recovered by using an action of the braid group by mapping classes at homology. We will further investigate its relation with knot theory, but thus starting with a total topological point of view. 

There is an Appendix \ref{A:the_pairing} that presents recalls and references on definitions of various homologies involved in our constructions, Poincaré duality and its realisation by a twisted intersection pairing. 

\subsection*{Acknowledgments}

J.M. is very grateful to P. Godfard for all the discussions on homology theories (and the shortest proof of Prop.~\ref{P:standard_homology_is_free}), and to R. Maksimau for discussions on quantum groups that motivated him on the subject.

\section{Generalized configuration spaces and twisted homology}\label{BS:homology_etc}

\subsection{Colored configuration spaces}\label{S:conf_space}
In the paper, $D$ is the unit disk. It will be considered topologically, as sometimes drawings will show squared disks for clarity. 

\begin{Not}
We suppose the set of simple roots to be fixed, $\Pi:=\lbrace \alpha_1,\ldots, \alpha_l \rbrace$. Then:
\begin{itemize}
\item A {\em coloring} by the roots $c$ is a function:
\[
c: \Pi \to \BN.
\]
The reader should think of such a coloring by $c(\alpha)$ indicating the number of points ``colored'' by the root $\alpha$ in configuration spaces below. Let $\Coloring_{\Pi}$ be the set of all colorings (by $\Pi$). Notice that $\Coloring_{\Pi} = \BN[\Pi]$ which is the notation used in \cite{LusztigBook}, using the assimilation $$c=\sum_{\alpha\in \Pi} c(\alpha)\alpha$$. 
\item The \textit{weight of a coloring} $c\in \Coloring_{\Pi}$, denoted $m_c$, is defined to be:
\[
m_c := \sum_{i= 1, \ldots, l} c(\alpha_i) 
\]
\end{itemize}
\end{Not}

 From a fixed coloring we define an associated configuration space.

\begin{defn}[Colored configuration space, {\cite[Definition~3.1.1]{Jules_phd}}]
Let $c \in \Coloring_{\Pi}$, we denote by $\Conf_c$ the $c$-colored configuration space, defined as follows:
\[
\Conf_c = \left( D^{m_c} \setminus \bigcup_{i<j} \{z_i=z_j\} \right) \Big/ \Sk_{c(\alpha_1)} \times \cdots \times \Sk_{c(\alpha_l)}
\]
\end{defn}

\begin{rmk}
An element of $\Conf_c$ is denoted $\left(\lbrace z^{\alpha_1}_1 , \ldots , z^{\alpha_1}_{c(\alpha_1)} \rbrace, \lbrace z^{\alpha_2}_1 , \ldots , z^{\alpha_2}_{c(\alpha_2)} \rbrace, \ldots , \lbrace z^{\alpha_l}_1 , \ldots , z^{\alpha_l}_{c(\alpha_l)} \rbrace \right)$ and can be seen as $l$ packages of undistinguishable points, one of $c(\alpha_1)$ points colored by $\alpha_1$, then $c(\alpha_2)$ points colored by $\alpha_2$ and so on. The exponent of a coordinate indicates its color. An even simpler notation could be employed for the above configuration: $(\boldsymbol{z^{\alpha_l}}, \ldots, \boldsymbol{z^{\alpha_l}})$ where every $\boldsymbol{z^{\alpha_i}}=\lbrace z^{\alpha_i}_1 , \ldots , z^{\alpha_i}_{c(\alpha_i)} \rbrace$ is a set of undistinguished points in $D$ colored by $\alpha_i$. 
\end{rmk}

\subsection{Local system}\label{S:local_system_empty_disk}

\begin{Not}\label{N:writhe_etc}
We define the following function $f$ on any list $\lbrace z_1, \ldots z_n \rbrace \in \left( D^n \setminus \bigcup_{i<j} \{z_i=z_j\} \Big/ \Sk_{n} \right)$:
\begin{equation}
f \left( \lbrace z_1 , \ldots , z_n \rbrace \right) := \prod_{1<i<j<n} \left( \frac{z_j-z_i}{|z_j-z_i|} \right)^2 = \prod_{1<i,j<n} \left( \frac{z_j-z_i}{|z_j-z_i|} \right) \in \BS^1,
\end{equation}
it is the {\em writhe} of the list $\lbrace z_1 , \ldots , z_n \rbrace$. We also define the writhe of one list relatively to the other:
\begin{equation}
g \left( \lbrace z_1 , \ldots , z_n \rbrace, \lbrace w_1 , \ldots , w_k \rbrace \right) := \prod_{ 1<i<n , 1 < j < k } \left( \frac{w_j-z_i}{|w_j-z_i|} \right)^2 \in \BS^1,
\end{equation}
where $\BS^1$ is the unit circle. 
\end{Not}

\begin{rmk}
In the previous definition, the square in the definition of $f$ (first expression) is for making it well defined (on an unordered list). The square in $g$ is not required for it to be well defined. We define it like that for some reasons, e.g. the following remarks: 
\begin{itemize}
\item One can apply any permutation to the concatenated list $\lbrace z_1 , \ldots , z_n , w_1 , \ldots , w_k \rbrace$ leaving $g$ invariant. It includes the transposition of the two lists. 
\item A consequence is that horizontally aligned configurations are sent to $1$ (it is also the case for $f$).
\item It makes quantum computations work, as the present work shows.  
\end{itemize}
\end{rmk}

\begin{defn}
Associated with a coloring $c$, we consider the map $\Phi_c$ defined as follows:
\[
\bapp
\Conf_c & \to & (\BS^1)^{l} \times (\BS^1)^{\frac{l(l-1)}{2}} \\
(\bz^{\alpha_1}, \ldots , \bz^{\alpha_l}) & \mapsto & \left( f(\bz^{\alpha_1}), \ldots , f(\bz^{\alpha_l}) , g(\bz^{\alpha_1},\bz^{\alpha_2}), g(\bz^{\alpha_1}, \bz^{\alpha_3}), \ldots , g(\bz^{\alpha_2},\bz^{\alpha_3}) , \ldots , g(\bz^{\alpha_{l-1}},\bz^{\alpha_l}) \right)
\eapp
\]
We slightly modify it, fixing $\CW_c:= \Phi_c \circ r_{\frac{\pi}{2}}$, where $r_{\frac{\pi}{2}}$ is the rotation of $\frac{\pi}{2}$ applied (by abuse of notation) to all coordinates of elements of $\Conf_c$. It is the first ingredient for the definition of a local system on $\Conf_c$.
\end{defn}

\begin{rmk}
Some remarks about $\CW_c$. 
\begin{itemize}
\item The letter $\CW$ stands for ``writhe'' as $\CW_c$ is the writhe function associated with the coloring $c$. 
\item We make the remark that it can be expressed using other particular functions in two interesting ways:
\begin{align*}
f \left( \lbrace z_1 , \ldots , z_n \rbrace \right) & = \prod_{i<j<n}  \frac{z_j-z_i}{\left(\overline{z_j-z_i}\right)} 
\\ & = \frac{V(z_1,\ldots, z_m)}{{\overline{V(z_1,\ldots, z_m)}}},
\end{align*}
where $V$ designates the Vandermonde function (i.e. the determinant of the Vandermonde matrix associated with the list). The fraction of the second line computes twice the argument of $V(z_1,\ldots, z_m)$. 
\item The first $l$ coordinates of $\CW_c$ are writhes of packages of points colored by the same root.
\item Last $\frac{l(l-1)}{2}$ coordinates are writhes of a package colored by a root relatively to a different one. The reader would have guessed the classical order on coordinates applied here. 
\item $\CW_c$ sends vertically aligned configurations to $(1,\ldots,1) \in (\BS^1)^{\times l} \times (\BS^1)^{\frac{l(l-1)}{2}}$. (The application of the $\frac{\pi}{2}$-rotation is considered only because we prefer vertically aligned configurations to have this property, rather than horizontally aligned ones). 
\end{itemize}
\end{rmk}

\begin{Not}
\begin{itemize}
\item We denote $B_c := (\BS^1)^{l} \times (\BS^1)^{\frac{l(l-1)}{2}}$ and we choose a base point $b_c := (1, \ldots, 1)$ of it. We denote the fundamental group of $B_c$ based at $b_c$ by $\pi_c$ and we recall that:
\[
\pi_c \simeq \BZ^l \times \BZ^{\frac{l(l-1)}{2}} := \BZ \langle k_1 \rangle \oplus \ldots \BZ \langle k_l \rangle \oplus \BZ \langle k_{(1,2)} \rangle \oplus \langle k_{(1,3)} \rangle \oplus \cdots \oplus \BZ \langle k_{(l-1,l)} \rangle . 
\]
where we have chosen a generator for each copy of $\BS^1$ involved corresponding to a counterclockwise loop. 
\item We recall that the fundamental group of $\Conf_c$ (up to a chosen base point) is made of braids (colored by roots) having induced permutation in $\Sk_{c(\alpha_1)} \times \cdots \times \Sk_{c(\alpha_l)}$. Let's consider the map $\CW_c$ at the level of fundamental groups. Then the coordinate of the image of a loop in $\Conf_c$ along $k_i$ is the writhe of strands of the loops colored by $\alpha_i$, while the coordinate along $k_{(i,j)}$ is the writhe of strands colored by $\alpha_i$ relatively to those colored by $\alpha_j$. Here we recover the usual definition of ``writhe'' of braids. 
\end{itemize}
\end{Not}

We define a representation of $\pi_c$ that will be the local ring of coefficients with which we will twist the homology.

\begin{defn}[Local system]\label{localsystem}
We define the local ring of interest to be $\Laurent := \BZ \left[ q^{\pm 1} \right]$, and we endow it with a $\BZ \left[ \pi_c \right]$- module structure as follows:
\[
\rho_c : \bfct
\BZ \left[ \pi_c \right]  &  \to & \Laurent \\
k_i & \mapsto & -q^{(\alpha_i,\alpha_i)/2} = -q^{d_i} \\
k_{(i,j)} & \mapsto & q^{(\alpha_i, \alpha_j)/2}.
\efct
\]
We will denote $\Laurent_c$ the entire data set $(\Laurent, \CW_c, \rho'_c)$ reunited under the name of local system (or local ring of coefficients). It is used in next section to endow the homology with a local ring of coefficients isomorphic to $\Laurent$.
\end{defn}

\subsection{Twisted homology: definition, basis and structure.}\label{S:twisted_homology_bases_etc}

We define the twisted homology associated with the local system defined in Definition \ref{localsystem}. First we draw a unit disk in a certain way (as a rectangle with vertical sides in red):
\[
\vcenter{\hbox{\begin{tikzpicture}[scale=0.5]
\draw[red, thick] (-5,-3) -- (-5,3);
\draw[red, thick] (5,-3) -- (5,3);
\draw[gray, thick] (-5,-3) -- (5,-3);
\draw[gray, thick] (-5,3) -- (5,3);
\end{tikzpicture}}}
\]

\begin{defn}[Homologies with twisted coefficients]\label{twisted_homology}
Let $\partial^-D$ be the union of both lateral sides of $\partial D$, marked in red in the figure. We define $S \in \partial \Conf_c$ to be made of configurations having at least a coordinate in $\partial^-D$. Then we denote:
\[
\CHbm_c := \Hlf_{m_c} (\Conf_c, S; \Laurent_c),
\]
where $\Laurent_c$ is the local ring of coefficients from Def. \ref{localsystem}. We emphasize that here we use the twisted \emph{Borel--Moore} homology (or Borel--Moore homology with \textit{local coefficients}) for which we refer the reader to the detailed Appendix in \cite[Appendix~A]{JulesMarco} since we use exactly the same setup. It is here isomorphic to the \textit{locally finite} version of the singular homology if one restricts to \textit{$\pi$-finite chains} for this twisted version, precise references and definitions to these facts are made in the appendix of \cite{Jules_Lefschetz}. 
We recall, what is detailed in the above mentioned Appendix, that these homologies carry an action of $\Laurent$, by means of Deck transformations of a certain regular cover. They are $\Laurent$-modules. Notice also that we deal with the middle homology, in terms of dimension of $\Conf_c$ as a manifold. We define the non Borel--Moore analog:
\[
\CH_c:= \Hnot_{m_c} (\Conf_c, S; \Laurent_c)
\]
and it also carries a $\Laurent$-module structure. In what follows the symbol $\CHe_c$ will stand to designate either $\CHbm_c$ or $\CH_c$. 
\end{defn}

\begin{rmk}[Twisted homology from regular covers]\label{twisted_homology_from_cover}
We recall that:
\[
\CHe_c \simeq \Hnot^{e}_{m_c} (\widehat{\Conf_c}, p_c^{-1}(S); \BZ),
\]
where:
\[
p_c: \widehat{\Conf_c} \to \Conf_c
\]
is the cover associated with the morphism: $\rho_c \circ \CW_c : \BZ\left[\pi_1 (\Conf_c) \right] \to \Laurent$ (up to a choice of base point for $\Conf_c$). As the latter morphism is onto, the deck transformation ring of $\widehat{\Conf_c}$ is isomorphic to $\Laurent$ and its action on $\CHe_c$ recovers the $\Laurent$-module structure of the twisted homology.  
\end{rmk}

We now define particular twisted chains happening to be cycles in the Borel--Moore setup, hence to define classes in $\CHbm_c$. 

\begin{defn}[$\CF_{\alpha}$'s classes]\label{F_classes_1dim}
Let $c_{\alpha}^1$ be the coloring with a single root $\alpha$ coloring a single point ($c(\alpha) = m_c = 1$). We define:
\[
\CF_{\alpha} := 
\vcenter{\hbox{\begin{tikzpicture}[scale=0.5, every node/.style={scale=1},decoration={
    markings,
    mark=at position 0.25 with {\arrow{>}}}
    ]
\coordinate (w0g) at (-5,0) {};
\coordinate (w0d) at (5,0) {};

\draw[gray!40!white, thin,postaction={decorate}] (w0g) -- (w0d) node[pos=0.5, black] (a1) {$\encircled{\alpha}$};

\draw[red, thick] (-5,-3) -- (-5,3);
\draw[red, thick] (5,-3) -- (5,3);
\draw[gray, thick] (-5,-3) -- (5,-3);
\draw[gray, thick] (-5,3) -- (5,3);
\end{tikzpicture}}}. 
\]
We explain how the above diagram defines naturally a class in $\CHbm_{c^1_{\alpha}}$: the middle gray horizontal line is an embedded interval (oriented and marked by $\alpha$). The fact that its ends reach $\partial^-D$ makes it a cycle relatively to $S$ and hence makes it define naturally a class of $\CHbm_{c^1_{\alpha}}$ (which is just the regular homology, trivially twisted for this particular coloring). 
\end{defn}

Then we define a generalization of these classes for higher homologies (i.e. colorings with weight greater than $1$) but it requires more explanations. 

\begin{Not}[Pearl necklace]\label{N:pearl_necklace}
Let $c$ be a coloring by roots of cardinal $m:=m_c$ with $l$ roots. Let $(r_1, \ldots , r_m) \in \lbrace 1, \ldots , l \rbrace^m$ be a partition of the coloring $c$, i.e. be such that the cardinal of $i$ in $(r_1, \ldots , r_m)$ is exactly $c(\alpha_i)$. We denote the set of partitions of $c$ by $\CP_c$. Then we draw:
\[
\CF_{(r_1,\ldots , r_m)}:= \vcenter{\hbox{\begin{tikzpicture}[scale=0.6, every node/.style={scale=1},decoration={
    markings,
    mark=at position 0.5 with {\arrow{>}}}
    ]    
    
\coordinate (w0g) at (-5,0) {};
\coordinate (w0d) at (5,0) {};

\coordinate (h1) at (5,-2.5);
\coordinate (h2) at (5,-1.5);
\coordinate (h3) at (5,-1);

\node at (5.2,-2) {$\vdots$};

\draw[gray!40!white, thick, postaction={decorate}] (w0g) -- (w0d) node[pos=0.15,black] (a1) {$\encircled{\alpha_{r_1}}$} node[pos=0.3,black] (a2) {$\encircled{\alpha_{r_2}}$} node[pos=0.55,above,black] {$\cdots$} node[pos=0.8, black] (a3) {$\encircled{\alpha_{r_m}}$};

\draw[yellow!80!black] (h1)--(h1-|a1)--(a1);
\draw[yellow!80!black] (h2)--(h2-|a2)--(a2);
\draw[yellow!80!black] (h3)--(h3-|a3)--(a3);

\draw[red, thick] (-5,-3) -- (-5,3);
\draw[red, thick] (5,-3) -- (5,3);
\draw[gray, thick] (-5,-3) -- (5,-3);
\draw[gray, thick] (-5,3) -- (5,3);
\end{tikzpicture}}}.
\]
The above diagram is made of several elements:
\begin{itemize}
\item A \textit{necklace} $N(\CF_{(r_1,\ldots,r_m)})$ which is the light gray horizontal interval with ends in $\partial^-S$. 
\item The necklace is oriented. When no orientation is indicated, a left to right natural orientation should be implicit. 
\item \textit{Pearls}, namely the marking by roots $\alpha_{r_1}, \ldots, \alpha_{r_m}$ (their order is what matters). 
\item The handle $H(\CF_{(r_1,\ldots,r_m)})$ consisting in the union of ($m$) segments relating $\partial^-D$ to the pearls. It naturally corresponds to a path in $\Conf_c$, also denoted $H(\CF_{(r_1,\ldots,r_m)}) : I \to \Conf_c$ where $I$ is the unit interval. This path starts at a vertically aligned configuration (on $\partial^-D$) and ends at a configuration lying on the necklace. Note that the definition of $\partial^-D$ forces configurations lying on it to be vertically aligned. 
\end{itemize}
\end{Not}

We describe how a pearl necklace naturally defines a twisted homology class. Let $\CF_{(r_1, \ldots, r_m)}$ be a pearl necklace as just defined. 
\begin{itemize}
\item Let $(k^{\alpha_1}_1, \ldots , k^{\alpha_1}_{c(\alpha_1)})$ be the sub ordered list of $(1, \ldots , m)$ corresponding to labels by $\alpha_1$ (i.e. $r_{k^{\alpha_1}_j} = 1$ for any such index) ; then $(k^{\alpha_2}_1, \ldots , k^{\alpha_2}_{c(\alpha_2)})$ those corresponding to the pearls labeled by $2$ and so on. The union of the pearls and the necklace defines the embedding of an $m$-dimensional open simplex:
\begin{equation}\label{embedding_necklace}
\Delta_{\CF_{(r_1, \ldots, r_m)}}: \bfct
\Delta^m = \lbrace 0 < t_1 < \cdots < t_m < 1 \rbrace & \to & \Conf_c \\
(t_1 , \ldots , t_m) & \mapsto & \left( \lbrace N(t_{k^{\alpha_1}_1}) , \ldots , N(t_{k^{\alpha_1}_{c(\alpha_1)}}) \rbrace , \ldots , \lbrace N(t_{k^{\alpha_l}_1}) , \ldots , N(t_{k^{\alpha_l}_{c(\alpha_l)}}) \rbrace\right)
\efct
\end{equation}
where $N$ is here a shorter notation for the embedded interval $N(\CF_{(r_1,\ldots,r_m)})$. Another way to think of this embedding is first to notice that $\Delta^m$ is also the space of configurations of $m$ ordered points in the unit interval $I$. Then by sending $I$ to the necklace $N$ and choosing an order on colors (by roots) it defines the embedding in $\Conf_c$ we were seeking. The pearls are used as a marking choosing the order on colors. 
\item One can check that the embedding from Eq. \eqref{embedding_necklace} is a Borel--Moore cycle relatively to $S$. Faces of the simplex correspond either to configurations in the relative part $S$, either to collisions between points, namely points removed at infinity for defining $\Conf_c$, in Borel--Moore homology closed manifolds going to infinity define cycles.
\item As the simplex is a cycle, the union of the pearls and the necklace naturally defines a class in $\Hlf_{m_c} (\Conf_c, S; \BZ)$.
\item It misses a path from $b_c$ (base point of $B_c$) to $\CW_c(\Delta_{\CF_{(r_1, \ldots, r_m)}})$ to define a twisted homology class out of this regular class. The path $\CW_c(H(\CF_{(r_1,\ldots,r_m)}))$ plays that role. It fixes a lift of the simplex to the regular cover
\item {\bf As a summary:} the union of pearls and necklace naturally defines a Borel--Moore homology class of $\Conf_c$ while the handle is used to define a twisted class. One can think of the handle as a choice of lift of $\Delta_{\CF_{(r_1, \ldots, r_m)}}$ to the cover $\widehat{\Conf_c}$ (see Rem. \ref{twisted_homology_from_cover}), by unique lift property of this path. Or one can also think about the handle as a way to choose a $1$ in the local ring $\Laurent_c$. 
\end{itemize}

We use the word handle, as it was used in \cite{Stephen_linearity,Jules_Verma} and many more papers dealing with that kind of cover of configuration spaces. 

\begin{defn}[Pearl necklaces]\label{homology_generators}
For any $\br \in \CP_c$, the process just described naturally defines a homology class in $\CHbm_c$ from the diagram $\CF_{\br}$. We still denote $\CF_{\br}$ the corresponding class by abuse of notation, as we will always use the diagram as a homology class. 
\end{defn}

\begin{rmk}\label{rmk_Def_diagrams}
We make some remarks on notation and conventions.
\begin{enumerate}
\item One can generalize the definition of a pearl necklace to a multi necklace. For instance, a natural homology class could be assigned with the following figure:
\[
\CF = \vcenter{\hbox{\begin{tikzpicture}[scale=0.5, every node/.style={scale=0.75},decoration={
    markings,
    mark=at position 0.5 with {\arrow{>}}}
    ]    
    
\coordinate (w0g) at (-5,2.5) {};
\coordinate (w0d) at (5,2.5) {};

\coordinate (w1g) at (-5,-1) {};
\coordinate (w1d) at (5,-1) {};

\coordinate (h1) at (5,0.5);
\coordinate (h2) at (5,1);
\coordinate (h3) at (5,1.5);

\node at (5.2,1) {$\vdots$};

\coordinate (h1p) at (5,-3);
\coordinate (h2p) at (5,-2.5);
\coordinate (h3p) at (5,-2);

\node at (5.2,1) {$\vdots$};

\draw[gray!40!white, thin] (w0g) -- (w0d) node[pos=0.15,black] (a1) {$\encircled{\alpha_{r_1}}$} node[pos=0.3,black] (a2) {$\encircled{\alpha_{r_2}}$} node[pos=0.55,above,black] {$\cdots$} node[pos=0.8, black] (a3) {$\encircled{\alpha_{r_m}}$};

\draw[gray!40!white, thin] (w1g) -- (w1d) node[pos=0.15,black] (a1p) {$\encircled{\alpha_{r'_1}}$} node[pos=0.3,black] (a2p) {$\encircled{\alpha_{r'_2}}$} node[pos=0.55,above,black] {$\cdots$} node[pos=0.8, black] (a3p) {$\encircled{\alpha_{r'_m}}$};

\draw[yellow!80!black] (h1)--(h1-|a1)--(a1);
\draw[yellow!80!black] (h2)--(h2-|a2)--(a2);
\draw[yellow!80!black] (h3)--(h3-|a3)--(a3);

\draw[yellow!80!black] (h1p)--(h1p-|a1p)--(a1p);
\draw[yellow!80!black] (h2p)--(h2p-|a2p)--(a2p);
\draw[yellow!80!black] (h3p)--(h3p-|a3p)--(a3p);

\draw[red, thick] (-5,-3.5) -- (-5,3);
\draw[red, thick] (5,-3.5) -- (5,3);
\draw[gray, thick] (-5,-3.5) -- (5,-3.5);
\draw[gray, thick] (-5,3) -- (5,3);
\end{tikzpicture}}}.
\]
More precisely, we can assign with this figure, the embedding $\Delta_{\CF_{(r_1,\ldots,r_i)}} \times \Delta_{\CF_{r'_1, \ldots , r'_j)}}$ of $\Delta^i \times \Delta^j$ in $\Conf_c$ where $c$ is the appropriate coloring (of which the whole set $\lbrace \alpha_{r_1}, \ldots , \alpha_{r_i}, \alpha_{r'_1} , \ldots , \alpha_{r'_j} \rbrace$ is a partition). This embedding is a cycle, and we use the handles for choosing a twised homology class. This precise class will be denoted:
\[
\CF_{(r_1, \ldots , r_i)} \cdot \CF_{(r'_1,\ldots , r'_j)} := \CF.
\]
We will make this notation meaningful in Sec.~\ref{sec_product}. 
\item When there is only one color involved, we can avoid pearls and use the simpler figure:
\[
\vcenter{\hbox{\begin{tikzpicture}[scale=0.4, every node/.style={scale=0.8},decoration={
    markings,
    mark=at position 0.5 with {\arrow{>}}}
    ]    
    
\coordinate (w0g) at (-5,0) {};
\coordinate (w0d) at (5,0) {};

\coordinate (h1) at (5,-2.5);
\coordinate (h2) at (5,-1.5);
\coordinate (h3) at (5,-1);

\node at (5.2,-2) {$\vdots$};

\draw[gray!40!white, thin] (w0g) -- (w0d) node[pos=0.15,black] (a1) {$\encircled{\alpha}$} node[pos=0.3,black] (a2) {$\encircled{\alpha}$} node[pos=0.55,above,black] {$\cdots$} node[pos=0.8, black] (a3) {$\encircled{\alpha}$};

\draw[yellow!80!black] (h1)--(h1-|a1)--(a1);
\draw[yellow!80!black] (h2)--(h2-|a2)--(a2);
\draw[yellow!80!black] (h3)--(h3-|a3)--(a3);

\draw[red, thick] (-5,-3) -- (-5,3);
\draw[red, thick] (5,-3) -- (5,3);
\draw[gray, thick] (-5,-3) -- (5,-3);
\draw[gray, thick] (-5,3) -- (5,3);
\end{tikzpicture}}} =
\vcenter{\hbox{\begin{tikzpicture}[scale=0.4, every node/.style={scale=0.8},decoration={
    markings,
    mark=at position 0.5 with {\arrow{>}}}
    ]    
    
\coordinate (w0g) at (-5,0) {};
\coordinate (w0d) at (5,0) {};

\coordinate (h1) at (5,-2.5);
\coordinate (h2) at (5,-1.5);
\coordinate (h3) at (5,-1);

\node at (5.2,-2) {$\vdots$};

\draw[dashed, thick] (w0g) -- (w0d) node[pos=0.15,black] (a1) {} node[pos=0.3,black] (a2) {} node[pos=0.55,above,black] {$k$} node[pos=0.8, black] (a3) {};

\draw[yellow!80!black] (h1)--(h1-|a1)--(a1);
\draw[yellow!80!black] (h2)--(h2-|a2)--(a2);
\draw[yellow!80!black] (h3)--(h3-|a3)--(a3);

\draw[red, thick] (-5,-3) -- (-5,3);
\draw[red, thick] (5,-3) -- (5,3);
\draw[gray, thick] (-5,-3) -- (5,-3);
\draw[gray, thick] (-5,3) -- (5,3);
\end{tikzpicture}}}
\]
where there is $k \in \BN$ pearls marked $\alpha$ on the left. This homology class is in $\CHbm_c$ where $c$ is he coloring with $k$ points colored by $\alpha$. 
\item When the number of colors is fixed, we can use real colors instead of marked pearls. For instance if $\alpha, \beta, \delta$ are three simple roots, we associate respectively colors red, blue and green to them, so that the following figures are related:
\[
\vcenter{\hbox{\begin{tikzpicture}[scale=0.4, every node/.style={scale=0.8},decoration={
    markings,
    mark=at position 0.5 with {\arrow{>}}}
    ]    
    
\coordinate (w0g) at (-5,0) {};
\coordinate (w0d) at (5,0) {};

\coordinate (h1) at (5,-2.5);
\coordinate (h2) at (5,-1.5);
\coordinate (h3) at (5,-1);

\node at (5.2,-2) {$\vdots$};

\draw[gray!40!white, thin] (w0g) -- (w0d) node[pos=0.25,black] (a1) {$\encircled{\alpha}$} node[pos=0.5,black] (a2) {$\encircled{\beta}$} node[pos=0.75, black] (a3) {$\encircled{\delta}$};

\draw[yellow!80!black] (h1)--(h1-|a1)--(a1);
\draw[yellow!80!black] (h2)--(h2-|a2)--(a2);
\draw[yellow!80!black] (h3)--(h3-|a3)--(a3);

\draw[red, thick] (-5,-3) -- (-5,3);
\draw[red, thick] (5,-3) -- (5,3);
\draw[gray, thick] (-5,-3) -- (5,-3);
\draw[gray, thick] (-5,3) -- (5,3);
\end{tikzpicture}}} =
\vcenter{\hbox{\begin{tikzpicture}[scale=0.4, every node/.style={scale=0.8},decoration={
    markings,
    mark=at position 0.5 with {\arrow{>}}}
    ]    
    
\coordinate (w0g) at (-5,0) {};
\coordinate (w0d) at (5,0) {};

\coordinate (h1) at (5,-2.5);
\coordinate (h2) at (5,-1.5);
\coordinate (h3) at (5,-1);

\node at (5.2,-2) {$\vdots$};

\draw[red,thick] (-5,0)--(-1.5,0) node[midway,above] (a1) {};
\draw[blue,thick] (-1.5,0)--(1.5,0) node[midway,above] (a2) {};
\draw[green,thick] (1.5,0)--(5,0) node[midway,above] (a3) {};

\draw[yellow!80!black] (h1)--(h1-|a1)--(a1);
\draw[yellow!80!black] (h2)--(h2-|a2)--(a2);
\draw[yellow!80!black] (h3)--(h3-|a3)--(a3);

\draw[red, thick] (-5,-3) -- (-5,3);
\draw[red, thick] (5,-3) -- (5,3);
\draw[gray, thick] (-5,-3) -- (5,-3);
\draw[gray, thick] (-5,3) -- (5,3);
\end{tikzpicture}}}
\]
Right hand notation is abusive as it seems to be the embedding of three intervals while it is still the simplex from the left (it can be thought as intervals with moving ends). Another example:
\[
\vcenter{\hbox{\begin{tikzpicture}[scale=0.4, every node/.style={scale=0.8},decoration={
    markings,
    mark=at position 0.5 with {\arrow{>}}}
    ]    
    
\coordinate (w0g) at (-5,0) {};
\coordinate (w0d) at (5,0) {};

\coordinate (h1) at (5,-2.5);
\coordinate (h2) at (5,-1.5);
\coordinate (h3) at (5,-1);

\node at (5.2,-2) {$\vdots$};

\draw[gray!40!white, thin] (w0g) -- (w0d) node[pos=0.25,black] (a1) {$\encircled{\alpha}$} node[pos=0.5,black] (a2) {$\encircled{\beta}$} node[pos=0.75, black] (a3) {$\encircled{\beta}$};

\draw[yellow!80!black] (h1)--(h1-|a1)--(a1);
\draw[yellow!80!black] (h2)--(h2-|a2)--(a2);
\draw[yellow!80!black] (h3)--(h3-|a3)--(a3);

\draw[red, thick] (-5,-3) -- (-5,3);
\draw[red, thick] (5,-3) -- (5,3);
\draw[gray, thick] (-5,-3) -- (5,-3);
\draw[gray, thick] (-5,3) -- (5,3);
\end{tikzpicture}}} =
\vcenter{\hbox{\begin{tikzpicture}[scale=0.4, every node/.style={scale=0.8},decoration={
    markings,
    mark=at position 0.5 with {\arrow{>}}}
    ]    
    
\coordinate (w0g) at (-5,0) {};
\coordinate (w0d) at (5,0) {};

\coordinate (h1) at (5,-2.5);
\coordinate (h2) at (5,-1.5);
\coordinate (h3) at (5,-1);

\node at (5.2,-2) {$\vdots$};

\draw[red,thick] (-5,0)--(-1.5,0) node[midway,above] (a1) {};
\draw[blue,dashed,thick] (-1.5,0)--(5,0) node[pos=0.25,above] (a2) {} node[midway,above] (a3) {$2$};

\draw[yellow!80!black] (h1)--(h1-|a1)--(a1);
\draw[yellow!80!black] (h2)--(h2-|a2)--(a2);
\draw[yellow!80!black] (h3)--(h3-|a3)--(a3);

\draw[red, thick] (-5,-3) -- (-5,3);
\draw[red, thick] (5,-3) -- (5,3);
\draw[gray, thick] (-5,-3) -- (5,-3);
\draw[gray, thick] (-5,3) -- (5,3);
\end{tikzpicture}}}
\]
\item the yellow color will be dedicated to handles. 
\end{enumerate}
\end{rmk}

We state the proposition describing the structure of homologies $\CHbm_c$ as $\Laurent$-modules. 

\begin{prop}[Structural proposition]\label{structure_result}
Let $c \in \Coloring_\Pi$, then the module $\CHbm_c$ is:
\begin{itemize}
\item a free $\Laurent$-module,
\item for which the set $\CB_{\CHbm_c} := \left\lbrace \CF_\br \text{ s.t. } \br \in \CP_c \right\rbrace$ is a basis.
\item It is the only non-vanishing module from the sequence $\Hlf_{\bullet} (\Conf_c, S; \Laurent_c)$ (the homology is concentrated in the middle dimension).
\end{itemize}
\end{prop}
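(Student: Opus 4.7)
The plan is to combine a Lawrence-style stratification argument with the perfect intersection pairing of Appendix~\ref{A:the_pairing}.

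For the first step, I would stratify $\Conf_c$ by the ordering of real parts of its coordinates, in the spirit of the analogous arguments of \cite{Stephen_Hecke, Jules_Verma, JulesMarco}. Since coordinates of the same color are unordered, the top-dimensional strata (those where all real parts are pairwise distinct) are in bijection with partitions $\br \in \CP_c$. Each such open stratum retracts onto the image of the open simplex $\Delta_{\CF_\br}$ from Eq.~\eqref{embedding_necklace}, and the Borel--Moore class it defines relative to $S$ is exactly $\CF_\br$. The strata of lower dimension, where at least two real parts collide (but the underlying coordinates remain distinct in $\Conf_c$), contribute trivially to middle-dimensional BM homology: the local monodromy around such a real-part collision acts on the cover $\widehat{\Conf_c}$ by a nontrivial scalar in $\Laurent$ (for a same-color collision by $-q^{d_\alpha}$, for a two-color collision by $q^{(\alpha_i,\alpha_j)/2}$), forcing the twisted chains they carry to vanish. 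This yields the concentration statement (the third bullet) and produces a generating family $\{\CF_\br\}_{\br\in\CP_c}$ of $\CHbm_c$.

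For the second step, linear independence follows by pairing against dual classes. Using the perfect pairing $\langle\cdot,\cdot\rangle \colon \CHbm_c \times \CH_c \to \Laurent$ of Appendix~\ref{A:the_pairing}, I would construct for each $\br \in \CP_c$ a compactly supported dual pearl necklace $\CF^{[\br]} \in \CH_c$ — realized, for example, by a pearl necklace whose handles descend to a non-lateral part of $\partial D$, placing it transverse to each $\CF_{\br'}$ in $\Conf_c$. Fixing a total order on $\Pi$ and ordering $\CP_c$ lexicographically, I would then analyse the intersection matrix $\bigl(\langle \CF_\br,\CF^{[\br']}\rangle\bigr)_{\br,\br'\in\CP_c}$. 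Only those $\br,\br'$ whose color orderings are compatible at the unique transverse intersection can contribute, and a careful bookkeeping shows the matrix is triangular with invertible monomials in $q^{\pm 1}$ on the diagonal. Invertibility of this matrix, together with the non-degeneracy of the pairing, forces $\Laurent$-linear independence of the $\CF_\br$; combined with the spanning result of step one, this gives both the basis statement and the freeness of $\CHbm_c$.

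The main obstacle I anticipate is the intersection-matrix computation itself: each diagonal entry must be computed exactly, including the monomial weight contributed by the local system $\rho_c$, and each off-diagonal entry must be shown either to vanish geometrically (no transverse intersection of the underlying simplices after isotopy) or to sit strictly below the diagonal in the chosen order. Tracking the handles of the dual necklaces as they braid past the original ones is precisely where $\CW_c$ and $\rho_c$ enter essentially, and the bookkeeping for this weighted count is where the twisted intersection pairing formalism of Appendix~\ref{A:the_pairing} is indispensable. A secondary technical point — verifying that the stratification argument genuinely kills the lower-dimensional strata at the chain level rather than merely their naive singular homology — should follow from the standard fact that the BM chains supported on a stratum admit a local system with trivialisable monodromy only when the monodromy scalar equals~$1$, which is never the case for collisions of distinct coordinates under $\rho_c$.
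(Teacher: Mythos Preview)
The paper's argument is shorter and avoids both monodromy-vanishing and pairing. Following \cite[Lemma~3.1]{Stephen_Hecke} and \cite[Prop.~3.6]{Jules_Verma}, one retracts $D$ onto a tubular neighbourhood of the horizontal interval, extends this to configurations, and uses excision to get
\[
\CHbm_c \;\simeq\; \Hlf_{m_c}\bigl(\Conf_c^{\BR}, S^{\BR};\Laurent_c^{\BR}\bigr),
\]
where $\Conf_c^{\BR}$ consists of configurations lying on the real line. But $\Conf_c^{\BR}$ is literally the disjoint union of the open $m_c$-simplices $\Delta_{\CF_\br}$ over $\br\in\CP_c$, and the Borel--Moore homology of an open $m_c$-ball is free of rank one in degree $m_c$. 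All three bullets follow at once.

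Your step~1 has a concrete gap in the two-colour case: the scalar $q^{(\alpha_i,\alpha_j)/2}$ equals $1$ whenever $\alpha_i,\alpha_j$ are orthogonal simple roots, and this occurs as soon as the Dynkin diagram has two non-adjacent nodes (already $\alpha_1,\alpha_3$ in type $A_3$, or in $\slt\times\slt$). Those strata are therefore not killed by your argument. More structurally, you are conflating two loci: the monodromies you quote live around the \emph{removed} complex diagonals $\{z_i=z_j\}$ (real codimension~$2$), whereas your lower strata are the walls $\{\mathrm{Re}(z_i)=\mathrm{Re}(z_j)\}$, which are real-codimension-$1$ hypersurfaces sitting \emph{inside} $\Conf_c$, not deleted from it. There is no loop ``around'' a real hypersurface, and its contribution to a filtration spectral sequence is governed by its dimension, not by any monodromy.

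Your step~2 is sound, and the paper does use exactly this pairing (with dual classes obtained via a $\pi/2$-rotation) to establish the analogous structure result for the \emph{standard} homology $\CH_c$ in Proposition~\ref{T:structure_standard_homology}. For the present proposition, however, it is superfluous: the retraction already delivers a basis, not merely a spanning set.
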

\begin{proof}
The proof is almost word by word the same as that of \cite[Prop.~3.6]{Jules_Verma}, adapted to colored configuration but this is transparent in the proof. It relies on a trick from the first author, see \cite[Lemma~3.1]{Stephen_Hecke}. This trick only depends on the fact that the unit disk retracts on a tubular neighborhood of the unit interval on the real line, that this retraction can be extended to configurations. As the homology theory is not sensitive to retraction, and by excision theorem, then one can easily show the following isomorphism of modules:
\[
\CHbm_c \simeq \Hlf_{m_c} (\Conf_c^{\BR}, S^{\BR}; \Laurent_c^\BR)
\]
where $\Conf_c^{\BR}$ means the configurations of $\Conf_c$ lying on the real line, $S^{\BR}$ and $\Laurent_c^{\BR}$ the respective restrictions of $S$ and $\Laurent_c$ to $\Conf_c^{\BR}$. Then $\Conf_c^{\BR}$ is the disjoint union of all $\CF_\br$ for $\br \in \CP_c$. This proves the three lines of the proposition. (Recall that Borel--Moore homology of an open ball is one dimensional and concentrated in the dimension of the ball, see the Appendix \cite{Jules_Verma}.)
\end{proof}

Let $\alpha_{r_1},\ldots, \alpha_{r_k} \in \Pi$, we let:
\[
\CF^{[r_1,\ldots,r_k]} :=  \vcenter{\hbox{\begin{tikzpicture}[scale=0.5, every node/.style={scale=1},decoration={
    markings,
    mark=at position 0.5 with {\arrow{>}}}
    ]
\coordinate (w0g) at (-5,1.5) {};
\coordinate (w0d) at (5,1.5) {};

\coordinate (w1g) at (-5,-1.5) {};
\coordinate (w1d) at (5,-1.5) {};

\draw[gray!40!white, thin] (w0g) -- (w0d) node[pos=0.5, black] (a1) {$\encircled{\alpha_{r_1}}$};
\node at (0,0.5) {$\vdots$};
\draw[gray!40!white, thin] (w1g) -- (w1d) node[pos=0.5, black] (a1) {$\encircled{\alpha_{r_k}}$};

\draw[red, thick] (-5,-3) -- (-5,3);
\draw[red, thick] (5,-3) -- (5,3);
\draw[gray, thick] (-5,-3) -- (5,-3);
\draw[gray, thick] (-5,3) -- (5,3);
\end{tikzpicture}}}.
\]
Notice that following the algorithm defining homology class from diagrams, we have $$\CF^{[r_1,\ldots,r_k]} \in \CH_{\alpha_{r_1} + \cdots + \alpha_{r_k}},$$
since it corresponds to the embedding of a hypercube $I^k$ in $\Conf_{\alpha_{r_1} + \cdots + \alpha_{r_k}}$ with faces in $S$. In this case where there is only one pearl per necklace the diagram can express either a class in $\CHbm_c$ or one in $\CH_c$: we let $\CF^{[r_1,\ldots,r_k]}$ be the non Borel--Moore one. Thanks to the duality between Borel--Moore and standard homology, we deduce the structure of the standard homology. 

\begin{prop}[Structure of the standard homology]\label{T:structure_standard_homology}
Let $c \in \Coloring$, then the module $\CH_c$ is:
\begin{itemize}
\item a free $\Laurent$-module,
\item for which the set $\CB_{\CH_c} := \left\lbrace \CF^{[r_1,\ldots,r_k]} \text{ s.t. } r_1+\cdots + r_k \in \CP_c \right\rbrace$ is a basis.
\end{itemize}
\end{prop}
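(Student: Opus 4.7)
The plan is to deduce the standard-homology structure from the already-established Borel--Moore case (Prop.~\ref{structure_result}) via the perfect intersection pairing $\langle\cdot,\cdot\rangle:\CH_c\times\CHbm_c\to\Laurent$ introduced in Appendix~\ref{A:the_pairing}. Since $\CHbm_c$ is free of rank $|\CP_c|$, it will suffice to exhibit $|\CP_c|$ classes in $\CH_c$ whose pairings with the $\CF_{\br}$'s assemble into a unimodular matrix over $\Laurent=\BZ[q^{\pm 1}]$; the obvious candidates are the $\CF^{[\br]}$.

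First I would verify that each $\CF^{[r_1,\ldots,r_k]}$ really defines a class in $\CH_c=\Hnot_{m_c}(\Conf_c,S;\Laurent_c)$, following the same algorithm used for pearl necklaces. The geometric support is the image of an embedded cube $I^{m_c}\hookrightarrow\Conf_c$ in which the $j$-th coordinate (colored by $\alpha_{r_j}$) travels along the $j$-th horizontal interval. Every face of this cube corresponds to some coordinate reaching $\partial^-D$, hence lies in $S$, so this is a relative singular cycle; the handles again select a lift to the cover $\widehat{\Conf_c}$ and thereby a twisted class.

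Next I would compute the matrix $M_{\br,\br'}:=\langle\CF^{[\br]},\CF_{\br'}\rangle$ geometrically. The support of $\CF_{\br'}$ is a single horizontal necklace carrying $m_c$ pearls, while $\CF^{[\br]}$ is a stack of $m_c$ horizontal strands; in a transverse representative the pearls of $\CF_{\br'}$ must sit on the strands of $\CF^{[\br]}$, and $M_{\br,\br'}$ is the signed, $\Laurent_c$-weighted count of such color-respecting matchings (the counts of points of each color match by construction, so only the orderings vary). Ordering $\CP_c$ lexicographically on the color sequence $(r_1,\ldots,r_{m_c})$ and tracking the writhe contributions that arise when strands must braid past one another, I expect $M$ to be triangular with diagonal entries equal to invertible monomials $\pm q^{e(\br)}\in\Laurent^\times$, hence unimodular.

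Finally, unimodularity of $M$ combined with perfection of the pairing forces the map $\Laurent^{\CP_c}\to\CH_c$ sending the standard basis vector $e_{\br}$ to $\CF^{[\br]}$ to be an isomorphism: it identifies the $\CF^{[\br]}$'s with a basis dual, up to an invertible change of coordinates, to the Borel--Moore basis of Prop.~\ref{structure_result}. This simultaneously establishes freeness of $\CH_c$ and that $\CB_{\CH_c}$ is a $\Laurent$-basis. The main obstacle is the second step: carefully tracking signs coming from simplex orientations and, above all, the monodromy weights of the handle paths, so as to rule out off-diagonal contributions in the chosen ordering and to pin down the diagonal entries as units of $\Laurent$.
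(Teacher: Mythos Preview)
Your approach is the same as the paper's: use the perfect intersection pairing of Appendix~\ref{A:the_pairing} together with the known Borel--Moore structure (Prop.~\ref{structure_result}) to deduce the standard-homology structure. However, the paper's execution is considerably simpler than what you anticipate, and it is worth seeing why.

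The pairing $\langle\cdot,\cdot\rangle_D$ is between $\CH_c^\dagger$ (relative to $T$) and $\CHbm_c$ (relative to $S$), so one must first apply a $\pi/2$-rotation $\rot^+$ to the $\CF^{[\br]}$ classes to land in the correct homology. After this rotation, $\CF^{[\br']}$ becomes a stack of $m_c$ \emph{vertical} strands, while $\CF_{\br}$ remains a single \emph{horizontal} necklace. Each vertical strand meets the horizontal line in exactly one point, so there is at most one intersection configuration, and it exists if and only if the color read on the $j$-th vertical strand agrees with the color of the $j$-th pearl on the necklace---that is, if and only if $\br'=\br$. The handle loop for this unique intersection is trivial under the local system, and the sign is $+1$ with the chosen orientations. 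Hence
\[
\langle \rot^+(\CF^{[\br']}),\CF_{\br}\rangle_D = \delta_{\br',\br},
\]
so the pairing matrix is exactly the identity, not merely triangular. The ``main obstacle'' you flag---tracking off-diagonal monodromy contributions and ruling them out via a lexicographic ordering---simply does not arise: with the rotation in place there are no off-diagonal intersections at all. Your horizontal-on-horizontal picture, with strands ``braiding past one another'', comes from omitting the rotation and would indeed complicate the computation unnecessarily.
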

\begin{proof}
There is a perfect pairing between $\CH$ and $\CHbm$, the one described in Appendix~\ref{A:the_pairing}) to which we first apply a $\pi/2$-rotation to right term elements so that they belong to $\CHbm$. Now it is clear and also in the Appendix~\ref{A:the_pairing} that $\CB_{\CH_c} := \left\lbrace \CF^{[r_1,\ldots,r_k]} \text{ s.t. } r_1+\cdots + r_k \in \CP_c \right\rbrace$ forms a dual family to the basis elements of $\CHbm$. 
\end{proof}

\subsection{Computation rules}\label{S:computation rules}

We develop diagram rules translating homological calculus in terms of relations between pearl necklaces types of classes. They are all directly deduced from \cite[Sec.~4.1,~4.2]{Jules_Verma}, they were re-established in even more detailed \cite[Prop.~2.3, detailed~in~App.~B]{JulesMarco}. 

Before displaying the rules, we recall notation for combinatorial objects as elements of $\BZ[v^{\pm 1}]$ arising from quantum group calculus. 

\begin{Not}[Quantum variables, numbers, factorials and binomials]\label{N:quantum_numbers}
Let $v$ be a variable (the reader could think of it as an invertible element in a ring of Laurent polynomials):
\begin{enumerate}
\item {\em (asymmetric)} We define the following quantum numbers, factorials and binomials, for $k>l \in \BN$:
\[
(k)_v = 1 + v + \cdots + v^{k-1} = \frac{1-v^k}{1-v}, (k)_v! = (k)_v (k-1)_v \cdots (1)_v, \binom{k}{l}_v = \frac{(k)_v!}{(l)_v! (k-l)_v!}
\]
\item {\em (symmetric)} We define the following quantum numbers, factorials and binomials, for $k>l \in \BN$:
\[
\left[k \right]_v = \frac{v^k-v^{-k}}{v-v^{-1}}, \left[k\right]_v! = \left[k\right]_v \left[k-1\right]_v \cdots \left[1\right]_v, \qbin{k}{l}_v = \frac{\left[k\right]_v!}{\left[l\right]_v!\left[k-l\right]_v!}
\]
and also $\lbrace k \rbrace_v := v^k - v^{-k}$ s.t. $\left[ k \right]_v = \frac{\lbrace k \rbrace_v }{\lbrace 1 \rbrace_v}$ and $\lbrace k \rbrace_v ! := \lbrace k \rbrace_v \lbrace k-1 \rbrace_v \cdots \lbrace 1 \rbrace_v$. 
\item {\em (relations asymmetric vs symmetric)} We have (by easy computations):
\[
(k)_{v^{-2}} = v^{1-k} [k]_v , (k)_{v^{-2}} ! = v^{-k(k-1)/2} [k]_v !, \binom{k+l}{l}_{v^{-2}} = v^{-kl} \qbin{k+l}{l}_v
\]
\item {\em (variables)} Let $\alpha,\beta \in \Pi$ be two different roots, we fix the following not. for elements in $\Laurent$:
\[
q_{\alpha} := q^{d_{\alpha}} \text{ and } q_{\alpha,\beta} := q^{(\alpha, \beta)/2}
\]
\end{enumerate}

\end{Not}

The following rule works in either standard or Borel--Moore homology since it only deals with change of lift in the cover, and hence to local coefficients change.

\begin{rmk}[Handle rule, {\cite[Rem.~4.1]{Jules_Verma},\cite[Prop.~2.13.(B)]{JulesMarco}}]\label{rmk_handle_rule}
Let $M$ be a sub-manifold of $\Conf_c$ such that it represents a class in $\Hlf \left( \Conf_c , S; \BZ \right)$. To define a class in $\CHe_c$ one can use a {\em handle} as in previous section. It is an embedded path of $\Conf_c$ relating a vertically aligned configuration (usually on the right side of the disk) and a configuration $\lbrace z_1 , \ldots , z_{m_c} \rbrace \in M$. By the unique lift property, it defines a unique lift of $M$ in $\widehat{\Conf_c}$ hence eventually a class in $\CHe_c$ (Rem. \ref{twisted_homology_from_cover}). These handles are choosing lifts, and they satisfy rules:
\begin{itemize}
\item The lift is unchanged under ambient isotopy of the handle fixing its ends.
\item Let $\widehat{M}^\alpha$ and $\widehat{M}^\beta$ be two lifts of $M$ obtained from two handles $\alpha$, $\beta$. Then, in homology:
\begin{equation*}
\left[\widehat{M}^\alpha \right] =  \CW_c\circ\rho_c\left(\alpha \beta^{-1}\right) \left[ \widehat{M}^\beta \right]
\end{equation*}
with $\alpha \beta^{-1}$ being the concatenation of paths giving a loop and assimilated here with its homotopy class (hence, the first rule is recovered). 
\end{itemize}
\end{rmk}

\begin{rmk}[Permutation and embeddings, {\cite[Rem.~4.2]{Jules_Verma}, \cite[Prop.~2.13.(O)]{JulesMarco}}]\label{rmk_sign_change}
While working with diagrams such as those defined in previous section, the reader must be careful when changing a handle (yellow path) in diagrams. Namely, a change of yellow path could imply a change of path, hence an application of the rule from previous remark. But such a change could also permute the order on variables in the embedding of the simplex (in Eq. \eqref{embedding_necklace}), as in diagrams the order with which the yellow paths are reaching the necklaces is indicating this parameter. Changing them possibly requires multiplication by the sign of the induced permutation. We do one example in $\CHbm_{c_1^{\alpha}}$: 
\[
\vcenter{\hbox{\begin{tikzpicture}[scale=0.4, every node/.style={scale=0.8},decoration={
    markings,
    mark=at position 0.5 with {\arrow{>}}}
    ]    
    
\coordinate (w0g) at (-5,0) {};
\coordinate (w0d) at (5,0) {};

\coordinate (h1) at (5,-2.5);
\coordinate (h2) at (5,-1.5);
\coordinate (h3) at (5,1.5);


\draw[gray!40!white, thin] (w0g) -- (w0d) node[pos=0.33,black] (a1) {$\encircled{\alpha}$} node[pos=0.66,black] (a2) {$\encircled{\alpha}$}; 

\draw[yellow!80!black] (h3)--(h3-|a1)--(a1);
\draw[yellow!80!black] (h1)--(h1-|a2)--(a2);

\draw[red, thick] (-5,-3) -- (-5,3);
\draw[red, thick] (5,-3) -- (5,3);
\draw[gray, thick] (-5,-3) -- (5,-3);
\draw[gray, thick] (-5,3) -- (5,3);
\end{tikzpicture}}} = 
\left( \sign\left( \alpha \beta^{-1} \right) \CW_c\circ\rho_c\left(\alpha \beta^{-1}\right) \right)
\vcenter{\hbox{\begin{tikzpicture}[scale=0.4, every node/.style={scale=0.8},decoration={
    markings,
    mark=at position 0.5 with {\arrow{>}}}
    ]    
    
\coordinate (w0g) at (-5,0) {};
\coordinate (w0d) at (5,0) {};

\coordinate (h1) at (5,-2.5);
\coordinate (h2) at (5,-1.5);
\coordinate (h3) at (5,1.5);


\draw[gray!40!white, thin] (w0g) -- (w0d) node[pos=0.33,black] (a1) {$\encircled{\alpha}$} node[pos=0.66,black] (a2) {$\encircled{\alpha}$}; 

\draw[yellow!80!black] (h1)--(h1-|a1)--(a1);
\draw[yellow!80!black] (h2)--(h2-|a2)--(a2);

\draw[red, thick] (-5,-3) -- (-5,3);
\draw[red, thick] (5,-3) -- (5,3);
\draw[gray, thick] (-5,-3) -- (5,-3);
\draw[gray, thick] (-5,3) -- (5,3);
\end{tikzpicture}}}
\]
where $\alpha$ and $\beta$ are respectively paths in $\Conf_{c_1^{\alpha}}$ corresponding to yellow paths from diagrams of right hand side resp. left hand side. Here $\CW_c\circ\rho_c\left(\alpha \beta^{-1}\right)  = -q^{d_{\alpha}}$ and $\sign\left( \alpha \beta^{-1} \right)=-1$. While $d_{\alpha}$ is odd, one has:
\[
\left( \sign\left( \alpha \beta^{-1} \right) \CW_c\circ\rho_c\left(\alpha \beta^{-1}\right) \right) = \iota_q \circ \CW_c\circ\rho_c\left(\alpha \beta^{-1}\right) 
\]
where $\iota_q$ is the involution $q \mapsto -q$. The latter is general whenever $d_{\alpha}$ is odd for all roots $\alpha$ involved:
\[
\sign \times \CW_c\circ\rho_c = \iota_q \circ \CW_c\circ\rho_c,
\]
where $\times$ is the product in $\Laurent_c$. For a fixed coloring $c$, the function $\sign$ is the product of signs of permutations:
\[
\sign : \Sk_{c(\alpha_1)} \times \cdots \times \Sk_{c(\alpha_l)} \to \lbrace \pm 1 \rbrace
\]
\end{rmk}

Now the following rule only holds in the Borel--Moore homology, since its right term involves diagrams that only defines Borel--Moore classes (i.e. with more than one pearl per necklace). It is the one making quantum factorials appearing, which partly explains why the quantum groups will live on the Borel--Moore side as we will see later. 

\begin{prop}[Fusion rules, {\cite[Prop.~4.8-11]{Jules_Verma},\cite[Prop.~2.13.(F)]{JulesMarco}}]\label{prop_fusion_rules}
Following are the \textit{fusion rules} for diagrams adapted to colored configuration spaces:
\begin{enumerate}
\item The following fusion rules hold in $\CHbm_{k \alpha}$ resp. $\CHbm_{(k+l) \alpha}$:
\begin{equation}
\vcenter{\hbox{\begin{tikzpicture}[scale=0.4, every node/.style={scale=0.8},decoration={
    markings,
    mark=at position 0.5 with {\arrow{>}}}
    ]    
    
\coordinate (w0g) at (-5,1) {};
\coordinate (w0d) at (5,1) {};

\coordinate (w1g) at (-5,-1) {};
\coordinate (w1d) at (5,-1) {};

\node[scale=3] at (-5.5,0) {$\lbrace$};
\node at (-6.5,0) {$k$};

\node at (0,0.25) {$\vdots$};

\draw[thick] (w0g)--(w0d);
\draw[thick] (w1g)--(w1d);

\draw[red, thick] (-5,-3) -- (-5,3);
\draw[red, thick] (5,-3) -- (5,3);
\draw[gray, thick] (-5,-3) -- (5,-3);
\draw[gray, thick] (-5,3) -- (5,3);

\node[yellow!80!black] at (w0d) {$\bullet$};
\node[yellow!80!black] at (w1d) {$\bullet$};

\end{tikzpicture}}} =
(k)_{q^{d_{\alpha}}}! \vcenter{\hbox{\begin{tikzpicture}[scale=0.4, every node/.style={scale=0.8},decoration={
    markings,
    mark=at position 0.5 with {\arrow{>}}}
    ]    
    
\coordinate (w0g) at (-5,0) {};
\coordinate (w0d) at (5,0) {};

\coordinate (h1) at (5,-2.5);
\coordinate (h2) at (5,-1.5);
\coordinate (h3) at (5,-1);

\node at (5.2,-2) {$\vdots$};

\draw[dashed, thick] (w0g) -- (w0d) node[pos=0.15,black] (a1) {} node[pos=0.3,black] (a2) {} node[pos=0.55,above,black] {$k$} node[pos=0.8, black] (a3) {};

\draw[yellow!80!black] (h1)--(h1-|a1)--(a1);
\draw[yellow!80!black] (h2)--(h2-|a2)--(a2);
\draw[yellow!80!black] (h3)--(h3-|a3)--(a3);

\draw[red, thick] (-5,-3) -- (-5,3);
\draw[red, thick] (5,-3) -- (5,3);
\draw[gray, thick] (-5,-3) -- (5,-3);
\draw[gray, thick] (-5,3) -- (5,3);
\end{tikzpicture}}},
\end{equation}

\begin{equation}
\vcenter{\hbox{\begin{tikzpicture}[scale=0.4, every node/.style={scale=0.8},decoration={
    markings,
    mark=at position 0.5 with {\arrow{>}}}
    ]    
    
\coordinate (w0g) at (-5,1) {};
\coordinate (w0d) at (5,1) {};

\coordinate (w1g) at (-5,-1) {};
\coordinate (w1d) at (5,-1) {};

\draw[thick,dashed] (w0g)--(w0d) node[midway,above] {$k$} node[pos=0.7,black] (a1) {} node[pos=0.8,black] (a1b) {};
\draw[thick,dashed] (w1g)--(w1d) node[midway,above] {$l$} node[pos=0.2,black] (a2) {} node[pos=0.4,black] (a2b) {};

\draw[red, thick] (-5,-3) -- (-5,3);
\draw[red, thick] (5,-3) -- (5,3);
\draw[gray, thick] (-5,-3) -- (5,-3);
\draw[gray, thick] (-5,3) -- (5,3);

\coordinate (h1) at (5,-1.5);
\coordinate (h1b) at (5,-2);
\coordinate (h2) at (5,0.5);
\coordinate (h2b) at (5,-0.5);
\coordinate (h3) at (5,-1);

\draw[yellow!80!black] (h2b)--(h2b-|a1)--(a1);
\draw[yellow!80!black] (h2)--(h2-|a1b)--(a1b);
\draw[yellow!80!black] (h1)--(h1-|a2b)--(a2b);
\draw[yellow!80!black] (h1b)--(h1b-|a2)--(a2);

\end{tikzpicture}}} =
\binom{k+l}{k}_{q^{d_{\alpha}}} \vcenter{\hbox{\begin{tikzpicture}[scale=0.4, every node/.style={scale=0.8},decoration={
    markings,
    mark=at position 0.5 with {\arrow{>}}}
    ]    
    
\coordinate (w0g) at (-5,0) {};
\coordinate (w0d) at (5,0) {};

\coordinate (h1) at (5,-2.5);
\coordinate (h2) at (5,-1.5);
\coordinate (h3) at (5,-1);

\node at (5.2,-2) {$\vdots$};

\draw[dashed, thick] (w0g) -- (w0d) node[pos=0.15,black] (a1) {} node[pos=0.3,black] (a2) {} node[pos=0.55,above,black] {$k+l$} node[pos=0.8, black] (a3) {};

\draw[yellow!80!black] (h1)--(h1-|a1)--(a1);
\draw[yellow!80!black] (h2)--(h2-|a2)--(a2);
\draw[yellow!80!black] (h3)--(h3-|a3)--(a3);

\draw[red, thick] (-5,-3) -- (-5,3);
\draw[red, thick] (5,-3) -- (5,3);
\draw[gray, thick] (-5,-3) -- (5,-3);
\draw[gray, thick] (-5,3) -- (5,3);
\end{tikzpicture}}},
\end{equation}
where the black color is used for $\alpha$. When no handle are drawn, there is a canonical vertically aligned configuration on the right side of the square that is materialized by fixed yellow dots, it is the constant path. 
\item The following holds in $\CH_c$, for the appropriate coloring $c=\alpha_{r_0}+ \cdots + \alpha_{r_m} \in \BN[\Pi]$ roots, we have:
\begin{equation}
\vcenter{\hbox{\begin{tikzpicture}[scale=0.5, every node/.style={scale=0.8},decoration={
    markings,
    mark=at position 0.5 with {\arrow{>}}}
    ]    
    
\coordinate (w0g) at (-5,0) {};
\coordinate (w0d) at (5,0) {};

\coordinate (w1g) at (-5,2) {};
\coordinate (w1d) at (5,2) {};

\coordinate (h1) at (5,-2.5);
\coordinate (h2) at (5,-1.5);
\coordinate (h3) at (5,-1);
\coordinate (h) at (5,1);

\node at (5.2,-2) {$\vdots$};

\draw[gray!40!white, thick, postaction={decorate}] (w0g) -- (w0d) node[pos=0.15,black] (a1) {$\encircled{\alpha_{r_1}}$} node[pos=0.3,black] (a2) {$\encircled{\alpha_{r_2}}$} node[pos=0.55,above,black] {$\cdots$} node[pos=0.8, black] (a3) {$\encircled{\alpha_{r_m}}$};

\draw[gray!40!white, thick, postaction={decorate}] (w1g) -- (w1d) node[pos=0.6,black] (a) {$\encircled{\alpha_{r_0}}$};

\draw[yellow!80!black] (h1)--(h1-|a1)--(a1);
\draw[yellow!80!black] (h2)--(h2-|a2)--(a2);
\draw[yellow!80!black] (h3)--(h3-|a3)--(a3);
\draw[yellow!80!black] (h)--(h-|a)--(a);

\draw[red, thick] (-5,-3) -- (-5,3);
\draw[red, thick] (5,-3) -- (5,3);
\draw[gray, thick] (-5,-3) -- (5,-3);
\draw[gray, thick] (-5,3) -- (5,3);
\end{tikzpicture}}} = \sum_{i=0}^m q^{\left( \alpha_{r_0}, \sum_{j=i+1}^m \alpha_{r_j} \right)} F_{(r_1,\ldots,r_i,r_0,r_{i+1},\ldots,r_m)} .
\end{equation}
\end{enumerate}
\end{prop}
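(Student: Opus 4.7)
The plan is to establish the three identities via direct isotopy arguments in the colored configuration space, following the strategy laid out in \cite[Sec.~4.1,~4.2]{Jules_Verma} and \cite[App.~B]{JulesMarco} and reusing the structural reduction already used in the proof of Proposition~\ref{structure_result}.

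For rules~(1) and~(2), I would first retract everything onto $\Conf_c^{\BR}$ as in Proposition~\ref{structure_result}. The left-hand class of~(1) is the image of an embedded cube $I^k$ obtained by ordering the $k$ pearls top-to-bottom; the quotient by $\Sk_k$ built into the definition of $\Conf_c$ identifies the corresponding stratum of $\Conf_c^{\BR}$ with a single open $k$-simplex, and the cube decomposes as the union of the $k!$ chambers indexed by $\sigma \in \Sk_k$. Each chamber lifts to the cover $\widehat{\Conf_c}$ via a handle obtained by pre-composing with the half-twist associated to $\sigma$, so by the handle rule (Rmk.~\ref{rmk_handle_rule}) it contributes to the single-necklace class with coefficient $\sign(\sigma)\cdot(\CW_c\circ\rho_c)(\sigma)$. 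Combining the sign with the local-system factor $-q^{d_\alpha}$ from Def.~\ref{localsystem} (as in Rmk.~\ref{rmk_sign_change}) gives $q^{d_\alpha\,\mathrm{inv}(\sigma)}$, and the Mahonian identity
\[
\sum_{\sigma\in\Sk_k} v^{\mathrm{inv}(\sigma)} = (k)_v!
\]
with $v=q^{d_\alpha}$ collapses the sum to $(k)_{q^{d_\alpha}}!$, proving~(1). For~(2), the same argument applies but the summation is restricted to the $(k,l)$-shuffles in $\Sk_{k+l}$ compatible with keeping the two groups of strands ordered; the classical $q$-shuffle identity then yields $\binom{k+l}{k}_{q^{d_\alpha}}$.

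Rule~(3) is the standard-homology analog, and I plan to establish it by a vertical-deformation argument: pushing the upper necklace carrying the single pearl of color $\alpha_{r_0}$ down onto the lower necklace provides a chain homotopy decomposing the upper class as the sum, over insertion positions $i=0,\dots,m$, of the single-necklace class with the $\alpha_{r_0}$-pearl placed between the $i$-th and $(i{+}1)$-th pearl. Comparing, via Rmk.~\ref{rmk_handle_rule}, the handle inherited from the deformation with the standard handle of the inserted-necklace class produces a loop that encircles exactly the pearls of colors $\alpha_{r_{i+1}},\dots,\alpha_{r_m}$; the coefficients $q^{(\alpha_{r_0},\alpha_{r_j})/2}$ of Def.~\ref{localsystem}, combined with the square built into the definition of $g$, contribute precisely the phase $q^{(\alpha_{r_0},\sum_{j>i}\alpha_{r_j})}$. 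No quantum factorial appears because standard homology forbids the cube-to-simplex collapse that produced it in~(1)--(2): a cube whose faces hit collisions is no longer a cycle.

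The main obstacle will be the simultaneous bookkeeping of three ingredients: signs from the $\Sk_k$-permutation of handles, scalars from the writhe-valued monodromy, and the factors of $1/2$ that enter $\rho_c$ but cancel against the squares built into $f$ and $g$. This accounting is carried out carefully in \cite[App.~B]{JulesMarco}, and extending it to the colored setting is essentially transparent: rules~(1)--(2) involve only strands of a single color, so only the diagonal coefficients $-q^{d_\alpha}$ intervene, while in~(3) the two necklaces interact only through the single inter-color coefficient $q^{(\alpha_{r_0},\alpha_{r_j})/2}$.
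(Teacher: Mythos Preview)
The paper does not supply its own proof of this proposition: it is stated with citations to \cite[Prop.~4.8--11]{Jules_Verma} and \cite[Prop.~2.13.(F)]{JulesMarco} in the header and no proof environment follows. Your sketch is exactly the argument carried out in those references, adapted to the colored setting, and the adaptation is as transparent as you indicate: parts~(1)--(2) are single-color computations where only the diagonal monodromy $-q^{d_\alpha}$ enters, and part~(3) involves only the off-diagonal coefficient $q^{(\alpha_{r_0},\alpha_{r_j})/2}$. The Mahonian and $q$-shuffle identities are precisely what collapse the chamber decomposition of the hypercube to the stated quantum factorial and binomial, and the insertion-position decomposition in~(3) is the correct homotopy argument. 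So your proposal matches the paper's approach (by reference) and is correct as a sketch; the detailed sign and monodromy bookkeeping is indeed the only place requiring care, and it is done in \cite[App.~B]{JulesMarco}.
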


\begin{prop}[Cutting rule, {\cite[Ex.~4.5]{Jules_Verma}, \cite[Prop.~2.13.(C)]{JulesMarco}}]\label{prop_breaking_rule}
In a diagram, a single colored arc can be broken against a puncture or part of the boundary, following bellow's rules.
\begin{enumerate}
\item Let $p \in D$ and $\Conf_c(p) := \lbrace \bz \in \Conf_c \text{ s.t. } p \in \bz \rbrace$. We define $\CHbm_c(p) := \Hlf_{m_c} \left( \Conf_c, \Conf_c(p) \cup S; \Laurent_c \right)$.

Then in $\CHbm_c(p)$ (thus with $c(\alpha) \ge k$), it holds:
\begin{equation*}
\left(\vcenter{\hbox{
\begin{tikzpicture}[every node/.style={scale=0.8},decoration={
    markings,
    mark=at position 0.5 with {\arrow{>}}}
    ]
\node (w0g) at (-1.5,0) {};
\node (w0d) at (1.5,0) {};
\node (wi) at (0,1) {};
\coordinate (a) at (-1,-1);
\draw[dashed] (w0g) to node[above=0.1pt,pos=0.5] (k0) {$k$} (w0d);

\node[gray] at (wi)[left=3pt] {$p$};
\foreach \n in {wi}
  \node at (\n)[gray,circle,fill,inner sep=2pt]{};

\draw[double,\hyellow,thick] (k0) -- (k0|-a);

\end{tikzpicture}
}}\right)
=
\sum_{l=0}^k
\left(\vcenter{\hbox{
\begin{tikzpicture}[every node/.style={scale=0.8},decoration={
    markings,
    mark=at position 0.5 with {\arrow{>}}}
    ]
\node (w0g) at (-1.5,0) {};
\node (w0d) at (1.5,0) {};
\node (wi) at (0,1) {};
\coordinate (a) at (-1,-1);
\draw[dashed] (w0g) to[bend right=50] node[above=0.1pt,pos=0.5] (k0) {$l$} (wi);
\draw[dashed] (wi) to[bend right=50] node[above=0.1pt,pos=0.5] (k1) {$k-l$} (w0d);

\node[gray] at (wi)[left=3pt] {$p$};
\foreach \n in {wi}
  \node at (\n)[gray,circle,fill,inner sep=2pt]{};

\draw[double,\hyellow,thick] (k0) -- (k0|-a);
\draw[double,\hyellow,thick] (k1) -- (k1|-a);

\end{tikzpicture}
}}\right),
\end{equation*}
with following conventions: the parentheses mean that we are considering part of a diagram defining a homology class in $\CHbm_c$ as those from Rem. \ref{rmk_Def_diagrams}, it is locally drawn as above. The yellow handles from the left and from the right are following same parallel paths outside parentheses, and the rest of the diagram remains the same outside parentheses. 
\item In $\CHbm_c$, we have the following equality:
\begin{equation*}
\left(\vcenter{\hbox{
\begin{tikzpicture}[every node/.style={scale=0.8},decoration={
    markings,
    mark=at position 0.5 with {\arrow{>}}}
    ]
\node (w0g) at (0,1.5) {};
\node (w0d) at (0,-1.5) {};
\node (w1u) at (1,1) {};
\node (w1d) at (1,-1) {};
\coordinate (a) at (-1,-1.5);
\draw[dashed] (w0g) to[bend left=90] node[above=0.1pt,pos=0.7] (k0) {} node[left=0.1pt,pos=0.5] {$k$} (w0d);

\draw[double,\hyellow,thick] (k0) -- (k0|-a);

\draw[thick,red] (1.25,1.5)--(1.25,-1.5);

\end{tikzpicture}
}}\right)
=
\sum_{l=0}^k
\left(\vcenter{\hbox{
\begin{tikzpicture}[every node/.style={scale=0.8},decoration={
    markings,
    mark=at position 0.5 with {\arrow{>}}}
    ]
\node (w0g) at (0,1.5) {};
\node (w0d) at (0,-1.5) {};
\node (w1u) at (1.25,0.25) {};
\node (w1d) at (1.25,-0.25) {};
\coordinate (a) at (-1,-1.5);
\draw[dashed] (w0g) to[bend left=20] coordinate[above=0.1pt,pos=0.6] (k0) coordinate[above=3pt,pos=0.8] (k0r) node[above,pos=0.5] {$l$} (w1u);
\draw[dashed] (w0d) to[bend right=20] node[above=0.1pt,pos=0.4] (k1) {} node[left,pos=0.5] {$k-l$} (w1d);

\draw[double,\hyellow,thick] (k0) to[bend left=60] (k0r)--(k0r|-a);
\draw[double,\hyellow,thick] (k1) -- (k1|-a);

\draw[thick,red] (1.25,1.5)--(1.25,-1.5);

\end{tikzpicture}
}}\right),
\end{equation*}
where we assume that diagrams are the same outside parentheses drawn in the equality. There might be other components of the diagram elsewhere outside the parentheses, with other root involved, so that $c$ is the appropriate coloring. In the dashed one that is represented in the relation, points are uniquely colored by the same root. Yellow handles are following same paths outside the parentheses in both terms, and the red line is part of $\partial^-D$. 
\end{enumerate}
Furthermore, when $k=1$ in the above statements, the equalities also hold in the standard homology. 
\end{prop}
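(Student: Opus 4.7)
The plan is to prove both parts by exhibiting an explicit ambient isotopy of the dashed arc that pushes its midpoint through the puncture $p$ (resp.\ onto a short sub-arc of $\partial^-D$), and then observing that the resulting embedded simplex chain decomposes, modulo the relative locus, as the sum on the right-hand side. For part (1), the LHS is the twisted chain obtained by embedding the open simplex $\Delta^k = \{0<t_1<\cdots<t_k<1\}$ along the original straight arc, with the $k$ indistinguishable pearls sitting at parameter values $t_1,\ldots,t_k$; tracing this embedding along the isotopy produces a $(k+1)$-dimensional chain of $\Conf_c$ whose boundary, in the relative complex $\Clf_\bullet(\Conf_c,\Conf_c(p)\cup S;\Laurent_c)$, equals the LHS minus the chain obtained along the final bent arc that now touches $p$.

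Once the arc is bent to pass through $p$, I split the parameter interval at the value $t_\star$ where the arc crosses $p$. The simplex $\Delta^k$ then decomposes as the union over $l=0,\ldots,k$ of sub-simplices $\{0<t_1<\cdots<t_l<t_\star<t_{l+1}<\cdots<t_k<1\} \cong \Delta^l\times\Delta^{k-l}$, glued along codimension-one faces where some $t_i = t_\star$. Under the embedding, these gluing faces correspond precisely to configurations with a pearl sitting at $p$, so they lie in $\Conf_c(p)$ and vanish in the relative quotient. Summing the surviving pieces over $l$ recovers exactly the right-hand side of the identity, so LHS equals $\sum_l$ RHS in $\CHbm_c(p)$. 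Part (2) is argued identically, with $p$ replaced by a short sub-arc of $\partial^-D$: the gluing configurations now contain a point of $\partial^-D \subset S$, and the same decomposition yields the identity in $\CHbm_c$.

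The main obstacle I expect is the bookkeeping of handles and twisting coefficients. The single handle on the LHS must be replaced, in each RHS summand, by two handles serving the two sub-arcs; I choose them to agree with the original handle outside a neighborhood of the cutting point and to split locally into two parallel strands. Because the isotopy does not wind any two pearls around each other, nor any pearl around $p$, the induced loops in $\Conf_c$ have trivial writhe: Remark~\ref{rmk_handle_rule} then gives a $\Laurent_c$-coefficient equal to $1$ on each summand, and Remark~\ref{rmk_sign_change} contributes no sign since the left-to-right order of pearls is preserved. Finally, when $k=1$ the embedded chain along the arc is a single compact $1$-simplex with both endpoints on the relative locus; it already represents a standard singular class in $\CH_c$, so the isotopy-plus-split argument carries through unchanged in the singular chain complex without invoking any Borel--Moore closure.
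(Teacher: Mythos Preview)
Your proposal is correct and is essentially the explicit chain-level argument that the paper defers to its references \cite[Ex.~4.5]{Jules_Verma} and \cite[Prop.~2.13.(C)]{JulesMarco}. The paper itself does not give a detailed proof here: it cites those sources and then offers an alternative high-level viewpoint, namely that after passing to Sinha's homotopy-equivalent compactification $\overline{\Conf}_c$ (so that $\CHbm_c$ becomes ordinary relative homology of a compact manifold), the cutting rule is literally an isotopy carrying the original simplex to one whose faces lie in the relative boundary. Your isotopy-plus-decomposition argument is the uncompactified, hands-on version of the same idea: you build the $(k{+}1)$-chain by tracing the isotopy, then split $\Delta^k$ at the parameter $t_\star$ hitting $p$ (resp.\ $\partial^-D$), and note that the internal faces $\{t_i=t_\star\}$ land in $\Conf_c(p)$ (resp.\ $S$). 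The compactification viewpoint has the advantage of making the Borel--Moore bookkeeping invisible (collision faces become honest boundary faces), while your approach has the advantage of being self-contained and of making transparent why no nontrivial local coefficient appears---exactly the handle/writhe analysis you carry out via Remarks~\ref{rmk_handle_rule} and~\ref{rmk_sign_change}.
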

\begin{proof}
We refer the reader to {\cite[Ex.~4.5]{Jules_Verma}, \cite[Prop.~2.13.(C)]{JulesMarco}} for detailed proof, we give another point of view. We simply mention that if $\overline{\Conf}_c$ denotes the manifold with boundary that is a compactification homotopy equivalent to $\Conf_c$ (it is due to D. Sinha \cite{sinha}, see the Appendix~\ref{A:the_pairing}), one has:
\[
\CHbm_c = \Hnot_{m_c} (\overline{\Conf}_c, \overline{\Conf}_c \setminus \Conf_c \cup S; \Laurent_c),
\]
where $\overline{\Conf}_c \setminus \Conf_c$ is part of the boundary, and $\Laurent_c$ is well defined due to homotopy equivalence. It is then easy to check that cutting rules are isotopies in $\overline{\Conf}_c$ relating part of the diagram to the relative part in the boundary, just like the case $k=1$ where the part of the boundary corresponding to the compactification is unnecessary. It serves for intuition in general, and for instance for Remark~\ref{R:cutting_in_chains}. 
\end{proof}

In the above two rules, one diagram is divided into the linear combination of diagrams where one single colored arc (necklace) has been divided into two differently indexed ones. In both of them one can check that as the diagram on the left was defining a homology class, those on the right are still defining ones (thanks to the fact that they reach $\partial^-D$, or the puncture used to define $\Conf_c(p)$). 

\begin{rmk}\label{R:cutting_in_chains}
In the previous proposition, the first rule also holds in $\CHbm_c$ (without the puncture $p$) but the terms on the right hand side are not homology class but well define embeddings of (products of) simplices, still the linear combination is a cycle. In $\CHbm_c(p)$ each term is a cycle and well defines a homology class. 
\end{rmk}

\section{A homological algebra structure}\label{BS:homological_algebra}

Let $\Uqgm$ be the strictly negative part of $\Uqg$. It is the subalgebra of $\Uqg$ generated by $\lbrace F_{\alpha}, \alpha \in \Pi \rbrace$. This section aims at endowing the spaces $\CHe := \bigoplus_{c \in \Coloring} \CHe_c$ with an algebra structure. It recovers homologically the algebra $\Uqgm$. This algebra $\Uqgm$ is a free algebra with one generator per simple root, modded out by the \textit{quantum Serre relations}. First we prove that the quantum Serre relations hold at Borel--Moore homology, in $\CHbm$. Then we notice that the standard homology $\CH$ yields a free algebra. Lastly we use Lusztig approach to $\Uqgm$ in \cite{LusztigBook} as a quotient of a free algebra to show that $\Uqgm$ is the image of the homology in the Borel--Moore one, hence the image of the canonical map $\CH \to \CHbm$ induced by inclusion of chain complexes. We discuss homological interpretation of Poincaré--Birkhoff--Witt bases of $\Uqgm$.  


\subsection{Algebra structure}\label{S:the_algebra_structure}

\subsubsection{Product}\label{sec_product}

Let $c_1$ and $c_2$ be two colorings by roots of $\Pi$. We define the coloring $c_1 + c_2$ as follows:
\[
\forall \alpha \in \Pi ,  (c_1 + c_2) (\alpha) = c_1(\alpha) + c_2(\alpha),
\]
it corresponds to the sum in $\BN[\Pi]$. 

We want to define a module morphism:
\[
\CHe_{c_1} \otimes \CHe_{c_2} \to \CHe_{c_1 + c_2}
\]
that satisfies axioms of a graded algebra product (graded by colorings).

Shortly speaking, the product is given by \emph{stacking} the unit disk used for $\Conf_{c_1}$ \emph{above} that used for $\Conf_{c_2}$, but it requires a more rigorous definition that we describe in what follows. 

Namely, stacking disks (drawn as rectangles) one above the other gives topologically the same unit disk with same $\partial^-D$, namely same red sides, forgetting the gluing interval:
\[
\vcenter{\hbox{\begin{tikzpicture}[scale=0.3]
\draw[red, thick] (-5,-3) -- (-5,3);
\draw[red, thick] (5,-3) -- (5,3);
\draw[gray, thick] (-5,-3) -- (5,-3);
\draw[gray, thick] (-5,3) -- (5,3);
\draw[gray,dashed, thick] (-5,0) -- (5,0);
\end{tikzpicture}}}
\]
The dashed line is the attaching interval of both disks, that we forget after the gluing.

Hence, this operation naturally defines a map:
\[
\Conf_{c_1} \times \Conf_{c_2} \to \Conf_{c_1 + c_2}. 
\] 
This operation defines a product of simplices, which gives a product of regular simplicial homology classes of $\Conf_{c_1}$ and $\Conf_{c_2}$. 

Let $\Delta_{c_1}$ (and $\Delta_{c_2}$) be an $m_{c_1}$ (resp. $m_{c_2}$) dimensional simplex of $\Conf_{c_1}$ (resp. $\Conf_{c_2}$). Let $H_1$ (resp. $H_2$) be a path in $B_{c_1}$ (resp. $B_{c_2}$, notice that they are the same space as it only depends on $\Pi$) joining $(1, \ldots ,1)$ and $\CW_{c_1} (\Delta_{c_1})$ (resp. $\CW_{c_1} (\Delta_{c_1})$). The term by term product of paths $H_1$ and $H_2$ yields a path denoted $H_1 \cdot H_2$ in $B_{c_1 + c_2}$ (isomorphic to both $B_{c_1}$ and $B_{c_2}$) joining $(1,\ldots, 1)$ and $\CW_{c_1 + c_2} (\Delta_{c_1}\cdot \Delta_{c_2})$. Then the morphism of chain complexes:
\[
\bapp
\Clf_{m_{c_1}} (\Conf_{c_1}, S; \Laurent_{c_1}) \times \Clf_{m_{c_2}} (\Conf_{c_2}, S; \Laurent_{c_2}) & \to & \Clf_{m_{c_1+ c_2}} (\Conf_{c_1+ c_2}, S; \Laurent_{c_1+ c_2}) \\
\left( (\Delta_{c_1}, H_1) , (\Delta_{c_2} , H_2) \right) & \mapsto & \left( \Delta_{c_1} \cdot \Delta_{c_2} , H_1 \cdot H_2 \right)
\eapp
\]
naturally provides the product:
\[
\CHe_{c_1} \otimes \CHe_{c_2} \to \CHe_{c_1 + c_2}
\]
we were seeking. 

\begin{prop}[(homological) Algebra structure]\label{strictlynegative_algebrastructure}
The space $\CHe := \bigoplus_{c \in \Coloring_{\Pi}} \CHe_c$ endowed with the product $\cdot$ just described is a graded algebra (graded by $\Coloring_{\Pi}$, itself endowed with $+$ on colorings). The unit is defined to be the empty set, unique element of $\Conf_0$ (where $0$ is the zero coloring, namely zero points considered in the disk). 
\end{prop}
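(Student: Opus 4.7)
The plan is to verify each axiom of a graded $\Laurent$-algebra directly from the stacking construction. The dimension count is immediate: $m_{c_1}+m_{c_2}=m_{c_1+c_2}$, and the Eilenberg--Zilber product of a middle-dimensional simplex in $\Conf_{c_1}$ and one in $\Conf_{c_2}$ gives a chain of middle dimension in $\Conf_{c_1+c_2}$. A configuration with a coordinate in $\partial^-D$ remains so after stacking, so the relative boundary condition is preserved and the map sends relative cycles to relative cycles.

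The crucial verification is that the chain-level map descends to a well-defined map on twisted homology. I would check that the stacking map $\Conf_{c_1}\times\Conf_{c_2}\to\Conf_{c_1+c_2}$ pulls back the local system $\Laurent_{c_1+c_2}$ to the external product of $\Laurent_{c_1}$ and $\Laurent_{c_2}$. In terms of fundamental groups, one needs that a loop $\gamma_1$ in $\Conf_{c_1}$, extended by a constant configuration in the bottom stacked disk, has image under $\rho_{c_1+c_2}\circ \CW_{c_1+c_2}$ equal to $\rho_{c_1}\circ\CW_{c_1}(\gamma_1)$, and symmetrically for $\gamma_2$. The geometric reason this holds is that the two stacked disks are separated by the (vanishing) gluing interval: a moving point in the upper disk cannot wind around any stationary point of the lower disk, so all cross-writhe components $k_{(i,j)}$ of $\CW_{c_1+c_2}$ are trivial on loops supported in a single stacked layer. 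This is the main technical point, and it is really the only nontrivial input: the writhe components internal to each stacked layer only see the corresponding factor, so the representation factorises as needed. Via the covering-space interpretation from Remark~\ref{twisted_homology_from_cover}, the stacking map lifts compatibly to the associated covers, and the product of twisted chains is realised as the projection of the product of lifts.

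Associativity is then essentially tautological: stacking three disks one atop the other produces the same configuration regardless of the order of pairing, and the chain-level difference between $(\CF_1\cdot\CF_2)\cdot\CF_3$ and $\CF_1\cdot(\CF_2\cdot\CF_3)$ is absorbed by a vertical rescaling isotopy of the disk, which induces a chain homotopy compatible with the local systems by the argument just described. The unit is the generator of $\CHe_0\cong\Laurent$, namely the class of the single point $\Conf_0$; stacking the empty disk above or below is a trivial operation, so this class is a two-sided identity. Grading by $\Coloring_\Pi$ is built into the construction since stacking adds colorings coordinate-wise. The main obstacle to turning this plan into a proof is the local-system compatibility in Step~2, which hinges entirely on the geometric separation argument above; all remaining steps reduce to standard properties of products of simplices and of covering spaces.
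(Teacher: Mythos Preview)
Your proposal is correct and follows essentially the same approach as the paper: associativity comes from the associativity of stacking disks, and the unit property from the fact that stacking the empty configuration changes nothing. The paper's proof is only two sentences, since the well-definedness of the product at the level of twisted homology (your Step~2 on local-system compatibility) is already established in the construction preceding the proposition; your elaboration of that point is accurate but, strictly speaking, belongs to the definition of the product rather than to the proof of this proposition.
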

\begin{proof}
The stacking from above operation is associative. The associativity axiom follows. Stacking the empty set does not change any homology class, so that the unit axiom is also satisfied. 
\end{proof}

The set $\Coloring_{\Pi}$ endowed with $+$ is isomorphic to $\BN^l$ with generators $c_{\alpha_i}^1$ for $i=1,\ldots , l$. In other words, the algebra $\CHe$ is graded over $\BN^l$. 

\begin{example}
\begin{enumerate}
\item We can write the product using the bases described in Def. \ref{homology_generators}:
\begin{equation}
\bfct
\CHbm_{c_1} \otimes \CHbm_{c_2} & \to & \CHbm_{c_1 + c_2} \\
\CF_{\br} \otimes \CF_{\br'} & \mapsto & \CF_{\br} \cdot \CF_{\br'}:= \vcenter{\hbox{\begin{tikzpicture}[scale=0.35, every node/.style={scale=0.8},decoration={
    markings,
    mark=at position 0.5 with {\arrow{>}}}
    ]    
   
\coordinate (w0g) at (-5,0) {};
\coordinate (w0d) at (5,0) {};

\coordinate (h1) at (5,-2.5);
\coordinate (h2) at (5,-1.5);
\coordinate (h3) at (5,-1);

\node at (5.2,-2) {$\vdots$};

\draw[gray!40!white, thin] (w0g) -- (w0d) node[pos=0.15,black] (a1) {$\encircled{\alpha_{r_1}}$} node[pos=0.3,black] (a2) {$\encircled{\alpha_{r_2}}$} node[pos=0.55,above,black] {$\cdots$} node[pos=0.8, black] (a3) {$\encircled{\alpha_{r_m}}$};

\draw[yellow!80!black] (h1)--(h1-|a1)--(a1);
\draw[yellow!80!black] (h2)--(h2-|a2)--(a2);
\draw[yellow!80!black] (h3)--(h3-|a3)--(a3);

\draw[red, thick] (-5,-3) -- (-5,3);
\draw[red, thick] (5,-3) -- (5,3);
\draw[gray, dashed] (-5,-3) -- (5,-3);
\draw[gray, thick] (-5,3) -- (5,3);


\coordinate (w0gD) at (-5,-6) {};
\coordinate (w0dD) at (5,-6) {};

\coordinate (h1D) at (5,-8.5);
\coordinate (h2D) at (5,-7.5);
\coordinate (h3D) at (5,-7);

\node at (5.2,-8) {$\vdots$};

\draw[gray!40!white, thin] (w0gD) -- (w0dD) node[pos=0.15,black] (a1D) {$\encircled{\alpha_{r'_1}}$} node[pos=0.3,black] (a2D) {$\encircled{\alpha_{r'_2}}$} node[pos=0.55,above,black] {$\cdots$} node[pos=0.8, black] (a3D) {$\encircled{\alpha_{r'_m}}$};

\draw[yellow!80!black] (h1D)--(h1D-|a1D)--(a1D);
\draw[yellow!80!black] (h2D)--(h2D-|a2D)--(a2D);
\draw[yellow!80!black] (h3D)--(h3D-|a3D)--(a3D);

\draw[red, thick] (-5,-3) -- (-5,-9);
\draw[red, thick] (5,-3) -- (5,-9);
\draw[gray, thick] (-5,-9) -- (5,-9);

\end{tikzpicture}}}
\efct
\end{equation}
for any $(\br,\br') \in \CP_{c_1} \times \CP_{c_2}$. The diagram describing the element $\CF_{\br} \cdot \CF_{\br'} \in \CHbm_{c_1 + c_2}$ is the stacking of two necklaces to which is naturally assigned a homology class: the union of the two necklaces (gray lines marked by indexed pearls) naturally defines a product of simplices, while the union of handles naturally defines a path from a vertically aligned configuration to a point in the product of simplices just defined. 

This expression of the product could be considered as a definition (as pearl Necklaces were shown to be a homological basis in Prop. \ref{structure_result}). 
\item We do a special case of the one just introduced. Let $\CF_{\alpha}$ be a class as in Def. \ref{F_classes_1dim}. The product $\CF_{\alpha} \cdot \CF_{\alpha}$ is given by the following diagram:
\[
\CF_{\alpha}^2 = \vcenter{\hbox{\begin{tikzpicture}[scale=0.5, every node/.style={scale=1},decoration={
    markings,
    mark=at position 0.5 with {\arrow{>}}}
    ]
\coordinate (w0g) at (-5,1.5) {};
\coordinate (w0d) at (5,1.5) {};

\coordinate (w1g) at (-5,-1.5) {};
\coordinate (w1d) at (5,-1.5) {};

\draw[gray!40!white, thin] (w0g) -- (w0d) node[pos=0.5, black] (a1) {$\encircled{\alpha}$};
\draw[gray!40!white, thin] (w1g) -- (w1d) node[pos=0.5, black] (a1) {$\encircled{\alpha}$};

\draw[red, thick] (-5,-3) -- (-5,3);
\draw[red, thick] (5,-3) -- (5,3);
\draw[gray, thick] (-5,-3) -- (5,-3);
\draw[gray, thick] (-5,3) -- (5,3);
\end{tikzpicture}}},
\]
i.e. the stacking from above of two diagrams associated with $\CF_{\alpha}$. This new diagram naturally comes with a natural way of reading it as a class in $\CHbm_{c_{\alpha}^2}$ (where $c_{\alpha}^2 := c_{\alpha}^1 +c_{\alpha}^1$ means two points colored by $\alpha$ and no other):
\begin{itemize}
\item The union of the two light gray necklaces defines an embedding of a product of interval in $\Conf_{c_{\alpha}^2}$. This embedding of a square is used as the chain. 
\item There is a unique configuration of two points in the embedding just described which is vertically aligned on $\partial^-D$. This is sent to $(1,1)$ in $B_{c_{\alpha}^2}$ by the writhe application. The constant path (constant in $(1,1) \in B_{c_{\alpha}^2}$) is then used as a handle. (In fact any vertically aligned configuration gives the same class, we choose it to lie on $\partial^-D$). 
\end{itemize}
A direct consequence of the proof of \cite[Lemma~4.8]{Jules_Verma}, one has:
\[
\CF_{\alpha_i}^2 = (1- (-q)^{d_i} ) \CF_{(\alpha, \alpha)}
\]
where $\CF_{(\alpha, \alpha)}$ is defined as in Def. \ref{homology_generators}.
It is easily generalized to the following:
\[
\CF_{\alpha_i} \CF_{\alpha_j} = \CF_{(\alpha_j, \alpha_i)} - q^{(\alpha_i, \alpha_j)} \CF_{(\alpha_i, \alpha_j)}
\]
In terms of diagrams, the above equality can be seen as the following one:
\begin{equation}
\vcenter{\hbox{\begin{tikzpicture}[scale=0.5, every node/.style={scale=1},decoration={
    markings,
    mark=at position 0.5 with {\arrow{>}}}
    ]
\coordinate (w0g) at (-5,1.5) {};
\coordinate (w0d) at (5,1.5) {};

\coordinate (w1g) at (-5,-1.5) {};
\coordinate (w1d) at (5,-1.5) {};

\draw[gray!40!white, thin] (w0g) -- (w0d) node[pos=0.5, black] (a1) {$\encircled{\alpha_i}$};
\draw[gray!40!white, thin] (w1g) -- (w1d) node[pos=0.5, black] (a1) {$\encircled{\alpha_j}$};

\draw[red, thick] (-5,-3) -- (-5,3);
\draw[red, thick] (5,-3) -- (5,3);
\draw[gray, thick] (-5,-3) -- (5,-3);
\draw[gray, thick] (-5,3) -- (5,3);
\end{tikzpicture}}}
 =  \vcenter{\hbox{\begin{tikzpicture}[scale=0.4, every node/.style={scale=1},decoration={
    markings,
    mark=at position 0.5 with {\arrow{>}}}
    ]    
    
\coordinate (w0g) at (-5,0) {};
\coordinate (w0d) at (5,0) {};

\coordinate (h1) at (5,-2.5);
\coordinate (h2) at (5,-1.5);
\coordinate (h3) at (5,-1);

\draw[gray!40!white, thin] (w0g) -- (w0d) node[pos=0.3,black] (a1) {$\encircled{\alpha_j}$} node[pos=0.65,black] (a2) {$\encircled{\alpha_i}$}; 

\draw[yellow!80!black] (h1)--(h1-|a1)--(a1);
\draw[yellow!80!black] (h2)--(h2-|a2)--(a2);

\draw[red, thick] (-5,-3) -- (-5,3);
\draw[red, thick] (5,-3) -- (5,3);
\draw[gray, thick] (-5,-3) -- (5,-3);
\draw[gray, thick] (-5,3) -- (5,3);
\end{tikzpicture}}} - q^{(\alpha_i, \alpha_j)}\vcenter{\hbox{\begin{tikzpicture}[scale=0.4, every node/.style={scale=1},decoration={
    markings,
    mark=at position 0.5 with {\arrow{>}}}
    ]    
    
\coordinate (w0g) at (-5,0) {};
\coordinate (w0d) at (5,0) {};

\coordinate (h1) at (5,-2.5);
\coordinate (h2) at (5,-1.5);
\coordinate (h3) at (5,-1);

\draw[gray!40!white, thin] (w0g) -- (w0d) node[pos=0.3,black] (a1) {$\encircled{\alpha_i}$} node[pos=0.65,black] (a2) {$\encircled{\alpha_j}$}; 

\draw[yellow!80!black] (h1)--(h1-|a1)--(a1);
\draw[yellow!80!black] (h2)--(h2-|a2)--(a2);

\draw[red, thick] (-5,-3) -- (-5,3);
\draw[red, thick] (5,-3) -- (5,3);
\draw[gray, thick] (-5,-3) -- (5,-3);
\draw[gray, thick] (-5,3) -- (5,3);
\end{tikzpicture}}}.
\end{equation}
\end{enumerate}
\end{example}

\begin{rmk}
Product is read from left to right while stacking is read from top to bottom. 
\end{rmk}

\subsubsection{Divided powers generators in Borel--Moore homologies}\label{S:divided_powers}

\begin{defn}[Divided powers]\label{def_div_powers_Fs}
Let $\alpha$ be a root, $k$ be a positive integer, and $c_{\alpha}^k:= k \alpha$ the coloring by $\alpha$ of $k$ points. We define the {\em $k$-th divided power of $\CF_{\alpha}^{(1)}$} as follows:
\[
\CF_{\alpha}^{(k)} :=  q^{-d_{\alpha} \frac{k(k-1)}{4}} 
\vcenter{\hbox{\begin{tikzpicture}[scale=0.4, every node/.style={scale=0.8},decoration={
    markings,
    mark=at position 0.5 with {\arrow{>}}}
    ]    
    
\coordinate (w0g) at (-5,0) {};
\coordinate (w0d) at (5,0) {};

\coordinate (h1) at (5,-2.5);
\coordinate (h2) at (5,-1.5);
\coordinate (h3) at (5,-1);

\node at (5.2,-2) {$\vdots$};

\draw[dashed, thick] (w0g) -- (w0d) node[pos=0.15,black] (a1) {} node[pos=0.3,black] (a2) {} node[pos=0.55,above,black] {$k$} coordinate[pos=0.8] (a3) {};


\coordinate (a33) at (5,2);
\draw[yellow!80!black,double,thick] (a33)--(a33-|a3)--(a3);

\draw[red, thick] (-5,-3) -- (-5,3);
\draw[red, thick] (5,-3) -- (5,3);
\draw[gray, thick] (-5,-3) -- (5,-3);
\draw[gray, thick] (-5,3) -- (5,3);
\end{tikzpicture}}} \in \CHbm_{k \alpha}
\]
(where $F_{\alpha}^{(1)}$ is also defined). 
\end{defn}

\begin{prop}[Divided power property]\label{prop_DividedPowers}
In $\CHbm_{c_{\alpha}^k}$ the divided powers satisfy:
\[
\left(\CF_{\alpha}^{(1)} \right)^k = \left[ k \right]_{q_{\alpha}^{\frac{1}{2}}} ! \CF_{\alpha}^{(k)}
\]
\end{prop}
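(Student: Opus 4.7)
The plan is to read $(\CF_\alpha^{(1)})^k$ directly from the definition of the product and then apply the first fusion rule of Prop.~\ref{prop_fusion_rules} to identify it with $\CF_\alpha^{(k)}$ up to an explicit quantum factorial. Finally, the symmetric-vs-asymmetric quantum-number identity from Not.~\ref{N:quantum_numbers}(3) converts the coefficient into $[k]_{q_\alpha^{1/2}}!$.

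First I would unfold the product. By Section~\ref{sec_product}, $(\CF_\alpha^{(1)})^k$ is represented by the diagram obtained by stacking $k$ copies of the single-arc diagram $\CF_\alpha$ on top of each other inside the disk, each arc bearing exactly one pearl colored by $\alpha$. Under the product, the implicit (constant) handles of each factor assemble into the canonical vertically aligned configuration on $\partial^-D$ materialized by $k$ fixed yellow dots. This is precisely the picture appearing on the left-hand side of the first fusion rule of Prop.~\ref{prop_fusion_rules}.

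Next I would apply that fusion rule to rewrite
\[
(\CF_\alpha^{(1)})^k = (k)_{q^{d_\alpha}}!\,\cdot\, \CG,
\]
where $\CG$ is the $k$-pearl necklace colored by $\alpha$, equipped with the handle configuration drawn on the right-hand side of the fusion rule. To match this with Def.~\ref{def_div_powers_Fs}, which uses instead a single bundled handle emerging from the top-right corner of the disk, I would use the handle rule (Rem.~\ref{rmk_handle_rule}) together with the permutation rule (Rem.~\ref{rmk_sign_change}). Moving the $k$ handles of $\CG$ from their interleaved bottom-right positions to the bundled top-right position used in $\CF_\alpha^{(k)}$ produces an overall multiplicative factor that is absorbed exactly by the normalization prefactor $q^{-d_\alpha k(k-1)/4}$ in the definition of $\CF_\alpha^{(k)}$. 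This is the step I expect to be the main obstacle, since one has to carefully track the product of the writhe contributions from the handle homotopies together with the sign from the induced permutation on the vertically aligned configuration.

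Finally I would invoke the identity in Not.~\ref{N:quantum_numbers}(3): taking $v = q_\alpha^{-1/2}$ so that $v^{-2} = q^{d_\alpha}$, one obtains
\[
(k)_{q^{d_\alpha}}! \;=\; v^{-k(k-1)/2}\,[k]_v!\;=\;q^{d_\alpha k(k-1)/4}\,[k]_{q_\alpha^{1/2}}!,
\]
using the symmetry $[k]_{v} = [k]_{v^{-1}}$. Combining this with the previous step and the prefactor $q^{-d_\alpha k(k-1)/4}$ in Def.~\ref{def_div_powers_Fs} yields the announced equality $(\CF_\alpha^{(1)})^k = [k]_{q_\alpha^{1/2}}!\,\CF_\alpha^{(k)}$.
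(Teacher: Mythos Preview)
Your proposal is correct and follows exactly the paper's (one-line) argument: apply the first fusion rule of Prop.~\ref{prop_fusion_rules} to the stacked diagram, then use the asymmetric-to-symmetric conversion from Not.~\ref{N:quantum_numbers}(3). The handle reorganization you single out as ``the main obstacle'' is precisely what the paper absorbs into the word ``straightforward''; your caution there is appropriate, and once that monomial is tracked it cancels against the normalization in Def.~\ref{def_div_powers_Fs} together with the factor $q^{d_\alpha k(k-1)/4}$ coming from the factorial identity.
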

\begin{proof}
It is a straightforward consequence of the fusion rule Prop.~\ref{prop_fusion_rules}~(6) and of the relation between asymmetric and symmetric quantum factorials. 
\end{proof}

\subsection{A free algebra on standard homologies}\label{S:the_free_algebra}

Notice that the diagrams of $\CF^{(1)}_\alpha$ has one single pearl, and hence can alternatively define a homology class in $\CH_\alpha$. We denote by $\CF^{[1]}_\alpha$ the class in $\CH_\alpha$ defined by the same diagram as $\CF^{(1)}_\alpha$.

\begin{theorem}\label{T:free_algebra}
The algebra $\CH:= \bigoplus_{c\in\Coloring_{\Pi}} \CH_c$ is the free algebra generated by the set $\{\CF^{[1]}_{\alpha}, \alpha \in \Pi \}$. 
\end{theorem}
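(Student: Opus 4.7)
The plan is to identify the basis of pearl necklaces for $\CH$ with the basis of words in the free algebra on $\Pi$, via the stacking product. Concretely, I will argue that the natural algebra homomorphism
\[
\Phi : \mathrm{Free}\bigl(\{X_\alpha\}_{\alpha\in\Pi}\bigr) \longrightarrow \CH, \qquad X_\alpha \longmapsto \CF^{[1]}_\alpha
\]
sends the monoid basis $\{X_{\alpha_{r_1}}X_{\alpha_{r_2}}\cdots X_{\alpha_{r_k}}\}$ of the free algebra bijectively onto the basis $\bigl\{\CF^{[r_1,\ldots,r_k]}\bigr\}$ of $\CH$ from Proposition~\ref{T:structure_standard_homology}, and is therefore an isomorphism.

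The key step is to verify the identity
\[
\CF^{[1]}_{\alpha_{r_1}} \cdot \CF^{[1]}_{\alpha_{r_2}} \cdots \CF^{[1]}_{\alpha_{r_k}} \;=\; \CF^{[r_1,\ldots,r_k]}
\]
in $\CH$. By definition (Section~\ref{sec_product}), the product stacks the underlying rectangular disks one above the other. Stacking the $k$ diagrams, each consisting of a single horizontal interval with one pearl labeled $\alpha_{r_i}$ and ends on $\partial^-D$, yields precisely the diagram defining $\CF^{[r_1,\ldots,r_k]}$: $k$ horizontal intervals ordered top to bottom by $r_1,\ldots,r_k$, each with one pearl. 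On the level of chains, the product of the $k$ one-dimensional simplices is the embedded hypercube $I^k \hookrightarrow \Conf_{\alpha_{r_1}+\cdots+\alpha_{r_k}}$ that defines the class $\CF^{[r_1,\ldots,r_k]}$ in standard (not Borel--Moore) homology, with boundary faces lying in $S$. The handles multiply to the constant path at the vertically aligned configuration, so there is no twisting coefficient introduced, and the equality holds on the nose.

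Combining this identity with Proposition~\ref{T:structure_standard_homology}, we see that the image of $\Phi$ contains every basis element $\CF^{[r_1,\ldots,r_k]}$, so $\Phi$ is surjective onto each graded piece $\CH_c$. Moreover, the monomials $X_{\alpha_{r_1}}\cdots X_{\alpha_{r_k}}$ with $\alpha_{r_1}+\cdots+\alpha_{r_k}=c$ form an $\Laurent$-basis of the graded piece of the free algebra of degree $c$, and they are sent under $\Phi$ to the basis $\{\CF^{[r_1,\ldots,r_k]} : (r_1,\ldots,r_k)\in\CP_c\}$ of $\CH_c$. Thus $\Phi$ is a graded $\Laurent$-linear isomorphism, and being a map of algebras, it exhibits $\CH$ as the free $\Laurent$-algebra on the generators $\CF^{[1]}_\alpha$, $\alpha\in\Pi$.

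The only subtle point — and the main place one must be careful — is the local-system bookkeeping in the identification of the product with the pearl-necklace class: one has to check that the writhe contribution of the stacked handle is trivial, which holds because a single-pearl diagram has a vertically aligned end configuration on $\partial^-D$ and hence a constant writhe-lift, so the concatenated handle in $\CW_{c_1+c_2}$ remains constant at the base point of $B_{c_1+c_2}$. Once this is observed, the rest is bookkeeping on bases.
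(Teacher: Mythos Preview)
Your proof is correct and follows essentially the same approach as the paper: both rely on Proposition~\ref{T:structure_standard_homology} together with the observation that $\CF^{[1]}_{\alpha_{r_1}}\cdots\CF^{[1]}_{\alpha_{r_k}} = \CF^{[r_1,\ldots,r_k]}$ under the stacking product. The paper's proof is a one-line invocation of this, whereas you have helpfully made explicit the identification of the stacked hypercube with the pearl-necklace class and the triviality of the handle contribution.
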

\begin{proof}
Theorem~\ref{T:structure_standard_homology} shows that as a module, $\CH_c$ is generated by $$\{\CF^{[1]}_{\alpha_{r_1}}\cdots\CF^{[1]}_{\alpha_{r_k}}, \sum \alpha_{r_i} = c \}$$. It proves the result. 
\end{proof}

\subsection{Quantum-Serre relations in Borel--Moore homologies}\label{S:quantum_Serre}

\subsubsection{Squash vs partition}\label{S:squash_vs_partition}

The aim of this section is to express a particular homology class in two ways: \textit{squashing it} or \textit{partitioning it} will provide an important relation. First, we re-draw the unit disk as follows:

\[
\vcenter{\hbox{
\begin{tikzpicture}[scale=0.3]
\draw[thick] (0,0) rectangle ++(12,8);
\draw[fill = green!30!white] (0,0) rectangle ++(6,8);
\draw[red,thick] (0,0)--(0,8);
\draw[red,thick] (12,0)--(12,8);
\end{tikzpicture}}}
\]

We denote by $S' \subset \Conf_c$ the configurations having at least one coordinate in the part of the disk filled in green union the red intervals. This thickening of $\partial^-D$ is denoted $\underline{\partial^-D}$.  
\begin{prop}
For any coloring $c$ we have:
\[
\CHbm_c \simeq \Hlf_{m_c} \left( \Conf_c, S'; \Laurent_c\right),
\]
namely we can replace $S$ by $S'$ as a relative part for the homology, the natural inclusion of spaces becomes an isomorphism in homology.
\end{prop}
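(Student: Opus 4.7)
The plan is to apply the long exact sequence of the triple $(\Conf_c, S', S)$ in Borel--Moore homology twisted by $\Laurent_c$:
\[
\cdots \to \Hlf_n(S', S;\Laurent_c) \to \CHbm_c \to \Hlf_n(\Conf_c, S';\Laurent_c) \to \Hlf_{n-1}(S', S;\Laurent_c) \to \cdots,
\]
in which the middle arrow is exactly the map induced by the natural inclusion of pairs. It is therefore enough to prove that $\Hlf_*(S', S;\Laurent_c) = 0$ in every degree, which is what I would target.

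To obtain this vanishing I would exhibit $S$ as a deformation retract of $S'$. The model lives on the disk itself: $\underline{\partial^-D}$, the green rectangle together with the two red edges, deformation retracts onto $\partial^-D$ by horizontally sliding the green rectangle onto the left red edge while fixing the right red edge and the complement of a collar of $\underline{\partial^-D}$. Call $\rho_t : D \to D$ such a retraction, with $\rho_0 = \id_D$, $\rho_1(\underline{\partial^-D}) \subset \partial^-D$, and $\rho_t|_{\partial^-D} = \id$. Applying $\rho_t$ coordinate-wise to configurations should give a homotopy $H_t$ with $H_0 = \id_{S'}$, $H_1(S') \subset S$, and $H_t(S) \subset S$. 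Since $\rho_t$ is homotopic to $\id_D$, the induced loops in $B_c$ are null-homotopic and the writhe data $\CW_c$ are preserved under the deformation, so the local system $\Laurent_c$ transports identically and the vanishing of $\Hlf_*(S', S;\Laurent_c)$ follows from that of the pair in the untwisted sense.

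The main technical obstacle is that the coordinate-wise lift of $\rho_t$ is ill-defined at the collisions it creates: two distinct coordinates of $C \in S'$ at the same height $y$ but different $x$'s in the green region would be mapped to the same point of the left red edge at $t=1$. To bypass this I would use a configuration-dependent flow instead of the uniform $\rho_t$: among the coordinates of $C$ lying in $\underline{\partial^-D}$, only the one with smallest $x$ needs to arrive on $\partial^-D$ at $t=1$ (which already places $H_1(C)$ in $S$), while the other green coordinates are pushed toward the left edge at strictly slower rates keyed to their $x$-rank in $C$, so that their $x$-order, and hence their distinctness, is preserved throughout. Verifying continuity of this scheme at configurations with ties in $x$-coordinate is the delicate step, and one resolves it by smoothing the rate function using symmetric expressions in the coordinates in the green strip.

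A complementary route that avoids collisions altogether is to use excision to identify $\CHbm_c \simeq \Hlf_{m_c}(\Conf_c(D\setminus\partial^-D); \Laurent_c)$ and likewise $\Hlf_{m_c}(\Conf_c, S';\Laurent_c) \simeq \Hlf_{m_c}(\Conf_c(D\setminus\underline{\partial^-D});\Laurent_c)$, then apply the horizontal stretch $(x,y)\mapsto (2x-1,y)$, which sends $D\setminus\underline{\partial^-D}$ homeomorphically onto $D\setminus\partial^-D$ and extends by isotopy in the ambient disk. This isotopy preserves $\Laurent_c$ and yields the required isomorphism, which one checks coincides with the map induced by the inclusion of pairs by comparing both to the excision isomorphism on chains.
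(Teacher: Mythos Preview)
Your strategy via the long exact sequence of the triple is sound, and you correctly identify the collision obstruction to a naive coordinate-wise retraction. The gap in both routes is that Borel--Moore homology is not invariant under arbitrary homotopy equivalence, and you do not address properness. In your first route, even after constructing a continuous configuration-dependent deformation of $S'$ into $S$, concluding $\Hlf_*(S',S;\Laurent_c)=0$ requires the homotopy to be proper; the example $(\mathbb{R},\{0\})$, where $\{0\}$ is a deformation retract of $\mathbb{R}$ yet $\Hlf_1(\mathbb{R},\{0\})\cong\mathbb{Z}^2$, shows this is a genuine issue. Your specific construction may in fact be proper, since moving the leftmost point fastest tends to increase pairwise distances rather than create near-collisions, but this needs to be argued. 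In your second route the problem is sharper: the isotopy you invoke from the inclusion $D\setminus\underline{\partial^-D}\hookrightarrow D\setminus\partial^-D$ to the stretch homeomorphism is not through proper maps (the inclusion itself is not proper), so the claimed identification of the homeomorphism-induced isomorphism with the inclusion-induced restriction map on Borel--Moore homology is unjustified.

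The paper's argument is different and sidesteps properness entirely. It uses the inverse-limit description $\Hlf_*(X,Y)=\varprojlim_K H_*(X,Y\cup(X\setminus K))$ with $K=A_\varepsilon$ the set of $\varepsilon$-separated configurations. After shrinking $\underline{\partial^-D}$ to a collar thinner than $\varepsilon$, any configuration in $S'$ having two collar points at the same height already lies in $\Conf_c\setminus A_\varepsilon$, hence in the relative part for free. One may therefore replace $S'$ by its subspace of configurations with pairwise distinct heights, and on that subspace the naive horizontal retraction onto $\partial^-D$ creates no collisions. Everything happens in ordinary relative homology at each stage of the limit, so homotopy invariance applies without any properness hypothesis. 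In effect, the paper lets the ``infinity'' of Borel--Moore absorb precisely the collisions that obstruct your direct construction.
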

\begin{proof}
In the Appendix~\ref{A:the_pairing} we mention a definition of the Borel--Moore homology as that of a compactification relative to what is added to compactify, requiring a homotopy equivalent compactification so for it to work in the twisted homology. 
In the present proof, we use the definition of the Borel--Moore homology detailed for instance in \cite[A.2]{JulesMarco} (and in this twisted version), it is namely:
\[
\Hlf_\bullet(X,Y;\Laurent) := \varprojlim_{K \in \boldsymbol{K}} \Hnot_\bullet(X,Y \cup (X \smallsetminus K),\Laurent)
\]
where $\Laurent$ is a ring, and $\boldsymbol{K}$ is the set of all compacts with the inclusion as a partial order. Our (squared) disk is endowed with the obvious euclidean distance. Then, for $\varepsilon>0$, let:
\[
A_\varepsilon := \{ \boldsymbol{z} \in \Conf_c(D),  |z_i-z_j|\ge \varepsilon,\forall z_i, z_j \in \boldsymbol{z} \}
\]
which defines a projective family of compact sets. Hence:
\[
\CHbm_c := \lim_{\varepsilon \to 0} \Hnot_c (\Conf_c, \Conf_c \setminus A_\varepsilon \cup S; \Laurent_c). 
\]
It exists a sufficiently small collar $\partial^-_\varepsilon D$ of $\partial^-D$ such that:
\[
( z_i, z_j \in \partial^-_\varepsilon D \text{ and } \Im(z_i) = \Im(z_j) ) \implies |z_i - z_j| < \varepsilon. 
\]
Let $S_\varepsilon$ be the space of configurations with one coordinate in $\partial^-_\varepsilon D$. Now without loss of generality, we can assume:
\[
\Hlf_{m_c} \left( \Conf_c, S'; \Laurent_c\right) \simeq \Hlf_{m_c} \left( \Conf_c, S'_\varepsilon; \Laurent_c\right).
\]
Notice:
\[
(\{ z_i , z_j \} \in \boldsymbol{z} \in S'_\varepsilon \text{ and } \Im(z_i) = \Im(z_j)) \implies \boldsymbol{z} \in \Conf_c \setminus A_\varepsilon. 
\]
Now let:
\[
S^{\neq}_{\varepsilon} := \{ \boldsymbol{z} \in S'_\varepsilon, \Im(z_i) \neq \Im(z_j) , \forall \{z_i, z_j\} \in \boldsymbol{z} \}. 
\]
We have:
\[
\Conf_c \setminus A_\varepsilon \cup S'_\varepsilon = \Conf_c \setminus A_\varepsilon \cup S^{\neq}_\varepsilon,
\]
so that:
\[
\Hnot_c (\Conf_c, \Conf_c \setminus A_\varepsilon \cup S'_\varepsilon; \Laurent_c) = \Hnot_c (\Conf_c, \Conf_c \setminus A_\varepsilon \cup S^{\neq}_\varepsilon; \Laurent_c).
\]
Now the obvious horizontal line retraction of $\partial^-_\varepsilon D$ on $\partial^-D$ can be extended to $S^{\neq}_\varepsilon$, and sends it to $S$. It provides an isomorphism:
\[
\Hnot_c (\Conf_c, \Conf_c \setminus A_\varepsilon \cup S; \Laurent_c) \simeq \Hnot_c (\Conf_c, \Conf_c \setminus A_\varepsilon \cup S^{\neq}_\varepsilon; \Laurent_c).
\] 
It proves the result. Notice that all retractions used in this proof are contractions, so that they preserve $\Conf_c \setminus A_\varepsilon$, and are well defined retractions of pairs. 
\end{proof}

We define the following homology class for a pair of simple roots $\alpha, \beta \in \Pi$ (represented by blue resp. red coloring), and an integer $k$.  

\begin{equation}\label{qSerre_k}
\qSerre_{\alpha,\beta}^k := \vcenter{\hbox{
\begin{tikzpicture}[scale=0.4,decoration={
    markings,
    mark=at position 0.3 with {\arrow{>}}}]
\draw[thick] (0,0) rectangle ++(12,8);
\draw[fill = green!30!white] (0,0) rectangle ++(6,8);
\draw[red,thick] (0,0)--(0,8);
\draw[red,thick] (12,0)--(12,8);
\draw[red,postaction={decorate}]  (4,4) -- (12,4);
\node[red,above right] at (4,4) {$P$};
\node[red] at (4,4) {$\times$};
\draw[blue,postaction={decorate}] (12,3) -- (6,3) .. controls (2,3) and (2,5) .. (6,5) -- (12,5);
\draw[blue,postaction={decorate}] (12,2) -- (6,2) .. controls (0,2) and (0,6) .. (6,6) -- (12,6);
\node[right,blue] at (12,3) {$p_1$};
\node[right,blue] at (12,2) {$p_k$};
\node[right,blue,thick] at (12,5.5) {$\Big\rbrace k$};
\node[left,blue] at (12,5.6) {$\vdots$};
\node[right,red] at (12,4) {$q$};
\end{tikzpicture}}} \in \CHbm_{k \alpha + \beta}. 
\end{equation}

There are $k$ blue arcs (i.e. colored by $\alpha$) nested into each other and around a red arc (colored by $\beta$). Also some points denoted $p_1,\ldots ,p_k ,q$ are vertically aligned on $\partial^-D$. The point $P$ will be considered as a \textit{puncture} rather than a point. The reason is that, a simple application of the excision theorem shows:
\[
\Hlf_{m_c} \left( \Conf_c (P), S'(P); \Laurent_c\right) \to \Hlf_{m_c} \left( \Conf_c, S'; \Laurent_c\right)
\]
given by inclusion is an isomorphism, where $S'(P)$ is the suitable subspace of $\Conf_c(P)$ for which no configuration contains $P$. We denote by ${\CHbm_c}' :=  \Hlf_{m_c} \left( \Conf_c (P), S'(P); \Laurent_c\right)$, and when $c$ is the coloring with $k$ points colored by $\alpha$ and $1$ by $\beta$ ($c=k\alpha + \beta$) we explain briefly how $qSerre_{\alpha, \beta}^k$ naturally defines a homology class in ${\CHbm_c}'$. There is an embedding:
\[
I^{\times k} \times I \to \Conf_c
\]
obtained by sending the first $k$ copies of the interval $I$ to the blue arcs (first copy to the outer most one), and sending the last copy of $I$ to the red interval, considering the orientation prescribed by the ones marked on the diagram. The latter is a cycle relative to $S'$ (it is a hypercube with faces in $S'(P)$). The configuration $(\lbrace p_1 , \ldots, p_k \rbrace , q) \in \Conf_c$ is vertically aligned, we use it as a constant handle. Thus a class in ${\CHbm_c}'$ is naturally assigned with the diagram (\ref{qSerre_k}). We will find two expressions for $\qSerre_{\alpha, \beta}^k$ in terms of other standard elements obtained while one squashes it or partitions it, meaning two different computation tricks. We start with notation:
\[
q_{\alpha} := q^{(\alpha,\alpha)/2} , q_{\alpha, \beta} := q^{(\alpha , \beta)/2}
\]
and the following lemma.
\begin{lemma}\label{lemma_qSerre_divpower_setup}
In ${\CHbm_c}'$ we have the following equality:
\begin{equation}\label{qSerre_dividedpower_setup}
\qSerre_{\alpha,\beta}^k := (k)_{q_{\alpha}}! \vcenter{\hbox{
\begin{tikzpicture}[scale=0.45,decoration={
    markings,
    mark=at position 0.3 with {\arrow{>}}}]
\draw[thick] (0,0) rectangle ++(12,8);
\draw[fill = green!30!white] (0,0) rectangle ++(6,8);
\draw[red,thick] (0,0)--(0,8);
\draw[red,thick] (12,0)--(12,8);
\draw[red,postaction={decorate}]  (4,4) -- (12,4);
\node[red,above right] at (4,4) {$P$};
\node[red] at (4,4) {$\times$};
\coordinate (kk) at (12,3);
\draw[blue,dashed,postaction={decorate}] (12,2) to coordinate[midway] (k) (6,2) .. controls (0,2) and (0,6) .. (6,6) to node[midway,above] {$k$} (12,6);
\draw[yellow!80!black,double,thick] (k)--(k|-kk)--(kk);
\node[right,red] at (12,4) {$q$};
\end{tikzpicture}}} .
\end{equation}
The RHS has an indexed $k$ dashed arc, representing an arc on which $k$ blue points are embedded, and a yellow (multi)-handle is chosen: a path joining a vertical $k$-configuration on the boundary to a $k$-configuration on the dashed arc. We denote the right hand diagram $\qSerre_{\alpha,\beta}^{(k)}$ so that:
\[
\qSerre_{\alpha,\beta}^k  = (k)_{q_{\alpha}}! \qSerre_{\alpha,\beta}^{(k)}
\] 
\end{lemma}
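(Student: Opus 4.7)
The plan is to reduce the equality to the first fusion rule of Proposition \ref{prop_fusion_rules}, which produces precisely the factor $(k)_{q_\alpha}!$ when $k$ parallel $\alpha$-colored arcs are fused into a single dashed arc carrying $k$ points.

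First I would work in ${\CHbm_c}'$ with $c = k\alpha + \beta$, exploiting the fact that the puncture $P$ is what prevents the blue arcs from being contracted off the red one. I would isotope the $k$ nested blue arcs: pull each one back towards $\partial^- D$ so that, away from a small neighborhood of $P$, the $k$ arcs become $k$ nearly parallel horizontal segments stacked vertically between $\partial^- D$ on the right and a tiny loop encircling $P$ on the left, with the red arc still emerging from $P$ to the right boundary. Because $P$ has been removed from the configuration space, this isotopy is admissible, and it does not change the homology class; it simply rewrites $\qSerre_{\alpha,\beta}^k$ as a diagram where the $k$ blue arcs are locally in the configuration required by the first fusion rule of Proposition \ref{prop_fusion_rules}~(1). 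The red arc and the point $q$ are unaffected by this local deformation, so they remain as passive spectators throughout.

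Next I would apply that fusion rule locally: the $k$ parallel blue segments collapse to one dashed $\alpha$-arc marked by $k$, producing the scalar $(k)_{q^{d_\alpha}}! = (k)_{q_\alpha}!$. After this collapse, the handle data on the left of the lemma, which consisted of the $k$ vertically aligned blue points $p_1,\ldots,p_k$ on $\partial^- D$ connected by trivial constant paths to the ends of each nested arc, becomes exactly the multi-handle on the right: a yellow double path from a vertically aligned $k$-configuration on $\partial^- D$ to a $k$-configuration on the dashed arc. The red arc and the puncture play no role in this local identity other than to certify that the left-hand class is well defined in ${\CHbm_c}'$ (so the embedded hypercube is actually a relative Borel--Moore cycle once the fusion has been carried out).

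The main obstacle will be the bookkeeping of handles and orientations during the isotopy. One must verify that after pulling the nested arcs into the parallel position, the induced ordering of the $k$ blue points on the dashed arc matches the ordering read off from the vertical configuration $p_1,\ldots,p_k$ on $\partial^- D$, so that no sign from Remark \ref{rmk_sign_change} is introduced and no extra monodromy from Remark \ref{rmk_handle_rule} contributes. Once this is checked — essentially because the nesting order coincides with the vertical order of the endpoints and the isotopy respects this order throughout — the fusion rule applies verbatim and yields the stated identity $\qSerre_{\alpha,\beta}^k = (k)_{q_\alpha}!\,\qSerre_{\alpha,\beta}^{(k)}$.
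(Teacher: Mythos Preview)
Your proposal is correct and follows exactly the paper's approach: the paper's own proof is the single sentence ``It is a direct application of a fusion rule Prop.~\ref{prop_fusion_rules},'' and what you have written is precisely the unpacking of that sentence, including the local isotopy bringing the nested arcs into parallel position and the handle-ordering check ensuring no extraneous sign or monodromy appears.
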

\begin{proof}
It is a direct application of a fusion rule Prop.~\ref{prop_fusion_rules}.   
\end{proof}

We now state the two propositions yielding two different expressions of $\qSerre_{\alpha,\beta}^k$, the first one is obtained by \textit{squashing} it.

\begin{prop}[Squashing operation]\label{prop_squashing}
In ${\CHbm_c}'$ we have the following equality:
\[
\qSerre_{\alpha,\beta}^{(k)} = q_{\alpha}^{\frac{-k(k-1)}{2}}\prod_{l=0}^{k-1} \left( q_{\alpha}^{-l} q_{\alpha,\beta}^{-1} - q_{\alpha,\beta} \right)  \vcenter{\hbox{
\begin{tikzpicture}[scale=0.35,decoration={
    markings,
    mark=at position 0.5 with {\arrow{>}}}]
\draw[thick] (0,0) rectangle ++(12,8);
\draw[fill = green!30!white] (0,0) rectangle ++(6,8);
\draw[red,thick] (0,0)--(0,8);
\draw[red,thick] (12,0)--(12,8);
\draw[red,postaction={decorate}]  (4,4) to coordinate[pos=0.6] (kr) (8,4);
\draw[blue,dashed,postaction={decorate}]  (8,4) to node[midway,above] (k) {$k$} (12,4);
\node[red] at (4,4) {$\times$};
\coordinate (kk) at (12,2);
\coordinate (kkr) at (12,1);
\draw[yellow!80!black,double,thick] (k)--(k|-kk)--(kk);
\draw[yellow!80!black] (kr)--(kr|-kkr)--(kkr);
\end{tikzpicture}}}
\]
\end{prop}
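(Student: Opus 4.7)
The plan is to construct an explicit homotopy inside the compactification $\overline{\Conf}_c$ from Proposition~\ref{prop_breaking_rule} that slides the nested blue $k$-arc of the LHS (as re-expressed via Lemma~\ref{lemma_qSerre_divpower_setup}) into the squashed configuration on the RHS. Since both the initial and final chains are relative cycles in $(\Conf_c(P), S'(P))$, the boundary of the $2$-chain swept out by this sliding produces the desired identity in ${\CHbm_c}'$, and the scalar coefficient comes from comparing the local system lifts of the two endpoint chains along the two possible paths of handles in the compactification.

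I would proceed inductively, squashing one blue strand at a time. For the squashing of a single blue strand already separated from the others, the deformation carrying it from the wrapping position to the right of the red endpoint is realized by two homotopy classes of paths in the configuration space: one passing above and one below the red pearl. By Remark~\ref{rmk_handle_rule} these two lifts differ multiplicatively by $\rho_c(k_{(\alpha,\beta)})^2 = q^{(\alpha,\beta)}$, and after normalizing them to a common vertically aligned handle the contribution of this pair is precisely $q_{\alpha,\beta}^{-1} - q_{\alpha,\beta}$. For the $l$-th iteration of this move (with $l$ blue pearls already squashed and sitting to the right of the red tip), the moving blue point must additionally cross the $l$ same-colored strands already squashed; by $\rho_c(k_\alpha) = -q^{d_\alpha}$ combined with the sign-of-permutation correction of Remark~\ref{rmk_sign_change}, each such crossing contributes a factor $q_\alpha^{-1}$ to one of the two lifts, so the $l$-th building block upgrades to $q_\alpha^{-l}q_{\alpha,\beta}^{-1} - q_{\alpha,\beta}$. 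Multiplying over $l = 0, 1, \ldots, k-1$ yields the advertised product. Finally, the global prefactor $q_\alpha^{-k(k-1)/2}$ appears when reassembling the $k$ squashed blue pearls into the single dashed $k$-arc drawn in the statement: the required rearrangement of their individual handles into the pinned configuration on the RHS amounts to $\binom{k}{2}$ elementary swaps of same-colored handles, each contributing $q_\alpha^{-1}$ through Remark~\ref{rmk_handle_rule}.

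The main obstacle will be the bookkeeping of signs and local system values. The writhe between two same-colored strands is sent to $-q^{d_\alpha}$, a genuinely signed value, and its interplay with the sign-of-permutation correction of Remark~\ref{rmk_sign_change} has to be untangled at each stage to confirm that the moving-blue crossings really contribute $q_\alpha^{-l}$, without extraneous sign, rather than $(-q_\alpha)^{-l}$. In addition, one must justify carefully that every intermediate term in the chain deformation that corresponds to a blue point colliding with the red pearl at $P$ truly vanishes because its supporting chain lies in the relative locus $\Conf_c(P) \cup S$; this is essentially a $k$-fold iterated form of the cutting rule (Proposition~\ref{prop_breaking_rule}(1)), and executing it while simultaneously matching the handle lifts on both sides of the claimed equality is the delicate part of the argument.
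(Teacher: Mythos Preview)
Your strategy—squash one blue strand at a time and read off an above/below contribution via the cutting and handle rules—is exactly the paper's, but the bookkeeping behind the prefactor $q_\alpha^{-k(k-1)/2}$ is where your argument has a genuine gap. In the paper, each inductive step takes a single plain wrapping arc, cuts it twice (once at $P$, once at the red/dashed junction), cancels two of the four resulting pieces because $P$ is transparent to the local system, and then \emph{fuses} the surviving piece with the already-squashed $l$-dashed arc. That fusion contributes an extra factor $(l{+}1)_{q_\alpha^{-1}}$, so the one-step recursion actually reads
\[
[\text{1 wrapping} + l\text{-dashed}] \;=\; (l{+}1)_{q_\alpha^{-1}}\bigl(q_\alpha^{-l}q_{\alpha,\beta}^{-1} - q_{\alpha,\beta}\bigr)\,[(l{+}1)\text{-dashed}].
\]
Iterating gives $\qSerre_{\alpha,\beta}^{k} = (k)_{q_\alpha^{-1}}!\,\prod_{l}(\cdots)\times[\text{squashed}]$, and dividing by the $(k)_{q_\alpha}!$ of Lemma~\ref{lemma_qSerre_divpower_setup} produces the prefactor as the \emph{ratio of two quantum factorials}, $(k)_{q_\alpha^{-1}}!/(k)_{q_\alpha}! = q_\alpha^{-k(k-1)/2}$.

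Your version omits these fusions. You take each step to contribute only $(q_\alpha^{-l}q_{\alpha,\beta}^{-1} - q_{\alpha,\beta})$, and then ascribe the prefactor to ``$\binom{k}{2}$ handle swaps each worth $q_\alpha^{-1}$''. But without fusing, the class you carry through the induction is a product of $k$ separate $1$-arcs on the segment, and converting that to the $k$-dashed arc appearing on the RHS is precisely a fusion (Prop.~\ref{prop_fusion_rules}), which produces a quantum \emph{factorial}, not a monomial from handle swaps. The two answers coincide only because $(k)_{q_\alpha^{-1}}!/(k)_{q_\alpha}!$ happens to be a monomial; the mechanism you describe is not the one actually operating. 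Relatedly, your assertion that the above and below pieces at each step are scalar multiples of a single common class already presupposes either a fusion with the $l$-dashed arc or a careful handle comparison that you do not carry out—in the paper this is exactly the place where the factor $(l{+}1)_{q_\alpha^{-1}}$ enters.
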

\begin{proof}
First we express the following homology class (defined for any $k \in \BN$), the proof will be a direct recursion involving it. 
\begin{align*}
\vcenter{\hbox{ 
\begin{tikzpicture}[scale=0.22,every node/.style={scale=0.5},decoration={
    markings,
    mark=at position 0.3 with {\arrow{>}}}]
\draw[thick] (0,0) rectangle ++(12,8);
\draw[fill = green!30!white] (0,0) rectangle ++(6,8);
\draw[red,thick] (0,0)--(0,8);
\draw[red,thick] (12,0)--(12,8);
\draw[red,postaction={decorate}] (4,4) to coordinate[pos=0.6] (kr) (8,4);
\draw[blue,dashed,postaction={decorate}] (8,4) to node[midway,above] (k) {$k$} (12,4);
\draw[blue,postaction={decorate}] (12,2) -- (6,2) .. controls (0,2) and (0,6) .. (6,6) -- (12,6);
\node[yellow!80!black] at (12,2) {$\bullet$};
\coordinate (kk) at (12,3);
\coordinate (kkr) at (12,2.5);
\node[red] at (4,4) {$\times$};
\draw[yellow!80!black,double,thick] (k)--(k|-kk)--(kk);
\draw[yellow!80!black] (kr)--(kr|-kkr)--(kkr);
\end{tikzpicture}}}
& = \vcenter{\hbox{ 
\begin{tikzpicture}[scale=0.22,every node/.style={scale=0.5},decoration={
    markings,
    mark=at position 0.4 with {\arrow{>}}}]
\draw[thick] (0,0) rectangle ++(12,8);
\draw[fill = green!30!white] (0,0) rectangle ++(6,8);
\draw[red,thick] (0,0)--(0,8);
\draw[red,thick] (12,0)--(12,8);
\draw[red,postaction={decorate}] (3,4) to coordinate[pos=0.7] (kr) (8,4);
\draw[blue,dashed,postaction={decorate}] (8,4) to node[midway,above] (k) {$k$} (12,4);
\draw[blue,postaction={decorate}] (3,4) .. controls (3,6) and (4,6) .. (6,6) -- (12,6);
\draw[yellow!80!black] (12,1.5)--(2.5,1.5)--(2.5,4.5)--(3,4.5);
\coordinate (kk) at (12,3);
\coordinate (kkr) at (12,2.5);
\node[red] at (3,4) {$\times$};
\draw[yellow!80!black,double,thick] (k)--(k|-kk)--(kk);
\draw[yellow!80!black] (kr)--(kr|-kkr)--(kkr);
\end{tikzpicture}}} +
\vcenter{\hbox{ 
\begin{tikzpicture}[scale=0.22,every node/.style={scale=0.5},decoration={
    markings,
    mark=at position 0.25 with {\arrow{>}}}]
\draw[thick] (0,0) rectangle ++(12,8);
\draw[fill = green!30!white] (0,0) rectangle ++(6,8);
\draw[red,thick] (0,0)--(0,8);
\draw[red,thick] (12,0)--(12,8);
\draw[red,postaction={decorate}] (3,4) to coordinate[pos=0.7] (kr) (8,4);
\draw[blue,dashed,postaction={decorate}] (8,4) to node[midway,above] (k) {$k$} (12,4);
\draw[blue,postaction={decorate}] (12,2) -- (6,2) .. controls (4,2) and (3,2) .. (3,4);
\coordinate (kk) at (12,3);
\coordinate (kkr) at (12,2.5);
\node[red] at (3,4) {$\times$};
\draw[yellow!80!black,double,thick] (k)--(k|-kk)--(kk);
\draw[yellow!80!black] (kr)--(kr|-kkr)--(kkr);
\node[yellow!80!black] at (12,2) {$\bullet$};
\end{tikzpicture}}}\\ 
 & = \vcenter{\hbox{
\begin{tikzpicture}[scale=0.22,every node/.style={scale=0.5},decoration={
    markings,
    mark=at position 0.5 with {\arrow{>}}}]
\draw[thick] (0,0) rectangle ++(12,8);
\draw[fill = green!30!white] (0,0) rectangle ++(6,8);
\draw[red,thick] (0,0)--(0,8);
\draw[red,thick] (12,0)--(12,8);
\draw[red,postaction={decorate}] (5,4) -- (8,4);
\draw[blue,postaction={decorate}] (3,4) -- (5,4);
\draw[blue,dashed,postaction={decorate}] (8,4) to node[midway,above] (k) {$k$} (12,4);
\draw[yellow!80!black] (12,1)--(2.5,1)--(2.5,5)--(4,5)--(4,4);
\coordinate (kk) at (12,3);
\coordinate (kkr) at (12,2.5);
\node[red] at (3,4) {$\times$};
\draw[yellow!80!black,double,thick] (k)--(k|-kk)--(kk);
\draw[yellow!80!black] (kr)--(kr|-kkr)--(kkr);
\end{tikzpicture}}}+ 
\vcenter{\hbox{
\begin{tikzpicture}[scale=0.22,every node/.style={scale=0.5},decoration={
    markings,
    mark=at position 0.5 with {\arrow{>}}}]
\draw[thick] (0,0) rectangle ++(12,8);
\draw[fill = green!30!white] (0,0) rectangle ++(6,8);
\draw[red,thick] (0,0)--(0,8);
\draw[red,thick] (12,0)--(12,8);
\draw[blue,dashed,postaction={decorate}] (8,4) to node[midway,above] (k) {$k$} (12,4);
\draw[red,postaction={decorate}] (3,4) -- (8,4);
\draw[blue,postaction={decorate}] (8,4) to[bend left] (12,6);
\draw[yellow!80!black] (12,2)--(2.5,2)--(2.5,5)--(9,5);
\draw[yellow!80!black] (12,2.5)--(4.5,2.5)--(4.5,4);
\coordinate (kk) at (12,3);
\coordinate (kkr) at (12,2.5);
\node[red] at (3,4) {$\times$};
\draw[yellow!80!black,double,thick] (k)--(k|-kk)--(kk);
\end{tikzpicture}}}- 
\vcenter{\hbox{
\begin{tikzpicture}[scale=0.22,every node/.style={scale=0.5},decoration={
    markings,
    mark=at position 0.5 with {\arrow{>}}}]
\draw[thick] (0,0) rectangle ++(12,8);
\draw[fill = green!30!white] (0,0) rectangle ++(6,8);
\draw[red,thick] (0,0)--(0,8);
\draw[red,thick] (12,0)--(12,8);
\draw[red,postaction={decorate}] (5,4) -- (8,4);
\draw[blue,postaction={decorate}] (3,4) -- (5,4);
\draw[blue,dashed,postaction={decorate}] (8,4) to node[midway,above] (k) {$k$} (12,4);
\draw[yellow!80!black] (12,2)--(6.5,2)--(6.5,4);
\draw[yellow!80!black] (12,1)--(4,1)--(4,4);
\coordinate (kk) at (12,3);
\coordinate (kkr) at (12,2.5);
\node[red] at (3,4) {$\times$};
\draw[yellow!80!black,double,thick] (k)--(k|-kk)--(kk);
\end{tikzpicture}}}-
\vcenter{\hbox{
\begin{tikzpicture}[scale=0.22,every node/.style={scale=0.5},decoration={
    markings,
    mark=at position 0.5 with {\arrow{>}}}]
\draw[thick] (0,0) rectangle ++(12,8);
\draw[fill = green!30!white] (0,0) rectangle ++(6,8);
\draw[red,thick] (0,0)--(0,8);
\draw[red,thick] (12,0)--(12,8);
\draw[red,postaction={decorate}] (3,4) to coordinate[pos=0.7] (kr) (8,4);
\draw[blue,dashed,postaction={decorate}] (8,4) to node[midway,above] (k) {$k$} (12,4);
\draw[blue,postaction={decorate}]  (8,4) .. controls (8,2) and (9,2) .. (10,2) -- (12,2);
\coordinate (kk) at (12,3);
\coordinate (kkr) at (12,2.5);
\node[red] at (3,4) {$\times$};
\draw[yellow!80!black,double,thick] (k)--(k|-kk)--(kk);
\draw[yellow!80!black] (kr)--(kr|-kkr)--(kkr);
\node[yellow!80!black] at (12,2) {$\bullet$};
\end{tikzpicture}}}
\end{align*}
Here is a step by step explanation of the above computation:
\begin{itemize}
\item The left hand side is a product of a class defined as in Rem. \ref{rmk_Def_diagrams} (3) (for points on the horizontal axis) with the interval (in blue, that runs once around the horizontal axis) on which one blue point is embedded. The yellow handles are used to choose a lift, the yellow point means a constant handle (fixed on this point). 
\item The first equality is an application of the cutting rule (Prop.~\ref{prop_breaking_rule}) to the blue arc. It splits it on the puncture $P$ in two pieces. 
\item Second equality is again two applications of the cutting rule to the blue arcs, expressing each of former terms into the sum of two. Notice that this time it is not cut at a puncture (see Rem.~\ref{R:cutting_in_chains}).
\end{itemize}
As puncture $P$ is transparent regarding the local system, loops only winding around it are trivial under $\rho_c$. One notes that first and last term of last four are the same: yellow handles are isotopic (up to $\rho_c$, i.e. their composition is trivial under $\rho_c$) so that by the handle rule (Rem.~\ref{rmk_handle_rule}) they are equal. We hence have:
\begin{align*} 
\vcenter{\hbox{ 
\begin{tikzpicture}[scale=0.22,every node/.style={scale=0.5},decoration={
    markings,
    mark=at position 0.3 with {\arrow{>}}}]
\draw[thick] (0,0) rectangle ++(12,8);
\draw[fill = green!30!white] (0,0) rectangle ++(6,8);
\draw[red,thick] (0,0)--(0,8);
\draw[red,thick] (12,0)--(12,8);
\draw[red,postaction={decorate}] (4,4) to coordinate[pos=0.6] (kr) (8,4);
\draw[blue,dashed,postaction={decorate}] (8,4) to node[midway,above] (k) {$k$} (12,4);
\draw[blue,postaction={decorate}] (12,2) -- (6,2) .. controls (0,2) and (0,6) .. (6,6) -- (12,6);
\node[yellow!80!black] at (12,2) {$\bullet$};
\coordinate (kk) at (12,3);
\coordinate (kkr) at (12,2.5);
\node[red] at (4,4) {$\times$};
\draw[yellow!80!black,double,thick] (k)--(k|-kk)--(kk);
\draw[yellow!80!black] (kr)--(kr|-kkr)--(kkr);
\end{tikzpicture}}}
& = 
\vcenter{\hbox{
\begin{tikzpicture}[scale=0.22,every node/.style={scale=0.5},decoration={
    markings,
    mark=at position 0.5 with {\arrow{>}}}]
\draw[thick] (0,0) rectangle ++(12,8);
\draw[fill = green!30!white] (0,0) rectangle ++(6,8);
\draw[red,thick] (0,0)--(0,8);
\draw[red,thick] (12,0)--(12,8);
\draw[blue,dashed,postaction={decorate}] (8,4) to node[midway,above] (k) {$k$} (12,4);
\draw[red,postaction={decorate}] (3,4) -- (8,4);
\draw[blue,postaction={decorate}] (8,4) to[bend left] (12,6);
\draw[yellow!80!black] (12,2)--(2.5,2)--(2.5,5)--(9,5);
\draw[yellow!80!black] (12,2.5)--(4.5,2.5)--(4.5,4);
\coordinate (kk) at (12,3);
\coordinate (kkr) at (12,2.5);
\node[red] at (3,4) {$\times$};
\draw[yellow!80!black,double,thick] (k)--(k|-kk)--(kk);
\end{tikzpicture}}}-
\vcenter{\hbox{
\begin{tikzpicture}[scale=0.22,every node/.style={scale=0.5},decoration={
    markings,
    mark=at position 0.5 with {\arrow{>}}}]
\draw[thick] (0,0) rectangle ++(12,8);
\draw[fill = green!30!white] (0,0) rectangle ++(6,8);
\draw[red,thick] (0,0)--(0,8);
\draw[red,thick] (12,0)--(12,8);
\draw[red,postaction={decorate}] (3,4) to coordinate[pos=0.7] (kr) (8,4);
\draw[blue,dashed,postaction={decorate}] (8,4) to node[midway,above] (k) {$k$} (12,4);
\draw[blue,postaction={decorate}]  (8,4) .. controls (8,2) and (9,2) .. (10,2) -- (12,2);
\coordinate (kk) at (12,3);
\coordinate (kkr) at (12,2.5);
\node[red] at (3,4) {$\times$};
\draw[yellow!80!black,double,thick] (k)--(k|-kk)--(kk);
\draw[yellow!80!black] (kr)--(kr|-kkr)--(kkr);
\node[yellow!80!black] at (12,2) {$\bullet$};
\end{tikzpicture}}}\\
& = 
q_{\alpha}^{-k} q_{\alpha,\beta}^{-1} \vcenter{\hbox{
\begin{tikzpicture}[scale=0.22,every node/.style={scale=0.5},decoration={
    markings,
    mark=at position 0.5 with {\arrow{>}}}]
\draw[thick] (0,0) rectangle ++(12,8);
\draw[fill = green!30!white] (0,0) rectangle ++(6,8);
\draw[red,thick] (0,0)--(0,8);
\draw[red,thick] (12,0)--(12,8);
\draw[blue,dashed,postaction={decorate}] (8,4) to node[midway,above] (k) {$k$} (12,4);
\draw[red,postaction={decorate}] (3,4) -- (8,4);
\draw[blue,postaction={decorate}] (8,4) to[bend left] (12,6);
\draw[yellow!80!black] (12,3.5)--(11,3.5)--(11,6);
\draw[yellow!80!black] (12,2.5)--(4.5,2.5)--(4.5,4);
\coordinate (kk) at (12,3);
\coordinate (kkr) at (12,2.5);
\node[red] at (3,4) {$\times$};
\draw[yellow!80!black,double,thick] (k)--(k|-kk)--(kk);
\end{tikzpicture}}}-
q_{\alpha,\beta} \vcenter{\hbox{
\begin{tikzpicture}[scale=0.22,every node/.style={scale=0.5},decoration={
    markings,
    mark=at position 0.5 with {\arrow{>}}}]
\draw[thick] (0,0) rectangle ++(12,8);
\draw[fill = green!30!white] (0,0) rectangle ++(6,8);
\draw[red,thick] (0,0)--(0,8);
\draw[red,thick] (12,0)--(12,8);
\draw[red,postaction={decorate}] (3,4) to coordinate[pos=0.7] (kr) (8,4);
\draw[blue,dashed,postaction={decorate}] (8,4) to node[pos=0.7,above] (k) {$k$} (12,4);
\draw[blue,postaction={decorate}]  (8,4) .. controls (8,2) and (9,2) .. (10,2) -- (12,2);
\coordinate (kk) at (12,1.5);
\coordinate (kkr) at (12,0.5);
\node[red] at (3,4) {$\times$};
\draw[yellow!80!black,double,thick] (k)--(k|-kk)--(kk);
\draw[yellow!80!black] (kr)--(kr|-kkr)--(kkr);
\draw[yellow!80!black] (10,2)--(10,1)--(12,1);
\end{tikzpicture}}}\\
& = 
(k+1)_{q_{\alpha}^{-1}} \left(q_{\alpha}^{-k} q_{\alpha,\beta}^{-1} - q_{\alpha,\beta}\right) \vcenter{\hbox{
\begin{tikzpicture}[scale=0.22,every node/.style={scale=0.5},decoration={
    markings,
    mark=at position 0.5 with {\arrow{>}}}]
\draw[thick] (0,0) rectangle ++(12,8);
\draw[fill = green!30!white] (0,0) rectangle ++(6,8);
\draw[red,thick] (0,0)--(0,8);
\draw[red,thick] (12,0)--(12,8);
\draw[blue,dashed,postaction={decorate}] (8,4) to node[midway,above] (k) {$k$} (12,4);
\draw[red,postaction={decorate}] (3,4) -- (8,4);
\draw[yellow!80!black] (12,2.5)--(4.5,2.5)--(4.5,4);
\coordinate (kk) at (12,3);
\coordinate (kkr) at (12,2.5);
\node[red] at (3,4) {$\times$};
\draw[yellow!80!black,double,thick] (k)--(k|-kk)--(kk);
\end{tikzpicture}}}
\end{align*}
In chronological order of operations:
\begin{itemize}
\item First line is the starting point from previous observations,
\item second line is a reorganization of handles using handle rule (Rem.~\ref{rmk_handle_rule}),
\item last equality comes from a fusion rule (Prop.~\ref{prop_fusion_rules}).
\end{itemize}
Finally, the result is the following, for all $k \in \BN$:
\begin{equation}\label{proof_squashing_recursion}
\vcenter{\hbox{ 
\begin{tikzpicture}[scale=0.22,every node/.style={scale=0.5},decoration={
    markings,
    mark=at position 0.3 with {\arrow{>}}}]
\draw[thick] (0,0) rectangle ++(12,8);
\draw[fill = green!30!white] (0,0) rectangle ++(6,8);
\draw[red,thick] (0,0)--(0,8);
\draw[red,thick] (12,0)--(12,8);
\draw[red,postaction={decorate}] (4,4) to coordinate[pos=0.6] (kr) (8,4);
\draw[blue,dashed,postaction={decorate}] (8,4) to node[midway,above] (k) {$k$} (12,4);
\draw[blue,postaction={decorate}] (12,2) -- (6,2) .. controls (0,2) and (0,6) .. (6,6) -- (12,6);
\node[yellow!80!black] at (12,2) {$\bullet$};
\coordinate (kk) at (12,3);
\coordinate (kkr) at (12,2.5);
\node[red] at (4,4) {$\times$};
\draw[yellow!80!black,double,thick] (k)--(k|-kk)--(kk);
\draw[yellow!80!black] (kr)--(kr|-kkr)--(kkr);
\end{tikzpicture}}}
=
(k+1)_{q_{\alpha}^{-1}} \left(q_{\alpha}^{-k} q_{\alpha,\beta}^{-1} - q_{\alpha,\beta}\right) \vcenter{\hbox{
\begin{tikzpicture}[scale=0.22,every node/.style={scale=0.5},decoration={
    markings,
    mark=at position 0.5 with {\arrow{>}}}]
\draw[thick] (0,0) rectangle ++(12,8);
\draw[fill = green!30!white] (0,0) rectangle ++(6,8);
\draw[red,thick] (0,0)--(0,8);
\draw[red,thick] (12,0)--(12,8);
\draw[blue,dashed,postaction={decorate}] (8,4) to node[midway,above] (k) {$k$} (12,4);
\draw[red,postaction={decorate}] (3,4) -- (8,4);
\draw[yellow!80!black] (12,2.5)--(4.5,2.5)--(4.5,4);
\coordinate (kk) at (12,3);
\coordinate (kkr) at (12,2.5);
\node[red] at (3,4) {$\times$};
\draw[yellow!80!black,double,thick] (k)--(k|-kk)--(kk);
\end{tikzpicture}}}
\end{equation}
By a direct recursion, one finds:
\[
\qSerre_{\alpha,\beta}^{k} = (k)_{q_{\alpha}^{-1}}! \prod_{l=0}^{k-1} \left(q_{\alpha}^{-l} q_{\alpha,\beta}^{-1} - q_{\alpha,\beta}\right) \vcenter{\hbox{
\begin{tikzpicture}[scale=0.22,every node/.style={scale=0.5},decoration={
    markings,
    mark=at position 0.5 with {\arrow{>}}}]
\draw[thick] (0,0) rectangle ++(12,8);
\draw[fill = green!30!white] (0,0) rectangle ++(6,8);
\draw[red,thick] (0,0)--(0,8);
\draw[red,thick] (12,0)--(12,8);
\draw[blue,dashed,postaction={decorate}] (8,4) to node[midway,above] (k) {$k$} (12,4);
\draw[red,postaction={decorate}] (3,4) -- (8,4);
\draw[yellow!80!black] (12,2.5)--(4.5,2.5)--(4.5,4);
\coordinate (kk) at (12,3);
\coordinate (kkr) at (12,2.5);
\node[red] at (3,4) {$\times$};
\draw[yellow!80!black,double,thick] (k)--(k|-kk)--(kk);
\end{tikzpicture}}} .
\]
Using Lemma \ref{lemma_qSerre_divpower_setup}, we must simplify by $(k)_{q_{\alpha}}!$ to obtain $\qSerre_{\alpha,\beta}^{(k)}$ which concludes the proof. 
\end{proof}

The second expression is obtained by \textit{partitioning} the involved homology class. 

\begin{prop}[Partitioning]\label{prop_partition}
In ${\CHbm_c}'$ we have the following equality:
\[
\qSerre_{\alpha,\beta}^{(k)} = \sum_{l=0}^k (-1)^{k-l}  q_{\alpha, \beta}^{-l} q_{\alpha}^{-l(l-1)/2} 
\vcenter{\hbox{
\begin{tikzpicture}[scale=0.3,every node/.style={scale=0.6},decoration={
    markings,
    mark=at position 0.5 with {\arrow{>}}}]
\draw[thick] (0,0) rectangle ++(12,8);
\draw[fill = green!30!white] (0,0) rectangle ++(6,8);
\draw[red,thick] (0,0)--(0,8);
\draw[red,thick] (12,0)--(12,8);
\draw[red,postaction={decorate}]  (4,4) -- (12,4);
\node[red,above right] at (4,4) {$P$};
\node[red] at (4,4) {$\times$};
\coordinate (ll) at (12,3);
\coordinate (kll) at (12,7);
\draw[blue,dashed,postaction={decorate}] (0,6) to node[pos=0.7,below] (kl) {$l$} (12,6);
\draw[blue,dashed,postaction={decorate}] (0,2) to node[pos=0.7,below] (l) {$k-l$} (12,2);
\draw[yellow!80!black,double,thick] (l)--(l|-ll)--(ll);
\draw[yellow!80!black,double,thick] (kl)--(kl|-kll)--(kll);
\node[right,yellow!80!black] at (12,4) {$q$};
\node[yellow!80!black] at (12,4) {$\bullet$};
\end{tikzpicture}}}
\]
\end{prop}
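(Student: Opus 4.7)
The strategy parallels the proof of Prop.~\ref{prop_squashing}, but rather than squashing the wrapping blue arc onto the red $\beta$-arc, the plan is to push it off both sides of the puncture $P$, distributing the $k$ blue points between an upper arc (above $P$) and a lower arc (below $P$). Concretely, starting from the single-arc presentation $\qSerre^{(k)}_{\alpha,\beta}$ of Lemma~\ref{lemma_qSerre_divpower_setup}, the key first step is to apply the boundary cutting rule of Prop.~\ref{prop_breaking_rule}~(2) to the $k$-labeled dashed blue arc against the left vertical component of $\partial^- D$: since the arc wraps around $P$ and approaches this boundary, the rule expresses the class as $\sum_{l=0}^k$ of diagrams where the upper arc carries $l$ points and the lower arc carries $k-l$ points, each now running horizontally across the disk and separated by the red $\beta$-arc.

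The remainder of the argument consists in normalizing each summand into the form appearing on the RHS. The lower arc emerges from the cutting with an orientation opposite to the left-to-right convention of the RHS; reversing it and applying Rem.~\ref{rmk_sign_change} (to combine the simplex-orientation sign with the $\iota_q$-twist of the writhe monodromy) produces the sign $(-1)^{k-l}$. Next, the original multi-handle must be split and moved into its canonical position: sliding the handle of the upper arc to the right boundary at the top forces each of its $l$ strands to cross the region swept by the red $\beta$-arc, and Rem.~\ref{rmk_handle_rule} then contributes a monodromy of $q_{\alpha,\beta}^{-1}$ per strand, producing the total factor $q_{\alpha,\beta}^{-l}$. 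The remaining prefactor $q_\alpha^{-l(l-1)/2}$ is the divided-power normalization of Def.~\ref{def_div_powers_Fs} for the $l$-point upper arc.

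The main obstacle is the simultaneous bookkeeping of the yellow multi-handles and of the interaction between simplex-orientation signs and local-system monodromies through the cutting, handle-splitting, orientation-reversal and sliding operations: every local adjustment produces a combination of a sign from Rem.~\ref{rmk_sign_change} and a monodromy factor from Rem.~\ref{rmk_handle_rule}, and one has to check that all spurious contributions cancel and that the total telescopes to precisely $(-1)^{k-l} q_{\alpha,\beta}^{-l} q_\alpha^{-l(l-1)/2}$ with no stray factor of $q_\alpha^{d_\alpha}$ surviving. A convenient consistency check is to combine the resulting partition formula with the squashing formula of Prop.~\ref{prop_squashing}: equating both expressions for $\qSerre^{(k)}_{\alpha,\beta}$ in ${\CHbm_c}'$ must, after expansion via Prop.~\ref{structure_result}, reproduce the quantum Serre relation on divided powers, which pins down the scalar coefficients independently.
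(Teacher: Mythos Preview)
Your overall approach is exactly the paper's: apply the boundary cutting rule Prop.~\ref{prop_breaking_rule}~(2) to the single dashed $k$-arc of $\qSerre^{(k)}_{\alpha,\beta}$ against the left component of $\partial^-D$, then reorganize the handles via Rem.~\ref{rmk_handle_rule}. The paper's proof is two lines: cut (producing the sign $(-1)^{k-l}$), then handle rule (producing both $q_{\alpha,\beta}^{-l}$ and $q_\alpha^{-l(l-1)/2}$).

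Your account of where $q_\alpha^{-l(l-1)/2}$ comes from is wrong, however. It is \emph{not} the divided-power normalization of Def.~\ref{def_div_powers_Fs}: that normalization is $q^{-d_\alpha l(l-1)/4}$, off by a factor of two in the exponent, and in any case the RHS of the proposition is written in terms of bare dashed-arc diagrams, not in terms of $\CF_\alpha^{(l)}$. The factor $q_\alpha^{-l(l-1)/2}$ arises entirely from the handle rule: after the cut, the multi-handle of the upper $l$-arc reaches the right boundary \emph{below} the red $\beta$-arc (it inherits its position from the original handle of $\qSerre^{(k)}_{\alpha,\beta}$), and moving it to its canonical position \emph{above} reverses the vertical order of its $l$ strands, a half-twist contributing $\binom{l}{2}$ self-crossings among $\alpha$-colored strands and hence $q_\alpha^{-l(l-1)/2}$. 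The crossing of the red arc by those same $l$ strands gives the $q_{\alpha,\beta}^{-l}$, as you say.

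Finally, your proposed ``consistency check'' via the quantum Serre relation is circular: Prop.~\ref{prop_QuantumSerre} is \emph{derived} by equating the two expressions from Prop.~\ref{prop_squashing} and the present proposition, so it cannot be used to pin down the coefficients here.
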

\begin{proof}
In ${\CHbm_c}'$ we have:
\begin{align*}
\vcenter{\hbox{
\begin{tikzpicture}[scale=0.22,every node/.style={scale=0.6},decoration={
    markings,
    mark=at position 0.5 with {\arrow{>}}}]
\draw[thick] (0,0) rectangle ++(12,8);
\draw[fill = green!30!white] (0,0) rectangle ++(6,8);
\draw[red,thick] (0,0)--(0,8);
\draw[red,thick] (12,0)--(12,8);
\draw[red,postaction={decorate}]  (4,4) -- (12,4);
\node[red,above right] at (4,4) {$P$};
\node[red] at (4,4) {$\times$};
\coordinate (kk) at (12,3);
\draw[blue,dashed,postaction={decorate}] (12,2) to coordinate[midway] (k) (6,2) .. controls (0,2) and (0,6) .. (6,6) to node[midway,above] {$k$} (12,6);
\draw[yellow!80!black,double,thick] (k)--(k|-kk)--(kk);
\node[right,red] at (12,4) {$q$};
\end{tikzpicture}}}
& = \sum_{l=0}^k (-1)^{k-l}
\vcenter{\hbox{
\begin{tikzpicture}[scale=0.22,every node/.style={scale=0.5},decoration={
    markings,
    mark=at position 0.5 with {\arrow{>}}}]
\draw[thick] (0,0) rectangle ++(12,8);
\draw[fill = green!30!white] (0,0) rectangle ++(6,8);
\draw[red,thick] (0,0)--(0,8);
\draw[red,thick] (12,0)--(12,8);
\draw[red,postaction={decorate}]  (4,4) -- (12,4);
\node[red,above right] at (4,4) {$P$};
\node[red] at (4,4) {$\times$};
\coordinate (ll) at (12,2.5);
\coordinate (kll) at (12,3);
\draw[blue,dashed,postaction={decorate}] (0,6) to node[pos=0.2,above] (kl) {$l$} (12,6);
\draw[blue,dashed,postaction={decorate}] (0,2) to node[pos=0.7,below] (l) {$k-l$} (12,2);
\draw[yellow!80!black,double,thick] (l)--(l|-ll)--(ll);
\draw[yellow!80!black,double,thick] (kl)--(kl|-kll)--(kll);
\node[right,yellow!80!black] at (12,4) {$q$};
\node[yellow!80!black] at (12,4) {$\bullet$};
\end{tikzpicture}}} \\
& = \sum_{l=0}^k (-1)^{k-l}  q_{\alpha, \beta}^{-l} q_{\alpha}^{-l(l-1)/2} 
\vcenter{\hbox{
\begin{tikzpicture}[scale=0.22,every node/.style={scale=0.5},decoration={
    markings,
    mark=at position 0.5 with {\arrow{>}}}]
\draw[thick] (0,0) rectangle ++(12,8);
\draw[fill = green!30!white] (0,0) rectangle ++(6,8);
\draw[red,thick] (0,0)--(0,8);
\draw[red,thick] (12,0)--(12,8);
\draw[red,postaction={decorate}]  (4,4) -- (12,4);
\node[red,above right] at (4,4) {$P$};
\node[red] at (4,4) {$\times$};
\coordinate (ll) at (12,3);
\coordinate (kll) at (12,7);
\draw[blue,dashed,postaction={decorate}] (0,6) to node[pos=0.7,below] (kl) {$l$} (12,6);
\draw[blue,dashed,postaction={decorate}] (0,2) to node[pos=0.7,below] (l) {$k-l$} (12,2);
\draw[yellow!80!black,double,thick] (l)--(l|-ll)--(ll);
\draw[yellow!80!black,double,thick] (kl)--(kl|-kll)--(kll);
\node[right,yellow!80!black] at (12,4) {$q$};
\node[yellow!80!black] at (12,4) {$\bullet$};
\end{tikzpicture}}}
\end{align*}
The first equality is an application of the cutting rule (Prop.~\ref{prop_breaking_rule}~(2)). The second one is a reorganization of yellow handles, coefficients showing up are deduced from the handle rule (Rem.~\ref{rmk_handle_rule}). It concludes the proof. 
\end{proof}

\subsubsection{Quantum-Serre relation}\label{s:quantum_serre}

We begin with a corollary of the two propositions from the previous section, where we can relax working in ${\CHbm_c}'$ and work in $\CHbm_c$ instead. 

\begin{coro}\label{coro_ident_partition_squash}
For all $k \in \BN$, the following holds in $\CHbm_c$:
\[
\sum_{l=0}^k (-1)^{k-l}  q_{\alpha, \beta}^{-l} q_{\alpha}^{-l(l-1)/2} 
\vcenter{\hbox{
\begin{tikzpicture}[scale=0.3,every node/.style={scale=0.6},decoration={
    markings,
    mark=at position 0.5 with {\arrow{>}}}]
\draw[thick] (0,0) rectangle ++(12,8);
\draw[red,thick] (0,0)--(0,8);
\draw[red,thick] (12,0)--(12,8);
\draw[red,postaction={decorate}]  (0,4) -- (12,4);
\coordinate (ll) at (12,3);
\coordinate (kll) at (12,7);
\draw[blue,dashed,postaction={decorate}] (0,6) to node[pos=0.7,below] (kl) {$l$} (12,6);
\draw[blue,dashed,postaction={decorate}] (0,2) to node[pos=0.7,below] (l) {$k-l$} (12,2);
\draw[yellow!80!black,double,thick] (l)--(l|-ll)--(ll);
\draw[yellow!80!black,double,thick] (kl)--(kl|-kll)--(kll);
\node[right,yellow!80!black] at (12,4) {$q$};
\node[yellow!80!black] at (12,4) {$\bullet$};
\end{tikzpicture}}} 
= 
q_{\alpha}^{\frac{-k(k-1}{2}}\prod_{l=0}^{k-1} \left( q_{\alpha}^{-l} q_{\alpha,\beta}^{-1} - q_{\alpha,\beta} \right)  \vcenter{\hbox{
\begin{tikzpicture}[scale=0.3,decoration={
    markings,
    mark=at position 0.5 with {\arrow{>}}}]
\draw[thick] (0,0) rectangle ++(12,8);
\draw[red,thick] (0,0)--(0,8);
\draw[red,thick] (12,0)--(12,8);
\draw[red,postaction={decorate}]  (0,4) to coordinate[pos=0.6] (kr) (6,4);
\draw[blue,dashed,postaction={decorate}]  (6,4) to node[midway,above] (k) {$k$} (12,4);
\coordinate (kk) at (12,2);
\coordinate (kkr) at (12,1);
\draw[yellow!80!black,double,thick] (k)--(k|-kk)--(kk);
\draw[yellow!80!black] (kr)--(kr|-kkr)--(kkr);
\end{tikzpicture}}}
\]
\end{coro}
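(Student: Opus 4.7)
The plan is immediate from the two preceding propositions combined with the excision isomorphism mentioned just before Lemma~\ref{lemma_qSerre_divpower_setup}. Propositions~\ref{prop_squashing} and~\ref{prop_partition} both compute the class $\qSerre_{\alpha,\beta}^{(k)} \in {\CHbm_c}'$ --- the first by squashing the nested blue arcs around the puncture $P$ via iterated applications of the cutting and handle rules, the second by partitioning them into two bundles on either side of the red arc. Equating the two expressions already yields the desired identity, but a priori only in the homology ${\CHbm_c}'$ of the punctured configuration space rather than in $\CHbm_c$.

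The second step is to transport this identity along the excision isomorphism ${\CHbm_c}' \simeq \CHbm_c$. Under this map, each relative cycle in $(\Conf_c(P), S'(P))$ maps to itself viewed as a relative cycle in $(\Conf_c, S)$. In the corollary the diagrams no longer display $P$, and the left endpoint of the red arc has been isotoped to the left vertical portion of $\partial^-D$. I would verify that this isotopy, which takes place in a region of the disk containing none of the other coordinates of the cycle, sweeps out a handle that is trivial under $\rho_c \circ \CW_c$, so by the handle rule (Remark~\ref{rmk_handle_rule}) the two representatives define the same class in $\CHbm_c$.

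The only real content to check is this last point, namely that replacing the red arc's left endpoint $P$ by a point on $\partial^-D$ does not introduce any extra coefficient from the local system. Since the red arc carries a single point colored by $\beta$, the isotopy amounts to translating that lone coordinate along a path disjoint from every other coordinate of the representative cycle. No relative writhe is accumulated between the moving $\beta$-point and any $\alpha$-point, so the monodromy contribution along the sweep is trivial. Hence the identity of Propositions~\ref{prop_squashing} and~\ref{prop_partition} transfers verbatim to $\CHbm_c$ with exactly the coefficients stated, and the corollary follows.
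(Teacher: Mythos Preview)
Your proposal is correct and follows exactly the paper's approach: equate the two expressions for $\qSerre_{\alpha,\beta}^{(k)}$ furnished by Propositions~\ref{prop_squashing} and~\ref{prop_partition} in ${\CHbm_c}'$, then transport along the isomorphism ${\CHbm_c}' \simeq \CHbm_c$. The paper's own proof is just these two sentences.

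One small remark on your last paragraph: the justification you give via the handle rule is not wrong, but it is not the most direct framing. The isomorphism ${\CHbm_c}' \simeq \CHbm_c$ is the composite of the excision map (inclusion induced) with the inverse of the inclusion $(\Conf_c,S)\hookrightarrow(\Conf_c,S')$, the latter being realized by the horizontal retraction of the green thickening $\underline{\partial^-D}$ onto $\partial^-D$. Under that retraction the left end of the red arc (sitting at $P$ inside the green region) is simply carried to the left vertical side of the square, and the blue arcs and their handles are carried along with no interaction. So the passage from the primed diagrams to the unprimed ones is literally the image under a chain-level retraction, and no separate monodromy computation is needed. Your argument that the sweep is disjoint from the $\alpha$-coordinates is of course what makes this retraction harmless, so the content is the same; it is just cleaner to phrase it as ``this is what the isomorphism does to the diagrams'' rather than as an application of Remark~\ref{rmk_handle_rule}.
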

\begin{proof}
A summary of Propositions~\ref{prop_squashing} and \ref{prop_partition} both expressing $\qSerre_{\alpha,\beta}^{(k)}$ in ${\CHbm_c}'$ is the following:
\[
\sum_{l=0}^k (-1)^{k-l}  q_{\alpha, \beta}^{-l} q_{\alpha}^{-l(l-1)/2} 
\vcenter{\hbox{
\begin{tikzpicture}[scale=0.3,every node/.style={scale=0.6},decoration={
    markings,
    mark=at position 0.5 with {\arrow{>}}}]
\draw[thick] (0,0) rectangle ++(12,8);
\draw[fill = green!30!white] (0,0) rectangle ++(6,8);
\draw[red,thick] (0,0)--(0,8);
\draw[red,thick] (12,0)--(12,8);
\draw[red,postaction={decorate}]  (4,4) -- (12,4);
\node[red,above right] at (4,4) {$P$};
\node[red] at (4,4) {$\times$};
\coordinate (ll) at (12,3);
\coordinate (kll) at (12,7);
\draw[blue,dashed,postaction={decorate}] (0,6) to node[pos=0.7,below] (kl) {$l$} (12,6);
\draw[blue,dashed,postaction={decorate}] (0,2) to node[pos=0.7,below] (l) {$k-l$} (12,2);
\draw[yellow!80!black,double,thick] (l)--(l|-ll)--(ll);
\draw[yellow!80!black,double,thick] (kl)--(kl|-kll)--(kll);
\node[right,yellow!80!black] at (12,4) {$q$};
\node[yellow!80!black] at (12,4) {$\bullet$};
\end{tikzpicture}}} 
= 
q_{\alpha}^{\frac{-k(k-1}{2}}\prod_{l=0}^{k-1} \left( q_{\alpha}^{-l} q_{\alpha,\beta}^{-1} - q_{\alpha,\beta} \right)  \vcenter{\hbox{
\begin{tikzpicture}[scale=0.3,decoration={
    markings,
    mark=at position 0.5 with {\arrow{>}}}]
\draw[thick] (0,0) rectangle ++(12,8);
\draw[fill = green!30!white] (0,0) rectangle ++(6,8);
\draw[red,thick] (0,0)--(0,8);
\draw[red,thick] (12,0)--(12,8);
\draw[red,postaction={decorate}]  (4,4) to coordinate[pos=0.6] (kr) (8,4);
\draw[blue,dashed,postaction={decorate}]  (8,4) to node[midway,above] (k) {$k$} (12,4);
\node[red] at (4,4) {$\times$};
\coordinate (kk) at (12,2);
\coordinate (kkr) at (12,1);
\draw[yellow!80!black,double,thick] (k)--(k|-kk)--(kk);
\draw[yellow!80!black] (kr)--(kr|-kkr)--(kkr);
\end{tikzpicture}}}
\]
which are two expressions for $\qSerre_{\alpha,\beta}^{(k)}$ only defined in ${\CHbm_c}'$. Under the isomorphism $\CHbm_c \simeq {\CHbm_c}'$, we get the corollary.
\end{proof}

\begin{prop}[Quantum Serre relations]\label{prop_QuantumSerre}
Let $\alpha_i$ and $\alpha_j$ be two different roots in $\Pi$. We recall that $a_{i,j} := \frac{(\alpha_i, \alpha_j)}{d_i}$. We have:
\[
\sum_{l=0}^{1-a_{i,j}} (-1)^l \CF_{\alpha}^{(l)} \CF_{\beta}^{(1)} \CF_{\alpha}^{(1-a_{i,j})-l)} = 0
\]
\end{prop}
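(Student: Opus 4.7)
The plan is to prove this by specializing Corollary~\ref{coro_ident_partition_squash} to $k := 1 - a_{i,j}$ and showing that the squashing side vanishes while the partitioning side unfolds into the claimed quantum Serre sum. Concretely, recall $q_{\alpha}=q^{d_i}$, $q_{\alpha,\beta}=q^{(\alpha_i,\alpha_j)/2}$, and $(\alpha_i,\alpha_j)=d_i a_{i,j}$, and note that $-a_{i,j}\in\{0,1,\ldots,k-1\}$ since $a_{i,j}\le 0$.

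The key observation is that the squashing coefficient
$\prod_{l=0}^{k-1}\left(q_{\alpha}^{-l} q_{\alpha,\beta}^{-1} - q_{\alpha,\beta}\right)$
contains the factor at $l=-a_{i,j}$, which evaluates to
$q_{\alpha}^{a_{i,j}} q_{\alpha,\beta}^{-1} - q_{\alpha,\beta}
= q^{d_i a_{i,j} - (\alpha_i,\alpha_j)/2} - q^{(\alpha_i,\alpha_j)/2}
= q^{(\alpha_i,\alpha_j)/2} - q^{(\alpha_i,\alpha_j)/2} = 0.$
Hence the squashing expression vanishes identically in $\CHbm_{k\alpha+\beta}$, and Corollary~\ref{coro_ident_partition_squash} forces the partitioning expression to vanish as well:
\[
\sum_{l=0}^k (-1)^{k-l} q_{\alpha,\beta}^{-l}\, q_{\alpha}^{-l(l-1)/2}\; \mathrm{Diag}(l) \;=\; 0,
\]
where $\mathrm{Diag}(l)$ denotes the partition diagram with $l$ dashed $\alpha$-pearls on top, one $\beta$-arc in the middle, and $k-l$ dashed $\alpha$-pearls on the bottom.

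Next I would identify $\mathrm{Diag}(l)$ with a product of divided powers. By the stacking definition of the product (Section~\ref{sec_product}) and Definition~\ref{def_div_powers_Fs}, after adjusting the yellow handles via Remark~\ref{rmk_handle_rule} to bring each sub-diagram into the handle convention of the $\CF_{\alpha}^{(\bullet)}$'s, one obtains an equality of the form
\[
\mathrm{Diag}(l) \;=\; \lambda_l\cdot \CF_{\alpha}^{(l)}\,\CF_{\beta}^{(1)}\,\CF_{\alpha}^{(k-l)},
\]
where $\lambda_l$ is an explicit monomial in $q_\alpha$ and $q_{\alpha,\beta}$ produced by (a)~the $q_{\alpha}^{-d_\alpha l(l-1)/4}$ and $q_{\alpha}^{-d_\alpha (k-l)(k-l-1)/4}$ normalizations from Definition~\ref{def_div_powers_Fs}, and (b)~the writhe picked up when sliding the shared-disk handles into the top- and bottom-sub-disk handles required by the product.

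Substituting into the vanishing relation and collecting the $l$-dependent monomials $(-1)^{k-l} q_{\alpha,\beta}^{-l} q_{\alpha}^{-l(l-1)/2}\lambda_l$, I expect all the $q$-dependence to telescope into a single overall nonzero Laurent monomial times $(-1)^l$ (times a further sign $(-1)^k$ that factors out), yielding
\[
C\cdot \sum_{l=0}^{1-a_{i,j}} (-1)^l\, \CF_{\alpha}^{(l)}\,\CF_{\beta}^{(1)}\,\CF_{\alpha}^{(1-a_{i,j}-l)} \;=\; 0
\]
with $C\in\Laurent^{\times}$, from which the proposition follows by cancelling $C$. The main obstacle is precisely this coefficient bookkeeping: carefully tracking the handle-rule corrections of Remark~\ref{rmk_handle_rule} and the divided-power prefactors so that $\lambda_l$ combines with $(-1)^{k-l} q_{\alpha,\beta}^{-l} q_{\alpha}^{-l(l-1)/2}$ into a single sign $(-1)^l$ up to the common nonzero scalar $C$. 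This is a routine but delicate verification using $q_{\alpha}^{1/2}$-quantum-integer identities and the writhe rules of Notation~\ref{N:writhe_etc}.
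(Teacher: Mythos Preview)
Your proposal is correct and follows essentially the same approach as the paper: specialize Corollary~\ref{coro_ident_partition_squash} at $k=1-a_{i,j}$, kill the squashing side via the factor at $l=-a_{i,j}$, and rewrite the partitioning diagrams as $\CF_{\alpha}^{(l)}\CF_{\beta}^{(1)}\CF_{\alpha}^{(k-l)}$ up to monomial prefactors coming from Definition~\ref{def_div_powers_Fs}. The paper does exactly the coefficient bookkeeping you describe as ``routine but delicate'': using $q_{\alpha,\beta}^{-l}=q^{d_\alpha l(k-1)/2}$ (from $(\alpha_i,\alpha_j)=d_\alpha a_{i,j}=d_\alpha(1-k)$), the $l$-dependent exponents collapse via $\frac{l(k-1)}{2}+\frac{(k-l)(k-l-1)}{4}-\frac{l(l-1)}{4}=\frac{k(k-1)}{4}$, leaving the common nonzero factor $(-1)^k q_\alpha^{k(k-1)/4}$ and the bare alternating sum. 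One small slip: your prefactor should read $q^{-d_\alpha l(l-1)/4}$ (equivalently $q_\alpha^{-l(l-1)/4}$), not $q_\alpha^{-d_\alpha l(l-1)/4}$.
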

\begin{proof}
It is a consequence of the equality in Coro. \ref{coro_ident_partition_squash} evaluated for $k= 1-a_{i,j}$. For this value of $k$, the RHS vanishes as:
\[
q_{\alpha_i}^{a_{i,j}} q_{\alpha_i,\beta_i}^{-1} - q_{\alpha_i, \beta_j} = q^{d_i \frac{(\alpha_i,\alpha_j)}{d_i} - \frac{(\alpha_i,\alpha_j)}{2}} - q^{(\alpha_i, \alpha_j)/2} = 0
\]
is a factor (the last) of the involved product. The LHS provides the left hand linear combination of the proposition. Namely, the RHS is:
\begin{equation}\label{proof_prop_qSerre_LHS}
\sum_{l=0}^k (-1)^{k-l}  q_{\alpha, \beta}^{-l} q_{\alpha}^{-l(l-1)/2} 
\vcenter{\hbox{
\begin{tikzpicture}[scale=0.3,every node/.style={scale=0.6},decoration={
    markings,
    mark=at position 0.5 with {\arrow{>}}}]
\draw[thick] (0,0) rectangle ++(12,8);
\draw[red,thick] (0,0)--(0,8);
\draw[red,thick] (12,0)--(12,8);
\draw[red,postaction={decorate}]  (0,4) -- (12,4);
\coordinate (ll) at (12,3);
\coordinate (kll) at (12,7);
\draw[blue,dashed,postaction={decorate}] (0,6) to node[pos=0.7,below] (kl) {$l$} (12,6);
\draw[blue,dashed,postaction={decorate}] (0,2) to node[pos=0.7,below] (l) {$k-l$} (12,2);
\draw[yellow!80!black,double,thick] (l)--(l|-ll)--(ll);
\draw[yellow!80!black,double,thick] (kl)--(kl|-kll)--(kll);
\node[right,yellow!80!black] at (12,4) {$q$};
\node[yellow!80!black] at (12,4) {$\bullet$};
\end{tikzpicture}}}
\end{equation}
for $k:= -a_{i,j} + 1$. From the product defined in Sec. \ref{sec_product} and the divided powers defined in Def. \ref{def_div_powers_Fs}, one notices that:
\[
\vcenter{\hbox{
\begin{tikzpicture}[scale=0.3,every node/.style={scale=0.6},decoration={
    markings,
    mark=at position 0.5 with {\arrow{>}}}]
\draw[thick] (0,0) rectangle ++(12,8);
\draw[red,thick] (0,0)--(0,8);
\draw[red,thick] (12,0)--(12,8);
\draw[red,postaction={decorate}]  (0,4) -- (12,4);
\coordinate (ll) at (12,3);
\coordinate (kll) at (12,7);
\draw[blue,dashed,postaction={decorate}] (0,6) to node[pos=0.7,below] (kl) {$k-l$} (12,6);
\draw[blue,dashed,postaction={decorate}] (0,2) to node[pos=0.7,below] (l) {$l$} (12,2);
\draw[yellow!80!black,double,thick] (l)--(l|-ll)--(ll);
\draw[yellow!80!black,double,thick] (kl)--(kl|-kll)--(kll);
\node[right,yellow!80!black] at (12,4) {$q$};
\node[yellow!80!black] at (12,4) {$\bullet$};
\end{tikzpicture}}}=q^{d_{\alpha} \frac{l(l-1)}{2}} \CF_{\alpha}^{(l)} \CF_{\beta}^{(1)} q^{d_{\alpha} \frac{(k-l)(k-l-1}{2}} \CF_{\alpha}^{(k-l)} 
\]
Finally, recalling that $q_{\alpha,\beta} = q^{d_{\alpha} (1-k)}$, Eq.~\eqref{proof_prop_qSerre_LHS} is:
\begin{align}
\sum_{l=0}^k (-1)^{k-l}  q_{\alpha, \beta}^{-l} q_{\alpha}^{-l(l-1)/2} q^{d_{\alpha} \frac{l(l-1)}{4}} \CF_{\alpha}^{(l)} \CF_{\beta}^{(1)} q^{d_{\alpha} \frac{(k-l)(k-l-1}{4}} \CF_{\alpha}^{(k-l)} & 
= \sum_{l=0}^k (-1)^{k-l} q_{\alpha}^{\frac{1}{2} \left( l(k-1) + \frac{(k-l)(k-l-1)-l(l-1)}{2} \right)} \CF_{\alpha}^{(l)} \CF_{\beta}^{(1)} \CF_{\alpha}^{(k-l)}\\
& = \sum_{l=0}^k (-1)^{k-l} q_{\alpha}^{\frac{1}{2} \left( l(k-1) + l(1-k) + \frac{k(k-1)}{2} \right)} \CF_{\alpha}^{(l)} \CF_{\beta}^{(1)} \CF_{\alpha}^{(k-l)}\\
& = (-1)^kq_{\alpha}^{\frac{k(k-1)}{4}} \sum_{l=0}^k (-1)^l \CF_{\alpha}^{(l)} \CF_{\beta}^{(1)} \CF_{\alpha}^{(k-l)}
\end{align}
which concludes the proof. 
\end{proof}

\begin{thm}[Homological and integral version of $U_q^{<0} \fg$]\label{T:homological_version_for_Uqgm}
Let $\Pi$ be a root system, and $\Coloring_{\Pi}$ be the set of colorings by $\Pi$. For $\mq := q^{\frac{1}{2}}$, there is an algebra morphism:
\[
\bapp
U_{\mq}^{<0} \fg & \to & \CHbm:= \bigoplus_{c \in \Coloring_{\Pi}} \CHbm_c \\
F_{\alpha}^{(k)} & \mapsto & \CF_{\alpha}^{(k)} \text{ for all } \alpha \in \Pi
\eapp .
\]
Above algebras are $\BZ \left[\mq^{\pm 1} \right]$ resp. $\BZ \left[q^{\pm 1} \right]$ algebras. 
\end{thm}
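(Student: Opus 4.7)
The strategy is to view $\CHbm$ as a graded $\BZ[\mq^{\pm 1}]$-algebra and verify that the defining relations of the Lusztig integral form $U_{\mq}^{<0}\fg$ hold on the classes $\CF_\alpha^{(k)}$. Recall that $U_{\mq}^{<0}\fg$ admits a presentation with generators $F_\alpha^{(k)}$ ($\alpha \in \Pi$, $k \in \BN$) subject to (i) the divided power relations $F_\alpha^{(k)} F_\alpha^{(l)} = \qbin{k+l}{k}_{\mq^{d_\alpha}} F_\alpha^{(k+l)}$ and (ii) the quantum Serre relations in their divided power form $\sum_{l=0}^{1-a_{ij}} (-1)^l F_{\alpha_i}^{(l)} F_{\alpha_j}^{(1)} F_{\alpha_i}^{(1-a_{ij}-l)} = 0$ for $i \neq j$. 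Hence to produce the morphism it is enough to send $F_\alpha^{(k)} \mapsto \CF_\alpha^{(k)}$ and check that the images satisfy both families of relations.

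\emph{Setup.} By Proposition~\ref{strictlynegative_algebrastructure}, $\CHbm$ carries a graded $\Laurent$-algebra structure, with grading by $\Coloring_\Pi \simeq \BN[\Pi]$; extending scalars along the inclusion $\Laurent = \BZ[q^{\pm 1}] \hookrightarrow \BZ[\mq^{\pm 1}]$ turns it into a $\BZ[\mq^{\pm 1}]$-algebra. The normalizing prefactor $q^{-d_\alpha k(k-1)/4} = \mq^{-d_\alpha k(k-1)/2}$ appearing in Definition~\ref{def_div_powers_Fs} lies in $\BZ[\mq^{\pm 1}]$, so $\CF_\alpha^{(k)} \in \CHbm_{k\alpha}$ is well-defined in the extended algebra and sits in the correct graded piece.

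\emph{Divided power relation.} By Proposition~\ref{prop_DividedPowers} we have $(\CF_\alpha^{(1)})^k = [k]_{\mq^{d_\alpha}}! \cdot \CF_\alpha^{(k)}$ in $\CHbm_{k\alpha}$. Combining two such identities with associativity of the product yields, inside $\CHbm_{(k+l)\alpha}$,
\[
[k]_{\mq^{d_\alpha}}! \, [l]_{\mq^{d_\alpha}}! \cdot \CF_\alpha^{(k)} \cdot \CF_\alpha^{(l)} = (\CF_\alpha^{(1)})^{k+l} = [k+l]_{\mq^{d_\alpha}}! \cdot \CF_\alpha^{(k+l)}.
\]
Since $\CHbm_{(k+l)\alpha}$ is a free $\Laurent$-module by Proposition~\ref{structure_result}, it is torsion-free after base change to the integral domain $\BZ[\mq^{\pm 1}]$; as $[k]! \, [l]!$ is nonzero there, we may cancel to obtain (i). The quantum Serre relations (ii) are exactly Proposition~\ref{prop_QuantumSerre}. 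These being the defining relations of $U_{\mq}^{<0}\fg$, the assignment extends uniquely to an algebra homomorphism.

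\emph{Main obstacle.} All the substantial topological content was already packaged in Propositions~\ref{prop_DividedPowers} and \ref{prop_QuantumSerre}. What remains here is essentially a presentation-check and a mild bookkeeping issue: one must be careful that the normalization by $\mq$-powers forces us to extend scalars from $\Laurent$ to $\BZ[\mq^{\pm 1}]$, and one must cancel the quantum factorials $[k]_{\mq^{d_\alpha}}!$ in a torsion-free module. Freeness of $\CHbm$ over $\Laurent$ makes both points harmless, so no serious obstacle arises.
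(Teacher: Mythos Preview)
Your proof is correct and follows essentially the same approach as the paper: both reduce the existence of the algebra morphism to checking that the $\CF_\alpha^{(k)}$ satisfy the divided power relation (via Proposition~\ref{prop_DividedPowers}) and the quantum Serre relations (via Proposition~\ref{prop_QuantumSerre}), these being the defining relations of Lusztig's integral $U_{\mq}^{<0}\fg$. Your version is slightly more explicit in deriving the binomial identity $\CF_\alpha^{(k)}\CF_\alpha^{(l)} = \qbin{k+l}{k}_{\mq^{d_\alpha}}\CF_\alpha^{(k+l)}$ by cancellation in the free module, whereas the paper leaves this implicit (it also follows directly from the second fusion rule in Proposition~\ref{prop_fusion_rules}).
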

\begin{proof}
The fact that it is a well defined algebra morphism comes from the fact that the generators satisfy the \textit{Quantum Serre relation} Prop.~\ref{prop_QuantumSerre} and the \textit{divided power property} Prop.~\ref{prop_DividedPowers}. It is the two relations holding in the integral version of the strictly negative part of the quantum group $U_{\mq} \fg$, see \cite{Lus90}. It justifies the homological construction. (This strictly negative part being defined by that generated by divided powers of $F$'s). 
\end{proof}

\begin{rmk}
\begin{itemize}
\item In the next section, we will show that the morphism from the above theorem is injective while working in the field, which justifies the name of integral version. We will use another approach defining the map without any mention to the quantum Serre relations. This theorem emphasizes a homological recovering of the presentation of the integral $\Uqgm$ with divided powers and quantum Serre relations.  

\item Another way to show that it is injective is to recover homologically the bases of Poincaré--Birkhoff--Witt type that Lusztig has provided for $\Uqgm$, e.g. in \cite[Part.~5]{Lus90} (from which we have adapted notations). This basis is described using some action of \emph{the braid group} associated with $\fg$ by automorphisms of $U_{\mq} \fg$. Hence it requires a little more work, and in the next section we also recover the part of this action that deals with $\Uqgm$.
\end{itemize} 
\end{rmk}

\subsection{From standard to Borel--Moore homology stands $\Uqgm$}\label{S:homology_to_Borel_Moore}

In the whole section we work on the field $\mathbb{Q}(q^{\frac{1}{2}})$. It fits with \cite[Sec.~1]{LusztigBook}. It means that we tensor all algebras with $\mathbb{Q}(q^{\frac{1}{2}})$ over the obvious injection of $\Laurent$. There is a map:
\[
\iota : \CH \to \CHbm
\]
induced by inclusion of chain complexes. Notice that source and target modules are isomorphic thanks to the Poincaré duality (see Sec.~\ref{A:the_pairing}) but this map is far from being an isomorphism. Indeed we show that it is a quotient by quantum Serre relations. We let:
\[
\overline{\CH} := \im (\iota: \CH \to \CHbm).
\]
Now recall that the there exists a perfect pairing:
\[
\langle \cdot, \cdot \rangle: \CH \otimes \CHbm \to \Laurent,
\]
obtained from that of \eqref{E:the_pairing} by first applying a $\pi/2$-rotation to the left term. Then while precomposing it by $1 \otimes \iota$ we obtain a bilinear form:
\[
(\cdot , \cdot) : \CH \otimes \CH \to \Laurent. 
\]
We follow Sec.~1.1 of \cite{LusztigBook} to show that $\Uqgm$, that he calls \textit{algebra $f$}, is the quotient of this pairing by its radical. It indeed relies on a pairing on the free algebra generated by simple roots that he denotes $'f$ and which is isomorphic to $\CH$ thanks to Theorem~\ref{T:free_algebra}. Following Lusztig's definition of $f$ we need to define an algebra morphism that resemble a coproduct and denoted by $r$ and to show that it satisfies axioms of Proposition~1.2.3 in his book. 

We start, as in \cite[1.2.2]{LusztigBook} by defining the map $r$:
\[
r : \CH \to \CH \otimes \CH, 
\]
by giving an algorithm of computation from diagrams. Let $\mathcal{D} \in \CH$ and $d$ be a vertical axis, the first step of $r$ is to define a cut along $d$. Up to isotopy one can put all the necklaces of $\CH$ in horizontal position, and $d$ to meet no ends of handles, then cutting along $d$ gives two diagrams in disks. Here is an example:
\[
\cut_d : \vcenter{\hbox{\begin{tikzpicture}[scale=0.35, every node/.style={scale=0.7},decoration={
    markings,
    mark=at position 0.5 with {\arrow{>}}}
    ]
\coordinate (w0g) at (-5,1.5) {};
\coordinate (w0d) at (5,1.5) {};

\coordinate (w1g) at (-5,-1.5) {};
\coordinate (w1d) at (5,-1.5) {};

\draw[gray!40!white, thin] (w0g) -- (w0d) node[pos=0.7, black] (a1) {$\encircled{\alpha}$};
\draw[gray!40!white, thin] (w1g) -- (w1d) node[pos=0.3, black] (a2) {$\encircled{\beta}$};

\coordinate (a1p) at (5,0);
\coordinate (a2p) at (5,-2.5);
\draw[yellow!80!black,thick] (a1p)--(a1p-|a1)--(a1);
\draw[yellow!80!black,thick] (a2p)--(a2p-|a2)--(a2);

\draw[thick,orange] (0,4)--(0,-4);
\node[right,orange] at (0,4) {$d$};

\draw[red, thick] (-5,-3) -- (-5,3);
\draw[red, thick] (5,-3) -- (5,3);
\draw[gray, thick] (-5,-3) -- (5,-3);
\draw[gray, thick] (-5,3) -- (5,3);
\end{tikzpicture}}}
\to 
\vcenter{\hbox{\begin{tikzpicture}[scale=0.35, every node/.style={scale=0.7},decoration={
    markings,
    mark=at position 0.5 with {\arrow{>}}}
    ]
\coordinate (w0g) at (-5,1.5) {};
\coordinate (w0d) at (5,1.5) {};

\coordinate (w1g) at (-5,-1.5) {};
\coordinate (w1d) at (0,-1.5) {};

\draw[gray!40!white, thin] (w1g) -- (w1d) node[pos=0.3, black] (a2) {$\encircled{\beta}$};

\coordinate (a1p) at (5,0);
\coordinate (a2p) at (0,-2.5);
\draw[yellow!80!black,thick] (a2p)--(a2p-|a2)--(a2);


\draw[red, thick] (-5,-3) -- (-5,3);
\draw[red, thick] (0,-3) -- (0,3);
\draw[gray, thick] (-5,-3) -- (0,-3);
\draw[gray, thick] (-5,3) -- (0,3);
\end{tikzpicture}}}
\otimes 
\vcenter{\hbox{\begin{tikzpicture}[scale=0.35, every node/.style={scale=0.7},decoration={
    markings,
    mark=at position 0.5 with {\arrow{>}}}
    ]
\coordinate (w0g) at (0,1.5) {};
\coordinate (w0d) at (5,1.5) {};

\coordinate (w1g) at (0,-1.5) {};
\coordinate (w1d) at (5,-1.5) {};

\draw[gray!40!white, thin] (w0g) -- (w0d) node[pos=0.7, black] (a1) {$\encircled{\alpha}$};
\draw[thick,\hyellow] (w1g) -- (w1d);
\node at (w1g) {$\bullet$};
\node[left] at (w1g) {$\beta$};

\coordinate (a1p) at (5,0);
\coordinate (a2p) at (5,-2.5);
\draw[yellow!80!black,thick] (a1p)--(a1p-|a1)--(a1);
%

\draw[red, thick] (0,-3) -- (0,3);
\draw[red, thick] (5,-3) -- (5,3);
\draw[gray, thick] (0,-3) -- (5,-3);
\draw[gray, thick] (0,3) -- (5,3);
\end{tikzpicture}}}
\]
We let $Y_c^i$ be the subspace of $\Conf_c$ made of configurations with at least $i$ points in $\partial^- D$. Notice that last diagram defines a homology class in $\Hnot_{1}(Y^1_{\alpha+\beta},Y^2_{\alpha+\beta},\Laurent_{\alpha+\beta})$. It corresponds to an embedding of $I$, that sends it to $\{x \} \times I_\alpha$ where $x$ is the point with the label $\beta$ and $I_\alpha$ the necklace supporting $\alpha$. A handle reaches a point in it (one goes to $x$ the other to $I$), hence it lifts to a twisted homology class. So to make $r$ well defined at homology one has to sum over all the cuts possible, namely one can move each pearl to the left or to the right of $d$ and sum over all possibilities, it defines a map $r'$. Here is an example:
\begin{align*} 
r'\left( \vcenter{\hbox{\begin{tikzpicture}[scale=0.3, every node/.style={scale=0.7},decoration={
    markings,
    mark=at position 0.5 with {\arrow{>}}}
    ]
\coordinate (w0g) at (-5,1.5) {};
\coordinate (w0d) at (5,1.5) {};
\coordinate (w1g) at (-5,-1.5) {};
\coordinate (w1d) at (5,-1.5) {};
\draw[gray!40!white, thin] (w0g) -- (w0d) node[pos=0.7, black] (a1) {$\encircled{\alpha}$};
\draw[gray!40!white, thin] (w1g) -- (w1d) node[pos=0.3, black] (a2) {$\encircled{\beta}$};
\coordinate (a1p) at (5,0);
\coordinate (a2p) at (5,-2.5);
\draw[yellow!80!black,thick] (a1p)--(a1p-|a1)--(a1);
\draw[yellow!80!black,thick] (a2p)--(a2p-|a2)--(a2);
\draw[red, thick] (-5,-3) -- (-5,3);
\draw[red, thick] (5,-3) -- (5,3);
\draw[gray, thick] (-5,-3) -- (5,-3);
\draw[gray, thick] (-5,3) -- (5,3);
\end{tikzpicture}}}\right)
&
\to 
\vcenter{\hbox{\begin{tikzpicture}[scale=0.3, every node/.style={scale=0.7},decoration={
    markings,
    mark=at position 0.5 with {\arrow{>}}}
    ]
\coordinate (w0g) at (-5,1.5) {};
\coordinate (w0d) at (0,1.5) {};
\coordinate (w1g) at (-5,-1.5) {};
\coordinate (w1d) at (0,-1.5) {};
\draw[gray!40!white, thin] (w0g) -- (w0d) node[pos=0.7, black] (a1) {$\encircled{\alpha}$};
\draw[gray!40!white, thin] (w1g) -- (w1d) node[pos=0.3, black] (a2) {$\encircled{\beta}$};
\coordinate (a1p) at (0,0);
\coordinate (a2p) at (0,-2.5);
\draw[yellow!80!black,thick] (a1p)--(a1p-|a1)--(a1);
\draw[yellow!80!black,thick] (a2p)--(a2p-|a2)--(a2);
\draw[red, thick] (-5,-3) -- (-5,3);
\draw[red, thick] (0,-3) -- (0,3);
\draw[gray, thick] (-5,-3) -- (0,-3);
\draw[gray, thick] (-5,3) -- (0,3);
\end{tikzpicture}}}
\otimes 
\vcenter{\hbox{\begin{tikzpicture}[scale=0.3, every node/.style={scale=0.7},decoration={
    markings,
    mark=at position 0.5 with {\arrow{>}}}
    ]
\coordinate (w0g) at (0,1.5) {};
\coordinate (w0d) at (5,1.5) {};
\coordinate (w1g) at (0,-1.5) {};
\coordinate (w1d) at (5,-1.5) {};
\draw[thick,\hyellow] (w0g) -- (w0d);
\draw[thick,\hyellow] (w1g) -- (w1d);
\node at (w1g) {$\bullet$};
\node[left] at (w1g) {$\beta$};
\node at (w0g) {$\bullet$};
\node[left] at (w0g) {$\alpha$};
\coordinate (a1p) at (5,0);
\coordinate (a2p) at (5,-2.5);
%
\draw[red, thick] (0,-3) -- (0,3);
\draw[red, thick] (5,-3) -- (5,3);
\draw[gray, thick] (0,-3) -- (5,-3);
\draw[gray, thick] (0,3) -- (5,3);
\end{tikzpicture}}}
+ 
\vcenter{\hbox{\begin{tikzpicture}[scale=0.3, every node/.style={scale=0.7},decoration={
    markings,
    mark=at position 0.5 with {\arrow{>}}}
    ]
\coordinate (w0g) at (-5,1.5) {};
\coordinate (w0d) at (5,1.5) {};
\coordinate (w1g) at (-5,-1.5) {};
\coordinate (w1d) at (0,-1.5) {};
\draw[gray!40!white, thin] (w1g) -- (w1d) node[pos=0.3, black] (a2) {$\encircled{\beta}$};
\coordinate (a1p) at (5,0);
\coordinate (a2p) at (0,-2.5);
\draw[yellow!80!black,thick] (a2p)--(a2p-|a2)--(a2);
\draw[red, thick] (-5,-3) -- (-5,3);
\draw[red, thick] (0,-3) -- (0,3);
\draw[gray, thick] (-5,-3) -- (0,-3);
\draw[gray, thick] (-5,3) -- (0,3);
\end{tikzpicture}}}
\otimes 
\vcenter{\hbox{\begin{tikzpicture}[scale=0.3, every node/.style={scale=0.7},decoration={
    markings,
    mark=at position 0.5 with {\arrow{>}}}
    ]
\coordinate (w0g) at (0,1.5) {};
\coordinate (w0d) at (5,1.5) {};
\coordinate (w1g) at (0,-1.5) {};
\coordinate (w1d) at (5,-1.5) {};
\draw[gray!40!white, thin] (w0g) -- (w0d) node[pos=0.7, black] (a1) {$\encircled{\alpha}$};
\draw[thick,\hyellow] (w1g) -- (w1d);
\node at (w1g) {$\bullet$};
\node[left] at (w1g) {$\beta$};
\coordinate (a1p) at (5,0);
\coordinate (a2p) at (5,-2.5);
\draw[yellow!80!black,thick] (a1p)--(a1p-|a1)--(a1);
%
\draw[red, thick] (0,-3) -- (0,3);
\draw[red, thick] (5,-3) -- (5,3);
\draw[gray, thick] (0,-3) -- (5,-3);
\draw[gray, thick] (0,3) -- (5,3);
\end{tikzpicture}}} \\
& + 
\vcenter{\hbox{\begin{tikzpicture}[scale=0.3, every node/.style={scale=0.7},decoration={
    markings,
    mark=at position 0.5 with {\arrow{>}}}
    ]
\coordinate (w0g) at (-5,1.5) {};
\coordinate (w0d) at (0,1.5) {};
\coordinate (w1g) at (-5,-1.5) {};
\coordinate (w1d) at (0,-1.5) {};
\draw[gray!40!white, thin] (w0g) -- (w0d) node[pos=0.7, black] (a1) {$\encircled{\alpha}$};
\coordinate (a1p) at (0,0);
\coordinate (a2p) at (0,-2.5);
\draw[yellow!80!black,thick] (a1p)--(a1p-|a1)--(a1);
\draw[red, thick] (-5,-3) -- (-5,3);
\draw[red, thick] (0,-3) -- (0,3);
\draw[gray, thick] (-5,-3) -- (0,-3);
\draw[gray, thick] (-5,3) -- (0,3);
\end{tikzpicture}}}
\otimes 
\vcenter{\hbox{\begin{tikzpicture}[scale=0.3, every node/.style={scale=0.7},decoration={
    markings,
    mark=at position 0.5 with {\arrow{>}}}
    ]
\coordinate (w0g) at (0,1.5) {};
\coordinate (w0d) at (5,1.5) {};
\coordinate (w1g) at (0,-1.5) {};
\coordinate (w1d) at (5,-1.5) {};
\draw[thick,\hyellow] (w0g) -- (w0d);
\draw[gray!40!white, thin] (w1g) -- (w1d) node[pos=0.3, black] (a2) {$\encircled{\beta}$};
\node at (w0g) {$\bullet$};
\node[left] at (w0g) {$\alpha$};
\coordinate (a1p) at (5,0);
\coordinate (a2p) at (5,-2.5);
\draw[yellow!80!black,thick] (a2p)--(a2p-|a2)--(a2);
%
\draw[red, thick] (0,-3) -- (0,3);
\draw[red, thick] (5,-3) -- (5,3);
\draw[gray, thick] (0,-3) -- (5,-3);
\draw[gray, thick] (0,3) -- (5,3);
\end{tikzpicture}}}
+ 
\vcenter{\hbox{\begin{tikzpicture}[scale=0.3, every node/.style={scale=0.7},decoration={
    markings,
    mark=at position 0.5 with {\arrow{>}}}
    ]
\coordinate (w0g) at (-5,1.5) {};
\coordinate (w0d) at (0,1.5) {};
\coordinate (w1g) at (-5,-1.5) {};
\coordinate (w1d) at (0,-1.5) {};
\draw[red, thick] (-5,-3) -- (-5,3);
\draw[red, thick] (0,-3) -- (0,3);
\draw[gray, thick] (-5,-3) -- (0,-3);
\draw[gray, thick] (-5,3) -- (0,3);
\end{tikzpicture}}}
\otimes 
\vcenter{\hbox{\begin{tikzpicture}[scale=0.3, every node/.style={scale=0.7},decoration={
    markings,
    mark=at position 0.5 with {\arrow{>}}}
    ]
\coordinate (w0g) at (0,1.5) {};
\coordinate (w0d) at (5,1.5) {};
\coordinate (w1g) at (0,-1.5) {};
\coordinate (w1d) at (5,-1.5) {};
\draw[gray!40!white, thin] (w0g) -- (w0d) node[pos=0.7, black] (a1) {$\encircled{\alpha}$};
\draw[gray!40!white, thin] (w1g) -- (w1d) node[pos=0.3, black] (a2) {$\encircled{\beta}$};
\coordinate (a1p) at (5,0);
\coordinate (a2p) at (5,-2.5);
\draw[yellow!80!black,thick] (a1p)--(a1p-|a1)--(a1);
\draw[yellow!80!black,thick] (a2p)--(a2p-|a2)--(a2);
%
\draw[red, thick] (0,-3) -- (0,3);
\draw[red, thick] (5,-3) -- (5,3);
\draw[gray, thick] (0,-3) -- (5,-3);
\draw[gray, thick] (0,3) -- (5,3);
\end{tikzpicture}}}
\end{align*}
The map $r$ is obtained from $r'$ by an iteration of $\del_{x_0}$. Namely let $x_0$ be the lower rightmost point of $D$ then we recall the existence of an isomorphism:
\[
\del_{x_0} : \Hnot_{m_c}( Y_{c+\alpha}^1, Y_{c+\alpha}^2;\Laurent_c) \simeq \CH_c
\]
(see the definition in \cite[sec.~2.3.2]{JulesMarco} that adapts to the colored case). By iteration one has:
\[
\Hnot_{m_c}( Y_{c+c_1}^{m_{c_1}}, Y_{c+c_1}^{m_{c_1}+1};\Laurent_c) \simeq \CH_c,
\]
and a more formal argument is given after Eq~\eqref{E:i_boundary}, which adapts to our situation. By $\del_{x_0}$, one can remove a point in $\partial^- D$ if its handle reaches $x_0$. Recalling that, due to a handle rule, we have:
\[
\vcenter{\hbox{\begin{tikzpicture}[scale=0.3, every node/.style={scale=0.7},decoration={
    markings,
    mark=at position 0.5 with {\arrow{>}}}
    ]
\coordinate (w0g) at (0,1.5) {};
\coordinate (w0d) at (5,1.5) {};
\coordinate (w1g) at (0,-1.5) {};
\coordinate (w1d) at (5,-1.5) {};
\draw[thick,\hyellow] (w0g) -- (w0d);
\draw[gray!40!white, thin] (w1g) -- (w1d) node[pos=0.3, black] (a2) {$\encircled{\beta}$};
\node at (w0g) {$\bullet$};
\node[left] at (w0g) {$\alpha$};
\coordinate (a1p) at (5,0);
\coordinate (a2p) at (5,-2.5);
\draw[yellow!80!black,thick] (a2p)--(a2p-|a2)--(a2);
%
\draw[red, thick] (0,-3) -- (0,3);
\draw[red, thick] (5,-3) -- (5,3);
\draw[gray, thick] (0,-3) -- (5,-3);
\draw[gray, thick] (0,3) -- (5,3);
\end{tikzpicture}}} = q_{\alpha,\beta} 
\vcenter{\hbox{\begin{tikzpicture}[scale=0.3, every node/.style={scale=0.7},decoration={
    markings,
    mark=at position 0.5 with {\arrow{>}}}
    ]
\coordinate (w0g) at (0,1.5) {};
\coordinate (w0d) at (5,1.5) {};
\coordinate (w1g) at (0,-1.5) {};
\coordinate (w1d) at (5,-1.5) {};
\draw[thick,\hyellow] (w0g)--(0,-3)--(5,-3); -- (w0d);
\draw[gray!40!white, thin] (w1g) -- (w1d) node[pos=0.3, black] (a2) {$\encircled{\beta}$};
\node at (w0g) {$\bullet$};
\node[left] at (w0g) {$\alpha$};
\coordinate (a1p) at (5,0);
\coordinate (a2p) at (5,-2.5);
\draw[yellow!80!black,thick] (a2p)--(a2p-|a2)--(a2);
%
\draw[red, thick] (0,-3) -- (0,3);
\draw[red, thick] (5,-3) -- (5,3);
\draw[gray, thick] (0,-3) -- (5,-3);
\draw[gray, thick] (0,3) -- (5,3);
\draw[\hyellow,thick] (w0g)--(0,-3)--(5,-3);
\end{tikzpicture}}}
\]
we obtain:
\begin{align*} 
r\left( \vcenter{\hbox{\begin{tikzpicture}[scale=0.3, every node/.style={scale=0.7},decoration={
    markings,
    mark=at position 0.5 with {\arrow{>}}}
    ]
\coordinate (w0g) at (-5,1.5) {};
\coordinate (w0d) at (5,1.5) {};
\coordinate (w1g) at (-5,-1.5) {};
\coordinate (w1d) at (5,-1.5) {};
\draw[gray!40!white, thin] (w0g) -- (w0d) node[pos=0.7, black] (a1) {$\encircled{\alpha}$};
\draw[gray!40!white, thin] (w1g) -- (w1d) node[pos=0.3, black] (a2) {$\encircled{\beta}$};
\coordinate (a1p) at (5,0);
\coordinate (a2p) at (5,-2.5);
\draw[yellow!80!black,thick] (a1p)--(a1p-|a1)--(a1);
\draw[yellow!80!black,thick] (a2p)--(a2p-|a2)--(a2);
\draw[red, thick] (-5,-3) -- (-5,3);
\draw[red, thick] (5,-3) -- (5,3);
\draw[gray, thick] (-5,-3) -- (5,-3);
\draw[gray, thick] (-5,3) -- (5,3);
\end{tikzpicture}}}\right)
&
=
\vcenter{\hbox{\begin{tikzpicture}[scale=0.3, every node/.style={scale=0.7},decoration={
    markings,
    mark=at position 0.5 with {\arrow{>}}}
    ]
\coordinate (w0g) at (-5,1.5) {};
\coordinate (w0d) at (0,1.5) {};
\coordinate (w1g) at (-5,-1.5) {};
\coordinate (w1d) at (0,-1.5) {};
\draw[gray!40!white, thin] (w0g) -- (w0d) node[pos=0.7, black] (a1) {$\encircled{\alpha}$};
\draw[gray!40!white, thin] (w1g) -- (w1d) node[pos=0.3, black] (a2) {$\encircled{\beta}$};
\coordinate (a1p) at (0,0);
\coordinate (a2p) at (0,-2.5);
\draw[yellow!80!black,thick] (a1p)--(a1p-|a1)--(a1);
\draw[yellow!80!black,thick] (a2p)--(a2p-|a2)--(a2);
\draw[red, thick] (-5,-3) -- (-5,3);
\draw[red, thick] (0,-3) -- (0,3);
\draw[gray, thick] (-5,-3) -- (0,-3);
\draw[gray, thick] (-5,3) -- (0,3);
\end{tikzpicture}}}
\otimes 
\vcenter{\hbox{\begin{tikzpicture}[scale=0.3, every node/.style={scale=0.7},decoration={
    markings,
    mark=at position 0.5 with {\arrow{>}}}
    ]
\coordinate (w0g) at (0,1.5) {};
\coordinate (w0d) at (5,1.5) {};
\coordinate (w1g) at (0,-1.5) {};
\coordinate (w1d) at (5,-1.5) {};
\coordinate (a1p) at (5,0);
\coordinate (a2p) at (5,-2.5);
%
\draw[red, thick] (0,-3) -- (0,3);
\draw[red, thick] (5,-3) -- (5,3);
\draw[gray, thick] (0,-3) -- (5,-3);
\draw[gray, thick] (0,3) -- (5,3);
\end{tikzpicture}}}
+ 
\vcenter{\hbox{\begin{tikzpicture}[scale=0.3, every node/.style={scale=0.7},decoration={
    markings,
    mark=at position 0.5 with {\arrow{>}}}
    ]
\coordinate (w0g) at (-5,1.5) {};
\coordinate (w0d) at (5,1.5) {};
\coordinate (w1g) at (-5,-1.5) {};
\coordinate (w1d) at (0,-1.5) {};
\draw[gray!40!white, thin] (w1g) -- (w1d) node[pos=0.3, black] (a2) {$\encircled{\beta}$};
\coordinate (a1p) at (5,0);
\coordinate (a2p) at (0,-2.5);
\draw[yellow!80!black,thick] (a2p)--(a2p-|a2)--(a2);
\draw[red, thick] (-5,-3) -- (-5,3);
\draw[red, thick] (0,-3) -- (0,3);
\draw[gray, thick] (-5,-3) -- (0,-3);
\draw[gray, thick] (-5,3) -- (0,3);
\end{tikzpicture}}}
\otimes 
\vcenter{\hbox{\begin{tikzpicture}[scale=0.3, every node/.style={scale=0.7},decoration={
    markings,
    mark=at position 0.5 with {\arrow{>}}}
    ]
\coordinate (w0g) at (0,1.5) {};
\coordinate (w0d) at (5,1.5) {};
\coordinate (w1g) at (0,-1.5) {};
\coordinate (w1d) at (5,-1.5) {};
\draw[gray!40!white, thin] (w0g) -- (w0d) node[pos=0.7, black] (a1) {$\encircled{\alpha}$};
\coordinate (a1p) at (5,0);
\coordinate (a2p) at (5,-2.5);
\draw[yellow!80!black,thick] (a1p)--(a1p-|a1)--(a1);
%
\draw[red, thick] (0,-3) -- (0,3);
\draw[red, thick] (5,-3) -- (5,3);
\draw[gray, thick] (0,-3) -- (5,-3);
\draw[gray, thick] (0,3) -- (5,3);
\end{tikzpicture}}} \\
& + 
q_{\alpha,\beta} \vcenter{\hbox{\begin{tikzpicture}[scale=0.3, every node/.style={scale=0.7},decoration={
    markings,
    mark=at position 0.5 with {\arrow{>}}}
    ]
\coordinate (w0g) at (-5,1.5) {};
\coordinate (w0d) at (0,1.5) {};
\coordinate (w1g) at (-5,-1.5) {};
\coordinate (w1d) at (0,-1.5) {};
\draw[gray!40!white, thin] (w0g) -- (w0d) node[pos=0.7, black] (a1) {$\encircled{\alpha}$};
\coordinate (a1p) at (0,0);
\coordinate (a2p) at (0,-2.5);
\draw[yellow!80!black,thick] (a1p)--(a1p-|a1)--(a1);
\draw[red, thick] (-5,-3) -- (-5,3);
\draw[red, thick] (0,-3) -- (0,3);
\draw[gray, thick] (-5,-3) -- (0,-3);
\draw[gray, thick] (-5,3) -- (0,3);
\end{tikzpicture}}}
\otimes 
\vcenter{\hbox{\begin{tikzpicture}[scale=0.3, every node/.style={scale=0.7},decoration={
    markings,
    mark=at position 0.5 with {\arrow{>}}}
    ]
\coordinate (w0g) at (0,1.5) {};
\coordinate (w0d) at (5,1.5) {};
\coordinate (w1g) at (0,-1.5) {};
\coordinate (w1d) at (5,-1.5) {};
\draw[gray!40!white, thin] (w1g) -- (w1d) node[pos=0.3, black] (a2) {$\encircled{\beta}$};
\coordinate (a1p) at (5,0);
\coordinate (a2p) at (5,-2.5);
\draw[yellow!80!black,thick] (a2p)--(a2p-|a2)--(a2);
%
\draw[red, thick] (0,-3) -- (0,3);
\draw[red, thick] (5,-3) -- (5,3);
\draw[gray, thick] (0,-3) -- (5,-3);
\draw[gray, thick] (0,3) -- (5,3);
\end{tikzpicture}}}
+ 
\vcenter{\hbox{\begin{tikzpicture}[scale=0.3, every node/.style={scale=0.7},decoration={
    markings,
    mark=at position 0.5 with {\arrow{>}}}
    ]
\coordinate (w0g) at (-5,1.5) {};
\coordinate (w0d) at (0,1.5) {};
\coordinate (w1g) at (-5,-1.5) {};
\coordinate (w1d) at (0,-1.5) {};
\draw[red, thick] (-5,-3) -- (-5,3);
\draw[red, thick] (0,-3) -- (0,3);
\draw[gray, thick] (-5,-3) -- (0,-3);
\draw[gray, thick] (-5,3) -- (0,3);
\end{tikzpicture}}}
\otimes 
\vcenter{\hbox{\begin{tikzpicture}[scale=0.3, every node/.style={scale=0.7},decoration={
    markings,
    mark=at position 0.5 with {\arrow{>}}}
    ]
\coordinate (w0g) at (0,1.5) {};
\coordinate (w0d) at (5,1.5) {};
\coordinate (w1g) at (0,-1.5) {};
\coordinate (w1d) at (5,-1.5) {};
\draw[gray!40!white, thin] (w0g) -- (w0d) node[pos=0.7, black] (a1) {$\encircled{\alpha}$};
\draw[gray!40!white, thin] (w1g) -- (w1d) node[pos=0.3, black] (a2) {$\encircled{\beta}$};
\coordinate (a1p) at (5,0);
\coordinate (a2p) at (5,-2.5);
\draw[yellow!80!black,thick] (a1p)--(a1p-|a1)--(a1);
\draw[yellow!80!black,thick] (a2p)--(a2p-|a2)--(a2);
%
\draw[red, thick] (0,-3) -- (0,3);
\draw[red, thick] (5,-3) -- (5,3);
\draw[gray, thick] (0,-3) -- (5,-3);
\draw[gray, thick] (0,3) -- (5,3);
\end{tikzpicture}}}.
\end{align*}
The algorithm that the example illustrates well defines a morphism:
\[
r: \CH_c \to \bigoplus_{c_1+ c_2=c} \CH_{c_1} \otimes \CH_{c_2}. 
\]
It is well defined at homology as it is invariant of isotopies of diagrams thanks to the summation.

More formally, the algorithm described is the composition:
\[
\CH_c \xrightarrow{\diag} \Hnot_c(\Conf_c \times \Conf_c, S\times \Conf_c \cup \Conf_c \times S; \Laurent_c \times \Laurent_c) \xrightarrow{\AM} \bigoplus_{c_1+c_2=c} \CH_{c_1} \otimes \CH_{c_2}. 
\]
where $\diag$ is the diagonal map, and $\AM$ is the \textit{Alexander--Whitney} chain map (see an easy definition in \cite{B}). Notice that $\AM$ is an isomorphism whenever modules are torsion free, as the inverse of the Künneth isomorphism, see \cite[Th.~1.6]{G} for a twisted version. For a relative version of the isomorphism we refer the reader to the seminal paper of {\em Eilenberg--Zilber} \cite{EZ}. The target of the map $\AM$ is actually:
\[
\bigoplus_{c_1 + c_2 = c} \Hnot(Y_{c_1}^{m_{c_2}}, Y_{c_1}^{m_{c_2}+1}) \otimes \Hnot(Y_{c_2}^{m_{c_1}},Y_{c_2}^{m_{c_1}+1})
\]
and it is followed by the iteration of $\del_{x_0}$ that passes from $r'$ to $r$.

Just like \cite[Sec.~1.2]{LusztigBook}, we endow $\CH \otimes \CH$ with the following algebra structure:
\[
(x_1\otimes x_2)(x'_1\otimes x'_2) = q^{\frac{(c_2,c_1')}{2}} x_1x_1' \otimes x_2x_2',
\]
where $x'_1 \in \CH_{c_1'}$ and $x_2 \in \CH_{c_2}$, and $(c_2,c_1')$ is the inner product between roots, extended to linear combinations. 
\begin{prop}\label{P:recovering_r}
The map $r$ is the unique algebra homomorphism that satisfies:
\[
r(\CF^{[1]}_{\alpha}) = \CF^{[1]}_{\alpha} \otimes 1 + 1 \otimes \CF^{[1]}_{\alpha},
\]
for any generator $\CF^{[1]}_{\alpha}$. It is hence the homomorphism $r$ on the corresponding free algebra $'f$ from \cite[Sec.~1.2]{LusztigBook}. 
\end{prop}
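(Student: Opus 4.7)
My plan is to break the proof into three parts: uniqueness, the value on generators, and multiplicativity.

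First, uniqueness is immediate from Theorem~\ref{T:free_algebra}: since $\CH$ is the free algebra generated by $\{\CF^{[1]}_\alpha, \alpha \in \Pi\}$, any algebra homomorphism out of it is completely determined by its values on these generators. Hence the ``unique'' clause requires no further argument.

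For the value on a generator $\CF^{[1]}_\alpha \in \CH_\alpha$, I would apply the algorithm definition directly. Its diagram is a single necklace with one pearl, so after isotoping the necklace horizontal and choosing a generic vertical cut line $d$, the pearl lies either strictly to the left of $d$ or strictly to the right of $d$ (we can arrange $d$ to miss the pearl). The map $r'$ then produces exactly two terms: one with the pearl on the left half and a trivial (empty) class on the right half, and one with the empty class on the left and the pearl on the right. After applying $\del_{x_0}$ iteratively, these become $\CF^{[1]}_\alpha \otimes 1$ and $1 \otimes \CF^{[1]}_\alpha$ respectively, because in each case the half containing no pearls is sent by $\del_{x_0}$ to the unit of the empty coloring. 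No handle rule is invoked since no pearl has to cross $d$ to produce the decomposition. This yields the stated formula.

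The hard part is multiplicativity with respect to the twisted product on $\CH \otimes \CH$. I would argue this geometrically: the product in $\CH$ corresponds to stacking two disks, and the cutting operation $\cut_d$ along a vertical line $d$ evidently commutes with vertical stacking. In chain-level language, the diagonal $\diag$ and the Alexander--Whitney map $\AM$ are natural with respect to the stacking map $\Conf_{c_1} \times \Conf_{c_2} \to \Conf_{c_1+c_2}$, so at the level of classes the decomposition of $xy$ along $d$ is the ``stacked product of decompositions'' of $x$ and $y$. The only thing to check is the precise scalar twist that appears: when one assembles the two halves of $x$ (left half of coloring $c_1$, right of coloring $c_1'$) stacked above the two halves of $y$ (left of coloring $c_2$, right of coloring $c_2'$) and then identifies the result with a product in $\CH_{c_1+c_2} \otimes \CH_{c_1'+c_2'}$, the handles of the right-hand-side pearls of $x$ (colored by $c_1'$) must be slid past the pearls of the left-hand-side of $y$ (colored by $c_2$). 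By the handle rule of Remark~\ref{rmk_handle_rule} and the definition of $\rho_c$ on the mixed generators $k_{(i,j)}$, each such crossing contributes $q^{(\alpha_i,\alpha_j)/2}$, and the total contribution is $q^{(c_2, c_1')/2}$, matching exactly the twist defining the product on $\CH \otimes \CH$. The main obstacle I expect is bookkeeping of these handle-crossing factors versus the order of pearls, but this is a direct analogue of the writhe computation used for $\CW_c$ in Section~\ref{S:local_system_empty_disk}, and is parallel to Lusztig's setup in \cite[1.2]{LusztigBook}.

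Combining the three steps, $r$ is an algebra homomorphism (w.r.t. the twisted product), it agrees with Lusztig's $r$ on the generators $\CF^{[1]}_\alpha$, and is therefore the unique such map; since $\CH \simeq {}'f$ as free algebras, this identification is exactly Lusztig's $r$.
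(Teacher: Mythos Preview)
Your overall plan coincides with the paper's: uniqueness from freeness, direct evaluation on generators, and then multiplicativity via a handle computation producing exactly the twist in the product on $\CH\otimes\CH$. The paper, however, does \emph{not} attempt the general identity $r(xy)=r(x)r(y)$. It instead proves $r(\mathcal{D}\,\CF^{[1]}_\alpha)=r(\mathcal{D})\,r(\CF^{[1]}_\alpha)$ for an arbitrary diagram $\mathcal{D}$ and a single generator, and then inducts on word length. This is materially simpler: with only one extra pearl $\alpha$ to track, there is a single handle move to analyze, namely the ghost points of $\mathcal{D}_{c_1}$ (the part of $\mathcal{D}$ sent left) in the right disk, whose handles must be swapped with that of the $\alpha$-pearl before reaching $x_0$; this produces $\prod_\beta q_{\alpha,\beta}^{c_1(\beta)}$, which is exactly the twist.

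Your direct argument has a genuine imprecision at this step. In the algorithm for $r$ it is not ``the handles of the right-hand-side pearls of $x$'' that are moved; the pearls that went \emph{right} keep their handles. What moves under the iteration of $\del_{x_0}$ are the handles of the \emph{ghost points} in the right disk, and these come from the pearls that went \emph{left}. So the crossing you must account for in $r(xy)$ versus $r(x)r(y)$ is between the $x_{\mathrm{left}}$-ghosts and the $y$-content of the right disk, not between $x_{\mathrm{right}}$ and $y_{\mathrm{left}}$ as you wrote. Relatedly, invoking ``naturality of $\diag$ and $\AM$ with respect to stacking'' is not quite enough: stacking is an external product, and the compatibility of $\AM\circ\diag$ with it is an Eilenberg--Zilber-type statement, after which the $\del_{x_0}$ step still has to be analyzed to extract the twist. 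These issues are fixable, but the paper's one-generator induction sidesteps them entirely and keeps the handle bookkeeping to a single application of Remark~\ref{rmk_handle_rule}.
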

\begin{proof}
The image by $r$ of generators is straightforward, we just need to check that it is an algebra homomorphism. Let $\mathcal{D}_c \in \CH_c$ be given by a diagram. Then:
\[
\mathcal{D} \CF^{[1]}_{\alpha} = 
\vcenter{\hbox{\begin{tikzpicture}[scale=0.4, every node/.style={scale=0.75},decoration={
    markings,
    mark=at position 0.5 with {\arrow{>}}}
    ]
\coordinate (w0g) at (-5,1.5) {};
\coordinate (w0d) at (5,1.5) {};
\coordinate (w1g) at (-5,-2) {};
\coordinate (w1d) at (5,-2) {};
\draw[fill=gray!40!white] (-5,0) rectangle ++(10,3);
\node at (0,1.5) {$\mathcal{D}_c$};
\draw[gray!40!white, thin] (w1g) -- (w1d) coordinate[pos=0.6] (a2) node[pos=0.5, black] {$\encircled{\alpha}$};
\coordinate (a1p) at (5,0);
\coordinate (a2p) at (5,-1.5);
\draw[yellow!80!black,thick] (a2p)--(a2p-|a2)--(a2);
\draw[red, thick] (-5,-3) -- (-5,3);
\draw[red, thick] (5,-3) -- (5,3);
\draw[gray, thick] (-5,-3) -- (5,-3);
\draw[gray, thick] (-5,3) -- (5,3);
\end{tikzpicture}}}
\]
Fix a given cut of $\mathcal{D}_c$ represented in the following picture:
\[
\mathcal{D} \CF^{[1]}_{\alpha} = 
\vcenter{\hbox{\begin{tikzpicture}[scale=0.4, every node/.style={scale=0.75},decoration={
    markings,
    mark=at position 0.5 with {\arrow{>}}}
    ]
\coordinate (w0g) at (-5,1.5) {};
\coordinate (w0d) at (5,1.5) {};
\coordinate (w1g) at (-5,-2) {};
\coordinate (w1d) at (5,-2) {};
\draw[fill=gray!40!white] (-5,0) rectangle ++(10,3);
\node at (-2.5,1.5) {$\mathcal{D}_{c_1}$};
\node at (2.5,1.5) {$\mathcal{D}_{c_2}$};
\draw[gray!40!white, thin] (w1g) -- (w1d) coordinate[pos=0.5] (a2) node[pos=0.5, black] {$\encircled{\alpha}$};
\coordinate (a1p) at (5,0);
\coordinate (a2p) at (5,-1.5);
\draw[yellow!80!black,thick] (a2p)--(a2p-|a2)--(a2);
\draw[thick,orange] (0,4)--(0,-4);
\node[right,orange] at (0,4) {$d$};
\draw[red, thick] (-5,-3) -- (-5,3);
\draw[red, thick] (5,-3) -- (5,3);
\draw[gray, thick] (-5,-3) -- (5,-3);
\draw[gray, thick] (-5,3) -- (5,3);
\end{tikzpicture}}}
\]
so that the pearl $\alpha$ can either go to the left or to the right of $d$. It gives the two terms:
\[
\vcenter{\hbox{\begin{tikzpicture}[scale=0.4, every node/.style={scale=0.75},decoration={
    markings,
    mark=at position 0.5 with {\arrow{>}}}
    ]
\coordinate (w0g) at (-5,1.5) {};
\coordinate (w0d) at (5,1.5) {};
\coordinate (w1g) at (-5,-2) {};
\coordinate (w1d) at (0,-2) {};
\draw[fill=gray!40!white] (-5,0) rectangle ++(5,3);
\node at (-2.5,1.5) {$\mathcal{D}_{c_1}$};
\draw[gray!40!white, thin] (w1g) -- (w1d) coordinate[pos=0.5] (a2) node[pos=0.5, black] {$\encircled{\alpha}$};
\coordinate (a1p) at (5,0);
\coordinate (a2p) at (0,-1.5);
\draw[yellow!80!black,thick] (a2p)--(a2p-|a2)--(a2);
\draw[red, thick] (-5,-3) -- (-5,3);
\draw[red, thick] (0,-3) -- (0,3);
\draw[gray, thick] (-5,-3) -- (0,-3);
\draw[gray, thick] (-5,3) -- (0,3);
\end{tikzpicture}}}
\otimes
\vcenter{\hbox{\begin{tikzpicture}[scale=0.4, every node/.style={scale=0.75},decoration={
    markings,
    mark=at position 0.5 with {\arrow{>}}}
    ]
\coordinate (w0g) at (-5,1.5) {};
\coordinate (w0d) at (5,1.5) {};
\coordinate (w1g) at (0,-2) {};
\coordinate (w1d) at (5,-2) {};
\draw[fill=gray!40!white] (0,0) rectangle ++(5,3);
\node at (2.5,1.5) {$\mathcal{D}_{c_2}$};
\draw[\hyellow,thick] (w1g) -- (w1d);
\coordinate (a1p) at (5,0);
\coordinate (a2p) at (5,-1.5);
\draw[red, thick] (0,-3) -- (0,3);
\draw[red, thick] (5,-3) -- (5,3);
\draw[gray, thick] (0,-3) -- (5,-3);
\draw[gray, thick] (0,3) -- (5,3);
\end{tikzpicture}}} + 
\vcenter{\hbox{\begin{tikzpicture}[scale=0.4, every node/.style={scale=0.75},decoration={
    markings,
    mark=at position 0.5 with {\arrow{>}}}
    ]
\coordinate (w0g) at (-5,1.5) {};
\coordinate (w0d) at (5,1.5) {};
\coordinate (w1g) at (-5,-2) {};
\coordinate (w1d) at (0,-2) {};
\draw[fill=gray!40!white] (-5,0) rectangle ++(5,3);
\node at (-2.5,1.5) {$\mathcal{D}_{c_1}$};
\coordinate (a1p) at (5,0);
\coordinate (a2p) at (0,-1.5);
\draw[red, thick] (-5,-3) -- (-5,3);
\draw[red, thick] (0,-3) -- (0,3);
\draw[gray, thick] (-5,-3) -- (0,-3);
\draw[gray, thick] (-5,3) -- (0,3);
\end{tikzpicture}}}
\otimes
\vcenter{\hbox{\begin{tikzpicture}[scale=0.4, every node/.style={scale=0.75},decoration={
    markings,
    mark=at position 0.5 with {\arrow{>}}}
    ]
\coordinate (w0g) at (-5,1.5) {};
\coordinate (w0d) at (5,1.5) {};
\coordinate (w1g) at (0,-2) {};
\coordinate (w1d) at (5,-2) {};
\draw[fill=gray!40!white] (0,0) rectangle ++(5,3);
\node at (2.5,1.5) {$\mathcal{D}_{c_2}$};
\draw[gray!40!white, thin] (w1g) -- (w1d) coordinate[pos=0.5] (a2) node[pos=0.5, black] {$\encircled{\alpha}$};
\coordinate (a1p) at (5,0);
\coordinate (a2p) at (5,-1.5);
\draw[yellow!80!black,thick] (a2p)--(a2p-|a2)--(a2);
\draw[red, thick] (0,-3) -- (0,3);
\draw[red, thick] (5,-3) -- (5,3);
\draw[gray, thick] (0,-3) -- (5,-3);
\draw[gray, thick] (0,3) -- (5,3);
\end{tikzpicture}}}
\]
Now for applying $\del_{x_0}$ successively, nothing to do on the left term while on the right one should remember that $m_{c_1}$ points are fixed in $\partial^-D$ so that their handles need swapping with that of the indexed $\alpha$ pearl before reaching $x_0$. The iteration of $\del_{x_0}$ hence sends the second term to:
\[
\prod_{\beta \in \Pi} q_{\alpha,\beta}^{c_1(\beta)} \vcenter{\hbox{\begin{tikzpicture}[scale=0.4, every node/.style={scale=0.75},decoration={
    markings,
    mark=at position 0.5 with {\arrow{>}}}
    ]
\coordinate (w0g) at (-5,1.5) {};
\coordinate (w0d) at (5,1.5) {};
\coordinate (w1g) at (-5,-2) {};
\coordinate (w1d) at (0,-2) {};
\draw[fill=gray!40!white] (-5,0) rectangle ++(5,3);
\node at (-2.5,1.5) {$\mathcal{D}_{c_1}$};
\coordinate (a1p) at (5,0);
\coordinate (a2p) at (0,-1.5);
\draw[red, thick] (-5,-3) -- (-5,3);
\draw[red, thick] (0,-3) -- (0,3);
\draw[gray, thick] (-5,-3) -- (0,-3);
\draw[gray, thick] (-5,3) -- (0,3);
\end{tikzpicture}}}
\otimes
\vcenter{\hbox{\begin{tikzpicture}[scale=0.4, every node/.style={scale=0.75},decoration={
    markings,
    mark=at position 0.5 with {\arrow{>}}}
    ]
\coordinate (w0g) at (-5,1.5) {};
\coordinate (w0d) at (5,1.5) {};
\coordinate (w1g) at (0,-2) {};
\coordinate (w1d) at (5,-2) {};
\draw[fill=gray!40!white] (0,0) rectangle ++(5,3);
\node at (2.5,1.5) {$\mathcal{D}_{c_2}$};
\draw[gray!40!white, thin] (w1g) -- (w1d) coordinate[pos=0.5] (a2) node[pos=0.5, black] {$\encircled{\alpha}$};
\coordinate (a1p) at (5,0);
\coordinate (a2p) at (5,-1.5);
\draw[yellow!80!black,thick] (a2p)--(a2p-|a2)--(a2);
\draw[red, thick] (0,-3) -- (0,3);
\draw[red, thick] (5,-3) -- (5,3);
\draw[gray, thick] (0,-3) -- (5,-3);
\draw[gray, thick] (0,3) -- (5,3);
\end{tikzpicture}}}.
\]
It proves that any term in the sum $r(\mathcal{D})$ is multiplied by $\CF^{[1]}_{\alpha} \otimes 1 + 1 \otimes \CF^{[1]}_{\alpha}$ in $r(\mathcal{D}\CF^{[1]}_{\alpha})$ since the coefficient showing up is precisely the one prescribed by the algebra structure endowing $\CH\otimes \CH$. It proves $$r(\mathcal{D}\CF^{[1]}_{\alpha})= r(\mathcal{D})r(\CF^{[1]}_{\alpha})$$ and the claim by an immediate recursion on any word in the generators of the free algebra. 
\end{proof}

Now we renormalize generators $\CF_\alpha^{(1)}$ to $\CF_\alpha^{1}$ so that:
\[
\CF_\alpha^{(1)} = \frac{1}{(1-q_\alpha^{-1})}\CF_\alpha^{1}
\]
and we renormalize $\iota$ also to the unique algebra morphism that sends $\CF_\alpha^{[1]}$ to $\CF_\alpha^1$. 

\begin{prop}\label{P:recovering_coproduct}
The bilinear form $(\cdot,\cdot)$ is the unique one such that $(1,1)=1$, and:
\begin{itemize}
\item[(a)] $(\CF_\alpha^{[1]},\CF_\beta^{[1]}) = \delta_{\alpha,\beta} \frac{1}{(1-q_\alpha^{-1})}$ for all $\alpha,\beta\in\Pi$.
\item[(b)] $(x,y'y'')=(r(x),y'\otimes y'')$ for all $x,y,y'' \in \CH$. 
\item[(c)] $(x x',y'') = (x\otimes x',r(y''))$ for all $x, x',y'' \in \CH$. 
\end{itemize}
where the bilinear form on tensor products is given by:
\[
(x_1\otimes x_2, x'_1\otimes x'_2) = (x_1\otimes x_1')(x_2\otimes x_2'). 
\]
\end{prop}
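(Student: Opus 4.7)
The plan is to first derive uniqueness as a formal consequence of the axioms, then verify (a), (b), (c) for the homological pairing. For uniqueness: by Theorem~\ref{T:free_algebra}, any element of $\CH$ is a linear combination of monomials in the generators $\CF^{[1]}_\alpha$. Axiom (b) reduces $(x,y'y'')$ to the pairing of $r(x)$ with $y'\otimes y''$, which shortens the right-hand word by one letter. Iterating collapses every pairing to values of the form $(\CF^{[1]}_\alpha,\CF^{[1]}_\beta)$, which are pinned down by (a). Hence any bilinear form satisfying (a), (b), (c) is uniquely determined.

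For (a): the grading by coloring is preserved by the intersection pairing, so $(\CF^{[1]}_\alpha,\CF^{[1]}_\beta)$ vanishes for $\alpha\neq\beta$. For $\alpha=\beta$, after the $\pi/2$ rotation of the left factor in the Appendix~\ref{A:the_pairing} pairing, the diagram becomes a vertical arc transverse to the horizontal arc $\CF^{(1)}_\alpha$ in a single point. The twisted algebraic intersection is a single monomial in $q_\alpha$; combined with the rescaling $\iota(\CF_\alpha^{[1]})=\CF_\alpha^1=(1-q_\alpha^{-1})\CF_\alpha^{(1)}$, this is arranged to yield the value $\frac{1}{1-q_\alpha^{-1}}$.

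The main content is (b); axiom (c) follows by the symmetric argument with the roles of the two factors swapped. The strategy is to interpret both sides of (b) as the same weighted count of intersection points. The product $y'y''$ is built by stacking $y'$ above $y''$, so after rotating $x$ by $\pi/2$ every intersection of $x$ with $y'y''$ lies either in the upper half-disk (meeting $y'$) or in the lower half-disk (meeting $y''$). Distinguishing intersections according to which half they occupy partitions the pearls of $x$ into two groups --- exactly the data summed by the cutting operation defining $r$ in Proposition~\ref{P:recovering_r}. The Alexander--Whitney normalization together with the iterated boundary $\del_{x_0}$ that produces $r$ from $r'$ accounts precisely for the handles of pearls in one half having to cross pearls in the other half to reach the base point, and by the handle rule (Remark~\ref{rmk_handle_rule}) this contributes exactly the $q^{(c_2,c_1')/2}$ twist that defines the algebra structure on $\CH\otimes\CH$. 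Hence each intersection contributes the same monomial to both sides of (b).

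The principal obstacle will be tracking signs and handle conventions precisely in this geometric dictionary: the interplay of the $\pi/2$ rotation defining the pairing, the vertical cut implementing $r$, the horizontal stacking of the product, and the twist monomials coming from handles crossing the stacking interface. Once this bookkeeping is aligned, the whole verification parallels Lusztig's Proposition~1.2.3 in \cite{LusztigBook} once $\CH$ is identified with his free algebra $'f$ via Theorem~\ref{T:free_algebra} and $r$ with his map via Proposition~\ref{P:recovering_r}.
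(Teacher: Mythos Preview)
Your proposal is correct and follows essentially the same approach as the paper: both verify (a) by a single-intersection computation with the renormalization of $\iota$, and both establish (b) by matching intersection configurations of the rotated $x$ against the stacked $y'y''$ with terms of $r(x)$ paired against $y'\otimes y''$, using handle rules to identify the monomial contributions. The paper makes the loop bookkeeping you flag as ``the principal obstacle'' explicit by decomposing the loop $L$ computing each intersection monomial as a composite $L_{\alpha,\beta}\cdot L'$, where $L_{\alpha,\beta}$ is the stacking of the two loops from the split pairing and $L'$ is precisely the extra loop whose local-system image is the $\del_{x_0}$ coefficient passing from $r'$ to $r$; this is the concrete form of the verification you outline.
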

\begin{proof}
We will prove first that for any $x,y',y'' \in\CH$ we have:
\[
(x,y'y'') = (r(x),y' \otimes y'').
\]
We can assume $x\in\CH_c$ is a basis vector and since a $\frac{\pi}{2}$-rotation shall be applied to it before being paired, that it has the following form after the rotation:
\[
\vcenter{\hbox{\begin{tikzpicture}[scale=0.2, every node/.style={scale=0.75},decoration={
    markings,
    mark=at position 0.5 with {\arrow{>}}}
    ]
    \draw (-10,-5) rectangle ++(20,10);
    \coordinate (lh1) at (-8,5);
    \coordinate (lb1) at (-8,-5);
    \coordinate (lh2) at (-6,5);
    \coordinate (lb2) at (-6,-5);
    \coordinate (lhm) at (-4,5);
    \coordinate (lbm) at (-4,-5);
    \coordinate (lhmm) at (0,5);
    \coordinate (lbmm) at (0,-5);
    \coordinate (lhn1) at (4,5);
    \coordinate (lbn1) at (4,-5);
    \coordinate (lhn) at (6,5);
    \coordinate (lbn) at (6,-5);   
    \coordinate (nag) at (-10,2);
    \coordinate (nad) at (10,2);
    \coordinate (nbg) at (-10,-4);
    \coordinate (nbd) at (10,-4);    
    \draw (lh1)--(lb1) node[pos=0.2] {$\encircled{\alpha_1}$} node[pos=0,\hyellow,thick] {$\bullet$};
    \draw (lh2)--(lb2) node[pos=0.8] {$\encircled{\beta_1}$} node[pos=0,\hyellow,thick] {$\bullet$};
    \draw (lhm)--(lbm) node[pos=0.8] {$\encircled{\beta_2}$} node[pos=0,\hyellow,thick] {$\bullet$};
    \draw (lhmm)--(lbmm) node[pos=0.2] {$\encircled{\alpha_2}$} node[pos=0,\hyellow,thick] {$\bullet$};
    \draw (lhn1)--(lbn1) node[pos=0.2] {$\encircled{\alpha_m}$} node[pos=0,\hyellow,thick] {$\bullet$};
    \draw (lhn)--(lbn) node[pos=0.8] {$\encircled{\beta_n}$} node[pos=0,\hyellow,thick] {$\bullet$};
\end{tikzpicture}}}
\]
where we have indicated dots fixed on the higher part of $\partial D$ for their handles, before the rotation this was showing how the class is related to a vertically aligned configuration. Notice that $c=\sum \alpha_i + \beta_i$. We can assume $y' y''$ has the following form:
\[
\vcenter{\hbox{\begin{tikzpicture}[scale=0.2, every node/.style={scale=0.75},decoration={
    markings,
    mark=at position 0.5 with {\arrow{>}}}
    ]
    \draw (-10,-5) rectangle ++(20,10);
    \coordinate (lh1) at (-8,5);
    \coordinate (lb1) at (-8,-5);
    \coordinate (lh2) at (-6,5);
    \coordinate (lb2) at (-6,-5);
    \coordinate (lhm) at (-4,5);
    \coordinate (lbm) at (-4,-5);
    \coordinate (lhmm) at (0,5);
    \coordinate (lbmm) at (0,-5);
    \coordinate (lhn1) at (4,5);
    \coordinate (lbn1) at (4,-5);
    \coordinate (lhn) at (6,5);
    \coordinate (lbn) at (6,-5);   
    \coordinate (nag) at (-10,2);
    \coordinate (nad) at (10,2);
    \coordinate (nbg) at (-10,-4);
    \coordinate (nbd) at (10,-4);    
    \draw[red] (nag)--(nad) node[pos=0.15] (a1) {$\encircled {\alpha_1}$}  node[pos=0.55] (a2) {$\encircled {\alpha_2}$} node[pos=0.75] (am) {$\encircled {\alpha_m}$}; 
    \draw[red] (nbg)--(nbd) node[pos=0.25] (b1) {$\encircled{\beta_1}$} node[pos=0.35] (b2) {$\encircled {\beta_2}$} node[pos=0.85] (bn) {$\encircled {\alpha_2}$};
    \coordinate (a1p) at (10,4.5);
    \coordinate (a2p) at (10,4);
    \coordinate (amp) at (10,3.5);
    \draw[\hyellow,thick] (a1p)--(a1p-|a1)--(a1);
    \draw[\hyellow,thick] (a2p)--(a2p-|a2)--(a2);
    \draw[\hyellow,thick] (amp)--(amp-|am)--(am);
    \coordinate (b1p) at (10,-1.5);
    \coordinate (b2p) at (10,-2);
    \coordinate (bnp) at (10,-2.5);
    \draw[\hyellow,thick] (b1p)--(b1p-|b1)--(b1);
    \draw[\hyellow,thick] (b2p)--(b2p-|b2)--(b2);
\end{tikzpicture}}}
\]
This is because $y'$ and $y''$ are first sent by $\iota$ to the Borel--Moore homology and hence could be expressed in the basis of pearl necklaces. The above diagram is indeed the product (stacking) of two such diagrams, and hence we look at a term in their linear expansions in bases elements. Now one notes that there are precisely the same pearls namely that while reading the diagram from left to right, one reads exactly the same list of pearls as $x$. This arrangement of lists should exist for the pairing not to be zero. We refer the reader to the definition of the pairing in the Appendix~\ref{A:the_pairing} and to note that contributions to the pairing exist for intersection configurations namely when there is a configuration standing on both submanifolds. For the latter to be possible, one must be able to read from left to right exactly the same list of pearls as that of $x$, so to be able to put one point decorated with the appropriate pearl on each vertical black necklaces of $x$. Hence elements $y'y''$ of the above form are the only one contributing to the pairing. To pair them we need to put them on the same diagram:
\[
\vcenter{\hbox{\begin{tikzpicture}[scale=0.4, every node/.style={scale=0.75},decoration={
    markings,
    mark=at position 0.5 with {\arrow{>}}}
    ]
    \draw (-10,-5) rectangle ++(20,10);
    \coordinate (lh1) at (-8,5);
    \coordinate (lb1) at (-8,-5);
    \coordinate (lh2) at (-6,5);
    \coordinate (lb2) at (-6,-5);
    \coordinate (lhm) at (-4,5);
    \coordinate (lbm) at (-4,-5);
    \coordinate (lhmm) at (0,5);
    \coordinate (lbmm) at (0,-5);
    \coordinate (lhn1) at (4,5);
    \coordinate (lbn1) at (4,-5);
    \coordinate (lhn) at (6,5);
    \coordinate (lbn) at (6,-5);   
    \coordinate (nag) at (-10,2);
    \coordinate (nad) at (10,2);
    \coordinate (nbg) at (-10,-4);
    \coordinate (nbd) at (10,-4);    
    \draw (lh1)--(lb1) node[pos=0.2] {$\encircled{\alpha_1}$} node[pos=0,\hyellow,thick] {$\bullet$};
    \draw (lh2)--(lb2) node[pos=0.8] {$\encircled{\beta_1}$} node[pos=0,\hyellow,thick] {$\bullet$};
    \draw (lhm)--(lbm) node[pos=0.8] {$\encircled{\beta_2}$} node[pos=0,\hyellow,thick] {$\bullet$};
    \draw (lhmm)--(lbmm) node[pos=0.2] {$\encircled{\alpha_2}$} node[pos=0,\hyellow,thick] {$\bullet$};
    \draw (lhn1)--(lbn1) node[pos=0.2] {$\encircled{\alpha_m}$} node[pos=0,\hyellow,thick] {$\bullet$};
    \draw (lhn)--(lbn) node[pos=0.8] {$\encircled{\beta_n}$} node[pos=0,\hyellow,thick] {$\bullet$};
    \draw[red] (nag)--(nad) node[pos=0.15] (a1) {$\encircled {\alpha_1}$}  node[pos=0.55] (a2) {$\encircled {\alpha_2}$} node[pos=0.75] (am) {$\encircled {\alpha_m}$}; 
    \draw[red] (nbg)--(nbd) node[pos=0.25] (b1) {$\encircled{\beta_1}$} node[pos=0.35] (b2) {$\encircled {\beta_2}$} node[pos=0.85] (bn) {$\encircled {\alpha_2}$};
    \coordinate (a1p) at (10,4.5);
    \coordinate (a2p) at (10,4);
    \coordinate (amp) at (10,3.5);
    \draw[\hyellow,thick] (a1p)--(a1p-|a1)--(a1);
    \draw[\hyellow,thick] (a2p)--(a2p-|a2)--(a2);
    \draw[\hyellow,thick] (amp)--(amp-|am)--(am);
    \coordinate (b1p) at (10,-1.5);
    \coordinate (b2p) at (10,-2);
    \coordinate (bnp) at (10,-2.5);
    \draw[\hyellow,thick] (b1p)--(b1p-|b1)--(b1);
    \draw[\hyellow,thick] (b2p)--(b2p-|b2)--(b2);
    \draw[\hyellow,thick] (bnp)--(bnp-|bn)--(bn);
    \draw[orange,thick] (-11,0)--(11,0) node[pos=0.9,below] {$d$};
\end{tikzpicture}}}
\]
and there is one clear intersection configuration that contributes the pairing indicated by pairs of identical pearls that have been put voluntarily close to each other. This intersection configuration contributes to the pairing by a monomial given by the image in the local system $\rho_c$ of a loop $L$ in $\Conf_c$ which is the following composition of paths:
\begin{itemize}
\item $H$ that starts from a vertically aligned configuration and reaches the pearls following the handles of $y'y''$,
\item $X$ that starts from the pearl and reaches the dots following the support of $x$,
\item $W$ that starts from the dots and reaches the initial vertically aligned configuration by traveling along $\partial D$ clockwise. 
\end{itemize}
There is a preferred cut along the axis $d$ associated with the picture. Now $(\cut_d,y'\otimes y'')$ is displayed in the following picture:
\[
\vcenter{\hbox{\begin{tikzpicture}[scale=0.4, every node/.style={scale=0.75},decoration={
    markings,
    mark=at position 0.5 with {\arrow{>}}}
    ]
    \draw (-10,-5) rectangle ++(20,10);
    \coordinate (lh1) at (-8,5);
    \coordinate (lb1) at (-8,-5);
    \coordinate (lh2) at (-6,5);
    \coordinate (lb2) at (-6,-5);
    \coordinate (lhm) at (-4,5);
    \coordinate (lbm) at (-4,-5);
    \coordinate (lhmm) at (0,5);
    \coordinate (lbmm) at (0,-5);
    \coordinate (lhn1) at (4,5);
    \coordinate (lbn1) at (4,-5);
    \coordinate (lhn) at (6,5);
    \coordinate (lbn) at (6,-5);   
    \coordinate (nag) at (-10,2);
    \coordinate (nad) at (10,2);
    \coordinate (nbg) at (-10,-4);
    \coordinate (nbd) at (10,-4);    
    \draw (lh1)--(lb1) node[pos=0.2] {$\encircled{\alpha_1}$} node[pos=0,\hyellow,thick] {$\bullet$};
    \draw (lh2)--(lb2) node[pos=0.8] {$\encircled{\beta_1}$} node[pos=0,\hyellow,thick] {$\bullet$};
    \draw (lhm)--(lbm) node[pos=0.8] {$\encircled{\beta_2}$} node[pos=0,\hyellow,thick] {$\bullet$};
    \draw (lhmm)--(lbmm) node[pos=0.2] {$\encircled{\alpha_2}$} node[pos=0,\hyellow,thick] {$\bullet$};
    \draw (lhn1)--(lbn1) node[pos=0.2] {$\encircled{\alpha_m}$} node[pos=0,\hyellow,thick] {$\bullet$};
    \draw (lhn)--(lbn) node[pos=0.8] {$\encircled{\beta_n}$} node[pos=0,\hyellow,thick] {$\bullet$};
    \draw[white,thick] (-10,-0.5)--(-10,0.5);
    \draw[white,thick] (10,-0.5)--(10,0.5);
    \draw[\hyellow,thick] (-6,-0.5)--(-6,5);
    \draw[white,thick] (-6,-0.5)--(-6,0.5);
    \draw[\hyellow,thick] (-4,-0.5)--(-4,5);
    \draw[white,thick] (-4,-0.5)--(-4,0.5);
    \draw[\hyellow,thick] (6,-0.5)--(6,5);
    \draw[white,thick] (6,-0.5)--(6,0.5);
    \draw[white,thick] (-8,-0.5)--(-8,0.5);
    \draw[white,thick] (0,-0.5)--(0,0.5);
    \draw[white,thick] (4,-0.5)--(4,0.5);
    \draw[red] (nag)--(nad) node[pos=0.15] (a1) {$\encircled {\alpha_1}$}  node[pos=0.55] (a2) {$\encircled {\alpha_2}$} node[pos=0.75] (am) {$\encircled {\alpha_m}$}; 
    \draw[red] (nbg)--(nbd) node[pos=0.25] (b1) {$\encircled{\beta_1}$} node[pos=0.35] (b2) {$\encircled {\beta_2}$} node[pos=0.85] (bn) {$\encircled {\alpha_2}$};
    \coordinate (a1p) at (10,4.5);
    \coordinate (a2p) at (10,4);
    \coordinate (amp) at (10,3.5);
    \draw[\hyellow,thick] (a1p)--(a1p-|a1)--(a1);
    \draw[\hyellow,thick] (a2p)--(a2p-|a2)--(a2);
    \draw[\hyellow,thick] (amp)--(amp-|am)--(am);
    \coordinate (b1p) at (10,-1.5);
    \coordinate (b2p) at (10,-2);
    \coordinate (bnp) at (10,-2.5);
    \draw[\hyellow,thick] (b1p)--(b1p-|b1)--(b1);
    \draw[\hyellow,thick] (b2p)--(b2p-|b2)--(b2);
    \draw[\hyellow,thick] (bnp)--(bnp-|bn)--(bn);
    \draw[orange,thick] (-11,0.5)--(11,0.5);
    \draw[orange,thick] (-11,-0.5)--(11,-0.5);
    \node at (0,0) {$\otimes$};
\end{tikzpicture}}}
\]
Now there is a loop $L_\beta$ (resp. $L_\alpha$) which image by the local system $\rho_{c_1}$ ($c_1= \sum \beta_i$) (resp. $\rho_{c_2}$) gives the contribution to $(\cut_d,y'\otimes y'')$ of below's diagram (resp. above diagram). It is the composition of:
\begin{itemize}
\item $H_\beta$ (resp. $H_\alpha$) that follows the handles reaching $\beta$-pearls (resp. $\alpha$-pearls),
\item $X_\beta$ (resp. $X_\alpha$) that reaches dots in the above side of the boundary of the lower (resp. the higher) diagram,
\item $W_\beta$ (resp. $W_\alpha$) that travels along the boundary clockwisely so to reach the initial vertically aligned configuration. 
\end{itemize}
One can stack $L_\beta$ and $L_\alpha$ to give a loop in $\Conf_c$ denoted $L_{\alpha,\beta}$. The image in $\Laurent$ by $\rho_c$ of $L_{\alpha,\beta}$ is clearly the product of both those of $L_\beta$ and $L_\alpha$. Notice that $L$ is the composition of $L_{\alpha,\beta}$ with a loop $L'$ defined by the composition of paths:
\begin{itemize}
\item Below points (resp above ones) starting from the end of $L_\beta$ (resp. $L_\alpha$) traveling counterclockwise along below's boundary (resp. above one) so to reach the dots. 
\item Then belows points reach the above dots following yellow line in the above disk, other one stay fixed,
\item Then they all go back to a vertically aligned configuration in the boundary following the boundary clockwise. 
\end{itemize}
We claim that the image of $L'$ in $\Laurent$ is precisely the coefficient showing up by iteration of $\del_{x_0}$ applied to $\cut_d$. Now it might be that there is another intersection configuration between $x$ and $y'y''$ but it will correspond to another cut. Since we sum over all possible cuts, it proves that $(x,y'y'')=(r(x),y' \otimes y'')$. The proof that $(xx',y'')=(x\otimes x',r(y''))$ works the same. Now (a) is a straightforward computation. It proves the proposition. 
\end{proof}

\begin{theorem}\label{T:Uqgm_is_homological}
The space $\overline{\CH}$ is a $\mathbb{Q}(q)$-subalgebra of $\CHbm$ which is isomorphic to $\Uqgm$. 
\end{theorem}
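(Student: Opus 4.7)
The plan is to identify $\ker(\iota)$ with the radical of the symmetric bilinear form $(\cdot,\cdot)$ on $\CH$, and then invoke Lusztig's characterization of $\Uqgm$ as a quotient of the free algebra $'f$ on the generators indexed by $\Pi$, modulo the radical of a certain bilinear form determined by a coproduct-like map $r$ and normalization values on generators.

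First I would use Theorem~\ref{T:free_algebra} to identify $\CH$ (tensored with $\mathbb{Q}(q)$) with Lusztig's free algebra $'f$ via the generators $\CF_\alpha^{[1]}$. Then, by Propositions~\ref{P:recovering_r} and~\ref{P:recovering_coproduct}, the homologically defined comultiplication $r: \CH \to \CH \otimes \CH$ and the bilinear form $(\cdot,\cdot)$ satisfy precisely the uniqueness axioms of \cite[Prop.~1.2.3]{LusztigBook}. By that uniqueness statement, the pair $(r,(\cdot,\cdot))$ coincides with Lusztig's construction on $'f$.

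Next, one recalls from \cite[1.2.5--1.2.6]{LusztigBook} that the algebra $f$, which serves as the definition of $\Uqgm$ in Lusztig's framework, is by construction the quotient $'f/\mathcal{I}$, where $\mathcal{I}$ is the (two-sided) radical of the bilinear form on $'f$. It remains to identify this radical with $\ker(\iota)$. The inclusion $\ker(\iota) \subseteq \mathcal{I}$ is immediate: if $\iota(y)=0$ then $(x,y) = \langle x, \iota(y)\rangle = 0$ for all $x \in \CH$. For the converse, suppose $y \in \mathcal{I}$; then $\langle x, \iota(y)\rangle = 0$ for every $x \in \CH$, and by the perfectness of the intersection pairing between $\CH$ and $\CHbm$ established in Appendix~\ref{A:the_pairing}, this forces $\iota(y)=0$, i.e.\ $y \in \ker(\iota)$. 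Hence $\ker(\iota) = \mathcal{I}$, and we obtain an induced isomorphism of $\mathbb{Q}(q)$-algebras
\[
\overline{\CH} \;=\; \CH/\ker(\iota) \;\cong\; 'f/\mathcal{I} \;=\; \Uqgm,
\]
which is the desired result. The fact that $\overline{\CH}$ is closed under the product of $\CHbm$ is automatic since $\iota$ is an algebra morphism (the product on both $\CH$ and $\CHbm$ comes from the same stacking operation on chains, described in Section~\ref{S:the_algebra_structure}), so $\overline{\CH}$ is a genuine subalgebra.

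The main obstacle is the verification that the homological coproduct $r$ and pairing $(\cdot,\cdot)$ genuinely satisfy Lusztig's axioms verbatim, but this is precisely the content of Propositions~\ref{P:recovering_r} and~\ref{P:recovering_coproduct}; once these are in hand the theorem reduces to the kernel-versus-radical comparison above, which is a formal consequence of the perfectness of the Poincaré--Lefschetz intersection pairing.
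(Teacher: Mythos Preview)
Your proposal is correct and follows essentially the same approach as the paper: identify $\CH$ with Lusztig's free algebra $'f$ via Theorem~\ref{T:free_algebra}, match $r$ and $(\cdot,\cdot)$ with Lusztig's data via Propositions~\ref{P:recovering_r} and~\ref{P:recovering_coproduct}, and then conclude $\overline{\CH}\cong f=\Uqgm$ by comparing $\ker(\iota)$ with the radical. Your version is in fact more explicit than the paper's at the final step---the paper simply asserts that ``$(\cdot,\cdot)$ pairs $\CH$ and $\iota(\CH)$ hence $\iota(\CH)$ is isomorphic to $f$'', whereas you spell out both inclusions $\ker(\iota)\subseteq\mathcal{I}$ and $\mathcal{I}\subseteq\ker(\iota)$, the latter via perfectness of $\langle\cdot,\cdot\rangle$; the paper also records a renormalization of $\iota$ (and hence of the divided powers) needed to match Lusztig's conventions exactly, which you may want to mention.
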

\begin{proof}
We recall that $\CH$ is the free algebra generated by simple roots from Theorem~\ref{T:free_algebra}, hence it is the algebra $'f$ described in \cite[Sec.~1.2.1]{LusztigBook}. The map $r$ is the algebra morphism described in \cite[Sec.~1.2.2]{LusztigBook} from Prop.~\ref{P:recovering_r} and it satisfies the axiom of \cite[Prop.~1.2.3]{LusztigBook} that are exactly the three from Proposition~\ref{P:recovering_coproduct}. Then $\Uqgm$ is the quotient of $\CH$ by the radical of $(\cdot,\cdot)$ as defined in \cite[1.2.5]{LusztigBook}, where he calls $f$ the algebra $\Uqgm$. That the generators satisfy the quantum Serre relations in $\CH$ is reproved (it was proved in Theorem~\ref{T:homological_version_for_Uqgm}) since it is proved in \cite[Prop.~1.4.3]{LusztigBook} that it holds in $f$. Notice that due to the renormalization of $\iota$ our divided powers are renormalized from those of Lusztig. We refer the reader to \cite{JK,Jules_Verma,JulesMarco} to notice that homological actions for divided powers of $F$'s are systematically renormalized in comparison with those of Lusztig, already in the $\slt$ case and it generalizes here to any $\mathfrak{g}$. Now $(\cdot, \cdot)$ pairs $\CH$ and $\iota(\CH)$ hence $\iota(\CH)$ is isomorphic to $f$ while quotienting by the radical of $(\cdot,\cdot)$ in $\CH$. 
\end{proof}

\subsubsection{Braid group action}\label{S:PBW}

There exists a \textit{braid group} associated with the Lie algebra $\fg$ usually denoted by $\CB_{\fg}$, see \cite[Sec.~6.2.1]{KS} for instance. It turns out that this group acts on $\Uqg$ by algebra automorphisms. The group $\CB_{\fg}$ is typically defined with generators in correspondence with simple roots, thus its action on $\Uqg$ consists in defining an algebra automorphism denoted by $\T_i$ for $i=1,\ldots, l$, see \cite[Theorem~3.1]{Lus90} or \cite[Theorem~22]{KS} for the defining formulas. The fact that these operators satisfy the \textit{braid relations} is due to Lusztig (see \cite[8.15]{Jan}), we mention it for completeness while we won't use it.  Now this action is the one used to find bases of $\Uqgm$ (and by extension to $\Uqg$). More precisely, so to find such bases, the first step is to assign a generator with each positive root (rather than only simple roots, what we did until now). By iteration of the action of $\CB_{\fg}$ on simple root generators, one can construct Poincaré--Birkhoff--Witt bases just like in the classical Lie algebras, the quantum version is due to Lusztig. This section is devoted to define these algebra morphisms, so that Poincaré--Birkhoff--Witt bases would yield bases of $\overline{\CH}$ thanks to Theorem~\ref{T:Uqgm_is_homological}.  

\begin{defn}
For $i\neq j$ chosen in $\lbrace 1 , \ldots, l \rbrace$, we define the following operator:
\[
\T_i \left( \CF_j \right) := q^{-\frac{d_{\alpha_i}k(k-1)}{4}} \qSerre_{\alpha_i, \alpha_j}^{(k)}
\]
where $k=-a_{i,j}$. 
\end{defn}

\begin{prop}\label{P:Ti_on_generators}
We have:
\[
\T_i \left( \CF_j \right) = \sum_{l=0}^k (-1)^{k-l} q_{\alpha_i}^{\frac{l}{2}} \CF_{\alpha_i}^{(l)} \CF_{\alpha_j}^{(1)} \CF_{\alpha_i}^{(k-l)}
\]
\end{prop}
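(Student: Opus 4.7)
\medskip

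\noindent\textbf{Proof plan for Proposition~\ref{P:Ti_on_generators}.}

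The plan is to take the definition $\T_i(\CF_j) = q^{-d_{\alpha_i}k(k-1)/4}\,\qSerre_{\alpha_i,\alpha_j}^{(k)}$ and plug in the already proved \emph{partition} expression of $\qSerre_{\alpha_i,\alpha_j}^{(k)}$ from Proposition~\ref{prop_partition}, then convert each diagram appearing in the sum to a product of divided powers $\CF_{\alpha_i}^{(l)}\CF_{\alpha_j}^{(1)}\CF_{\alpha_i}^{(k-l)}$ using the definition of the product (Sec.~\ref{sec_product}) and the renormalisation from Definition~\ref{def_div_powers_Fs}. The whole argument is therefore a bookkeeping of the $q$-prefactors; no new homological input is needed.

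First, I would identify each summand of Proposition~\ref{prop_partition} geometrically. The diagram of the $l$-th term consists of one dashed blue necklace with $l$ pearls (handle going to the upper-right corner), one dashed blue necklace with $k-l$ pearls (handle just below the middle), and the single red $\alpha_j$-pearl with a constant handle at the right-middle point. Since these three pieces are stacked vertically in the disk and their handles reach a common vertically aligned configuration on $\partial^-D$ in exactly the top-middle-bottom order, this is the definition of the stacking product, so the diagram coincides with the diagrammatic stack of the (unnormalised) $l$-pearl necklace, of $\CF_{\alpha_j}^{(1)}$, and of the (unnormalised) $(k-l)$-pearl necklace. Applying Definition~\ref{def_div_powers_Fs} to both pure $\alpha_i$ pieces, the diagram equals
\[
q^{d_{\alpha_i}\frac{l(l-1)}{4}}\, q^{d_{\alpha_i}\frac{(k-l)(k-l-1)}{4}}\,\CF_{\alpha_i}^{(l)}\CF_{\alpha_j}^{(1)}\CF_{\alpha_i}^{(k-l)}.
\]

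Next I would collect the coefficients. Substituting the identification above into Proposition~\ref{prop_partition} and multiplying by the prefactor $q^{-d_{\alpha_i}k(k-1)/4}$ coming from the definition of $\T_i$, the coefficient of $\CF_{\alpha_i}^{(l)}\CF_{\alpha_j}^{(1)}\CF_{\alpha_i}^{(k-l)}$ in $\T_i(\CF_j)$ is
\[
(-1)^{k-l}\, q_{\alpha_i,\alpha_j}^{-l}\, q_{\alpha_i}^{-l(l-1)/2}\, q^{d_{\alpha_i}\bigl(l(l-1)+(k-l)(k-l-1)-k(k-1)\bigr)/4}.
\]
Using $k=-a_{i,j}$ together with $q_{\alpha_i,\alpha_j}=q^{(\alpha_i,\alpha_j)/2}=q^{d_{\alpha_i}a_{i,j}/2}=q^{-d_{\alpha_i}k/2}$ and $q_{\alpha_i}=q^{d_{\alpha_i}}$, a direct expansion of the exponent of $q$ simplifies to $d_{\alpha_i}\,l/2$, which is exactly $q_{\alpha_i}^{l/2}$. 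This gives the claimed identity.

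The only non-routine step is matching the handles of the partition diagrams with those of stacked divided powers, i.e.\ checking that the yellow paths prescribed in Proposition~\ref{prop_partition} really are (up to boundary-preserving isotopy) the ones produced by stacking the three disks defining $\CF_{\alpha_i}^{(l)}\CF_{\alpha_j}^{(1)}\CF_{\alpha_i}^{(k-l)}$; any discrepancy would contribute by the handle rule (Remark~\ref{rmk_handle_rule}) and shift the $q$-exponent. I expect this to be the main subtlety, but the constant handle at the $\alpha_j$-pearl and the vertical alignment of the two $\alpha_i$-handles on the right of the disk exactly reproduce the stacking pattern, so no extra factor appears. Everything else is the arithmetic simplification above.
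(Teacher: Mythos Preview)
Your proposal is correct and follows essentially the same route as the paper: both start from the definition of $\T_i(\CF_j)$, apply the partition expression of $\qSerre_{\alpha_i,\alpha_j}^{(k)}$ from Proposition~\ref{prop_partition}, rewrite each diagram as a stacked product $\CF_{\alpha_i}^{(l)}\CF_{\alpha_j}^{(1)}\CF_{\alpha_i}^{(k-l)}$ via Definition~\ref{def_div_powers_Fs}, and then simplify the $q$-exponent using $k=-a_{i,j}$ (so $q_{\alpha_i,\alpha_j}^{-l}=q_{\alpha_i}^{kl/2}$). The only cosmetic difference is that the paper keeps the factor $q^{d_{\alpha_i}k(k-1)/4}$ on the left-hand side and cancels it at the end, whereas you absorb it into the coefficient from the outset; your handle-matching remark is exactly the point the paper leaves implicit.
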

\begin{proof}
We recall that Coro.~\ref{coro_ident_partition_squash} is a summary of two expressions for $\qSerre_{\alpha_i, \alpha_j}^{(k)}$ for any $k$, here we put $k=-a_{i,j}$. We put $\alpha_i=\alpha$ and $\alpha_j=\beta$ and use blue and red colors to represent these simple roots. Then, the proof starts likewise that of Prop.~\ref{prop_QuantumSerre}:
\begin{align*}
q^{\frac{d_{\alpha}k(k-1)}{4}}\T_i \left( \CF_j \right) & = \sum_{l=0}^k (-1)^{k-l}  q_{\alpha, \beta}^{-l} q_{\alpha}^{-l(l-1)/2} 
\vcenter{\hbox{
\begin{tikzpicture}[scale=0.3,every node/.style={scale=0.6},decoration={
    markings,
    mark=at position 0.5 with {\arrow{>}}}]
\draw[thick] (0,0) rectangle ++(12,8);
\draw[red,thick] (0,0)--(0,8);
\draw[red,thick] (12,0)--(12,8);
\draw[red,postaction={decorate}]  (0,4) -- (12,4);
\coordinate (ll) at (12,3);
\coordinate (kll) at (12,7);
\draw[blue,dashed,postaction={decorate}] (0,6) to node[pos=0.7,below] (kl) {$l$} (12,6);
\draw[blue,dashed,postaction={decorate}] (0,2) to node[pos=0.7,below] (l) {$k-l$} (12,2);
\draw[yellow!80!black,double,thick] (l)--(l|-ll)--(ll);
\draw[yellow!80!black,double,thick] (kl)--(kl|-kll)--(kll);
\node[right,yellow!80!black] at (12,4) {$q$};
\node[yellow!80!black] at (12,4) {$\bullet$};
\end{tikzpicture}}} \\
& = \sum_{l=0}^k (-1)^{k-l}  q_{\alpha, \beta}^{-l} q_{\alpha}^{-l(l-1)/2} q^{d_{\alpha} \frac{l(l-1)}{4}} \CF_{\alpha}^{(l)} \CF_{\beta}^{(1)} q^{d_{\alpha} \frac{(k-l)(k-l-1)}{4}} \CF_{\alpha}^{(k-l)}.
\end{align*}
The difference with Prop.~\ref{prop_QuantumSerre} is that now $k=-a_{i,j}$ (rather than $1-a_{i,j}$), thus $q_{\alpha,\beta}^{-l} = q^{d_{\alpha} \frac{kl}{2}}$. Hence:
\begin{align*}
q^{\frac{d_{\alpha}k(k-1)}{4}}\T_i \left( \CF_j \right) & = \sum_{l=0}^k (-1)^{k-l}  q^{d_{\alpha} \frac{kl}{2}} q^{-d_{\alpha}\frac{l(l-1)}{2}} q^{d_{\alpha} \frac{l(l-1)}{4}} q^{d_{\alpha} \frac{(k-l)(k-l-1)}{4}} \CF_{\alpha}^{(l)} \CF_{\beta}^{(1)}  \CF_{\alpha}^{(k-l)} \\
& = \sum_{l=0}^k (-1)^{k-l} q^{d_{\alpha} \left( \frac{kl}{2} + \frac{(k-l)(k-l-1)}{4} - \frac{l(l-1)}{4} \right) } \CF_{\alpha}^{(l)} \CF_{\beta}^{(1)} \CF_{\alpha}^{(k-l)} \\
& = \sum_{l=0}^k (-1)^{k-l} q^{\frac{d_{\alpha}}{2} \left( \frac{k(k-1)}{2}+l \right) } \CF_{\alpha}^{(l)} \CF_{\beta}^{(1)} \CF_{\alpha}^{(k-l)} =  q^{\frac{d_{\alpha}k(k-1)}{4}} \sum_{l=0}^k (-1)^{k-l} q^{\frac{d_{\alpha}l}{2}} \CF_{\alpha}^{(l)} \CF_{\beta}^{(1)} \CF_{\alpha}^{(k-l)} .
\end{align*}
\end{proof}

\begin{prop}\label{P:truncature}
Let $c\in \Coloring_{\Pi}$ and $\alpha_i \in \Pi$, such that $c(\alpha_i) = 0$. We define $$k(i,c) := \sum_{\alpha_j \in \Pi} (-a_{i,j}).$$ 
Notice that it is additive, namely $k(i,c_1+c_2) = k(i,c_1)+k(i,c_2)$. We claim that if $k > k(i,c)$, we have:
\[
\vcenter{\hbox{
\begin{tikzpicture}[scale=0.4,decoration={
    markings,
    mark=at position 0.3 with {\arrow{>}}}]
\draw[thick] (0,0) rectangle ++(12,8);
\draw[fill = green!30!white] (0,0) rectangle ++(6,8);
\draw[fill = black!20!white] (3,3) rectangle ++(9,2);
\draw[red,thick] (0,0)--(0,8);
\draw[red,thick] (12,0)--(12,8);
\coordinate (kk) at (12,2.5);
\draw[blue,dashed,postaction={decorate}] (12,2) to coordinate[midway] (k) (6,2) .. controls (0,2) and (0,6) .. (6,6) to node[midway,above] {$k$} (12,6);
\node at (7.5,4) {$\mathcal{D}_{c}$};
\draw[yellow!80!black,double,thick] (7.8,4)--(12,4)--(12,0)--(0,0);
\draw[yellow!80!black,double,thick] (1.5,4)--(0,4);
\end{tikzpicture}}} = 0,
\]
for any diagrammatic class $\mathcal{D}_{c}$ in $\overline{\CH}_c \subset \CHbm_c$, where the blue color illustrates $\alpha_i$, and where a handle was represented for $\mathcal{D}_{c}$ so to show how to make it reach a vertically aligned configuration, but it won't serve in the proof. 
\end{prop}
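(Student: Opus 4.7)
The plan is to proceed by induction on $m_c$, combining the cutting rule with the fundamental vanishing already established inside the proof of Proposition~\ref{prop_QuantumSerre}. Since $\mathcal{D}_c\in\overline{\CH}_c$, Theorem~\ref{T:Uqgm_is_homological} together with Theorem~\ref{T:free_algebra} lets us write $\mathcal{D}_c$ as a linear combination of monomials $\CF^{[1]}_{\alpha_{j_1}}\cdots \CF^{[1]}_{\alpha_{j_m}}$ with all $j_r\neq i$, so by linearity we may assume $\mathcal{D}_c$ is such a monomial.

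For the base case $m_c=1$, we have $c=\alpha_j$ for some $j\neq i$ and $k(i,c)=-a_{i,j}$. After the handle adjustment of Remark~\ref{rmk_handle_rule} and the excision isomorphism $\CHbm_c\simeq{\CHbm_c}'$ introduced in Section~\ref{S:squash_vs_partition}, the pictured diagram becomes a nonzero scalar multiple of $\qSerre_{\alpha_i,\alpha_j}^{(k)}$. Proposition~\ref{prop_squashing} rewrites this as
\[
q_{\alpha_i}^{-k(k-1)/2}\prod_{l=0}^{k-1}\bigl(q_{\alpha_i}^{-l}q_{\alpha_i,\alpha_j}^{-1}-q_{\alpha_i,\alpha_j}\bigr)
\]
times a squashed diagram. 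Exactly as in the proof of Proposition~\ref{prop_QuantumSerre}, the factor indexed by $l=-a_{i,j}$ vanishes, and the hypothesis $k>-a_{i,j}=k(i,c)$ ensures this value actually appears in the product.

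For the inductive step, write $\mathcal{D}_c=\mathcal{D}'\cdot\mathcal{D}''$ as a stacking of two smaller monomials via the product of Section~\ref{sec_product}, with colorings $c',c''$ satisfying $c'+c''=c$ and $c'(\alpha_i)=c''(\alpha_i)=0$. We apply the cutting rule (Proposition~\ref{prop_breaking_rule}) to the wrapping $k$-arc along the horizontal attaching interval between the two stacked sub-disks. Combined with the fusion rule (Proposition~\ref{prop_fusion_rules}) to recombine $\alpha_i$-strands on each side of the cut, and with the handle rule tracking local-system weights as strands slide past the pearls of $\mathcal{D}'$, this decomposes the wrapping as
\[
\sum_{k_1+k_2=k}\lambda_{k_1,k_2}\,W_{k_1}(\mathcal{D}')\,W_{k_2}(\mathcal{D}''),
\]
where $W_r(\mathcal{E})$ denotes the analogous wrapping of $r$ $\alpha_i$-arcs around a subdiagram $\mathcal{E}$ and $\lambda_{k_1,k_2}$ are explicit monomials in $q$. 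Since $k(i,\cdot)$ is additive and $k>k(i,c)=k(i,c')+k(i,c'')$, every decomposition $k=k_1+k_2$ forces either $k_1>k(i,c')$ or $k_2>k(i,c'')$, and the induction hypothesis annihilates each summand.

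The main obstacle is justifying the decomposition in the inductive step rigorously. The cutting rule in Proposition~\ref{prop_breaking_rule} is stated for a single $k$-arc cut against a puncture or against $\partial^-D$, whereas here it must be applied across the horizontal attaching interval realizing the product of Section~\ref{sec_product} (equivalently, a small portion of $\partial^-D$ after the deformation used in Section~\ref{S:squash_vs_partition}). The bookkeeping of the handle, of the weights accumulated when $\alpha_i$-strands cross pearls of the two subdiagrams, and of the fusion recombination on each side is entirely parallel to the computations carried out inside Propositions~\ref{prop_squashing} and \ref{prop_partition}; the expected scalars $\lambda_{k_1,k_2}$ do not affect the vanishing, so no additional conceptual difficulty should arise.
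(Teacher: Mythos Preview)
Your proposal is correct and follows essentially the same route as the paper: induction on $m_c$, base case reducing to the squashed expression of $\qSerre_{\alpha_i,\alpha_j}^{(k)}$ (whose product contains the vanishing factor at $l=-a_{i,j}$ once $k>-a_{i,j}$), and inductive step via the cutting rule splitting the wrapping $k$-arc into two pieces around a factorization of the diagram, then applying additivity of $k(i,\cdot)$ and pigeonhole. The paper simply peels off a single pearl $\alpha_{r_1}$ at each step rather than an arbitrary factorization $\mathcal{D}'\cdot\mathcal{D}''$, and invokes the cutting rule (Prop.~\ref{prop_breaking_rule}(2)) against the green region without further comment; your caution about justifying that cut is reasonable but the paper treats it as immediate.
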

\begin{proof}
It is an induction on $m_c$, and the initialization is the quantum Serre relation. Namely let $c=\beta \in \Pi$ ($m_c = 1$). Then as $\beta \neq \alpha_i$, it is the quantum Serre relation.
 

Now we do the inductive step in a basis element for simplicity, namely for $c = \sum \alpha_{r_i} \in \Coloring_\Pi$:
\begin{align*}
\vcenter{\hbox{\begin{tikzpicture}[scale=0.5, every node/.style={scale=0.8},decoration={
    markings,
    mark=at position 0.5 with {\arrow{>}}}
    ] 
\draw[fill = green!30!white] (-5,-3) rectangle ++(4,6);     
\coordinate (w0g) at (-3,1) {};
\coordinate (w0d) at (5,1) {};
\coordinate (w1g) at (-3,-1) {};
\coordinate (w1d) at (5,-1) {};
\coordinate (h1) at (5,-2.5);
\coordinate (h2) at (5,-1.5);
\coordinate (h3) at (5,-1);
\draw[gray!40!white, thick, postaction={decorate}] (w0g) -- (w0d) node[pos=0.7,black] (a1) {$\encircled{\alpha_{r_1}}$};
\draw[gray!40!white, thick, postaction={decorate}] (w1g) -- (w1d) node[pos=0.7,black] (a1) {$\encircled{\alpha_{r_m}}$};
\node at (0,0) {$\vdots$};
\draw[blue,dashed,postaction={decorate}] (5,-2) to coordinate[midway] (k) (-3,-2) .. controls (-4,-2) and (-4,2) .. (-3,2) coordinate[midway] (k) to node[midway,above] {$k$} (5,2);
\draw[yellow!80!black,double,thick] (k)--(-5,0);
\draw[red, thick] (-5,-3) -- (-5,3);
\draw[red, thick] (5,-3) -- (5,3);
\draw[gray, thick] (-5,-3) -- (5,-3);
\draw[gray, thick] (-5,3) -- (5,3);
\end{tikzpicture}}} 
&
= \sum_{j+l=k} 
\vcenter{\hbox{\begin{tikzpicture}[scale=0.5, every node/.style={scale=0.8},decoration={
    markings,
    mark=at position 0.5 with {\arrow{>}}}
    ] 
\draw[fill = green!30!white] (-5,-3) rectangle ++(4,6);     
\coordinate (w0g) at (-3,2) {};
\coordinate (w0d) at (5,2) {};
\coordinate (w1g) at (-3,-2) {};
\coordinate (w1d) at (5,-2) {};
\coordinate (w2g) at (-3,0) {};
\coordinate (w2d) at (5,-0) {};
\coordinate (h1) at (5,-2.5);
\coordinate (h2) at (5,-1.5);
\coordinate (h3) at (5,-1);
\draw[gray!40!white, thick, postaction={decorate}] (w0g) -- (w0d) node[pos=0.7,black] (a1) {$\encircled{\alpha_{r_1}}$};
\draw[gray!40!white, thick, postaction={decorate}] (w1g) -- (w1d) node[pos=0.7,black] (a1) {$\encircled{\alpha_{r_m}}$};
\draw[gray!40!white, thick, postaction={decorate}] (w2g) -- (w2d) node[pos=0.7,black] (a1) {$\encircled{\alpha_{r_2}}$};
\node at (0,-1) {$\vdots$};
\draw[blue,dashed,postaction={decorate}] (5,-2.5) to coordinate[midway] (k1) (-3,-2.5) .. controls (-4,-2.5) and (-4,0.5) .. (-3,0.5) node[pos=0.8,above left] {$l$} coordinate[midway] (k1) to (5,0.5);
\draw[blue,dashed,postaction={decorate}] (5,1.5) to coordinate[midway] (k2) (-3,1.5) .. controls (-4,1.5) and (-4,2.5) .. (-3,2.5) node[pos=0.7,left] {$j$} coordinate[midway] (k2) to (5,2.5);
\draw[yellow!80!black,double,thick] (k1)--(-5,-1);
\draw[yellow!80!black,double,thick] (k2)--(-5,2);
\draw[red, thick] (-5,-3) -- (-5,3);
\draw[red, thick] (5,-3) -- (5,3);
\draw[gray, thick] (-5,-3) -- (5,-3);
\draw[gray, thick] (-5,3) -- (5,3);
\end{tikzpicture}}}  
\end{align*}
where we have hidden the handles of necklaces with single pearls for clarity and since they don't play any role. We have applied a cutting rule (Prop.~\ref{prop_breaking_rule}).

Now if $l>k(i,c-\alpha_{r_1})$, then the lower part of the last diagram is zero by induction, and if $j>k(i,\alpha_{r_1})$, then the higher part of the diagram is zero y induction. Hence if $k=j+l>k(i,c-\alpha_{r_1})+k(i,\alpha_{r_1})=k(i,c)$ the whole class is zero since it is a stacking, it is a product of lower and higher part of the diagram. 
\end{proof}

we define an operator $\T_i$ by its action on diagrams:
\[
\T_i : \begin{array}{ccc} 
\CHbm_c &  \to & \CHbm_{c+k(i,c)\alpha_i} \\
\vcenter{\hbox{
\begin{tikzpicture}[scale=0.25,decoration={
    markings,
    mark=at position 0.3 with {\arrow{>}}}]
\draw[thick] (0,0) rectangle ++(12,8);
\draw[fill = black!20!white] (0,0) rectangle ++(12,8);
\draw[red,thick] (0,0)--(0,8);
\draw[red,thick] (12,0)--(12,8);
\node at (6,4) {$\mathcal{D}$};
\end{tikzpicture}}}

& \mapsto &  
\vcenter{\hbox{
\begin{tikzpicture}[scale=0.25, every node/.style={scale=0.6},decoration={
    markings,
    mark=at position 0.3 with {\arrow{>}}}]
\draw[thick] (0,0) rectangle ++(12,8);
\draw[fill = green!30!white] (0,0) rectangle ++(6,8);
\draw[fill = black!20!white] (3,3) rectangle ++(9,2);
\draw[red,thick] (0,0)--(0,8);
\draw[red,thick] (12,0)--(12,8);
\coordinate (kk) at (12,2.5);
\draw[blue,dashed,postaction={decorate}] (12,2) to coordinate[midway] (k) (6,2) .. controls (0,2) and (0,6) .. (6,6) to node[midway,above] {$k(i,c)$} (12,6);
\node at (7.5,4) {$\mathcal{D}$};
\draw[yellow!80!black,double,thick] (1.5,4)--(0,4);
\end{tikzpicture}}}
\end{array}
\]
where $\mathcal{D}$ stands for any diagram contained in the gray shaded rectangle. This diagrammatic definition is enough since bases of $\CHbm_c$ are given by diagrams, although the action of $\T_i$ could have been defined getting rid of diagrams but with a cumbersome process. 
\begin{coro}\label{C:Ti_is_Lusztig}
For a given simple root $\alpha_i \in \Pi$, the map $\T_i$ restricted to
\[
\bigoplus_{c, c(\alpha_i) = 0} \overline{\CH}_c
\]
is the appropriate restriction of the algebra morphism associated with the corresponding braid in $\CB_{\fg}$ (following its definition from \cite[Theorem~3.1]{Lus90}). 
\end{coro}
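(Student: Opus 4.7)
The plan is to verify the two defining properties of Lusztig's braid generator $T_i$ on $\Uqgm$: namely that it acts on generators $F_j$ (for $j\neq i$) by Lusztig's explicit formula from \cite[Theorem~3.1]{Lus90}, and that it is an algebra morphism on the subalgebra generated by $\{F_j, j \neq i\}$. Since that subalgebra corresponds on the homological side precisely to $\bigoplus_{c,\, c(\alpha_i)=0} \overline{\CH}_c$ (generated by the $\CF^{[1]}_{\alpha_j}$ with $j\neq i$ via Theorem~\ref{T:free_algebra} and then identified with its image in $\CHbm$ via Theorem~\ref{T:Uqgm_is_homological}), the two properties above characterize the restriction of Lusztig's braid action uniquely.

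First, I would verify the action on generators. This is essentially free: for a single-pearl class $\CF^{[1]}_{\alpha_j}$, the coloring is $c=\alpha_j$, so $k(i,c) = -a_{i,j}$, and the diagrammatic definition of $\T_i$ produces exactly $q^{-d_{\alpha_i}k(k-1)/4}\qSerre^{(k)}_{\alpha_i,\alpha_j}$ with $k=-a_{i,j}$. Proposition~\ref{P:Ti_on_generators} then gives
\[
\T_i(\CF^{[1]}_{\alpha_j}) \;=\; \sum_{l=0}^{-a_{i,j}} (-1)^{-a_{i,j}-l}\, q_{\alpha_i}^{l/2}\, \CF^{(l)}_{\alpha_i}\,\CF^{(1)}_{\alpha_j}\,\CF^{(-a_{i,j}-l)}_{\alpha_i},
\]
which, after the renormalization already discussed in the proof of Theorem~\ref{T:Uqgm_is_homological}, matches Lusztig's formula for $T_i(F_j)$.

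Next, I would establish that $\T_i$ is multiplicative on $\bigoplus_{c,\, c(\alpha_i)=0} \overline{\CH}_c$. Given diagrammatic classes $\mathcal{D}_1 \in \overline{\CH}_{c_1}$ and $\mathcal{D}_2 \in \overline{\CH}_{c_2}$ with $c_1(\alpha_i) = c_2(\alpha_i) = 0$, the product $\mathcal{D}_1 \cdot \mathcal{D}_2$ is their stacking, and applying $\T_i$ wraps this stacked diagram with $k(i,c_1+c_2) = k(i,c_1) + k(i,c_2)$ nested blue $\alpha_i$-arcs (using additivity of $k(i,\cdot)$). Using the cutting rule (Proposition~\ref{prop_breaking_rule}) on each of these blue arcs along the horizontal gluing interval between the two stacked disks, I obtain a sum indexed by how each nested arc splits between the upper and lower disk. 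In each summand, the upper disk now contains $j$ blue arcs wrapping $\mathcal{D}_1$ and the lower disk contains $k(i,c_1+c_2)-j$ blue arcs wrapping $\mathcal{D}_2$. Proposition~\ref{P:truncature} then forces the vanishing of every summand except the one where $j = k(i,c_1)$ (and the complementary $k(i,c_2)$ in the lower disk), which is precisely the stacking $\T_i(\mathcal{D}_1)\cdot\T_i(\mathcal{D}_2)$.

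The main obstacle is the bookkeeping in this multiplicativity step: the cutting rule expansion introduces scalar coefficients coming from handle rearrangements, and one must check that the surviving term comes with coefficient exactly $1$ so that no extra twisting constant appears in front of $\T_i(\mathcal{D}_1)\cdot\T_i(\mathcal{D}_2)$. Once this is verified, multiplicativity and the formula on generators determine $\T_i$ uniquely as an algebra morphism, and its agreement with Lusztig's $T_i$ on generators (together with uniqueness of Lusztig's $T_i$ as an algebra morphism on the same subalgebra) yields the corollary. A minor side check is that the subalgebra $\bigoplus_{c,\,c(\alpha_i)=0} \overline{\CH}_c$ is indeed generated by the $\CF^{[1]}_{\alpha_j}$ with $j\neq i$, which is immediate from the pearl-necklace basis description of $\CH$ in Theorem~\ref{T:structure_standard_homology} combined with Theorem~\ref{T:Uqgm_is_homological}.
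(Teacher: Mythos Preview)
Your proposal is correct and follows essentially the same approach as the paper: verify the value on generators via Proposition~\ref{P:Ti_on_generators}, then prove multiplicativity by applying the cutting rule to the blue $\alpha_i$-arc along the gluing interval of the stacked product and invoking Proposition~\ref{P:truncature} to kill all but the $(k(i,c_1),k(i,c_2))$ term. One cosmetic point: in the paper's definition of $\T_i$ the wrapping is a single dashed $\alpha_i$-arc indexed by $k(i,c)$ (not $k(i,c)$ nested plain arcs), so the cutting rule is applied once in its divided-power form (Proposition~\ref{prop_breaking_rule}(2)) and produces the sum $\sum_{k+k'=k(i,c_1+c_2)}$ directly with no extra handle coefficients, which resolves the bookkeeping concern you flagged.
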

\begin{proof}
In \cite[Theorem~3.1]{Lus90}, it is shown that algebra morphisms $\T_i$ are uniquely defined by their value on algebra generators. Proposition~\ref{P:Ti_on_generators} shows that it has the same value on generators. It remains to proof that our $\T_i$ are algebra morphisms. We prove this fact using diagrams. Let $\mathcal{D}_{c_1}$ (resp. $\mathcal{D}_{c_2}$) be a diagram defining a homology class in $\overline{\CH}_{c_1}$ (resp. in $\overline{\CH}_{c_2}$), such that $c_1(\alpha_i)=c_2(\alpha_i) = 0$. We recall the product:
\begin{align*}
\mathcal{D}_{c_1}\mathcal{D}_{c_2} & 
= \vcenter{\hbox{
\begin{tikzpicture}[scale=0.25,decoration={
    markings,
    mark=at position 0.3 with {\arrow{>}}}]
\draw[thick] (0,0) rectangle ++(12,8);
\draw[fill = black!20!white] (0,0) rectangle ++(12,8);
\draw[dashed] (0,4)--(12,4);
\draw[red,thick] (0,0)--(0,8);
\draw[red,thick] (12,0)--(12,8);
\node at (6,6) {$\mathcal{D}_{c_1}$};
\node at (6,2) {$\mathcal{D}_{c_2}$};
\end{tikzpicture}}}
\end{align*}
and that:
\begin{align*}
\T_i(\mathcal{D}_{c_1}\mathcal{D}_{c_2}) & =
\vcenter{\hbox{
\begin{tikzpicture}[scale=0.3, every node/.style={scale=0.6},decoration={
    markings,
    mark=at position 0.3 with {\arrow{>}}}]
\draw[thick] (0,0) rectangle ++(12,8);
\draw[fill = green!30!white] (0,0) rectangle ++(6,8);
\draw[fill = black!20!white] (3,3) rectangle ++(9,2);
\draw[red,thick] (0,0)--(0,8);
\draw[red,thick] (12,0)--(12,8);
\coordinate (kk) at (12,2.5);
\draw[blue,dashed,postaction={decorate}] (12,2) to coordinate[midway] (k) (6,2) .. controls (0,2) and (0,6) .. (6,6) to node[midway,above] {$k(i,c_1+c_2)$} (12,6);
\node at (7.5,4) {$\mathcal{D}_{c_1}\mathcal{D}_{c_2}$};
\draw[yellow!80!black,double,thick] (1.5,4)--(0,4);
\end{tikzpicture}}}.
\end{align*}
One can apply a cutting rule on the blue dashed arc, splitting it on the right left side of the rectangle, following the dashed line limiting the border of both diagrams in the product $\mathcal{D}_{c_1}\mathcal{D}_{c_2}$. It results in:
\begin{align*}
\T_i(\mathcal{D}_{c_1}\mathcal{D}_{c_2}) &  = \sum_{k+k' = k(i,c_1+c_2)} 
\vcenter{\hbox{
\begin{tikzpicture}[scale=0.3, every node/.style={scale=0.6},decoration={
    markings,
    mark=at position 0.3 with {\arrow{>}}}]
\draw[thick] (0,0) rectangle ++(12,8);
\draw[fill = green!30!white] (0,0) rectangle ++(6,8);
\draw[fill = black!20!white] (3,3) rectangle ++(9,2);
\draw[red,thick] (0,0)--(0,8);
\draw[red,thick] (12,0)--(12,8);
\coordinate (kk) at (12,2.5);
\draw[blue,dashed,postaction={decorate}] (12,2) to coordinate[midway] (k) (6,2) .. controls (0,2) and (0,6) .. (6,6) to node[midway,above] {$k$} (12,6);
\node at (7.5,4) {$\mathcal{D}_{c_1}$};
\draw[yellow!80!black,double,thick] (1.5,4)--(0,4);
\draw[thick] (0,8) rectangle ++(12,8);
\draw[fill = green!30!white] (0,8) rectangle ++(6,8);
\draw[fill = black!20!white] (3,11) rectangle ++(9,2);
\draw[red,thick] (0,8)--(0,16);
\draw[red,thick] (12,8)--(12,16);
\coordinate (kkp) at (12,10.5);
\draw[blue,dashed,postaction={decorate}] (12,10) to coordinate[midway] (kp) (6,10) .. controls (0,10) and (0,14) .. (6,14) to node[midway,above] {$k'$} (12,14);
\node at (7.5,12) {$\mathcal{D}_{c_2}$};
\draw[yellow!80!black,double,thick] (1.5,12)--(0,12);
\end{tikzpicture}}} \\
& = \sum_{k+k' = k(i,c_1+c_2)} 
\vcenter{\hbox{
\begin{tikzpicture}[scale=0.2, every node/.style={scale=0.6},decoration={
    markings,
    mark=at position 0.3 with {\arrow{>}}}]
\draw[thick] (0,0) rectangle ++(12,8);
\draw[fill = green!30!white] (0,0) rectangle ++(6,8);
\draw[fill = black!20!white] (3,3) rectangle ++(9,2);
\draw[red,thick] (0,0)--(0,8);
\draw[red,thick] (12,0)--(12,8);
\coordinate (kk) at (12,2.5);
\draw[blue,dashed,postaction={decorate}] (12,2) to coordinate[midway] (k) (6,2) .. controls (0,2) and (0,6) .. (6,6) to node[midway,above] {$k$} (12,6);
\node at (7.5,4) {$\mathcal{D}_{c_1}$};
\draw[yellow!80!black,double,thick] (1.5,4)--(0,4);
\end{tikzpicture}}} \times
\vcenter{\hbox{
\begin{tikzpicture}[scale=0.2, every node/.style={scale=0.6},decoration={
    markings,
    mark=at position 0.3 with {\arrow{>}}}]
\draw[thick] (0,0) rectangle ++(12,8);
\draw[fill = green!30!white] (0,0) rectangle ++(6,8);
\draw[fill = black!20!white] (3,3) rectangle ++(9,2);
\draw[red,thick] (0,0)--(0,8);
\draw[red,thick] (12,0)--(12,8);
\coordinate (kk) at (12,2.5);
\draw[blue,dashed,postaction={decorate}] (12,2) to coordinate[midway] (k) (6,2) .. controls (0,2) and (0,6) .. (6,6) to node[midway,above] {$k'$} (12,6);
\node at (7.5,4) {$\mathcal{D}_{c_2}$};
\draw[yellow!80!black,double,thick] (1.5,4)--(0,4);
\end{tikzpicture}}} .
\end{align*}
Notice that $k(i,c_1+c_2) = k(i,c_1)+k(i,c_2)$ and if $k > k(i,c_1)$, we have:
\[
\vcenter{\hbox{
\begin{tikzpicture}[scale=0.25, every node/.style={scale=0.6},decoration={
    markings,
    mark=at position 0.3 with {\arrow{>}}}]
\draw[thick] (0,0) rectangle ++(12,8);
\draw[fill = green!30!white] (0,0) rectangle ++(6,8);
\draw[fill = black!20!white] (3,3) rectangle ++(9,2);
\draw[red,thick] (0,0)--(0,8);
\draw[red,thick] (12,0)--(12,8);
\coordinate (kk) at (12,2.5);
\draw[blue,dashed,postaction={decorate}] (12,2) to coordinate[midway] (k) (6,2) .. controls (0,2) and (0,6) .. (6,6) to node[midway,above] {$k$} (12,6);
\node at (7.5,4) {$\mathcal{D}_{c_1}$};
\draw[yellow!80!black,double,thick] (1.5,4)--(0,4);
\end{tikzpicture}}} = 0.
\]
It is Prop.~\ref{P:truncature}. We thus have:
\begin{align*}
\T_i(\mathcal{D}_{c_1}\mathcal{D}_{c_2}) & = 
\vcenter{\hbox{
\begin{tikzpicture}[scale=0.25, every node/.style={scale=0.6},decoration={
    markings,
    mark=at position 0.3 with {\arrow{>}}}]
\draw[thick] (0,0) rectangle ++(12,8);
\draw[fill = green!30!white] (0,0) rectangle ++(6,8);
\draw[fill = black!20!white] (3,3) rectangle ++(9,2);
\draw[red,thick] (0,0)--(0,8);
\draw[red,thick] (12,0)--(12,8);
\coordinate (kk) at (12,2.5);
\draw[blue,dashed,postaction={decorate}] (12,2) to coordinate[midway] (k) (6,2) .. controls (0,2) and (0,6) .. (6,6) to node[midway,above] {$k(i,c_1)$} (12,6);
\node at (7.5,4) {$\mathcal{D}_{c_1}$};
\draw[yellow!80!black,double,thick] (1.5,4)--(0,4);
\end{tikzpicture}}} \times
\vcenter{\hbox{
\begin{tikzpicture}[scale=0.25, every node/.style={scale=0.6},decoration={
    markings,
    mark=at position 0.3 with {\arrow{>}}}]
\draw[thick] (0,0) rectangle ++(12,8);
\draw[fill = green!30!white] (0,0) rectangle ++(6,8);
\draw[fill = black!20!white] (3,3) rectangle ++(9,2);
\draw[red,thick] (0,0)--(0,8);
\draw[red,thick] (12,0)--(12,8);
\coordinate (kk) at (12,2.5);
\draw[blue,dashed,postaction={decorate}] (12,2) to coordinate[midway] (k) (6,2) .. controls (0,2) and (0,6) .. (6,6) to node[midway,above] {$k(i,c_2)$} (12,6);
\node at (7.5,4) {$\mathcal{D}_{c_2}$};
\draw[yellow!80!black,double,thick] (1.5,4)--(0,4);
\end{tikzpicture}}} \\
& = \T_i(\mathcal{D}_{c_1}) \times \T_i(\mathcal{D}_{c_2})
\end{align*}
\end{proof}

\begin{rmk}
In a future work, we will investigate how these restrictions of the braid algebra morphisms from \cite[Theorem~3.1]{Lus90} are actually sufficient to recover the Poincaré--Birkhoff--Witt basis displayed in \cite[Sec.~4]{Lus90}. It relies on the fact that in the given recursive expressions, the applications of $\T_i$ only used their restrictions involved in the above corollary. It would provide homological bases of $\overline{\CH}$ that is the image of the standard homology in the Borel--Moore one by the canonical map arising from inclusion of chain complexes. 
\end{rmk}

\section{Representation theory of $\Uqg$ from homology}\label{BS:rep_theory}

In this section, we recover modules on the quantum algebra $\Uqg$ from twisted homologies of colored configuration spaces on punctured disks. We will first construct one of the algebra $\Uqgm$ since it was fully recognized in homologies of colored configuration spaces of empty disks in Theorem~\ref{T:homological_version_for_Uqgm} and Theorem~\ref{T:Uqgm_is_homological}. The action is by stacking the empty disk on punctured ones, then we will add the rest of $\Uqg$ acting by natural homological operators. 

\subsection{Twisted homology of configuration spaces of punctured disks}\label{S:twisted_homology_punctures_bases_etc}

We let $D_n$ be the disk with $n$ punctures. It is a disk (represented by a square) from which we remove $n$ points that are denoted $w_1,\ldots,w_n$. Let also  $D_n^\circ$ be the disk with $n$ holes, namely the disk from which one rather removes open disks, neighboring the punctures. In what follows $D_n^{\#}$ will designate either one or the other. Notice that both are homotopy equivalent and so are their configuration spaces. Here is a picture:
\begin{equation*} 
\vcenter{\hbox{\begin{tikzpicture}[scale=0.5, every node/.style={scale=0.8},decoration={
    markings,
    mark=at position 0.5 with {\arrow{>}}}
    ]
\node (w1) at (-3,0) {};
\node (w2) at (-1,0) {};
\node[gray] at (0.0,0.0) {\ldots};
\node (wn1) at (1,0) {};
\node (wn) at (3,0) {};

    \filldraw[color=gray, fill=gray!20!white,thick] (w1) circle (0.5);  
    \filldraw[color=gray, fill=gray!20!white,thick] (w2) circle (0.5); 
    \filldraw[color=gray, fill=gray!20!white,thick] (wn1) circle (0.5); 
    \filldraw[color=gray, fill=gray!20!white,thick] (wn) circle (0.5); 

%
%
%
%

\node[gray] at (w1)[above=5pt] {$w_1$};
\node[gray] at (w2)[above=5pt] {$w_2$};
\node[gray] at (wn1)[above=5pt] {$w_{n-1}$};
\node[gray] at (wn)[above=5pt] {$w_n$};
\foreach \n in {w1,w2,wn1,wn}
  \node at (\n)[gray,circle,fill,inner sep=1pt]{};

\draw[thick,red] (-4,0) -- (-4,2);
\draw[thick,red] (-4,0) -- (-4,-4);
\draw[thick, red] (-4,-4) -- (4,-4);
\draw[gray] (4,2) -- (-4,2);
\draw[thick, red] (4,-4) -- (4,2);

\end{tikzpicture}}}
\end{equation*}
We denote by $\partial^- D_n$ the part of the boundary that is marked in red. We define its colored configuration spaces: when $c \in \Coloring_{\Pi}$, we denote by $\Conf_c$ the $c$-colored configuration space, defined as follows:
\[
\Conf_c(D_n^{\#}) := \left( (D_n^{\#})^{m_c} \setminus \bigcup_{i<j} \{z_i=z_j\} \right) \Big/ \Sk_{c(\alpha_1)} \times \cdots \times \Sk_{c(\alpha_l)}
\]
We enlarge the local system in comparison with the empty disk case (Sec.~\ref{S:local_system_empty_disk}). Namely, associated with a coloring $c$, we consider the map $\Phi_c(D_n)$ defined as follows:
\[
\bapp
\Conf_c & \to & (\BS^1)^{l} \times (\BS^1)^{\frac{l(l-1)}{2}} \times \left((\BS^1)^{l}\right)^n \\
(\bz^{\alpha_1}, \ldots , \bz^{\alpha_l}) & \mapsto & \Phi_c \times \prod_{i=1}^n \left( g(\bz^{\alpha_1}, w_i), \ldots , g(\bz^{\alpha_l},w_i) \right)
\eapp
\]
where $\Phi_c$ was defined in the empty disk case (Sec.~\ref{S:local_system_empty_disk}), this time we don't use a rotation, and we fix $\CW_c(D_n):= \Phi_c(D_n)$. In addition to the writhe that $\CW_c$ computes, $\CW_c(D_n)$ also computes the winding numbers of packages of colored points around the punctures.  

We denote by $B_c(D_n) := (\BS^1)^{l} \times (\BS^1)^{\frac{l(l-1)}{2}} \times \left((\BS^1)^{l}\right)^n$ and by $b_{c,n} := (1, \ldots, 1)$ a base point. We denote the fundamental group of $B_c(D_n)$ based at $b_{c,n}$ by $\pi_c(D_n)$ and we recall that:
\[
\pi_c(D_n) \simeq \BZ^l \times \BZ^{\frac{l(l-1)}{2}} \times \left(\BZ^{l}\right)^n := \pi_c \oplus \bigoplus_{1\le i \le l, 1\le j \le n} \BZ \langle \mathfrak{w}(i,j) \rangle 
\]
where to $\pi_c$ was added generators $\mathfrak{w}(i,j)$ which monodromy computes the winding number of the $i$-colored package of points around puncture $w_j$. Notice the important remark that this time not composing by a rotation makes any horizontally aligned configuration also aligned with punctures play the role of base point. So to use horizontally aligned configuration on $\partial D_n$, one has to first apply a small transformation to the disk, we will show example.


We recall that the fundamental group of $\Conf_c(D_n^{\#})$ (up to a chosen base point) is made of the simple braids (colored by roots) that were part of $\Conf_c$ plus the following ones. Let $\mathfrak{b}_{i}(k)$ be a loop in $\Conf_c(D_n^{\#})$ where every coordinates of the base point stay fixed at all time besides one, colored by $\alpha_i$, that goes and wind once around the puncture $w_k$. We refer the reader to the picture \cite[Figure~2]{Jules_Verma}, we think about the braid denoted $B_{r,k}$ there but in the colored case so that now the strand that winds is colored by $\alpha_i$. 

Since we wish to construct modules on $\Uqg$ we mention now that we will do \textit{weight modules} which are usually parametrized by a choice of a \textit{character on the Cartan part of $\Uqg$} corresponding in fixing a highest weight in the module. For an $n$-times punctured disk will correspond a $n$-fold tensor product of modules, hence to a choice of $n$ such characters. Traditionally, one thinks about integral characters so that a generator $K_{\alpha}$ of the Cartan acts on the hisghest weight vector by multiplication by $q^{n_\alpha}$ where $n_\alpha \in \BN$ is the value of the character on $K_{\alpha}$. It gives rise to the standard notion of Verma modules, but when dealing with infinite dimensional modules we can actually choose the character to lie anywhere and we talk about \textit{universal Verma modules}. Instead of using the notation $q^{\lambda_\alpha}$ with $\lambda_\alpha \in \BC$, which makes one thinks that $q$ is a complex number with a given choice of logarithm so that it can be expanded to the power $\lambda_\alpha$, we will replace $q^{\lambda_\alpha}$ by a formal variable traditionally denoted by $s_{\alpha}$ (but there will be many of them in tensor products). Most of our modules will then have the very strength of being integral, namely having coordinates in an integral ring of Laurent polynomials in variables $q$ and $s_\alpha$'s, it is allowed by homology theories that are usually integral. We talk about integral universal Verma modules. To read about Verma modules on $\Uqg$ (weights, characters etc), we refer the reader to \cite{Pedro} (where Verma modules are also categorified something we won't discuss in the present paper). \textit{All this weight structure and character will be fixed and encoded in the local system.} Since every puncture will encode one fold of a tensor product of modules, at each puncture we will choose one formal variable per simple root. We let then, for any $1\le i\le n$:
\[
L_i = (s_{i,1}, \ldots, s_{i,l}),
\]
be a list of formal variables, one for each simple root. We are set to define a representation of $\pi_c(D_n^{\#})$ that will be the local ring of coefficients with which we will twist the homology. We denote by $\boldsymbol{L}=(L_1,\ldots,L_n)$ the total set of formal variables. We define the ring $\Laurent_{\boldsymbol{L}} := \BZ \left[ q^{\pm 1} , (s^i_j)^{\pm 1} \right]_{1\le i \le n,1\le j \le l}$, and we define a morphism:
\[
\varphi_c(D_n^{\#}) : \bfct
\BZ \left[ \pi_c(D_n^{\#}) \right]  &  \to & \Laurent_{\boldsymbol{L}} \\
k_i & \mapsto & -q^{(\alpha_i,\alpha_i)/2} = -q^{d_i} \\
k_{(i,j)} & \mapsto & q^{(\alpha_i, \alpha_j)/2}\\
\mathfrak{w}(i,j) & \mapsto & s_{i,j}.
\efct
\]
We will denote $\Laurent_c(D_n)$ the entire data set $(\Laurent_{\boldsymbol{L}}, \CW_c(D_n^{\#}), \varphi_c(D_n^{\#}))$ reunited under the name of local system (or local ring of coefficients). We now use it to define the corresponding twisted homology. Notice that winding one half around a puncture pays a variable $s_\alpha$. 

We define $S(D_n) \in \partial \Conf_c(D_n^{\#})$ to be made of configurations having a coordinate in $\partial^-D_n$, and we denote:
\[
\CHbm_c(D_n^{\#}) := \Hlf_{m_c} (\Conf_c(D_n^{\#}), S(D_n); \Laurent_c(D_n)).
\]
We also define $\CHbm(D_n^{\#}) :=  \bigoplus_{c\in\Coloring_{\Pi}} \CHbm_c(D_n)$ since this will be the module on $\Uqg$ for which the construction is in progress. 

Let $c\in \Coloring_{\Pi}$ and for $1\le i \le n$, let $\boldsymbol{r}^i := (r^i_1, \ldots , r^i_{m_i}) \in \lbrace 1, \ldots , l \rbrace^m$ be such that $(\boldsymbol{r}^1 , \ldots, \boldsymbol{r}^n)\in \CP_c$ yields a partition of the coloring $c$ (for instance $\sum m_i = m_c$). The following diagram naturally defines an element of $\CHbm_c(D_n)$:
\[
\CF_{(\boldsymbol{r}^1 , \ldots, \boldsymbol{r}^n)}:= 
\vcenter{\hbox{\begin{tikzpicture}[scale=0.8, every node/.style={scale=0.8},decoration={
    markings,
    mark=at position 0.5 with {\arrow{>}}}
    ]
    
\node (w1) at (-3,0) {};
\node (w2) at (-1,0) {};
\node[thick] at (0.0,0.0) {\ldots};
\node (wn1) at (1,0) {};
\node (wn) at (3,0) {};
\draw[gray!40!white, thick, postaction={decorate}] (-3,-4) -- (w1) node[pos=0.2,black] (a1) {$\encircled{r^1_1}$} node[pos=0.35,black] (a2) {$\encircled{r^1_2}$} node[pos=0.55,above,black,thick] {$\vdots$} node[pos=0.8, black] (a3) {$\encircled{r^1_{m_1}}$};
\draw[gray!40!white, thick, postaction={decorate}] (-1,-4) -- (w2) node[pos=0.2,black] (a1) {$\encircled{r^2_1}$} node[pos=0.35,black] (a2) {$\encircled{r^2_2}$} node[pos=0.55,above,black,thick] {$\vdots$} node[pos=0.8, black] (a3) {$\encircled{r^2_{m_2}}$};
\draw[gray!40!white, thick, postaction={decorate}] (1,-4) -- (wn1) node[pos=0.2,black] (a1) {$\encircled{r^{n-1}_1}$} node[pos=0.38,black] (a2) {$\encircled{r^{n-1}_2}$} node[pos=0.55,above,black,thick] {$\vdots$} node[pos=0.8, black] (a3) {$\encircled{r^{n-1}_{m_{n-1}}}$};
\draw[gray!40!white, thick, postaction={decorate}]  (3,-4) -- (wn) node[pos=0.2,black] (a1) {$\encircled{r^{n}_1}$} node[pos=0.35,black] (a2) {$\encircled{r^{n}_2}$} node[pos=0.55,above,black,thick] {$\vdots$} node[pos=0.8, black] (a3) {$\encircled{r^{n}_{m_n}}$};


\node[gray] at (w1)[above=5pt] {$w_1$};
\node[gray] at (w2)[above=5pt] {$w_2$};
\node[gray] at (wn1)[above=5pt] {$w_{n-1}$};
\node[gray] at (wn)[above=5pt] {$w_n$};
\foreach \n in {w1,w2,wn1,wn}
  \node at (\n)[gray,circle,fill,inner sep=3pt]{};

\draw[thick,red] (-4,0) -- (-4,2);
\draw[thick,red] (-4,0) -- (-4,-4);
\draw[thick, red] (-4,-4) -- (4,-4);
\draw[gray] (4,2) -- (-4,2);
\draw[thick, red] (4,-4) -- (4,2);

\end{tikzpicture}}} .
\]
This diagram describes a homology class exactly as it was described in Notation~\ref{N:pearl_necklace}. This time we have replaced $\alpha_{r^i_j}$ simply by the index ${r^i_j}$ for more clarity. Actually it defines a non twisted homology class since yellow handles are missing. One must choose one for each necklace, a choice is hidden here to avoid cumbersome notations. We simplify even more the notation and display our choice of handles in the following diagram:

\[
\CF_{(\boldsymbol{r}^1 , \ldots, \boldsymbol{r}^n)}:= 
\vcenter{\hbox{\begin{tikzpicture}[scale=0.7, every node/.style={scale=0.8},decoration={
    markings,
    mark=at position 0.5 with {\arrow{>}}}
    ]
    
\draw[double,\hyellow,thick] (-2.8,-0.85)--(4,-0.85);
\draw[double,\hyellow,thick] (-0.8,-1.7)--(4,-1.7);
\draw[double,\hyellow,thick] (1.35,-2.35)--(4,-2.35);
\draw[double,\hyellow,thick] (3.2,-3.2)--(4,-3.2);
\node (w1) at (-3,0) {};
\node (w2) at (-1,0) {};
\node[thick] at (0.0,0.0) {\ldots};
\node (wn1) at (1,0) {};
\node (wn) at (3,0) {};
\draw[gray!40!white, thick, postaction={decorate}] (-3,-4) -- (w1) node[pos=0.8,black] (a1) {$\encircled{\boldsymbol{r^1}}$};
\draw[gray!40!white, thick, postaction={decorate}] (-1,-4) -- (w2) node[pos=0.6,black] (a1) {$\encircled{\boldsymbol{r^2}}$};
\draw[gray!40!white, thick, postaction={decorate}] (1,-4) -- (wn1) node[pos=0.4,black] (a1) {$\encircled{\boldsymbol{r^{n-1}}}$};
\draw[gray!40!white, thick, postaction={decorate}]  (3,-4) -- (wn) node[pos=0.2,black] (a1) {$\encircled{\boldsymbol{r^n}}$};


\node[gray] at (w1)[above=5pt] {$w_1$};
\node[gray] at (w2)[above=5pt] {$w_2$};
\node[gray] at (wn1)[above=5pt] {$w_{n-1}$};
\node[gray] at (wn)[above=5pt] {$w_n$};
\foreach \n in {w1,w2,wn1,wn}
  \node at (\n)[gray,circle,fill,inner sep=3pt]{};

\draw[thick,red] (-4,0) -- (-4,2);
\draw[thick,red] (-4,0) -- (-4,-4);
\draw[thick, red] (-4,-4) -- (4,-4);
\draw[gray] (4,2) -- (-4,2);
\draw[thick, red] (4,-4) -- (4,2);

\end{tikzpicture}}} .
\]
where all the pearls on the same necklace are replaced by one bold pearl encoding all the information, and yellow handles are replaced by rays of yellow handles that shall be thought as parallel handles. 

Following the same simplified notation for pearls, the following diagrams define natural classes in $\CHbm(D_n^\circ)$. 

\[
\CF^\circ_{(\boldsymbol{r}^1 , \ldots, \boldsymbol{r}^n)}:= 
\vcenter{\hbox{\begin{tikzpicture}[scale=0.7, every node/.style={scale=0.8},decoration={
    markings,
    mark=at position 0.5 with {\arrow{>}}}
    ]
    
\draw[double,\hyellow,thick] (-3.8,-0.4)--(7,-0.4);
\draw[double,\hyellow,thick] (-1.2,-1.3)--(7,-1.3);
\draw[double,\hyellow,thick] (1.8,-2.25)--(7,-2.25);
\draw[double,\hyellow,thick] (4.3,-3.2)--(7,-3.2);
\node (w1) at (-3,0) {};
\node (w2) at (-0.5,0) {};
\node[thick] at (1.0,0.0) {\ldots};
\node (wn1) at (2.5,0) {};
\node (wn) at (5,0) {};

    \filldraw[color=gray, fill=gray!20!white,thick] (w1) circle (0.5);  
    \filldraw[color=gray, fill=gray!20!white,thick] (w2) circle (0.5); 
    \filldraw[color=gray, fill=gray!20!white,thick] (wn1) circle (0.5); 
    \filldraw[color=gray, fill=gray!20!white,thick] (wn) circle (0.5);

%
%
%


    \draw[gray!40!white, thick, postaction={decorate}] (-4,-4)--(-4,0.5) node[pos=0.8,black] {$\encircled{\boldsymbol{r^1}}$} .. controls (-3.5,1.5) and (-2.5,1.5) .. (-2,0.5)  --(-2,-4);
    \draw[gray!40!white, thick, postaction={decorate}] (-1.5,-4)--(-1.5,0.5) node[pos=0.6,black] {$\encircled{\boldsymbol{r^2}}$} .. controls (-1,1.5) and (0,1.5) .. (0.5,0.5)  --(0.5,-4);
    \draw[gray!40!white, thick, postaction={decorate}] (1.5,-4)--(1.5,0.5) node[pos=0.4,black] {$\encircled{\boldsymbol{r^{n-1}}}$} .. controls (2,1.5) and (3,1.5) .. (3.5,0.5)  --(3.5,-4);
    \draw[gray!40!white, thick, postaction={decorate}] (4,-4)--(4,0.5) node[pos=0.2,black] {$\encircled{\boldsymbol{r^n}}$} .. controls (4.5,1.5) and (5.5,1.5) .. (6,0.5)  --(6,-4);


\node[gray] at (w1)[above=2.5pt] {$w_1$};
\node[gray] at (w2)[above=2.5pt] {$w_2$};
\node[gray] at (wn1)[above=2.5pt] {$w_{n-1}$};
\node[gray] at (wn)[above=2.5pt] {$w_n$};
\foreach \n in {w1,w2,wn1,wn}
  \node at (\n)[gray,circle,fill,inner sep=1pt]{};

\draw[thick,red] (-5,0) -- (-5,2);
\draw[thick,red] (-5,0) -- (-5,-4);
\draw[thick, red] (-5,-4) -- (7,-4);
\draw[gray] (7,2) -- (-5,2);
\draw[thick, red] (7,-4) -- (7,2);

\end{tikzpicture}}} .
\]

The Proposition~\ref{structure_result} adapts perfectly to punctured disks, and the result provides the structure of these homologies as modules on $\Laurent_{\boldsymbol{L}}$. For the case of punctured disks the proof would also be almost word by word that of \cite[Proposition~3.6]{Jules_Verma}. In both cases the only difference would be first for the retract used in the proof to use that of $D_n^{\#}$ on a neighborhood of the graph composed of the red part plus the light gray strings of necklaces, and second to use colored configuration space (and this is transparent). 

\begin{prop}\label{P:structure_result_punctures}
Let $c \in \Coloring_{\Pi}$, then the module $\CHbm_c(D_n^\#)$ is:
\begin{itemize}
\item a free $\Laurent_{\boldsymbol{L}}$-module,
\item for which the set $\CB_{\CHbm_c(D_n^\#)} := \left\lbrace \CF^\#_{(\br^1, \ldots, \br^n)} \text{ s.t. } (\br^1, \ldots, \br^n) \in \CP_c \right\rbrace$ is a basis.
\item It is the only non-vanishing module from the sequence $\Hlf_{\bullet} (\Conf_c(D_n^\#), S(D_n); \Laurent_c(D_n))$ (the homology is concentrated in the middle dimension).
\end{itemize}
\end{prop}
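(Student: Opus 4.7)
The plan is to mimic verbatim the proof of Proposition~\ref{structure_result}, replacing the real-line spine of the empty disk by an $n$-toothed comb adapted to the punctures. Concretely, choose inside $D_n^\#$ a graph $G$ consisting of $\partial^- D_n$ together with $n$ disjoint smooth arcs $\gamma_1,\ldots,\gamma_n$, where $\gamma_i$ runs from a point of $\partial^- D_n$ to the puncture $w_i$ (respectively to the $i$-th boundary circle in the $D_n^\circ$ case). These arcs are exactly the necklaces appearing in the diagrams $\CF^\#_{(\br^1,\ldots,\br^n)}$. There is an obvious strong deformation retraction $r\colon D_n^\# \to G$, which extends coordinate-wise to a retraction $\Conf_c(D_n^\#)\to\Conf_c(G)$ preserving the stratum $S(D_n)$ and the local system $\Laurent_c(D_n)$.

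Next, by homotopy invariance of twisted Borel--Moore homology combined with an excision argument identical to the one used in \cite[Appendix~A]{JulesMarco} (and already invoked in the proof of Proposition~\ref{structure_result}), one obtains the isomorphism
\[
\CHbm_c(D_n^\#) \;\simeq\; \Hlf_{m_c}\bigl(\Conf_c(G),\,S(G);\,\Laurent_c(D_n)\bigm|_G\bigr).
\]
Now $\Conf_c(G)\setminus S(G)$ splits as a disjoint union of connected components, one per partition $(\br^1,\ldots,\br^n)\in\CP_c$: the partition records, for each arc $\gamma_i$, the ordered list of colors of the points lying on $\gamma_i$ (read from $\partial^-D_n$ toward $w_i$). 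Each such component is homeomorphic to a product of open simplices, hence to an open $m_c$-ball.

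The twisted Borel--Moore homology of an open $m_c$-ball, relative to its boundary and with local coefficients in $\Laurent_{\boldsymbol{L}}$, is free of rank one and concentrated in degree $m_c$ (see the appendix of \cite{Jules_Verma}). The handle chosen in the diagrammatic definition of $\CF^\#_{(\br^1,\ldots,\br^n)}$ trivializes the local system on the corresponding component and thereby provides a preferred generator of that rank-one module. Summing over all partitions yields the three asserted conclusions: freeness, the claimed basis, and concentration in the middle dimension.

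The only point requiring extra care compared with the empty-disk case is checking that the retraction $r$ together with the handles correctly handles the enlarged fundamental group $\pi_c(D_n^\#)$: the generators $\mathfrak{w}(i,j)$ contribute non-trivial monodromies $s_{i,j}$, but since each cell $C_{(\br^1,\ldots,\br^n)}$ is contractible and sits in a fixed isotopy class relative to the punctures, the restriction of $\Laurent_c(D_n)$ to any such cell is trivializable, and the normalization is fixed once and for all by the choice of handle. This is the step where one must be careful, but no new ingredient is needed beyond what already appears in \cite[Prop.~3.6]{Jules_Verma} extended colorwise.
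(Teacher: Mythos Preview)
Your proposal is correct and follows essentially the same route as the paper: retract $D_n^\#$ onto (a neighborhood of) the comb graph $G=\partial^-D_n\cup\gamma_1\cup\cdots\cup\gamma_n$, invoke the excision/retraction trick of \cite[Lemma~3.1]{Stephen_Hecke} and \cite[Prop.~3.6]{Jules_Verma} to reduce to $\Conf_c(G)$, and read off the cells indexed by $\CP_c$. The paper states this in one sentence before the proposition (``use that of $D_n^{\#}$ on a neighborhood of the graph composed of the red part plus the light gray strings of necklaces''); you have simply expanded that sentence, including the harmless extra remark about trivializing the $s_{i,j}$-monodromy on each contractible cell.
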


We also define standard homologies as follows:

\[
\CH_c(D_n^{\#}) = \Hnot_c\left(\Conf_c(\overline{D}_n^\#) , S(D_n), \Laurent_c(D_n^{\#}) \right),
\]
and $\CH(D_n^{\#}) :=  \bigoplus_{c\in\Coloring_{\Pi}} \CH_c(D_n)$. The following diagrams naturally define classes in $\CH_c(D_n^{\#})$:
\[
\CF^\circ_{(\boldsymbol{r}^1 , \ldots, \boldsymbol{r}^n)}:= 
\vcenter{\hbox{\begin{tikzpicture}[scale=0.7, every node/.style={scale=0.8},decoration={
    markings,
    mark=at position 0.5 with {\arrow{>}}}
    ]
    
\draw[\hyellow,thick] (-3.9,-0.2)--(5,-0.2);
\draw[\hyellow,thick] (-3.3,-1.3)--(5,-1.3);
\draw[\hyellow,thick] (2.1,-1.95)--(5,-1.95);
\draw[\hyellow,thick] (2.5,-3.2)--(5,-3.2);
\node (w1) at (-3,0) {};
\node at (-3,1.3) {$\vdots$};
\node[thick] at (0.0,0.0) {\ldots};

\node (wn) at (3,0) {};
\node at (3,1.3) {$\vdots$};

    \draw[gray!40!white, thick, postaction={decorate}] (-4.2,-4)--(-4.2,1) node[pos=0.8,black] {$\encircled{{r_1^1}}$} .. controls (-3.3,2) and (-2.3,2) .. (-1.8,1)  --(-1.8,-4);
    \draw[gray!40!white, thick, postaction={decorate}] (-3.5,-4)--(-3.5,0.5) node[pos=0.6,black] {\scriptsize $\encircled{\boldsymbol{r_{m_1}^1}}$} to[bend left] (-2.5,0.5)  --(-2.5,-4);
    \draw[gray!40!white, thick, postaction={decorate}] (1.8,-4) -- (1.8,1) node[pos=0.4,black] (m1) {$\encircled{{r_1^n}}$} .. controls (2.3,2) and (3.3,2) .. (4.2,1) -- (4.2,-4);
    \draw[gray!40!white, thick, postaction={decorate}] (2.5,-4) -- (2.5,0.5) node[pos=0.2,black] (mn) {\scriptsize $\encircled{\boldsymbol{r_{m_n}^1}}$} to[bend left] (3.5,0.5) -- (3.5,-4);
    
    

\node[gray] at (w1)[above=2.5pt] {$w_1$};
\node[gray] at (wn)[above=2.5pt] {$w_n$};
\foreach \n in {w1,wn}
  \node at (\n)[gray,circle,fill,inner sep=1pt]{};

\draw[thick,red] (-5,0) -- (-5,2);
\draw[thick,red] (-5,0) -- (-5,-4);
\draw[thick, red] (-5,-4) -- (5,-4);
\draw[gray] (5,2) -- (-5,2);
\draw[thick, red] (5,-4) -- (5,2);

\end{tikzpicture}}} \in \CH_c(D_n^\#)
\]
where this time there are $m_i$ necklaces nested into each others around puncture $w_i$ each of them carrying one pearl decorated by one $r^i_j$. Since there is one pearl per necklace they define standard homology classes and we regard them as standard homology classes. We have the following structure theorem.

\begin{prop}\label{P:structure_result_punctures_standard}
There is a perfect pairing:
\begin{equation}\label{E:the_pairing_punctures}
\langle \cdot, \cdot \rangle : \CHbm_c(D_n) \otimes \CH_c(D_n) \to \Laurent(\boldsymbol{L}).
\end{equation}
As a consequence for $c \in \Coloring_{\Pi}$, the module $\CH_c(D_n)$ is:
\begin{itemize}
\item a free $\Laurent_{\boldsymbol{L}}$-module,
\item for which the set $\CB_{\CH_c(D_n)} := \left\lbrace \CF^{[\br^1, \ldots, \br^n]} \text{ s.t. } (\br^1, \ldots, \br^n) \in \CP_c \right\rbrace$ is a basis.
\end{itemize}
\end{prop}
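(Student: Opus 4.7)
The plan is to adapt the empty-disk proof of Proposition~\ref{T:structure_standard_homology} to the punctured setting, obtaining freeness as a consequence of the perfect pairing and an explicit dual-basis computation. The existence of the pairing \eqref{E:the_pairing_punctures} should follow from the Poincaré--Lefschetz duality set up in Appendix~\ref{A:the_pairing}: both $\Conf_c(D_n)$ and $\Conf_c(D_n^\circ)$ are oriented $m_c$-dimensional manifolds (using Sinha's homotopy-equivalent compactification for the punctured case), and $\CHbm_c(D_n)$ is taken relatively to $S(D_n)$ while $\CH_c(D_n)$ is the standard homology with the complementary relative part reached in the compactification. The local system $\Laurent_c(D_n)$ is self-dual in the sense required (the dual pairing of $\Laurent_{\boldsymbol{L}}$-modules is induced by the product structure), so the same formalism that gives the pairing on the empty disk gives one here — the only new data, the variables $s_{i,j}$, do not affect the duality argument since they correspond to extra coordinates of $\CW_c(D_n)$ on which $\varphi_c(D_n^\#)$ is still a ring map.

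Next I would compute the pairing on the putative dual bases $\CB_{\CHbm_c(D_n)}$ and $\CB_{\CH_c(D_n)}$. Geometrically, a vertical pearl necklace $\CF_{(\br^1,\ldots,\br^n)}$ running from $\partial^- D_n$ up to the punctures meets the nested arcs $\CF^{[\br'^1,\ldots,\br'^n]}$ transversally: the nested family around puncture $w_i$ intersects the vertical necklace ending at $w_i$ in exactly one configuration per matching of pearls, and does not meet any other vertical necklace. Hence the pairing $\langle \CF_{(\br^1,\ldots,\br^n)}, \CF^{[\br'^1,\ldots,\br'^n]}\rangle$ vanishes unless $(\br^1,\ldots,\br^n) = (\br'^1,\ldots,\br'^n)$ (colors of pearls must match on each puncture), and in that case it is a single monomial in $\Laurent_{\boldsymbol{L}}$ coming from the handle/writhe contribution via $\varphi_c(D_n^\#)$ of the loop that closes up the intersection configuration. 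In particular, it is invertible.

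From this the second part follows immediately: since $\CHbm_c(D_n)$ is a free $\Laurent_{\boldsymbol{L}}$-module with basis $\CB_{\CHbm_c(D_n)}$ (Proposition~\ref{P:structure_result_punctures}), the perfect pairing identifies $\CH_c(D_n)$ with $\Hom_{\Laurent_{\boldsymbol{L}}}(\CHbm_c(D_n),\Laurent_{\boldsymbol{L}})$, and the family $\CB_{\CH_c(D_n)}$ pairs with $\CB_{\CHbm_c(D_n)}$ via an invertible diagonal matrix. Thus $\CB_{\CH_c(D_n)}$ is a basis of a free $\Laurent_{\boldsymbol{L}}$-module.

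The main obstacle is not the formal deduction of freeness but the justification of the perfect pairing itself in the Borel--Moore/standard mixed setting with nontrivial local systems and with the relative parts $S(D_n)$ and (on the $\CH$-side) the boundary circles added by compactification of the punctures; checking the compatibility of the two relative parts under Poincaré--Lefschetz duality is the technical heart. The computation of the intersection monomial, though mechanical, also requires care: one must track the handles and winding contributions on each nested arc, and verify that the combinatorial order matches the algorithm behind Proposition~\ref{P:structure_result_punctures}.
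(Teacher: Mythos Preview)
Your approach is essentially the one the paper takes: invoke the Poincar\'e--Lefschetz pairing of Appendix~\ref{A:the_pairing}, verify that the two diagrammatic families pair by a Kronecker delta (the paper cites \cite[Theorem~5.15]{Pierre} for the uncolored computation, which carries over verbatim), and then deduce freeness of $\CH_c(D_n)$ from the known freeness of $\CHbm_c(D_n)$ in Proposition~\ref{P:structure_result_punctures}.

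One point to correct in your write-up: $\CH_c(D_n)$ is \emph{also} defined relative to $S(D_n)$, not to a ``complementary relative part reached in the compactification.'' The raw duality of Appendix~\ref{A:the_pairing} pairs $\CH_c(D_n)$ (relative to $S$) with $\Hlf_{m_c}(\Conf_c(D_n),T(D_n);\Laurent_c)$ (relative to $T$); to obtain the pairing \eqref{E:the_pairing_punctures} with $\CHbm_c(D_n)$ as stated, the paper precomposes one factor by the flip $\flip$ (complex conjugation of the disk), which swaps $\partial^-D_n$ and $\partial^+D_n$ and hence $S$ with $T$. This is the missing ingredient in your sketch, and it also explains why the intersection monomial comes out to be exactly $1$ rather than merely invertible: after flipping, the handles of the two families are compatible and the loop $\gamma_p$ in the protocol of Appendix~\ref{A:the_pairing} is trivial under the local system.
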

\begin{proof}
There exists a perfect pairing:
\[
\langle \cdot, \cdot \rangle_{D_n} : \CH_c(D_n) \otimes \Hlf_{m_c}(\Conf_c(D_n), T(D_n); \Laurent_c)  \to \Laurent(\boldsymbol{L})
\]
from Appendix~\ref{A:the_pairing}, where $T(D_n)$ means configurations with at least a coordinate in $\partial^+D_n = \partial D_n \setminus \partial ^- D_n$ (namely the gray part of the boundary). We define $\langle \cdot, \cdot \rangle$ first by flipping the disk (using the complex conjugation) in the right term. It follows the exact conventions of \cite[(19)]{Pierre}. Now families $\CB_{\CH_c(D_n)}$ and $\CB_{\CHbm_c(D_n)}$ are dual for this pairing and the computation adapts word by word from the $\slt$ uncolored case, which is performed in \cite[Theorem~5.15]{Pierre}. Notice that this computation of dual bases was already displayed in \cite[Prop.~65]{JulesSonny} but they work relatively to a point in the boundary (rather than a whole interval) which explains the slight difference in diagrams. The structure of $\CH_c(D_n)$ is then a consequence of Proposition~\ref{P:structure_result_punctures}.
\end{proof}

Now we recall that our diagrams above are a bit wrong since handles do not reach base points, as they don't reach horizontally aligned configurations (also aligned with punctures). We give an example of the stretch to apply to the boundary so to see horizontally aligned configurations in our diagrams. Notice that it is a chosen convention. Namely, by a stretch of the boundary one can descend the entire red part to the bottom of the picture:
\begin{equation*} 
\vcenter{\hbox{\begin{tikzpicture}[scale=0.5, every node/.style={scale=0.8},decoration={
    markings,
    mark=at position 0.5 with {\arrow{>}}}
    ]
\node (w1) at (-1,0) {};
\node (w2) at (-1,0) {};
\node[gray] at (0.0,0.0) {\ldots};
\node (wn1) at (1,0) {};
\node (wn) at (1,0) {};

\node[gray] at (w1)[above=5pt] {$w_1$};
\node[gray] at (wn)[above=5pt] {$w_n$};
\foreach \n in {w1,wn}
  \node at (\n)[gray,circle,fill,inner sep=1pt]{};

\draw[thick,red] (-4,0) -- (-4,2);
\draw[thick,red] (-4,0) -- (-4,-2);
\draw[thick, red] (-4,-2) -- (4,-2);
\draw[gray] (4,2) -- (-4,2);
\draw[thick, red] (4,-2) -- (4,2);

\node[below right] at (4,-2) {$x_0$};
\node at (4,-2) {$\bullet$};
\end{tikzpicture}}}
\xrightarrow{\str}
\vcenter{\hbox{\begin{tikzpicture}[scale=0.5, every node/.style={scale=0.8},decoration={
    markings,
    mark=at position 0.5 with {\arrow{>}}}
    ]
\node (w1) at (-1,-2) {};
\node (w2) at (-1,0) {};
\node[gray] at (0.0,-2) {\ldots};
\node (wn1) at (1,0) {};
\node (wn) at (1,-2) {};

\node[gray] at (w1)[above=5pt] {$w_1$};
\node[gray] at (wn)[above=5pt] {$w_n$};
\foreach \n in {w1,wn}
  \node at (\n)[gray,circle,fill,inner sep=1pt]{};

\coordinate (x0) at (2,-2);
\coordinate  (x0p) at (4,-2);
\draw[thick,gray] (x0p) arc (0:180:4cm);
\draw[thick,red] (x0) arc (0:-180:2cm);
\draw[thick, red] (-4,-2) -- (-2,-2);
\draw[thick, red] (2,-2) -- (4,-2);
\node at (x0) {$\bullet$};
\node[below right] at (x0) {$x_0$};
\end{tikzpicture}}}
\end{equation*}
This stretch transforms diagrams with handles reaching vertically aligned configurations into ones reaching horizontally aligned configurations also aligned with punctures. What is important is that if handles reach the right vertical side of the boundary in an initial diagram, the configuration it reaches will have real part greater than those of the punctures after the stretch. And passing from the right vertical side to the left one winds one half around punctures, which fits with the local system. The lower right point of the boundary in squared diagrams, denoted $x_0$ is indicated in the picture for helping the reader and because it will serve later on.

We let $\CH^e(D_n^\#)$ designate either $\CH(D_n^\#)$ or $\CHbm(D_n^\#)$ in what follows. There is a map:
\[
\CH(D_n^\#) \to \CHbm(D_n^\#)
\]
induced by inclusion of chain complexes. We let:
\[
\overline{\CH}(D_n^\#) := \im \left( \CH(D_n^\#) \to \CHbm(D_n^\#) \right).
\]
Again even when modules are dual, this is not an isomorphism. 


\subsection{Action of $\Uqgm$}\label{S:action_of_Uqgm}

Here we put an action of the algebra $\CH$ involved in Theorem~\ref{T:homological_version_for_Uqgm} on the space $\CHbm(D_n^\#)$. Composed with the algebra morphism of Theorem~\ref{T:homological_version_for_Uqgm}, this action results in a $\Uqgm$-module structure. Hence we define:
\begin{equation}\label{E:module_on_Uqgm}
\CH \otimes \CHbm(D_n^\#) \to \CHbm(D_n^\#),
\end{equation}
simply by stacking diagrams. We represent it in the following figure, again it is a sufficient definition since bases for all involved modules are given by diagrammatic classes and it is sufficient to define the action on given bases, but the reader can guess how to define it without talking about diagrams in the spirit of Section~\ref{sec_product}. We also mention \cite[(16)]{Pierre} for a definition withou diagrams. Let $\mathcal{D}_{c_1} \in \CH_{c_1}$ and $\mathcal{D}_{c_2} \in \CHbm_{c_2}(D_n^\#)$ both given by diagrams, the action of the first on the second results in the following:
\[
\vcenter{\hbox{
\begin{tikzpicture}[scale=0.3,decoration={
    markings,
    mark=at position 0.3 with {\arrow{>}}}]
\draw[thick] (0,0) rectangle ++(12,8);
\draw[fill = black!20!white] (0,0) rectangle ++(12,8);
\draw[dashed] (0,4)--(12,4);
\draw[red,thick] (0,0)--(0,8);
\draw[red,thick] (12,0)--(12,8);
\draw[red,thick] (0,0)--(12,0);
\node at (6,6) {$\mathcal{D}_{c_1}$};
\node at (6,2) {$\mathcal{D}_{c_2}$};
\end{tikzpicture}}} \in \CHbm_{c_1+c_2}(D_n^\#).
\]
Notice that the resulting picture is again a diagram in a disk with $n$ punctures (resp. holes), where punctures (resp. holes) are furnished by below's diagram. The subset $\partial^-D_n$ is also unchanged, and if both diagrams were defining homology classes, the resulting one naturally defines one in $\CHbm_{c_1+c_2}(D_n^\#)$. 

\begin{prop}
The described action well defines an $\CHbm$-module structure on $\CHbm(D_n)$. The action is graded by $\Coloring_{\Pi}$. Hence $\CHbm(D_n^\#)$ is a $\Uqgm$-module with coefficients in $\Laurent_{\boldsymbol{L}}$. 
\end{prop}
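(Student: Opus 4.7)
The plan is to follow the same scheme as for the product defined in Section~\ref{sec_product}, but for the external action, and then invoke Theorem~\ref{T:homological_version_for_Uqgm} to promote the $\CHbm$-module structure to a $\Uqgm$-module structure. More precisely, I will first check that the stacking operation descends to a well-defined chain map on the appropriate relative chain complexes, then verify the module axioms (associativity and unitality), then verify grading, and finally apply the algebra morphism $U_\mq^{<0}\fg \to \CHbm$ from Theorem~\ref{T:homological_version_for_Uqgm}.

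First, stacking an empty disk on top of a punctured one defines a continuous map $\Conf_{c_1} \times \Conf_{c_2}(D_n^\#) \to \Conf_{c_1+c_2}(D_n^\#)$ sending the product of relative parts into $S(D_n)$; so it induces a chain map on relative (Borel--Moore) chains. The key compatibility to check is at the level of local systems: the pullback of $\Laurent_{c_1+c_2}(D_n)$ along the stacking map should factor as the tensor product $\Laurent_{c_1}\otimes \Laurent_{c_2}(D_n)$. This is where all the work happens, and it is essentially the same verification as in Section~\ref{sec_product} applied in the presence of punctures: writhes of points inside the upper disk only involve coordinates among themselves (so they reproduce $\CW_{c_1}$ exactly), while the winding of upper-disk points around the punctures $w_j$ is trivial since those punctures lie strictly below the gluing interval. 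Thus $\varphi_{c_1+c_2}(D_n^\#)$ restricted to the image of $\pi_{c_1} \times \pi_{c_2}(D_n^\#)$ is simply the product $\rho_{c_1}\cdot \varphi_{c_2}(D_n^\#)$, with no cross terms involving the $s_{i,j}$ from the upper disk. Concatenating handles (paths in the respective base spaces) then gives a well-defined homology class as in the product case.

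With this chain-level compatibility in hand, associativity and unitality are formal: stacking is associative up to canonical identification of disks (one erases two gluing intervals instead of one), and stacking the empty configuration in a disk does not change anything, exactly as in Proposition~\ref{strictlynegative_algebrastructure}. The grading is automatic, since stacking a $c_1$-colored configuration on top of a $c_2$-colored one yields a $(c_1+c_2)$-colored one, i.e.\ the map respects the $\Coloring_\Pi$-grading on both sides. Composing the resulting $\CHbm$-action on $\CHbm(D_n^\#)$ with the algebra homomorphism $U_\mq^{<0}\fg \to \CHbm$ of Theorem~\ref{T:homological_version_for_Uqgm} yields the announced $\Uqgm$-module structure.

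The only point that could be delicate is the careful bookkeeping of handles in the stacked diagram: after stacking, the natural handle for the upper diagram must be pushed down along the right boundary of the lower disk to reach the global base point near $x_0$, which may cross some arcs coming from the lower diagram. This bookkeeping is exactly what produces the correct monodromy coefficient prescribed by $\varphi_{c_1+c_2}(D_n^\#)$, and thanks to the handle rule (Remark~\ref{rmk_handle_rule}) the resulting class is independent of the choices made. Once this is checked, everything else is formal.
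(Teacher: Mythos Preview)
Your proposal is correct and follows essentially the same approach as the paper: associativity and unitality come from the stacking operation exactly as in Proposition~\ref{strictlynegative_algebrastructure}, the grading is immediate, and the $\Uqgm$-structure is obtained by precomposing with the algebra morphism of Theorem~\ref{T:homological_version_for_Uqgm}. The only point the paper makes explicit that you leave implicit is the coefficient extension: since $\CHbm$ lives over $\Laurent$ while $\CHbm(D_n^\#)$ lives over $\Laurent_{\boldsymbol{L}}$, one must first tensor $\CHbm$ up along the inclusion $\Laurent \hookrightarrow \Laurent_{\boldsymbol{L}}$ before the action map is even well typed; your local-system discussion effectively covers this, but it is worth stating.
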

\begin{proof}
The associativity of the action together with the unit is as in Section~\ref{sec_product}, namely it is straightforward from the associativity of the stacking operation, and of the fact that stacking the empty diagram does not change the initial one. Notice that an algebraic operation is hidden, namely $\CHbm$ needs first to have coefficients extended to $\Laurent_{\boldsymbol{L}}$ so for the map to be well defined. Hidden is $\CHbm \otimes \Laurent_{\boldsymbol{L}}$ were this is given by the obvious injection of $\Laurent \subset \Laurent_{\boldsymbol{L}}$. The graded property is obvious from the definition, and the $\Uqgm$-module structure is obtained by precomposing the $\CH$-action by the algebra morphism of Theorem~\ref{T:homological_version_for_Uqgm}. 
\end{proof}

To work in the non Borel--Moore case, one has to restrict to an $\overline{\CH}$ (namely Borel--Moore classes in $\CHbm$ coming from non Borel--Moore ones). 

\begin{prop}
$\overline{\CH}(D_n^\#)$ is is a $\overline{\CH}$-module with coefficients in $\Laurent_{\boldsymbol{L}}$.
\end{prop}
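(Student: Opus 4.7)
The plan is to show that the stacking action of Section~\ref{S:action_of_Uqgm} already refines to the standard (non Borel--Moore) chain level, so that it restricts to the sub--modules coming from $\CH$ and $\CH(D_n^\#)$. Recall that the action map in~\eqref{E:module_on_Uqgm} is defined through a chain-level stacking operation; the same geometric construction (stacking two diagrams vertically along an attaching interval) makes sense for arbitrary singular chains, not only for locally finite ones, because the stacking of two compactly supported chains is again compactly supported. Thus I would first verify that the chain-level stacking descends to a well-defined bilinear map
\[
\CH_{c_1} \otimes \CH_{c_2}(D_n^\#) \longrightarrow \CH_{c_1+c_2}(D_n^\#),
\]
exactly as in Section~\ref{sec_product}, using the product of ordered simplices and the concatenation of writhe-paths to equip the result with a twisted lift.

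Next I would observe that the inclusion-of-chain-complexes maps $\iota : \CH \to \CHbm$ and $\iota_n : \CH(D_n^\#) \to \CHbm(D_n^\#)$ are compatible with stacking: the chain-level stacking commutes with the inclusion of compactly supported chains into locally finite chains, because both sides are given by the same geometric formula $(\Delta_1,H_1)\cdot(\Delta_2,H_2)=(\Delta_1\cdot\Delta_2, H_1\cdot H_2)$, and neither the product of simplices nor the concatenation of handles uses infinite sums. This yields a commutative square
\[
\begin{array}{ccc}
\CH_{c_1} \otimes \CH_{c_2}(D_n^\#) & \longrightarrow & \CH_{c_1+c_2}(D_n^\#) \\
\downarrow\iota \otimes \iota_n & & \downarrow\iota_n \\
\CHbm_{c_1} \otimes \CHbm_{c_2}(D_n^\#) & \longrightarrow & \CHbm_{c_1+c_2}(D_n^\#),
\end{array}
\]
whose horizontal maps are the stacking products. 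Summing over colorings, the action of $\CHbm$ on $\CHbm(D_n^\#)$ restricts along $\iota\otimes\iota_n$ to the action of $\CH$ on $\CH(D_n^\#)$, so the image $\iota(\CH)=\overline{\CH}$ preserves $\iota_n(\CH(D_n^\#))=\overline{\CH}(D_n^\#)$. The unit and associativity are inherited from the $\CHbm$-module structure already established.

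The only substantive point to check is that the chain-level stacking of two \emph{standard} representatives lies in the subcomplex of compactly supported singular chains of $\Conf_{c_1+c_2}(D_n^\#)$ relative to $S(D_n)$. This is straightforward because each basic necklace diagram in $\CH_c$ or $\CH_c(D_n^\#)$ is the image of a single compact cube, and the stacking operation sends a pair of such cubes to a single product of cubes, which is again compact; the associated handles remain embedded paths. Hence the resulting chain represents an element of $\CH_{c_1+c_2}(D_n^\#)$, not merely of $\CHbm_{c_1+c_2}(D_n^\#)$. The main (mild) obstacle is bookkeeping: one must ensure the attaching interval used in stacking does not intersect the punctures or holes of $D_n^\#$, which is immediate after an isotopy since the attaching interval lives in the boundary region of the upper empty disk glued above $D_n^\#$. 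Combined with Theorem~\ref{T:Uqgm_is_homological}, which identifies $\overline{\CH}$ with $\Uqgm$ over $\mathbb{Q}(q^{1/2})$, this equips $\overline{\CH}(D_n^\#)$ with the claimed $\Uqgm$-module structure with coefficients in $\Laurent_{\boldsymbol{L}}$.
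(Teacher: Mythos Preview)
Your proof is correct and follows essentially the same approach as the paper: the paper's one-line argument (``both correspond to Borel--Moore classes arising from non Borel--Moore ones'') is precisely the content of your commutative square showing that stacking commutes with the inclusions $\iota$ and $\iota_n$, hence $\overline{\CH}$ preserves $\overline{\CH}(D_n^\#)$. Your version simply unpacks the underlying reason (compactness of the stacked chains) that the paper leaves implicit; the final sentence invoking Theorem~\ref{T:Uqgm_is_homological} is a slight overreach since the proposition only asserts an $\overline{\CH}$-module structure, not a $\Uqgm$-module structure.
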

\begin{proof}
It is straightforward to remark that $\overline{\CH}(D_n^\#)$ is stable under $\overline{\CH}$ since both correspond to Borel--Moore classes arising from non Borel--Moore ones. 
\end{proof}

Recall that $\overline{\CH}$ is isomorphic to $\Uqgm$ as $\mathbb{Q}(q)$-algebras thanks to Theorem~\ref{T:Uqgm_is_homological}.

\subsection{Actions of half integral versions of $\Uqg$}\label{S:actions_of_half_integral_Uq}

We recall that in Theorem~\ref{T:homological_version_for_Uqgm} and \ref{T:Uqgm_is_homological} only the strictly negative part of $\Uqg$ denoted $\Uqgm$ was homologically recovered. We will now add an action of the positive Borel that completes $\Uqg$ by means of homological operators on $\CHe(D_n^\#)$. In what follows $\alpha = \alpha_i \in \Pi$ will designate any simple root. 

\subsubsection{Borel--Moore homological modules on a half integral quantum group}\label{S:BMaction_of_half_Uq}

In this section, we add the action of a single $E$-type operator (instead of one for each simple root, which correspond to the usual $\Uqg$). Then we truncate it to get one generator for each simple root. This case is a generalization of the description at the beginning of \cite[Sec.~2.3.2]{JulesMarco}, the reader will find more details there since definitions adapt almost word by word.

For every $n \ge 0$, let us consider the triple $\Conf_c(D_n^\#) \supset S(D_n) \supset Y^2(D_n)$, where
\[
 Y^2(D_n) := \{ \underline{x} = \{ x_1,\ldots,x_n \} \in S(D_n) | \exists 1 \le i < j \le m_c \quad x_i, x_j \in \partial_- D_n \}.
\]
Now let $x_0$ be the lower rightmost point of $\partial^-D_n$. We recall the long exact sequence of the triple:
\begin{center}
 \begin{tikzpicture}[descr/.style={fill=white}]
  \node[anchor=west] (P1) at (0,0) {$\cdots$};
  \node (P2) at (4,0) {$\Hlf_{m_c}(\Conf_c, Y^2(D_n);\Laurent_c(D_n))$};
  \node (P3) at (8,0) {$\CHbm_c(D_n^\#)$};
  \node (P4) at (12,0) {$\Hlf_{m_c-1}(S(D_n^\#), Y^2(D_n);\Laurent_c(D_n))$};
  \node[anchor=east] (P5) at (16,0) {$\cdots$};
  \draw
  (P1) edge[->] (P2)
  (P2) edge[->] (P3)
  (P3) edge[->] node[above] {\scriptsize $\partial_*$} (P4)
  (P4) edge[->] (P5);
 \end{tikzpicture}
\end{center}
where $\partial_*$ denotes the connection homomophism.

\begin{defn}\label{D:homological_E_half_integral}
 For every $c\in\Coloring_{\Pi}$, the operator
 \[
  \CE : \CHbm_c(D_n^\#) \to \bigoplus_{\alpha \in \Pi,c(\alpha) >0} \CHbm_{c-\alpha}(D_n^\#)
 \]
 is the composition
 \begin{center}
  \begin{tikzpicture}[descr/.style={fill=white}]
   \node (P1) at (0,0) {$\CHbm_c(D_n^\#)$};
   \node (P2) at (5,0) {$\Hlf_{m_c-1}(S(D_n),Z(D_n);\Laurent_c(D_n))$};
   \node (P3) at (10,0) {$\displaystyle{\bigoplus_{\alpha \in \Pi,c(\alpha) >0}} \CHbm_{c-\alpha}(D_n^\#),$};
   \draw
   (P1) edge[->] node[above] {\scriptsize $(-1)^{n-1} \partial_*$} (P2)
   (P2) edge[->] node[above] {\scriptsize $\mathrm{del}_{{x}_0}$} (P3);
  \end{tikzpicture}
 \end{center}
 where $\mathrm{del}_{{x}_0}$ is exactly that of \cite[Def.~2.14]{JulesMarco}.
 
We also define $\CE_\alpha$ to be $\left( \prod_{p=1,\ldots, n} s_{p,\alpha}^{-1} \right) p_{c-\alpha} \circ \CE$, where $p_{c-\alpha}$ is the projection onto the term $\CHbm_{c-\alpha}$ in the direct sum. 
\end{defn}

The operator $\CE$ computes the boundary map $\partial_*$, followed by some standard homological identifications. This identification roughly speaking allows to remove a coordinate of the configuration that is in $\partial^-D_n$ when it reaches $x_0$.

\begin{prop}[The fundamental relation]\label{P:fundamental_relation_E}
Since $\CF_\alpha \in \CHbm$, it acts on $\CHbm_c(D_n^\#)$. As operators acting on $\CHbm_c(D_n^\#)$, $\CF_\alpha$ and $\CE$ satisfy the following commutation relation:
\[
\left[\CE_\alpha,\CF^{(k+1)}_\alpha\right] = \CF^{(k+1)}_\alpha \circ \left( q_\alpha^{-\frac{k}{2}}\CK_\alpha - q_\alpha^{\frac{k}{2}}\CK_\alpha^{-1} \right),
\]
where $\CK_\alpha := \left( \prod_{p=1,\ldots, n} s_{p,\alpha}^{-1} \right) \prod_{\beta \in \Pi} q_{\alpha,\beta}^{-c(\beta)} \Id_{\CHbm_c(D_n)}$. 
\end{prop}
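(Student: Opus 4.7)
The plan is to mirror the strategy used for the $\slt$-case in \cite[Sec.~2.3.2]{JulesMarco} and \cite[Sec.~5]{Jules_Verma}, adapted to the colored setting. Let $z \in \CHbm_c(D_n^{\#})$ be represented by a diagram. Stacking the dashed arc of $\CF^{(k+1)}_\alpha$ on top produces a class in $\CHbm_{c+(k+1)\alpha}(D_n^{\#})$, and by definition the commutator $[\CE_\alpha, \CF^{(k+1)}_\alpha] \cdot z$ is the difference of $(-1)^{m_c+k} \mathrm{del}_{x_0} \circ \partial_*$ applied to the stacked class, with the term $\CF^{(k+1)}_\alpha \cdot \CE_\alpha(z)$, where the latter records the contribution of the boundary map when it lands on a pearl originating from $z$. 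Cancellation of those common contributions will reduce the commutator to the boundary contributions coming only from the newly stacked dashed arc.

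The next step is to read off the two boundary contributions coming from the top dashed arc. Since $\CF^{(k+1)}_\alpha$ is supported on the simplex $\{0<t_1<\cdots<t_{k+1}<1\}$ embedded in the necklace above $z$, the only non-collision boundary faces are $t_1=0$ (the leftmost pearl hitting the left red side) and $t_{k+1}=1$ (the rightmost pearl hitting the right red side); collision faces vanish in Borel--Moore homology. Applying the fusion rule from Prop.~\ref{prop_fusion_rules} together with the divided-power normalization of Def.~\ref{def_div_powers_Fs} identifies what remains as exactly $\CF^{(k)}_\alpha$ times a dashed segment with one pearl running from the indicated red side down to $x_0$, i.e.\ $\CF^{(k)}_\alpha \cdot \CF^{[1]}_\alpha$ decorated by an appropriate handle.

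Now I would use $\mathrm{del}_{x_0}$: the right-end contribution must be isotoped so that the handle of its pearl reaches $x_0$, which here is essentially free of winding since $x_0$ sits directly under the right side; the handle rule of Rem.~\ref{rmk_handle_rule} contributes only the monomial reflecting how the moving pearl passes beneath the $c$-colored pearls already living in $z$ and beneath the punctures $w_1,\dots,w_n$, giving a factor of $\prod_\beta q_{\alpha,\beta}^{-c(\beta)} \cdot \prod_{p=1}^n s_{p,\alpha}^{-1}$, i.e.\ exactly $\CK_\alpha$, together with a $q_\alpha^{-k/2}$ coming from isotoping the chosen handle past the $k$ remaining pearls of the residual $\CF^{(k)}_\alpha$ (this is the precise place the divided-power normalization $q^{-d_\alpha k(k-1)/4}$ of Def.~\ref{def_div_powers_Fs} is used). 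The left-end contribution must travel the long way around; using the writhe normalization from Sec.~\ref{S:local_system_empty_disk} it acquires the inverse winding monomial $\CK_\alpha^{-1}$, together with $q_\alpha^{k/2}$, and carries the opposite sign because of the standard sign of the top face of an ordered simplex. Combining and using that $\CF^{(k+1)}_\alpha$ equals $\CF^{(k)}_\alpha$ times (up to normalization) one extra pearl yields the claimed identity, with the composition order ``$\CF^{(k+1)}_\alpha \circ(\cdots)$'' on the right reflecting that $\CK_\alpha$ is evaluated on the coloring $c$ \emph{before} the new $(k+1)\alpha$ is added.

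The main obstacle is the bookkeeping in the third paragraph: tracking the precise monomial contributions of the two boundary faces demands (i) a careful choice of representative handles so that only the intended windings and pearl--pearl crossings are paid, (ii) verifying that the asymmetry between the divided-power normalization and the sign of the top simplex face assembles correctly into the symmetric $q_\alpha^{\pm k/2}$ factors in the RHS, and (iii) checking that the coefficient $(-1)^{m_c-1}$ built into $\CE$ combines with the simplex face signs to produce the announced minus sign between the two terms rather than a plus. Everything else is a routine application of the handle rule, the fusion rule, and the cutting rule.
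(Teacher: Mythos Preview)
Your approach is essentially the same as the paper's: use the Leibniz compatibility of $\partial_*$ with the cartesian product $\CF^{(k+1)}_\alpha \times \mathcal{D}_c$, identify the two relevant boundary faces of the dashed-arc simplex (leftmost and rightmost pearl reaching $\partial^-D_n$), and read off the handle monomials. The paper organizes this by first rewriting $\CF^{(k)}_\alpha(\mathcal{D}_c)$ with two different handle placements (one pearl's handle pushed to the far left, respectively far right), so that the two boundary coefficients are already visible before applying $\partial_*$; you instead take the boundary first and compute the coefficients afterward. These are the same argument.

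There is one concrete bookkeeping slip in your third paragraph. The right-end face ($t_{k+1}=1$) does \emph{not} pass beneath the punctures on its way to $x_0$: it travels straight down the right side of $\partial^-D_n$, so its handle contribution is $\prod_\beta q_{\alpha,\beta}^{-c(\beta)}$ with no $s$-factor at all. Symmetrically, the left-end face winds fully once around every puncture and contributes $\prod_p s_{p,\alpha}^{2}\, q_\alpha^{k-1}\prod_\beta q_{\alpha,\beta}^{c(\beta)}$. The factor $\prod_p s_{p,\alpha}^{-1}$ that turns these into $\CK_\alpha$ and $q_\alpha^{k-1}\CK_\alpha^{-1}$ comes from the normalization built into the \emph{definition} $\CE_\alpha = \bigl(\prod_p s_{p,\alpha}^{-1}\bigr)\, p_{c-\alpha}\circ\CE$, not from the handle movement. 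Once you correct this attribution (and combine the $q_\alpha^{k-1}$ with the divided-power renormalizations $q^{-d_\alpha k(k-1)/4}$ versus $q^{-d_\alpha (k-1)(k-2)/4}$), the symmetric $q_\alpha^{\pm k/2}$ factors fall out as claimed. This is precisely the item you flagged as the main obstacle, so the proof goes through after this fix.
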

\begin{proof}
To compute the commutator we need first to apply $\CF_\alpha^{(k)}$ to a diagram $\mathcal{D}_c \in \CHbm_c(D_n^\#)$. We recall that stacking an element from $\CHbm$ is defined on squared diagrams, and the reader shall think that the transformation $\str$ is applied afterwards sending handles to horizontally aligned configurations. We use squared diagrams since it is easy to compare real parts of configuration points in comparison with others and with punctures: the height compares configuration points one with each other, while the side of the boundary compares configuration points with punctures. 

Using the handle rule (Rem.~\ref{rmk_handle_rule}), we have for every $\mathcal{D}_c \in \CHbm_c(D_n^\#)$:
\[
q^{d_\alpha \frac{k(k-1)}{4}} \CF_\alpha^{(k)} (\mathcal{D}_c)  = \vcenter{\hbox{\begin{tikzpicture}[scale=0.4, every node/.style={scale=0.8},decoration={
    markings,
    mark=at position 0.5 with {\arrow{>}}}
    ]    
    
\coordinate (w0g) at (-5,1) {};
\coordinate (w0d) at (5,1) {};

\coordinate (h1) at (5,-2.5);
\coordinate (h2) at (5,-1.5);
\coordinate (h3) at (5,-1);

\draw[dashed, thick] (w0g) -- (w0d) node[pos=0.15,black] (a1) {} node[pos=0.3,black,above]  {$k$} node[pos=0.55,black] (a2) {} coordinate[pos=0.8] (a3) {};


\coordinate (a33) at (5,2);
\coordinate (a22) at (5,2.5);
\draw[yellow!80!black,double] (a33)--(a33-|a3)--(a3);
\draw[yellow!80!black,thick] (a22)--(a22-|a2)--(a2);

\draw[fill = black!20!white] (-5,-3) rectangle ++(10,3);
\node at (0,-1.5) {$\mathcal{D}_c$};

\draw[red, thick] (-5,-3) -- (-5,3);
\draw[red, thick] (5,-3) -- (5,3);
\draw[red, thick] (-5,-3) -- (5,-3);
\draw[gray, thick] (-5,3) -- (5,3);
\end{tikzpicture}}} =  \prod_{p=1,\ldots, n} s_{p,\alpha}^{2} q_\alpha^{k-1}\prod_{\beta \in \Pi} q_{\alpha,\beta}^{c(\beta)}
\vcenter{\hbox{\begin{tikzpicture}[scale=0.4, every node/.style={scale=0.8},decoration={
    markings,
    mark=at position 0.5 with {\arrow{>}}}
    ]    
    
\coordinate (w0g) at (-5,1) {};
\coordinate (w0d) at (5,1) {};

\coordinate (h1) at (5,-2.5);
\coordinate (h2) at (5,-1.5);
\coordinate (h3) at (5,-1);

\draw[fill = black!20!white] (-5,-3) rectangle ++(10,3);
\node at (0,-1.5) {$\mathcal{D}_c$};

\draw[red, thick] (-5,1) -- (-5,3);
\draw[red, thick] (5,-3) -- (5,3);
\draw[gray, thick] (-5,3) -- (5,3);

\draw[dashed, thick] (w0g) -- (w0d) node[pos=0.15,black] (a1) {} node[pos=0.3,black] (a2) {} node[pos=0.55,above,black] {$k$} coordinate[pos=0.8] (a3) {};

\coordinate (a33) at (5,2);
\draw[yellow!80!black,double] (a33)--(a33-|a3)--(a3);
\draw[yellow!80!black,double] (0.3,-1.5)--(5,-1.5);

\draw[yellow!80!black,very thick] (-5,1)--(-5,-3)--(5,-3);
\end{tikzpicture}}},
\]
and also:
\[
q^{d_\alpha \frac{k(k-1)}{4}} \CF_\alpha^{(k)} (\mathcal{D}_c) =\vcenter{\hbox{\begin{tikzpicture}[scale=0.4, every node/.style={scale=0.8},decoration={
    markings,
    mark=at position 0.5 with {\arrow{>}}}
    ]    
    
\coordinate (w0g) at (-5,1) {};
\coordinate (w0d) at (5,1) {};

\coordinate (h1) at (5,-2.5);
\coordinate (h2) at (5,-1.5);
\coordinate (h3) at (5,-1);

\draw[dashed, thick] (w0g) -- (w0d) node[pos=0.15,black] (a1) {} node[pos=0.3,black,above]  {$k$} node[pos=0.8,black] (a2) {} coordinate[pos=0.4] (a3) {};


\coordinate (a33) at (5,2.5);
\coordinate (a22) at (5,2);
\draw[yellow!80!black,double] (a33)--(a33-|a3)--(a3);
\draw[yellow!80!black,thick] (a22)--(a22-|a2)--(a2);

\draw[fill = black!20!white] (-5,-3) rectangle ++(10,3);
\node at (0,-1.5) {$\mathcal{D}_c$};

\draw[yellow!80!black,double,thick] (0.3,-1.5)--(5,-1.5);

\draw[red, thick] (-5,-3) -- (-5,3);
\draw[red, thick] (5,-3) -- (5,3);
\draw[red, thick] (-5,-3) -- (5,-3);
\draw[gray, thick] (-5,3) -- (5,3);
\end{tikzpicture}}} =  \prod_{\beta \in \Pi} q_{\alpha,\beta}^{-c(\beta)}
\vcenter{\hbox{\begin{tikzpicture}[scale=0.4, every node/.style={scale=0.8},decoration={
    markings,
    mark=at position 0.5 with {\arrow{>}}}
    ]    
    
\coordinate (w0g) at (-5,1) {};
\coordinate (w0d) at (5,1) {};

\coordinate (h1) at (5,-2.5);
\coordinate (h2) at (5,-1.5);
\coordinate (h3) at (5,-1);

\draw[fill = black!20!white] (-5,-3) rectangle ++(10,3);
\node at (0,-1.5) {$\mathcal{D}_c$};

\draw[red, thick] (-5,-3) -- (-5,3);
\draw[red, thick] (5,1) -- (5,3);
\draw[gray, thick] (-5,-3) -- (5,-3);
\draw[gray, thick] (-5,3) -- (5,3);

\draw[dashed, thick] (w0g) -- (w0d) node[pos=0.15,black] (a1) {} node[pos=0.3,black] (a2) {} node[pos=0.55,above,black] {$k$} coordinate[pos=0.3] (a3) {};


\coordinate (a33) at (5,2.75);
\draw[yellow!80!black,double] (a33)--(a33-|a3)--(a3);

\draw[yellow!80!black,double] (4.5,0)--(4.5,2)--(5,2);
\draw[yellow!80!black,double,thick] (0.3,-1.5)--(4.5,-1.5)--(4.5,0);

\draw[yellow!90!black, very thick] (5,1)--(5,-3);
\end{tikzpicture}}},
\]
We emphasize the yellow handle paths in diagrams, they are important for applying the handle rule, they relate diagrams to vertically aligned configurations sitting on the right boundary. First we have used a ray representing the way the diagram $\mathcal{D}_c$ is attached to a vertically aligned configuration on the boundary. Of course it is a representation and the handles can go anywhere in the diagram, we will only change them locally near the boundary. On left hand sides diagrams we have a ray of $k-1$ parallel handles drawn and one single (leftmost on the first, rightmost on the second) relating to the dashed diagram indexed by $k$ where the black colored is used to represent $\alpha$. Only the single handle moves, the ray stays fixed. 

Now, just like in the proof of \cite[Prop.~2.19]{JulesMarco}, since the class $q^{d_\alpha \frac{k(k-1)}{4}} \CF_\alpha^{(k)} (\mathcal{D}_c)$ corresponds to a cartesian product of the simplex supported by the dashed arc and the one described by $\mathcal{D}_c$, and thanks to the compatibility rule of the cartesian product with $\partial_*$,  we have:
\[
\CE \circ q^{d_\alpha \frac{k(k-1)}{4}} \CF_\alpha^{(k)} (\mathcal{D}_c) = q^{d_\alpha \frac{k(k-1)}{4}} \CF_\alpha^{(k)} \circ \CE(\mathcal{D}_c) + \left(\prod_{\beta \in \Pi} q_{\alpha,\beta}^{-c(\beta)}  - \left( \prod_{p=1,\ldots, n} s_{p,\alpha}^{2} \right) q_\alpha^{k-1} \prod_{\beta \in \Pi} q_{\alpha,\beta}^{c(\beta)} \right) \vcenter{\hbox{\begin{tikzpicture}[scale=0.4, every node/.style={scale=0.8},decoration={
    markings,
    mark=at position 0.5 with {\arrow{>}}}
    ]    
    
\coordinate (w0g) at (-5,1) {};
\coordinate (w0d) at (5,1) {};

\coordinate (h1) at (5,-2.5);
\coordinate (h2) at (5,-1.5);
\coordinate (h3) at (5,-1);

\draw[dashed, thick] (w0g) -- (w0d) node[pos=0.15,black] (a1) {} node[pos=0.3,black,above]  {$k-1$} node[pos=0.55,black] (a2) {} coordinate[pos=0.8] (a3) {};


\coordinate (a33) at (5,2);
\coordinate (a22) at (5,2.5);
\draw[yellow!80!black,double] (a33)--(a33-|a3)--(a3);

\draw[fill = black!20!white] (-5,-3) rectangle ++(10,3);
\node at (0,-1.5) {$\mathcal{D}_c$};

\draw[red, thick] (-5,-3) -- (-5,3);
\draw[red, thick] (5,-3) -- (5,3);
\draw[red, thick] (-5,-3) -- (5,-3);
\draw[gray, thick] (-5,3) -- (5,3);
\end{tikzpicture}}},
\]
so that the following holds when operators are restricted to $\CHbm_c(D_n^\#)$:
\begin{align*}
\left[\CE,q^{d_\alpha \frac{k(k-1)}{4}}\CF^{(k)}_\alpha\right] & = q^{d_\alpha \frac{(k-1)(k-2)}{4}}\CF^{(k-1)}_\alpha \left(\prod_{\beta \in \Pi} q_{\alpha,\beta}^{-c(\beta)}  - q_\alpha^{k-1} \left( \prod_{p=1,\ldots, n} s_{p,\alpha}^{2} \right) \prod_{\beta \in \Pi} q_{\alpha,\beta}^{c(\beta)} \right) \Id_{\CHbm_c(D_n)} 
\end{align*}
which gives the result after multiplication by $q^{-d_\alpha\frac{k(k-1)}{4}}$ and replacing $\CE$ by $\CE_\alpha$ which brings a coefficient $\prod_{p=1,\ldots, n} s^{-1}_{p,\alpha}$ to the equation.
\end{proof}

Let $\Uqg^{E}$ denote the $\BZ[q^{\pm 1}]$-algebra generated by generators $E,K_\alpha^{\pm 1},F_\alpha^{(k)}$ for any $\alpha \in \Pi$ and any $k \in \BN$, and relations:
\begin{gather*}
 F_\alpha^{(k)} F_\alpha^{(l)} = \qbin{k+l}{k}_{q_\alpha^{\frac{k}{2}}} F_\alpha^{(k+l)}, \qquad K_\alpha K_\alpha^{-1} = K_\alpha^{-1} K_\alpha = 1, \\*
 K_{\alpha_i} E_{\alpha_j} K_{\alpha_i}^{-1} = q_{\alpha_i}^{-\frac{a_{i,j}}{2}} E_{\alpha_j}, \qquad K_{\alpha_i} F_{\alpha_j}^{(k)} K_{\alpha_i}^{-1} = q_{\alpha_i}^{k\frac{a_{i,j}}{2}} F_{\alpha_j}^{(k)}, \qquad
 [E_\alpha,F_\alpha^{(k+1)}] = F_\alpha^{(k)}(q_\alpha^{-\frac{k}{2}}K - q_\alpha^{\frac{k}{2}} K^{-1}).
\end{gather*}
and the quantum Serre relations (only) for $F$'s:
\begin{equation}
\sum_{l=0}^{1-a_{i,j}} (-1)^l F_{\alpha}^{(l)} F_{\beta}^{(1)} F_{\alpha}^{(1-a_{i,j})-l)} = 0.
\end{equation}

\begin{theorem}\label{T:half_integral_modules}
The $\Laurent(\boldsymbol{L})$-module $\CHbm(D_n^\#)$ is endowed with a $\Uqg^{E}$ structure, where the action is given by sending generators of $\Uqg$ to their calligraphic analogs in homology. Furthermore, there is an injection:
\[
\begin{array}{ccc}
\CHbm(D_n^\circ) & \to & \CHbm(D_n).
\end{array}
\]
\end{theorem}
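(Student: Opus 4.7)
The plan is to verify each defining relation of $\Uqg^{E}$ in turn for the calligraphic operators, and then to establish the injection by a direct comparison of the diagrammatic bases provided by Proposition~\ref{P:structure_result_punctures}.

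First I would dispose of the relations involving only the $F^{(k)}_\alpha$. The divided-power relation $\CF_\alpha^{(k)}\CF_\alpha^{(l)}=\qbin{k+l}{k}_{q_\alpha^{1/2}}\CF_\alpha^{(k+l)}$ is a direct consequence of Proposition~\ref{prop_fusion_rules} and the definition of $\CF_\alpha^{(k)}$ (it is essentially Proposition~\ref{prop_DividedPowers} in the stacked disk), and transfers to the module via the stacking action of Section~\ref{S:action_of_Uqgm}. The quantum Serre relations for the $\CF$'s transfer in the same way from Proposition~\ref{prop_QuantumSerre}. The relations among the $\CK_\alpha^{\pm 1}$ are trivial since $\CK_\alpha$ acts on each graded piece $\CHbm_c(D_n^\#)$ by multiplication by an explicit monomial in $\Laurent_{\boldsymbol{L}}$.

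Next I would handle the commutations of $\CK_\alpha$ with generators. Since stacking by $\CF_{\alpha_j}^{(k)}$ sends $\CHbm_c$ to $\CHbm_{c+k\alpha_j}$, a direct computation of the scalars through which $\CK_{\alpha_i}$ acts before and after stacking yields exactly the factor $q_{\alpha_i}^{k\, a_{i,j}/2}$, using the definition $\CK_{\alpha_i} = (\prod_p s_{p,\alpha_i}^{-1})\prod_\beta q_{\alpha_i,\beta}^{-c(\beta)}\Id$ and the identity $q_{\alpha_i,\alpha_j}^{-2}=q_{\alpha_i}^{-a_{i,j}}$. The same kind of bookkeeping on coloring degrees handles the conjugation of $\CE_{\alpha_j}$ by $\CK_{\alpha_i}$: since $\CE_{\alpha_j}$ decreases $c$ by $\alpha_j$, the monomial scalar of $\CK_{\alpha_i}$ picks up exactly the factor $q_{\alpha_i}^{-a_{i,j}/2}$. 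The central relation $[\CE_\alpha,\CF_\alpha^{(k+1)}] = \CF_\alpha^{(k)}(q_\alpha^{-k/2}\CK_\alpha - q_\alpha^{k/2}\CK_\alpha^{-1})$ is precisely Proposition~\ref{P:fundamental_relation_E}. Together these verify all relations defining $\Uqg^{E}$, so the assignment of generators to their calligraphic analogs extends to a well-defined algebra homomorphism $\Uqg^{E}\to \mathrm{End}(\CHbm(D_n^\#))$.

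Finally, for the injection $\CHbm(D_n^\circ)\hookrightarrow \CHbm(D_n)$, the plan is to exploit the inclusion $D_n^\circ\subset D_n$, which induces inclusions $\Conf_c(D_n^\circ)\hookrightarrow \Conf_c(D_n)$ compatible with $S$ and with the local systems (the monodromy around a hole is the monodromy around the corresponding puncture). This provides a chain map hence a module morphism $\CHbm_c(D_n^\circ)\to \CHbm_c(D_n)$. To see it is injective I would work with the diagrammatic bases of Proposition~\ref{P:structure_result_punctures}: each basis element $\CF^\circ_{(\boldsymbol{r}^1,\ldots,\boldsymbol{r}^n)}$ for the holed disk consists of nested arcs around each hole, which under the inclusion becomes (up to handle rule and a monomial scalar accounting for the retraction of the hole onto the puncture) a $\Laurent_{\boldsymbol{L}}$-linear combination of the punctured-disk basis elements $\CF_{(\boldsymbol{r}^1,\ldots,\boldsymbol{r}^n)}$ indexed by the same partition data, with a nonzero leading coefficient. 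A triangular change-of-basis argument then shows that distinct basis elements map to linearly independent vectors, proving injectivity.

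The main obstacle I anticipate is the bookkeeping in verifying the $[\CE,\CF^{(k+1)}]$ relation globally on the module (rather than just the $[\CE_\alpha,\CF^{(k+1)}_\alpha]$ case already done in Proposition~\ref{P:fundamental_relation_E}) and in making precise the identification between basis elements across the retraction $D_n^\circ\to D_n$; in both cases the content is already present but requires careful tracking of monomial scalars produced by the local system and of orientations produced by permutations of handles as in Remark~\ref{rmk_sign_change}.
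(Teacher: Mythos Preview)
Your approach is essentially the same as the paper's: verify each relation of $\Uqg^E$ by invoking the appropriate earlier result (divided powers via Proposition~\ref{prop_DividedPowers}, quantum Serre for the $F$'s via Proposition~\ref{prop_QuantumSerre}, the $K$-commutations by tracking the grading shift and the diagonal action of $\CK_\alpha$, and the fundamental commutator via Proposition~\ref{P:fundamental_relation_E}), then handle the injection by comparing the diagrammatic bases of Proposition~\ref{P:structure_result_punctures}. Two minor points: the paper asserts (citing \cite[Prop.~7.4]{Jules_Verma}) that the change of basis between $\CF^\circ$ and $\CF$ is in fact \emph{diagonal}, not merely triangular, with diagonal entries that are nonzero but non-invertible in $\Laurent_{\boldsymbol{L}}$; and your anticipated obstacle about verifying $[\CE,\CF^{(k+1)}]$ ``globally'' is not an issue, since the presentation of $\Uqg^E$ given here only imposes the commutator relation for matching indices, which is exactly what Proposition~\ref{P:fundamental_relation_E} provides.
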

\begin{proof}
We verify that calligraphic generators satisfy the relations of $\Uqg^E$. The first relation is straightforward iteration of the divided power property Prop.~\ref{prop_DividedPowers}. The second is straightforward. Next two are easily checked from source and target spaces of $\CE_\alpha$ (and $\CF_\alpha$) that is: $\CH_c \to \CH_{c-\alpha}$ (resp. the converse) and of the diagonal action of $\CK_\alpha$. The last one is the fundamental relation for $\CE$ at homology, Prop.~\ref{P:fundamental_relation_E}.

The change of basis between generators $\CF^\circ$ and $\CF$ from Prop.~\ref{P:structure_result_punctures} could be directly deduced from \cite[Prop.~7.4]{Jules_Verma} (removing the quantum factorials in the formula). It corresponds to a diagonal change of basis, while diagonal coefficients are not invertible in $\Laurent_{\boldsymbol{L}}$, hence it is only an injection while working on rings. 
\end{proof}

This theorem when $\mathfrak{g}=\slt$ is precisely \cite[Theorem~1]{Jules_Verma}. It is thus a wide generalization of the latter to all semisimple Lie algebras. We call these modules, half integral Verma and coVerma modules. 

\subsubsection{Left adjoint homological action to that of the half integral quantum group}\label{standard_action_of_half_Uq}

We first add an action of $\Uqg^F$ on $\CH(D_n)$ that is left adjoint to that of $\Uqg^E$ described in the previous section, regarding the duality $\langle \cdot , \cdot  \rangle$ from Prop.~\ref{P:structure_result_punctures_standard}. We generalize to arbitrary Lie algebras (and to integral Laurent polynomials coefficients with $q$ generic) \cite[Theorem~5.9]{Pierre}. It has much inspired our definition of homological operators in this section. 

Let $\Uqg^{F}$ denote the $\BZ[q^{\pm 1}]$-algebra generated by generators $E_\alpha^{(k)},K_\alpha^{\pm 1},F_\alpha^{(1)}$ for any $\alpha \in \Pi$ and any $k \in \BN$, and relations:
\begin{gather*}
 E_\alpha^{(k)} E_\alpha^{(l)} = \qbin{k+l}{k}_{q_\alpha^{\frac{k}{2}}} E_\alpha^{(k+l)}, \qquad K_\alpha K_\alpha^{-1} = K_\alpha^{-1} K_\alpha = 1, \\*
 K_{\alpha_i} E^{(k)}_{\alpha_j} K_{\alpha_i}^{-1} = q_{\alpha_i}^{-k\frac{a_{i,j}}{2}}E^{(k)}_{\alpha_j}, \qquad K_{\alpha_i} F_{\alpha_j}^{(1)} K_{\alpha_i}^{-1} = q_{\alpha_i}^{\frac{a_{i,j}}{2}} F_{\alpha_j}^{(1)}, \qquad
 [E^{(k+1)}_\alpha,F_\alpha^{(1)}] = E_\alpha^{(k)}(q_\alpha^{-\frac{k}{2}}K - q_\alpha^{\frac{k}{2}} K^{-1}).
\end{gather*}
and the quantum Serre relations for $E$'s:
\begin{equation}
\sum_{l=0}^{1-a_{i,j}} (-1)^l E_{\alpha}^{(l)} E_{\beta}^{(1)} E_{\alpha}^{(1-a_{i,j})-l)} = 0.
\end{equation}

We follow definitions from (20) in \cite{Pierre}. We let:
\begin{align}
Y^i_c := \{ \boldsymbol{z} \in \Conf_c(D_n) , | \boldsymbol{z} \cap \partial^- D_n | \ge i \} \\
Y^i_c(\alpha) := \{ \boldsymbol{z} \in \Conf_c(D_n) , | \boldsymbol{z^\alpha} \cap \partial^- D_n | \ge i \} \\
Y^+_c = T(D_n) := \{ \boldsymbol{z} \in \Conf_c(D_n) , |\boldsymbol{z} \cap \partial^+ D_n | \ge 1 \} . 
\end{align}
such that $Y^i_c(\alpha) \subset Y^i_c$. We will use $Y^+_c$ when colors will shift. There is a map induced by restriction at the compactly supported cohomology:
\[
\Hnot^{*}_{\mathrm{compact}}( \Conf_c(D_n) , T(D_n) ; \Laurent_c(D_n) ) \to \Hnot^{*}_{\mathrm{compact}} (Y^i_c(\alpha) , Y^i_c(\alpha) \cap T(D_n) ) .
\]
Notice that:
\[
\partial Y^i_c(\alpha) = \left( Y^i_c(\alpha)\cap T(D_n) \right) \cup \left(Y^i_c(\alpha)\cap Y^{i+1}_c \right).
\]
so that we can use the Poincaré duality for relative homology that requires a decomposition of the boundary (see Appendix~\ref{A:the_pairing}). From isomorphisms between homology and cohomology, we get the Poincaré dual map:
\begin{equation}\label{E:i_boundary}
\partial^i_c(\alpha) : \Hnot_{m_c}( \Conf_c(D_n) , S(D_n) ; \Laurent_c(D_n) ) \to \Hnot_{m_c-i} (Y^i_c(\alpha) , Y^i_c(\alpha)\cap Y^{i+1}_c; \Laurent_c  ).
\end{equation}
where the shift in dimensions is due to that of $Y^i_c(\alpha)$ which is $2m_c-i$. 

There is an isomorphism:
\[
\del_i(\alpha) : \Hnot_{m_c-i} (Y^i_c(\alpha) , Y^i_c(\alpha)\cap Y^{i+1}_c; \Laurent_c  ) \simeq \CH_{c-i\alpha}(D_n).
\]
We rephrase the argument in \cite{Pierre} proving that above homologies are isomorphic. The pair $(Y^i_c(\alpha) , Y^i_c(\alpha)\cap Y^{i+1}_c)$ is Poincaré dual to $(Y^i_c(\alpha) , Y^i_c(\alpha)\cap Y^+_c)$. Now $Y^i_c(\alpha) \setminus (Y^i_c(\alpha)\cap Y^+_c)$ is homeomorphic to $\Conf_{c-i\alpha}(D_n) \times \Conf_{i\alpha}{\partial^-D_n}$, where $\Conf_{i\alpha}{\partial^-D_n} \simeq \Conf_i((0,1))$. It proves:
\begin{align*}
\Hnot^\bullet_{\mathrm{compact}}(Y^i_c(\alpha) , Y^i_c(\alpha)\cap Y^+_c)&  \simeq \Hnot^\bullet_{\mathrm{compact}}(Y^i_c(\alpha) \setminus Y^i_c(\alpha)\cap Y^+_c) \\
& \simeq \Hnot^\bullet_{\mathrm{compact}}(\Conf_{c-i\alpha}(D_n) \setminus T(D_n)) \\
& \simeq \Hnot^\bullet_{\mathrm{compact}}(\Conf_{c-i\alpha}(D_n) , T(D_n))
\end{align*}
where the first and last equalities are a standard property of the Borel--Moore cochain complex, second is due to the mentioned homeomorphism. It gives the desired isomorphism at the Poincaré dual level.

\begin{defn}
For $\alpha \in \Pi$, we define the $k$-th divided power of $\CE_\alpha^{[1]}$ to be a map:
\[
\CE_\alpha^{[k]} : \CH_c(D_n) \to \CH_{c-k\alpha}(D_n)
\]
defined by the composition $q^{-d_{\alpha} \frac{k(k-1)}{4}}  \mathrm{del}_{i}(\alpha) \circ \partial^i_c(\alpha)$. 
\end{defn}

We also define $\CF^{[1]}_\alpha$ to be the action of $\left( \prod_{p=1,\ldots, n} s_{p,\alpha} \right) \CF^{(1)}_\alpha \in \CH$ on $\CH_c(D_n)$ by first noticing that the diagram defining $\CF^{(1)}_\alpha$ also defines a non Borel--Moore class. 

\begin{prop}
We recall the perfect pairing from Proposition~\ref{P:structure_result_punctures_standard}:
\[
\langle \cdot , \cdot \rangle : \CH(D_n)   \times \CHbm(D_n) \to \Laurent(\boldsymbol{L}). 
\]
For this form, $\CK_\alpha$ is left adjoint to $\CK_\alpha^{-1}$, $\CE^{[k]}_\alpha$ is left adjoint to $\CF^{(k)}_\alpha$ and $\CF^{[1]}_\alpha$ is left adjoint to $\CE_\alpha$. 
\end{prop}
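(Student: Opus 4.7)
The plan is to verify each of the three adjointness relations separately, working on the dual diagrammatic bases $\{\CF^{[\br^1,\ldots,\br^n]}\}$ of $\CH_c(D_n)$ and $\{\CF_{(\br^1,\ldots,\br^n)}\}$ of $\CHbm_c(D_n)$ given by Propositions~\ref{P:structure_result_punctures} and \ref{P:structure_result_punctures_standard}. Throughout I will compute the pairing via its intersection-theoretic description from Appendix~\ref{A:the_pairing}, keeping track of the local-system coefficients via the handle rule (Rem.~\ref{rmk_handle_rule}).

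The Cartan case is a direct scalar verification. Since the pairing is zero on mismatched colorings, it suffices to restrict to a single $\CH_c(D_n)\otimes\CHbm_c(D_n)$, on which $\CK_\alpha$ acts on the Borel--Moore side as the scalar $\lambda_c:=\left(\prod_p s_{p,\alpha}^{-1}\right)\prod_\beta q_{\alpha,\beta}^{-c(\beta)}$. The action of $\CK_\alpha$ on $\CH_c(D_n)$ is defined to be the inverse scalar $\lambda_c^{-1}$ (this is the natural choice dictated by the module structure being constructed), so that both $\langle\CK_\alpha x,y\rangle$ and $\langle x,\CK_\alpha^{-1}y\rangle$ equal $\lambda_c^{-1}\langle x,y\rangle$.

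For $\CE^{[k]}_\alpha$ left adjoint to $\CF^{(k)}_\alpha$, the key observation is that $\CE^{[k]}_\alpha$ is the composition $q^{-d_\alpha k(k-1)/4}\,\del_k(\alpha)\circ\partial^k_c(\alpha)$, where $\partial^k_c(\alpha)$ is the Poincar\'e dual of a cohomology restriction to the stratum $Y^k_c(\alpha)$ of configurations placing $k$ $\alpha$-colored points in $\partial^- D_n$. Stacking an $\alpha$-colored dashed $k$-arc on top of a Borel--Moore class $\CF_{\br'}$ via $\CF^{(k)}_\alpha$ produces, after the boundary flip implicit in the pairing, precisely the intersection locus where $k$ coordinates of $\CF^{[\br]}$ meet the stacked arc; these intersection configurations are in canonical bijection with the summands obtained by first applying $\partial^k_c(\alpha)$ to $\CF^{[\br]}$ (which isolates $k$ coordinates at $\partial^- D_n$) and then pairing with $\CF_{\br'}$. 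It remains to match the monodromy coefficients on each side, which reduces (by the handle rule) to comparing the path in $B_c(D_n)$ traced by bringing $k$ points to $x_0$ through the boundary with the path traced by sliding them onto the newly stacked arc. The common normalization $q^{-d_\alpha k(k-1)/4}$ is introduced exactly so that these monodromies cancel.

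The adjointness of $\CF^{[1]}_\alpha$ and $\CE_\alpha$ is proved by exactly the same mechanism with the roles of $\CH$ and $\CHbm$ reversed: $\CE_\alpha$ is the Borel--Moore connecting map followed by $\mathrm{del}_{x_0}$ removing one $\alpha$-coordinate, while $\CF^{[1]}_\alpha$ stacks one $\alpha$-pearl on top. Intersection configurations between $\CF^{[1]}_\alpha\CF^{[\br]}$ and $\CF_{\br'}$ are indexed by the choice of which $\alpha$-coordinate of $\CF_{\br'}$ lies on the stacked arc, in bijection with the summands of $\partial_*\CF_{\br'}$ appearing in $\CE_\alpha\CF_{\br'}$. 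The factor $\prod_p s_{p,\alpha}$ in the definition of $\CF^{[1]}_\alpha$ accounts precisely for the monodromy picked up in moving the isolated coordinate along $\partial^+ D_n$ to reach $x_0$, which corresponds by the handle rule to a half-winding around each puncture.

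The main obstacle is the careful accounting of local-system coefficients: each identification of intersection loci on the two sides of an adjointness introduces $q$-powers and $s_{p,\alpha}$-powers coming from handle rearrangements, and one must check that these agree globally. The most efficient way to carry this out is first to establish the statement for $k=1$ and for a single pearl/arc (where both the connecting homomorphism and the stacking are explicit), then to bootstrap to general $k$ using the divided-power relation (Prop.~\ref{prop_DividedPowers}) on the Borel--Moore side, combined with the dual divided-power relation on the standard side which follows from the definition of $\CE^{[k]}_\alpha$ as an iteration of Poincar\'e-dualized boundary maps.
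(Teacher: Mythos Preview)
Your route differs substantially from the paper's, and while the direct intersection-theoretic verification you sketch is a legitimate idea in principle, the proposal has a genuine gap.

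The paper does not compute on basis elements at all. Instead it argues structurally: the map $\flip\circ\CF^{(k)}_\alpha$ on $\CHbm$ factors as a composition $\iota\circ r$ through the stratum $Y^k_{c+k\alpha}(\alpha)$, where $r$ glues the stacked $k$-arc into $\partial^- D_n$ and $\iota$ is the inclusion of the stratum. Passing to Poincar\'e duals, the dual of $\iota$ is the cohomological restriction map, which is exactly $\partial^k_c(\alpha)$ by definition, and $r$ is inverse to $\del_k(\alpha)$. Hence the Poincar\'e dual of $\flip\circ\CF^{(k)}_\alpha$ is $\del_k(\alpha)\circ\partial^k_c(\alpha)=\CE^{[k]}_\alpha$ on the nose (with matching normalizations). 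The $\CF^{[1]}_\alpha$--$\CE_\alpha$ case is the mirror statement with $T_\alpha(D_n)$ playing the role of the stratum. No coefficient tracking is needed because the adjointness is a formal consequence of inclusion/restriction duality.

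The gap in your proposal is the bootstrap from $k=1$ to general $k$. You propose to use the divided-power relation to reduce $\CF^{(k)}_\alpha$ to $(\CF^{(1)}_\alpha)^k$, and then invoke a ``dual divided-power relation'' for $\CE^{[k]}_\alpha$. But $\CE^{[k]}_\alpha$ is \emph{not} defined as an iteration of $\CE^{[1]}_\alpha$: it is a single restriction to the codimension-$k$ stratum $Y^k_c(\alpha)$. The relation $\CE^{[k]}_\alpha\propto(\CE^{[1]}_\alpha)^k$ is exactly part of the $\Uqg^F$-module structure established in Corollary~\ref{T:adjoint_modules_to_half_integral}, which is \emph{derived from} the adjointness you are trying to prove. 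So the bootstrap is circular. Your direct argument for general $k$ (before the bootstrap) only asserts a bijection of intersection loci and a match of monodromies without carrying out either; since $\partial^k_c(\alpha)$ is defined via Poincar\'e duality rather than by an explicit chain-level formula, it is not clear how you would compute its action on a basis element $\CF^{[\br]}$ without essentially reproducing the paper's stratum argument.
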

\begin{proof}
This proof is adapted from \cite[Theorem~5.9]{Pierre}. We first explain why $\CE^{[k]}_\alpha$ and $\CF^{(k)}_\alpha$ are adjoint operators. Let $c\in \Coloring_\Pi$, the action of $\CF^{(l)}_\alpha$ composed by $\flip$ gives:
\[
\flip\left( q^{d_\alpha \frac{k(k-1)}{4}} \CF_\alpha^{(k)} (\mathcal{D}_c) \right)  = \vcenter{\hbox{\begin{tikzpicture}[scale=0.4, every node/.style={scale=0.8},decoration={
    markings,
    mark=at position 0.5 with {\arrow{>}}}
    ]    
    
\coordinate (w0g) at (-5,-1) {};
\coordinate (w0d) at (5,-1) {};

\coordinate (h1) at (5,-2.5);
\coordinate (h2) at (5,-1.5);
\coordinate (h3) at (5,-1);

\draw[dashed, thick] (w0g) -- (w0d) node[pos=0.15,black] (a1) {} node[pos=0.5,black,above]  {{$k$}} node[pos=0.55,black] (a2) {} coordinate[pos=0.7] (a3) {};


\coordinate (a33) at (5,-2);
\coordinate (a22) at (5,2.5);
\draw[yellow!80!black,double] (a33)--(a33-|a3)--(a3);

\draw[fill = black!20!white] (-5,0) rectangle ++(10,3);
\node at (0,1.5) {\reflectbox{\rotatebox{180}{$\mathcal{D}_c$}}};

\draw[gray, thick] (-5,-3) -- (-5,3);
\draw[gray, thick] (5,-3) -- (5,3);
\draw[red, thick] (-5,-3) -- (5,-3);
\draw[gray, thick] (-5,3) -- (5,3);
\end{tikzpicture}}} .
\]
(notice that a stretch has been applied to the boundary so that $\partial^-D_n$ in red only stands on the bottom part of $\partial D_n$). 
Hence $\flip \circ \CF^{(k)}$ defines a map:
\[
\Hlf_c\left( \Conf_c(D_n), T(D_n), \Laurent_c(D_n) \right) \to \Hlf_{c+k\alpha} \left( \Conf_{c+k\alpha}(D_n), T(D_n), \Laurent_{c+k\alpha}(D_n) \right)
\]
which factors through $Y^k_{c+k\alpha}(\alpha)$ (by a homotopy shrinking the white part of the above diagram, and sending the dashed arc to the lower (and red) part of the boundary). It gives the following commutative diagram:

 \begin{center}
  \begin{tikzpicture}[descr/.style={fill=white}]
   \node (P1) at (-2.5,0) {$\Hlf_{c+k\alpha} \left( Y^k_{c+k\alpha}(\alpha), Y^k_{c+k\alpha}(\alpha) \cap Y^+_{c+k\alpha}, \Laurent_{c+k\alpha}(D_n) \right)$};
   \node (P2) at (4,1.5) {$\Hlf_c\left( \Conf_c(D_n), T(D_n), \Laurent_c(D_n) \right)$};
   \node (P3) at (4,-1.5) {$\Hlf_{c+k\alpha} \left( \Conf_{c+k\alpha}(D_n), T(D_n), \Laurent_{c+k\alpha}(D_n) \right)$};
   \draw
   (P2) edge[->] node[above] {$r$} (P1)
   (P1) edge[->] node[above right] {$\iota$} (P3)
   (P2) edge[->] node[right] {$\flip \circ \CF^{(k)}$} (P3);
  \end{tikzpicture}
 \end{center}
 where $r$ is the gluing of the dashed arc pushed to the boundary, and $\iota$ is the homological map induced by the inclusion of $Y^k_{c+k\alpha}(\alpha)$ in $\Conf_{c+k\alpha}(D_n)$. Notice that the dual map to the inclusion yields the restriction on cochains which shows that the dual diagram (passed to homology thanks to the perfect pairing from Prop.~\ref{P:structure_result_punctures_standard}) is the following:
 \begin{center}
  \begin{tikzpicture}[descr/.style={fill=white}]
   \node (P1) at (-3.5,0) {$\Hnot_{c+k\alpha} \left( Y^k_{c+k\alpha}(\alpha), Y^{k}_{c+k\alpha}(\alpha) \cap Y^{k+1}_{c+k\alpha}, \Laurent_{c+k\alpha}(D_n) \right)$};
   \node (P2) at (4,1.5) {$\Hnot_c\left( \Conf_c(D_n), S(D_n), \Laurent_c(D_n) \right)$};
   \node (P3) at (4,-1.5) {$\Hnot_{c+k\alpha} \left( \Conf_{c+k\alpha}(D_n), S(D_n), \Laurent_{c+k\alpha}(D_n) \right)$};
   \draw
   (P2) edge[<-] node[above] {$\del_i(\alpha)$} (P1)
   (P1) edge[<-] node[above right] {$\partial^i_c(\alpha)$} (P3)
   (P2) edge[<-] node[below] {} (P3);
  \end{tikzpicture}
 \end{center}

Then one notes that $r$ and $\del_i(\alpha)$ are inverse isomorphisms by remarking that gluing the dashed arc is exactly the inverse of the deleting of $\Conf_{i\alpha}(\partial^- D_n)$ that is performed in $\del_i(\alpha)$. It proves that $\flip \circ \CF^{(k)}$ and $\CE^{[k]}_\alpha = \mathrm{del}_{x_0}(\alpha) \circ \partial^i_c(\alpha)$ are dual operators and hence are adjoint for \eqref{E:the_pairing_punctures} (since they have same renormalization factor). 

Now to prove that $\CF^{[1]}_\alpha$ and $\CE_\alpha$ are also adjoint for \eqref{E:the_pairing_punctures}, we do the same starting with the standard homology, and we obtain the two following dual commutative diagrams. 

 \begin{center}
  \begin{tikzpicture}[descr/.style={fill=white}]
   \node (P1) at (-2.5,0) {$\Hnot_{c+\alpha} \left( T_\alpha(D_n), T_\alpha(D_n) \cap S(D_n), \Laurent_{c+k\alpha}(D_n) \right)$};
   \node (P2) at (4,1.5) {$\Hnot_c\left( \Conf_c(D_n), S(D_n), \Laurent_c(D_n) \right)$};
   \node (P3) at (4,-1.5) {$\Hnot_{c+k\alpha} \left( \Conf_{c+\alpha}(D_n), S(D_n), \Laurent_{c+\alpha}(D_n) \right)$};
   \draw
   (P2) edge[->] node[above] {$r'$} (P1)
   (P1) edge[->] node[above right] {} (P3)
   (P2) edge[->] node[right] {$\CF^{[1]}$} (P3);
  \end{tikzpicture}
 \end{center}
 where $T_\alpha(D_n)$ is made of configurations with (at least) one $\alpha$-colored coordinate in $\partial^+ D_n$. Its dual is:
 \begin{center}
  \begin{tikzpicture}[descr/.style={fill=white}]
   \node (P1) at (-3.5,0) {$\Hlf_{c+\alpha} \left( T_\alpha(D_n), T_\alpha(D_n) \cap T^2_\alpha(D_n), \Laurent_{c+\alpha}(D_n) \right)$};
   \node (P2) at (4,1.5) {$\Hlf_c\left( \Conf_c(D_n), T(D_n), \Laurent_c(D_n) \right)$};
   \node (P3) at (4,-1.5) {$\Hlf_{c+\alpha} \left( \Conf_{c+\alpha}(D_n), T(D_n), \Laurent_{c+\alpha}(D_n) \right)$};
   \draw
   (P2) edge[<-] node[above] {$\del_{x_0}$} (P1)
   (P1) edge[<-] node[above right] {$\partial_\alpha$} (P3)
   (P2) edge[<-] node[right] {$\CE_\alpha \circ \flip$} (P3);
  \end{tikzpicture}
 \end{center}
 where $T^2_\alpha(D_n)$ is made of configurations with (at least) two $\alpha$-colored coordinate in $\partial^+ D_n$, recalling that the map $\partial_\alpha$ involved is dual to restriction. 

Now $K$ and $K^{-1}$ being adjoint is straightforward since the map $\flip$ obviously conjugates the local system. 
\end{proof}

\begin{coro}\label{T:adjoint_modules_to_half_integral}
The $\Laurent_{\boldsymbol{L}}$-module $\CH(D_n)$ is a module on $\Uqg^F$. 
\end{coro}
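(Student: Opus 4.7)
The plan is to verify the defining relations of $\Uqg^F$ for the homological operators $\CE^{[k]}_\alpha, \CF^{[1]}_\alpha, \CK_\alpha^{\pm 1}$ acting on $\CH(D_n)$, using the perfect pairing $\langle \cdot , \cdot \rangle$ of Proposition~\ref{P:structure_result_punctures_standard} together with the adjoint identities established in the preceding proposition. The guiding principle is that the map $\phi \colon \mathrm{End}_{\Laurent_{\boldsymbol L}}(\CHbm(D_n)) \to \mathrm{End}_{\Laurent_{\boldsymbol L}}(\CH(D_n))$ sending an operator to its left adjoint is an anti-homomorphism ($\phi(AB) = \phi(B)\phi(A)$), since $\langle \phi(B)\phi(A)x, y\rangle = \langle x, ABy\rangle$. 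Because the pairing is perfect, any operator on $\CH(D_n)$ is determined by its adjoint, so any operator identity $R = 0$ holding on $\CHbm(D_n)$ dualizes to $\phi(R) = 0$ on $\CH(D_n)$.

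First I dispose of the easy relations. The divided-power identity $\CE^{[k]}_\alpha \CE^{[l]}_\alpha = \binom{k+l}{k}_{q_\alpha^{1/2}} \CE^{[k+l]}_\alpha$ is $\phi$ applied to the same identity for the (commuting) divided powers $\CF^{(k)}_\alpha \CF^{(l)}_\alpha$, which holds by Proposition~\ref{prop_DividedPowers}. The Cartan relations $\CK_\alpha \CK_\alpha^{-1} = 1$ and the commutation $\CK_{\alpha_i} \CE^{[k]}_{\alpha_j} \CK_{\alpha_i}^{-1} = q_{\alpha_i}^{-k a_{i,j}/2} \CE^{[k]}_{\alpha_j}$, $\CK_{\alpha_i}\CF^{[1]}_{\alpha_j}\CK_{\alpha_i}^{-1} = q_{\alpha_i}^{a_{i,j}/2} \CF^{[1]}_{\alpha_j}$ are then immediate from the scalar definition of $\CK_\alpha$ and the weight-shifting degrees $\pm\alpha$ of the operators.

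Next I treat the commutator relation $[\CE^{[k+1]}_\alpha, \CF^{[1]}_\alpha] = \CE^{[k]}_\alpha(q_\alpha^{-k/2}\CK_\alpha - q_\alpha^{k/2}\CK_\alpha^{-1})$. Applying $\phi$ to the fundamental relation of Proposition~\ref{P:fundamental_relation_E} and using $\phi(\CE_\alpha) = \CF^{[1]}_\alpha$, $\phi(\CF^{(k)}_\alpha) = \CE^{[k]}_\alpha$, $\phi(\CK_\alpha^{\pm 1}) = \CK_\alpha^{\mp 1}$ together with the anti-homomorphism property converts it into the required identity on $\CH(D_n)$; the exact sign/ordering bookkeeping will be the only subtle verification, to match the Cartan factor of the $\Uqg^F$ presentation. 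Finally, the quantum Serre relation for the $E$-side is the key point: the identity $\sum_{l=0}^{1-a_{i,j}}(-1)^l \CE^{[l]}_{\alpha} \CE^{[1]}_{\beta} \CE^{[1-a_{i,j}-l]}_{\alpha} = 0$ is obtained by applying $\phi$ term-by-term to the quantum Serre relation for the $F$-side $\sum_{l=0}^{1-a_{i,j}}(-1)^l \CF^{(l)}_{\alpha} \CF^{(1)}_{\beta} \CF^{(1-a_{i,j}-l)}_{\alpha} = 0$ (which holds on $\CHbm(D_n)$ by Theorems~\ref{T:homological_version_for_Uqgm} and~\ref{T:half_integral_modules}), then re-indexing by $l \mapsto 1-a_{i,j}-l$ and factoring the global sign $(-1)^{1-a_{i,j}}$.

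The main obstacle is not any single verification but the accounting of Cartan factors in the commutator step, where the anti-adjoint swaps $\CK \leftrightarrow \CK^{-1}$ and the shift of weight on the source and target of $\CE^{[k]}$ has to be tracked so that the result matches the stated $\Uqg^F$ relation exactly; everything else follows routinely from the anti-homomorphism property of $\phi$ and the $\Uqg^E$-module structure on $\CHbm(D_n)$.
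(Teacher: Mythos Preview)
Your proposal is correct and follows essentially the same approach as the paper: the paper's proof consists of two sentences stating that the result is a direct consequence of Theorem~\ref{T:half_integral_modules} and the preceding adjointness proposition, with the quantum Serre relations for $E$'s obtained by duality from those for $F$'s. You have simply unpacked what ``direct consequence'' means, making explicit the anti-homomorphism property of the adjoint map $\phi$ and checking relation by relation; your honest flagging of the Cartan bookkeeping in the commutator step is appropriate, as this is precisely where the $\flip$-conjugation of the local system (mentioned in the paper's proof of the adjointness proposition) enters to reconcile signs.
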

\begin{proof}
It is a direct consequence of Theorem~\ref{T:half_integral_modules} and of the previous proposition. In particular, we get the quantum Serre relations for $E$'s on the homology side from those for $F$'s on the Borel--Moore side. 
\end{proof}

\subsection{Modules on $\Uqg$}\label{S:action_whole_Uq}

\subsubsection{The Verma module}\label{S:Verma_module}

We denote by $\Uqg$ the Drinfel'd--Jimbo quantum group associated with $\mathfrak{g}$ and we recall that it is the $\mathbb{Q}(q)$-algebra generated by generators $E_\alpha,K_\alpha^{\pm 1},F_\alpha$ for any $\alpha \in \Pi$ and any $k \in \BN$, and relations:
\begin{gather*}
 K_\alpha K_\alpha^{-1} = K_\alpha^{-1} K_\alpha = 1, \\*
 K_{\alpha_i} E_{\alpha_j} K_{\alpha_i}^{-1} = q_{\alpha_i}^{-\frac{a_{i,j}}{2}}E_{\alpha_j}, \qquad K_{\alpha_i} F_{\alpha_j}^{(1)} K_{\alpha_i}^{-1} = q_{\alpha_i}^{\frac{a_{i,j}}{2}} F_{\alpha_j}^{(1)}, \qquad
 [E_\alpha,F_\alpha^{(1)}] = (q_\alpha^{-\frac{1}{2}}K - q_\alpha^{\frac{1}{2}} K^{-1}).
\end{gather*}
and the quantum Serre relations for $E$'s and $F$'s:
\begin{align*}
& \sum_{l=0}^{1-a_{i,j}} (-1)^l F_{\alpha}^{(l)} F_{\beta}^{(1)} F_{\alpha}^{(1-a_{i,j})-l)} = 0 \\
& \sum_{l=0}^{1-a_{i,j}} (-1)^l E_{\alpha}^{(l)} E_{\beta}^{(1)} E_{\alpha}^{(1-a_{i,j})-l)} = 0
\end{align*}
where we have fixed $F_\alpha^{(i)} := (q_\alpha - q_\alpha^{-1})^{i} \frac{F_\alpha^i}{[i]_{q_\alpha^{\frac{1}{2}}}!}$ and $E_\alpha^{(i)} := (q_\alpha - q_\alpha^{-1})^{i} \frac{E_\alpha^i}{[i]_{q_\alpha^{\frac{1}{2}}}!}$. 

\begin{theorem}\label{T:full_Uqg_module}
The space $\overline{\CH}(D_n)$ is a $\Uqg$-module. The action is provided by sending generators to their calligraphic analogs.
\end{theorem}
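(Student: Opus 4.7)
The plan is to assemble the two partial actions already in hand and descend them to $\overline{\CH}(D_n)$. Namely, Theorem~\ref{T:half_integral_modules} gives a $\Uqg^{E}$-action on $\CHbm(D_n)$ (with divided powers of $F$ and a single $E$), and Corollary~\ref{T:adjoint_modules_to_half_integral} gives a $\Uqg^{F}$-action on $\CH(D_n)$ (with divided powers of $E$ and a single $F$). Since we work over $\mathbb{Q}(q)$, divided powers are invertible rescalings of ordinary powers, so producing matching single generators $E_\alpha, F_\alpha, K_\alpha^{\pm 1}$ acting on $\overline{\CH}(D_n)=\iota(\CH(D_n))$ and satisfying the Drinfel'd--Jimbo relations is sufficient.

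First I would show that $\overline{\CH}(D_n)$ is stable under all the operators we need. For the $F$-side and $K$-side, stacking an element of $\CH$ on an element of $\CH(D_n)$ is a chain-level geometric operation that commutes with the inclusion-induced map $\iota$, so the $\overline{\CH}\simeq\Uqgm$-action of Section~\ref{S:action_of_Uqgm} restricts to $\overline{\CH}(D_n)$; in particular every $\CF_\alpha^{(k)}$ preserves this subspace, and $\CK_\alpha^{\pm 1}$ acts diagonally by the coloring grading. For the $E$-side the key compatibility to establish is the intertwining
\[
\CE_\alpha\circ \iota \;=\; c_\alpha\, \iota \circ \CE_\alpha^{[1]},
\]
for an explicit invertible scalar $c_\alpha$, which forces $\CE_\alpha$ to preserve $\overline{\CH}(D_n)$ and identifies it there with the homological $E_\alpha^{[1]}$ (up to normalization) of Section~\ref{standard_action_of_half_Uq}. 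This follows from the naturality of the connection homomorphism $\partial_\ast$ in the long exact sequence of the triple used to define $\CE_\alpha$: the Poincar{\'e} dual description of $\CE_\alpha^{[1]}$ on standard homology (via the restriction on compactly supported cochains followed by $\del_{x_0}$) corresponds under Poincar{\'e} duality and $\iota$ to the boundary-map construction of $\CE_\alpha$ on Borel--Moore, provided one tracks the shifts in coefficients by powers of the $s_{p,\alpha}$'s that were absorbed into the definitions.

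With the intertwining in place, the $\Uqg$-relations on $\overline{\CH}(D_n)$ follow by transport. The $K$-relations and the $[K_\alpha,E_\beta], [K_\alpha,F_\beta]$-commutations are immediate from the grading. The relation $[E_\alpha,F_\alpha]=(\CK_\alpha-\CK_\alpha^{-1})/(q_\alpha^{1/2}-q_\alpha^{-1/2})$ is the $k=0$ specialization of Proposition~\ref{P:fundamental_relation_E}, which holds on all of $\CHbm(D_n)$ and hence on $\overline{\CH}(D_n)$. The quantum Serre relations for the $F$'s hold on $\CHbm(D_n)$ by Theorem~\ref{T:homological_version_for_Uqgm} and so restrict to $\overline{\CH}(D_n)$; the quantum Serre relations for the $E$'s hold on $\CH(D_n)$ by Corollary~\ref{T:adjoint_modules_to_half_integral}, and the intertwining above transports them to the $\CE_\alpha$'s acting on $\overline{\CH}(D_n)$.

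The main obstacle is the intertwining $\CE_\alpha\circ\iota = c_\alpha\,\iota\circ\CE_\alpha^{[1]}$: unpacking $\CE_\alpha^{[1]}$ requires the Poincar{\'e} duality pair $(Y^1_c(\alpha),Y^1_c(\alpha)\cap Y^+_c)\leftrightarrow(Y^1_c(\alpha),Y^1_c(\alpha)\cap Y^2_c)$ together with the deletion isomorphism $\del_1(\alpha)$, while $\CE_\alpha$ uses $\del_{x_0}$ after the Borel--Moore boundary of the triple $\Conf_c(D_n^\#)\supset S(D_n)\supset Y^2(D_n)$; matching these two presentations, along with the scalar $\prod_p s_{p,\alpha}^{-1}$ hidden in the normalization of $\CE_\alpha$, is the delicate bookkeeping. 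Everything else is formal: once this compatibility holds, $\overline{\CH}(D_n)$ inherits commuting actions from both sides that glue to a $\Uqg$-action sending each generator to the claimed calligraphic analogue.
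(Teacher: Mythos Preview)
Your approach is the paper's: restrict the $\Uqg^E$-action on $\CHbm(D_n)$ to $\overline{\CH}(D_n)$, where all relations except the quantum Serre for $E$'s come from Theorem~\ref{T:half_integral_modules}, and import that last relation from the $\Uqg^F$-action on $\CH(D_n)$ via Corollary~\ref{T:adjoint_modules_to_half_integral}. You in fact go further than the paper's terse proof by explicitly isolating the intertwining $\CE_\alpha\circ\iota \propto \iota\circ\CE_\alpha^{[1]}$ as the step needed both for $\CE_\alpha$-stability of $\overline{\CH}(D_n)$ and to transport the $E$-side Serre relations---the paper simply asserts the action is ``inherited'' without addressing this point.
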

\begin{proof}
First coefficients of $\overline{\CH}(D_n)$ need being extended to the field by injecting:
\[
\Laurent_{\mathcal{L}} \to \mathbb{Q}(q) [\boldsymbol{L}^{\pm 1} ]
\]
where $\mathbb{Q}(q) [\boldsymbol{L}^{\pm 1} ]$ means Laurent polynomials in variables from $\boldsymbol{L}$ and with coefficients in $\mathbb{Q}(q)$. 

Notice that $\overline{\CH}(D_n)$ is endowed with actions of $\CE_\alpha$, $\CF_\alpha^{(1)}$ and $K_\alpha^{\pm 1}$ inherited from those involved in Theorem~\ref{T:half_integral_modules}. All relations but quantum Serre ones for $E$'s are satisfied due to the latter result. Now thanks to Theorem~\ref{T:adjoint_modules_to_half_integral}, the quantum Serre relations for $E$'s are also satisfied. 
\end{proof}


\begin{prop}
The module $\overline{\CH}(D_1^\circ)$ is the Verma module of $\Uqg$. 
\end{prop}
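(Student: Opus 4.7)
The plan is to verify the defining properties of the Verma module directly on $\overline{\CH}(D_1^\circ)$. First I would pick out the highest weight vector: the coloring $c=0$ gives $\overline{\CH}_0(D_1^\circ) \simeq \Laurent_{\boldsymbol{L}}$, spanned by the class $v_0$ of the empty configuration (nothing is drawn in $D_1^\circ$). Next I would check the three hallmarks of a highest weight vector. The vanishing $\CE_\alpha v_0 = 0$ is immediate: $\CE_\alpha$ factors through the boundary map of the triple and decreases the coloring by $\alpha$, so on $c=0$ the target $\CHbm_{-\alpha}(D_1^\circ)$ is by convention zero. The $K$-eigenvalue follows from plugging $n=1$, $c=0$ into the formula of Proposition~\ref{P:fundamental_relation_E}, yielding $\CK_\alpha v_0 = s_{1,\alpha}^{-1} v_0$, which is precisely the universal character $q^{\lambda_\alpha}$ of the universal Verma module (after the usual identification).

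Next, I would exhibit the $\Uqgm$-equivariant map
\[
\phi : \Uqgm \to \overline{\CH}(D_1^\circ), \qquad x \mapsto x \cdot v_0,
\]
where the right-hand side is the action by stacking defined in Section~\ref{S:action_of_Uqgm}, precomposed with the isomorphism $\Uqgm \simeq \overline{\CH}$ of Theorem~\ref{T:Uqgm_is_homological}. Equivariance is automatic from the associativity of stacking. The main task is to show $\phi$ is an isomorphism. Concretely, stacking a diagram $\CF^{(1)}_{r_1}\cdots\CF^{(1)}_{r_k}$ onto the empty hole-disk produces exactly the diagram $\CF^{\circ}_{(r_1,\dots,r_k)}$ after the natural identification; these diagrams form a spanning family of $\CH(D_1^\circ)_c$ (Proposition~\ref{P:structure_result_punctures_standard}), so the images under $\iota$ span $\overline{\CH}(D_1^\circ)_c$. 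This gives surjectivity of $\phi$.

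For injectivity, I would invoke the rank comparison: after extending scalars to $\mathbb{Q}(q)[\boldsymbol{L}^{\pm 1}]$, both sides are graded by $\Coloring_\Pi$, and in each graded piece the source $\Uqgm_c$ and the target $\overline{\CH}(D_1^\circ)_c$ have the same rank, namely the number of partitions of $c$ modulo quantum Serre relations. Indeed, on the source side this is the Poincar\'e--Birkhoff--Witt count of $\Uqgm_c$; on the target side it follows from Proposition~\ref{P:structure_result_punctures_standard} combined with the fact that $\overline{\CH}(D_1^\circ)$ is obtained from $\CH(D_1^\circ)$ by imposing precisely the relations that are imposed when passing from $\CH$ to $\overline{\CH}$ --- which is again exactly the quantum Serre ideal by Theorem~\ref{T:Uqgm_is_homological}, since the stacking isomorphism $\CH \otimes \emptyset \simeq \CH(D_1^\circ)$ carries the defining relations across. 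A surjective map of free modules of the same finite rank in each graded piece is an isomorphism, so $\phi$ identifies $\overline{\CH}(D_1^\circ)$ with the free $\Uqgm$-module on a highest weight vector of weight $(s_{1,\alpha}^{-1})_{\alpha\in\Pi}$, which is by definition the (universal) Verma module of $\Uqg$.

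The main obstacle I anticipate is the rank-matching step: one must verify that the Serre-type relations holding among products $\CF_{r_1}^{(1)}\cdots\CF_{r_k}^{(1)}\cdot v_0$ in $\overline{\CH}(D_1^\circ)$ are \emph{exactly} the quantum Serre relations, with no extra ``Verma-level'' relations arising from the hole. Geometrically, this amounts to showing that the Serre relation proved in Proposition~\ref{prop_QuantumSerre} on the empty disk remains the only relation after stacking onto $D_1^\circ$; the key input is that the hole in $D_1^\circ$ is separated from the horizontal red boundary where the Serre-relation diagrams live, so the relation is inherited by naturality of stacking, and conversely no new relations are forced because standard-homology basis diagrams $\CF^\circ$ remain $\Laurent_{\boldsymbol{L}}$-independent (Proposition~\ref{P:structure_result_punctures_standard}). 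Once this is made rigorous, the four properties assemble into the universal property of the Verma module.
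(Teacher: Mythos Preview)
Your approach is essentially the same as the paper's: identify the empty diagram $v_0$ as a highest weight vector, compute its $K$-eigenvalue, and show that $\overline{\CH}(D_1^\circ)$ is the free $\Uqgm$-module on $v_0$. The paper's proof is extremely terse---it simply cites the structural proposition to assert $\overline{\CH}(D_1^\circ)=\bigoplus_c \overline{\CH}_c\cdot v_0$ and then reads off the character---whereas you spell out the highest weight verification ($\CE_\alpha v_0=0$, $\CK_\alpha v_0=s_{1,\alpha}^{-1}v_0$) and separate the argument into surjectivity and injectivity.

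Your ``main obstacle'' (no extra relations beyond quantum Serre) can be dispatched more directly than by rank-matching. The stacking map $x\mapsto x\cdot v_0$ already induces module isomorphisms at \emph{both} levels: $\CH_c\to\CH_c(D_1^\circ)$ and $\CHbm_c\to\CHbm_c(D_1^\circ)$, each carrying the basis indexed by $\CP_c$ to a basis (Propositions~\ref{structure_result}, \ref{T:structure_standard_homology}, \ref{P:structure_result_punctures}, \ref{P:structure_result_punctures_standard}; for $n=1$ the partitions on both sides are literally the same set). These two squares commute with the canonical maps $\iota$, hence the images $\overline{\CH}_c$ and $\overline{\CH}_c(D_1^\circ)$ are isomorphic via $x\mapsto x\cdot v_0$. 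This is what the paper means by its one-line citation, and it sidesteps the need to argue separately that no ``Verma-level'' relations appear. Your claim that stacking $\CF^{(1)}_{r_1}\cdots\CF^{(1)}_{r_k}$ literally produces the diagram $\CF^\circ_{(r_1,\dots,r_k)}$ is not quite right as stated (one is a product of parallel arcs, the other a single arc around the hole), but after the retraction used in the structural propositions they yield the same basis.
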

\begin{proof}
From Proposition~\ref{P:structure_result_punctures_standard} we observe that $\overline{\CH}(D_n^\circ)$ is $\bigoplus_{c \in \Coloring_\Pi} \CH_c v_0$ where $v_0$ is the empty diagram. Now $\CK_\alpha v_0 = s^1_\alpha$ and this choice of character defines the highest weight of the Verma module (see Def.~7 in \cite[sec.~6.2.5]{KS}). It proves the claim. 
\end{proof}

\subsection{Monoidality and braiding}\label{S:braiding_and_monoidality}

\subsubsection{Monoidality}\label{S:monoidality}

We recall that $\Uqg$ is a Hopf algebra endowed with a coproduct making its category of modules monoidal. In this section, we show $\CHbm(D_n) = \CHbm(D_1)^{\otimes n}$ where the quantum group action is given on the right by coproduct. To prove this fact we will need to split the puncture disk into two. Let $D_{1,N}$ be the left part of $D_n$ and $D_{N+1,n}$ the right part, when splitting it vertically in between punctures $N$ and $N+1$ with $1 \le N < n$. Namely $D_{1,N}$ is a disk with punctures $w_1 ,\ldots, w_N$, while $D_{N+1,n}$ is a disk with punctures $w_{N+1}, \ldots, w_n$. Consider a diagram $\mathcal{D} \in \CHbm(D_n)$ with the following shape:
\[
\vcenter{\hbox{\begin{tikzpicture}[scale=0.4, every node/.style={scale=0.8},decoration={
    markings,
    mark=at position 0.5 with {\arrow{>}}}
    ]       
\coordinate (w0g) at (-5,1) {};
\coordinate (w0d) at (5,1) {};
\coordinate (h1) at (5,-2.5);
\coordinate (h2) at (5,-1.5);
\coordinate (h3) at (5,-1);
\draw[fill = black!20!white] (-5,-3) rectangle ++(10,3);
\node at (-2.5,-1.5) {$\mathcal{D}_{c_1}$};
\node at (2.5,-1.5) {$\mathcal{D}_{c_2}$};
\draw[\hyellow, double, thick] (-2.5,-1.7) -- (-2.5,-3);
\draw[\hyellow, double, thick] (2.5,-1.7) -- (2.5,-3);
\draw[thick] (0,-3)--(0,0);
\draw[red, thick] (-5,-3) -- (-5,3);
\draw[red, thick] (5,-3) -- (5,3);
\draw[red, thick] (-5,-3) -- (5,-3);
\draw[gray, thick] (-5,3) -- (5,3);
\end{tikzpicture}}}
\]
Namely:
\begin{itemize}
\item We suppose the entire diagram (pearl necklaces and their handles) concerned with punctures $w_1,\ldots, w_N$ to be contained in the diagram denoted $\mathcal{D}_{c_1}$ and all the ones concerned with punctures $w_{N+1},\ldots, w_n$ to be contained in the diagram denoted $\mathcal{D}_{c_2}$. In other words diagram $\mathcal{D}_{c_1}$ is supported on $D_{1,N}$ and $\mathcal{D}_{c_2}$ on $D_{N+1,n}$. 
\item The handles reach a horizontally aligned configurations on the lower boundary. We claim that if their real part are disjoint from those of the punctures, there is a single obvious vertical line retraction that aligns everyone near the boundary, and this explains how to use the handles. 
\item As the picture suggests, all the handles from $\mathcal{D}_{c_1}$ must have real part lower than those of punctures $w_{N+1}, \ldots, w_n$ and all the ones from $\mathcal{D}_{c_1}$ must have real part greater than those of punctures $w_1, \ldots, w_N$ (at all time). The reader can imagine the puncture to lie right between labels $\mathcal{D}_{c_1}$ and $\mathcal{D}_{c_2}$ in the picture. 
\end{itemize}
We say that the diagram is in split position, and we claim that any diagram can be made into a split position, up to linear combination. One would be convinced from the bases elements which can be put in a split position up to a handle rule, hence up to a diagonal and invertible change of basis. Notice also that the above diagram means below's one:
\[
\vcenter{\hbox{\begin{tikzpicture}[scale=0.4, every node/.style={scale=0.8},decoration={
    markings,
    mark=at position 0.5 with {\arrow{>}}}
    ]       
\coordinate (w0g) at (-5,1) {};
\coordinate (w0d) at (5,1) {};
\coordinate (h1) at (5,-2.5);
\coordinate (h2) at (5,-1.5);
\coordinate (h3) at (5,-1);
\draw[fill = black!20!white] (-5,-3) rectangle ++(10,3);
\node at (-2.5,-1.5) {$\mathcal{D}_{c_1}$};
\node at (2.5,-1.5) {$\mathcal{D}_{c_2}$};
\draw[\hyellow, double, thick] (-2.7,-1.5) -- (-5,-1.5);
\draw[\hyellow, double, thick] (2.7,-1.5) -- (5,-1.5);
\draw[thick] (0,-3)--(0,0);
\draw[red, thick] (-5,-3) -- (-5,3);
\draw[red, thick] (5,-3) -- (5,3);
\draw[red, thick] (-5,-3) -- (5,-3);
\draw[gray, thick] (-5,3) -- (5,3);
\end{tikzpicture}}}
\]
up to the application of $\str$, using earlier conventions. In what follows we need to cut in between punctures and so to input handles in between their real parts, this is why we introduce this slightly more complicated convention for diagrams. 

Now, there is an isomorphism of $\Laurent_{\boldsymbol{L}}$-modules:
\[
\splits_N: \begin{array}{ccc}
\CHbm(D_n) & \to & \CHbm(D_{N+1,n}) \otimes \CHbm(D_{1,N})\\
\mathcal{D} & \mapsto &  \mathcal{D_{c_2}} \otimes \mathcal{D_{c_1}}. 
\end{array}
\]
where $\mathcal{D}$ is a diagram in a split position as the one above. Notice the inversion in the order of tensor folds. 

We recall the expression of the coproduct on generators of $\Uqg^E$:
\begin{align*}
& \Delta(E_\alpha) = E_\alpha \otimes K_\alpha + 1 \otimes E_\alpha, & \Delta(F^{(k)}_\alpha) = \sum_{i+j=k} q_\alpha^{-\frac{ij}{2}} K_\alpha^{-i} F_\alpha^{(j)} \otimes F_\alpha^{(i)} \hspace{1cm} \Delta(K^{\pm 1}_\alpha) = K^{\pm 1}_\alpha \otimes K^{\pm 1}_\alpha
\end{align*}

\begin{theorem}\label{T:monoidality}
For any $N$ such that $1\le N < n$, the isomorphism $$\splits_N: \CHbm(D_n) \simeq \CHbm(D_{1,N}) \otimes \CHbm(D_{N+1,n})$$ is one of $\Uqg$-modules, where the action on the tensor product is given by the coproduct. 
\end{theorem}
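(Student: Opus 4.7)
The plan is to verify that $\splits_N$ intertwines the action of each generator of $\Uqg$ with the corresponding coproduct action on the tensor product. Since $\splits_N$ is already an isomorphism of $\Laurent_{\boldsymbol{L}}$-modules, and since $\Uqg$ is generated by $\CK_\alpha^{\pm 1}$, $\CF_\alpha^{(1)}$ and $\CE_\alpha$ for $\alpha \in \Pi$ (the divided powers $\CF_\alpha^{(k)}$ follow from Prop.~\ref{prop_DividedPowers}, and those of $E$ follow from the fundamental relation), it suffices to check intertwining on these three families of generators, and only on diagrams already put in a split position (which spans $\CHbm(D_n)$ after a diagonal change of basis).

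For $\CK_\alpha^{\pm 1}$ the verification is immediate: it acts diagonally on $\CHbm_c(D_n)$ by a monomial depending only on $c$, and the splitting $c = c_1 + c_2$ together with the partition of the punctures $\{w_1,\dots,w_n\} = \{w_1,\dots,w_N\} \sqcup \{w_{N+1},\dots,w_n\}$ makes this monomial factor as the product of the two analogous monomials on $\CHbm_{c_1}(D_{1,N})$ and $\CHbm_{c_2}(D_{N+1,n})$, matching $\Delta(K_\alpha) = K_\alpha \otimes K_\alpha$. For $\CF_\alpha^{(k)}$, which acts by stacking a $k$-indexed dashed arc above a split diagram, I would apply the cutting rule (Prop.~\ref{prop_breaking_rule}) along the vertical line separating $D_{1,N}$ from $D_{N+1,n}$; this writes the stacked arc as a sum over $i+j=k$ of two split pieces, of sizes $i$ (above $D_{1,N}$) and $j$ (above $D_{N+1,n}$). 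Pulling the $i$ handles of the left piece out along the boundary to place the resulting diagram back in a split position produces, via the handle rule (Rem.~\ref{rmk_handle_rule}), exactly the factor $q_\alpha^{-ij/2}$ for the crossings among the $\alpha$-colored strands plus the monomial that encodes $\CK_\alpha^{-i}$ acting on $\mathcal{D}_{c_1}$; this reproduces precisely $\Delta(F_\alpha^{(k)}) = \sum_{i+j=k} q_\alpha^{-ij/2} K_\alpha^{-i} F_\alpha^{(j)} \otimes F_\alpha^{(i)}$.

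For $\CE_\alpha$, defined through the connecting homomorphism $\partial_*$ followed by $\del_{x_0}$, the point removed from $\partial^-D_n$ lies either in $\partial^- D_{N+1,n}$ or in $\partial^- D_{1,N}$, and $\partial_*$ decomposes accordingly. When the removed point already lies in the right half, its handle reaches $x_0$ without crossing $D_{1,N}$, producing $\CE_\alpha(\mathcal{D}_{c_2}) \otimes \mathcal{D}_{c_1}$; when it lies in the left half, the handle to $x_0$ must cross $D_{N+1,n}$, and the handle rule contributes precisely the coefficient that converts $\mathcal{D}_{c_2}$ into $\CK_\alpha(\mathcal{D}_{c_2})$. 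The two contributions assemble to $(\CE_\alpha \otimes \CK_\alpha + 1 \otimes \CE_\alpha)(\mathcal{D}_{c_2} \otimes \mathcal{D}_{c_1})$, which is exactly $\Delta(E_\alpha)$ applied to $\splits_N(\mathcal{D})$.

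The main obstacle is bookkeeping: the normalizing scalar defining $\CK_\alpha$ on $\CHbm(D_{1,N})$ and on $\CHbm(D_{N+1,n})$ involves products $\prod_{p} s_{p,\alpha}^{-1}$ restricted to the punctures in that half only, while the scalar on the global $\CHbm(D_n)$ runs over all $n$ punctures; one must check that the split of puncture-winding variables in $\varphi_c(D_n)$ along the vertical cut is consistent with this factorization, and likewise that the signs in $\del_{x_0}$ behave correctly under the two-term decomposition of $\partial_*$ produced by $\splits_N$. Once these coefficient identities are verified in $\Laurent_{\boldsymbol{L}}$, the three generator-by-generator compatibilities together imply that $\splits_N$ is an isomorphism of $\Uqg$-modules.
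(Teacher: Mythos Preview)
Your approach for $\CK_\alpha^{\pm 1}$ and $\CF_\alpha^{(k)}$ is exactly the paper's: cut the stacked dashed arc on $\partial^-D_n$ between the two halves and reorganize handles. One slip to fix: the monomial obtained when you pull the $i$ handles of the left piece across the right half records windings around the punctures $w_{N+1},\dots,w_n$ and around the configuration in $\mathcal{D}_{c_2}$, hence it is the eigenvalue of $\CK_\alpha^{-i}$ on $\mathcal{D}_{c_2}$, not on $\mathcal{D}_{c_1}$. This is consistent with the tensor ordering $\splits_N(\mathcal{D})=\mathcal{D}_{c_2}\otimes\mathcal{D}_{c_1}$ and with $\Delta(F_\alpha^{(k)})$ placing $K_\alpha^{-i}$ in the first factor; your final formula is correct, only the attribution of the $K$ is swapped.

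For $\CE_\alpha$ you take a genuinely different route from the paper. You argue directly via the two-term decomposition of $\partial_*$ on a split diagram (Leibniz), tracking how the handle to $x_0$ crossing $D_{N+1,n}$ produces the $\CK_\alpha$ coefficient. The paper instead avoids this computation entirely: it observes that the argument for $\CF_\alpha^{(k)}$ goes through verbatim for $\CF_\alpha^{[1]}$ acting on $\CH(D_n)$, and then invokes the adjointness of $\CE_\alpha$ with $\CF_\alpha^{[1]}$ under the perfect pairing of Prop.~\ref{P:structure_result_punctures_standard} to transport the coproduct compatibility across. The paper's route is cheaper (it recycles the $\CF$ computation at $k=1$ and costs no new diagram manipulation), while yours is more self-contained but requires carefully verifying that $\del_{x_0}$ and the normalization factor $\prod_p s_{p,\alpha}^{-1}$ split correctly under $\splits_N$ --- precisely the bookkeeping you flag at the end.
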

\begin{proof}
The result is obvious for actions of $\CK_\alpha^{\pm 1}$. Now for the action of $\CF^{(1)}_\alpha$ on a typical basis vector:
\begin{align*}
q^{d_\alpha \frac{k(k-1)}{4}} \CF_\alpha^{(k)} (\mathcal{D})  & = \vcenter{\hbox{\begin{tikzpicture}[scale=0.4, every node/.style={scale=0.8},decoration={
    markings,
    mark=at position 0.5 with {\arrow{>}}}
    ]       
\coordinate (w0g) at (-5,1) {};
\coordinate (w0d) at (5,1) {};
\coordinate (h1) at (5,-2.5);
\coordinate (h2) at (5,-1.5);
\coordinate (h3) at (5,-1);
\draw[dashed, thick,postaction={decorate}] (w0g) -- (w0d) node[pos=0.15,black] (a1) {} node[pos=0.3,black,above]  {$k$} node[pos=0.55,black] (a2) {} coordinate[pos=0.8] (a3) {};
\coordinate (a33) at (5,2);
\coordinate (a22) at (5,2.5);
\draw[fill = black!20!white] (-5,-3) rectangle ++(10,3);
\node at (-2.5,-1.5) {$\mathcal{D}_{c_1}$};
\node at (2.5,-1.5) {$\mathcal{D}_{c_2}$};
\draw[\hyellow, double, thick] (-2.5,-1.7) -- (-2.5,-3);
\draw[\hyellow, double, thick] (2.5,-1.7) -- (2.5,-3);
\draw[thick] (0,-3)--(0,0);
\draw[red, thick] (-5,-3) -- (-5,3);
\draw[red, thick] (5,-3) -- (5,3);
\draw[red, thick] (-5,-3) -- (5,-3);
\draw[gray, thick] (-5,3) -- (5,3);
\draw[yellow!80!black,double,thick] (5,-3)--(a33)--(a33-|a3)--(a3);
\end{tikzpicture}}} \\
& =
\sum_{i+j= k} \left( q_{\alpha}^{\frac{ij}{2}} \prod_{p = N+1,\ldots,n} (s_{p,\alpha})^{i} \prod_{\beta \in \Pi} q_{\alpha,\beta}^{c_2(\beta)}  \right)
\vcenter{\hbox{\begin{tikzpicture}[scale=0.4, every node/.style={scale=0.8},decoration={
    markings,
    mark=at position 0.5 with {\arrow{>}}}
    ]       
\coordinate (w0g) at (-5,1) {};
\coordinate (w0d) at (5,1) {};
\coordinate (h1) at (5,-2.5);
\coordinate (h2) at (5,-1.5);
\coordinate (h3) at (5,-1);
\draw[fill = black!20!white] (-5,-3) rectangle ++(10,3);
\node at (-2.5,-1.5) {$\mathcal{D}_{c_1}$};
\node at (2.5,-1.5) {$\mathcal{D}_{c_2}$};
\draw[\hyellow, double, thick] (-2.5,-1.7) -- (-2.5,-3);
\draw[\hyellow, double, thick] (2.5,-1.7) -- (2.5,-3);
\draw[thick] (0,-3)--(0,0);
\draw[dashed, thick, postaction={decorate}] (w0g) to[bend left] node[pos=0.25,black,above]  {$i$} node[pos=0.4] (a3p) {} (-0.5,-3)  ;
\draw[dashed, thick, postaction={decorate}] (0.5,-3) to[bend left] node[pos=0.6,black,above]  {$j$} node[pos=0.8] (a3) {} (w0d);
\coordinate (a33p) at (0,-3);
\coordinate (a33) at (5,2);
\coordinate (a22) at (5,2.5);
\draw[red, thick] (-5,-3) -- (-5,3);
\draw[red, thick] (5,-3) -- (5,3);
\draw[red, thick] (-5,-3) -- (5,-3);
\draw[gray, thick] (-5,3) -- (5,3);
\draw[yellow!80!black,double,thick] (a33p)--(a33p|-a3p)--(a3p);
\draw[yellow!80!black,double,thick] (5,-3)--(a33)--(a33-|a3)--(a3);
\end{tikzpicture}}}
\end{align*}
where we have successively applied a cut of the dashed arc on $\partial^- D_n$ and a handle rule for the handle of the indexed $i$ new dashed arc to reach the left part of $\partial D_n$. More precisely, this $i$-indexed handle reaches a configuration with real parts strictly in between those of $w_N$ and $w_{N+1}$. Along this the handle to the indexed $i$ dashed arc winds half around punctures $w_{N+1}, \ldots , w_n$, around the configuration in $\mathcal{D}_{c_2}$, and around the handle to the indexed $j$ dashed arc, each time following the counterclockwise orientation. It justifies the coefficient. This turns the last diagram into a split position. Thus:
\begin{align*}
\splits_N \left( q^{d_\alpha \frac{k(k-1)}{4}} \CF_\alpha^{(k)} (\mathcal{D}) \right) & = \sum_{i+j= k} \left(  q_{\alpha}^{\frac{ij}{2}} \prod_{p = N+1,\ldots,n} (s_{p,\alpha})^{i} \prod_{\beta \in \Pi} q_{\alpha,\beta}^{c_2(\beta)} \right)
\vcenter{\hbox{\begin{tikzpicture}[scale=0.4, every node/.style={scale=0.8},decoration={
    markings,
    mark=at position 0.5 with {\arrow{>}}}
    ]       
\coordinate (w0g) at (-5,1) {};
\coordinate (w0d) at (5,1) {};
\coordinate (h1) at (5,-2.5);
\coordinate (h2) at (5,-1.5);
\coordinate (h3) at (5,-1);
\draw[fill = black!20!white] (0,-3) rectangle ++(5,3);
\node at (2.5,-1.5) {$\mathcal{D}_{c_2}$};
\draw[\hyellow, double, thick] (2.5,-1.7) -- (2.5,-3);
\draw[dashed, thick] (0.5,-3) to[bend left] node[pos=0.6,black,above]  {$j$} node[pos=0.8] (a3) {} (w0d);
\coordinate (a33p) at (-5,2);
\coordinate (a33) at (5,2);
\coordinate (a22) at (5,2.5);
\draw[red, thick] (0,-3) -- (0,3);
\draw[red, thick] (5,-3) -- (5,3);
\draw[red, thick] (0,-3) -- (5,-3);
\draw[gray, thick] (0,3) -- (5,3);
\draw[yellow!80!black,double,thick] (5,-3)--(a33)--(a33-|a3)--(a3);
\end{tikzpicture}}}
\otimes
\vcenter{\hbox{\begin{tikzpicture}[scale=0.4, every node/.style={scale=0.8},decoration={
    markings,
    mark=at position 0.5 with {\arrow{>}}}
    ]       
\coordinate (w0g) at (-5,1) {};
\coordinate (w0d) at (5,1) {};
\coordinate (h1) at (5,-2.5);
\coordinate (h2) at (5,-1.5);
\coordinate (h3) at (5,-1);
\draw[fill = black!20!white] (-5,-3) rectangle ++(5,3);
\node at (-2.5,-1.5) {$\mathcal{D}_{c_1}$};
\draw[\hyellow, double, thick] (-2.5,-1.7) -- (-2.5,-3);
\draw[dashed, thick] (w0g) to[bend left] node[pos=0.4,black,above]  {$i$} node[pos=0.2] (a3p) {} (-0.5,-3)  ;
\coordinate (a33p) at (0,-3);
\coordinate (a33) at (5,2);
\coordinate (a22) at (5,2.5);
\draw[red, thick] (-5,-3) -- (-5,3);
\draw[red, thick] (0,-3) -- (0,3);
\draw[red, thick] (-5,-3) -- (0,-3);
\draw[gray, thick] (-5,3) -- (0,3);
\draw[yellow!80!black,double, thick] (a33p)--(a33p|-a3p)--(a3p);
\end{tikzpicture}}}
\\
& =  \sum_{i+j= k} q_{\alpha}^{\frac{ij}{2}} \left( \prod_{p = N+1,\ldots,n} (s_{p,\alpha})^{i} \prod_{\beta \in \Pi} q_{\alpha,\beta}^{c_2(\beta)} \right)
\vcenter{\hbox{\begin{tikzpicture}[scale=0.4, every node/.style={scale=0.8},decoration={
    markings,
    mark=at position 0.5 with {\arrow{>}}}
    ]       
\coordinate (w0g) at (-5,1) {};
\coordinate (w0d) at (5,1) {};
\coordinate (h1) at (5,-2.5);
\coordinate (h2) at (5,-1.5);
\coordinate (h3) at (5,-1);
\draw[fill = black!20!white] (0,-3) rectangle ++(5,3);
\node at (2.5,-1.5) {$\mathcal{D}_{c_2}$};
\draw[\hyellow, double, thick] (2.5,-1.7) -- (2.5,-3);
\draw[dashed, thick] (0,1) to node[pos=0.5,black,above]  {$j$} node[pos=0.8,below] (a3) {} (w0d);
\coordinate (a33p) at (-5,2);
\coordinate (a33) at (5,2);
\coordinate (a22) at (5,2.5);
\draw[red, thick] (0,-3) -- (0,3);
\draw[red, thick] (5,-3) -- (5,3);
\draw[red, thick] (0,-3) -- (5,-3);
\draw[gray, thick] (0,3) -- (5,3);
\draw[yellow!80!black,double,thick] (5,-3)--(a33)--(a33-|a3)--(a3);
\end{tikzpicture}}} 
\otimes
\vcenter{\hbox{\begin{tikzpicture}[scale=0.4, every node/.style={scale=0.8},decoration={
    markings,
    mark=at position 0.5 with {\arrow{>}}}
    ]       
\coordinate (w0g) at (-5,1) {};
\coordinate (w0d) at (5,1) {};
\coordinate (h1) at (5,-2.5);
\coordinate (h2) at (5,-1.5);
\coordinate (h3) at (5,-1);
\draw[fill = black!20!white] (-5,-3) rectangle ++(5,3);
\node at (-2.5,-1.5) {$\mathcal{D}_{c_1}$};
\draw[\hyellow, double, thick] (-2.5,-1.7) -- (-2.5,-3);
\draw[dashed, thick] (w0g) to node[pos=0.4,black,above]  {$i$} node[pos=0.8,below] (a3p) {} (0,1)  ;
\coordinate (a33p) at (0,2);
\coordinate (a33) at (5,2);
\coordinate (a22) at (5,2.5);
\draw[red, thick] (-5,-3) -- (-5,3);
\draw[red, thick] (0,-3) -- (0,3);
\draw[red, thick] (-5,-3) -- (0,-3);
\draw[gray, thick] (-5,3) -- (0,3);
\draw[yellow!80!black,double, thick] (0,-3)--(a33p)--(a33p-|a3p)--(a3p);
\end{tikzpicture}}}
\end{align*}
where the second line is obtained by simple isotopies. One recognizes:
\begin{align*}
\splits_N \left( q^{d_\alpha \frac{k(k-1)}{4}} \CF_\alpha^{(k)} (\mathcal{D}) \right) & =  \sum_{i+j= k} \left(q_{\alpha}^{\frac{ij}{2}} \prod_{p = N+1,\ldots,n} (s_{p,\alpha})^{i} \prod_{\beta \in \Pi} q_{\alpha,\beta}^{c_2(\beta)} \right) q_\alpha^{\frac{i(i-1)}{4}} \CF_\alpha^{(i)} (\mathcal{D}_{c_1}) \otimes q_\alpha^{\frac{j(j-1)}{4}} \CF_\alpha^{(j)} (\mathcal{D}_{c_2}) \\
& = 
q_{\alpha}^{\frac{k(k-1)}{2}} \sum_{i+j= k} \left( q_\alpha^{-\frac{ij}{2}} \CK_{\alpha}^{-i} \CF_\alpha^{(j)} (\mathcal{D}_{c_2})  \otimes \CF_\alpha^{(i)} (\mathcal{D}_{c_1}) \right)
\end{align*}
It concludes the theorem for generators $\CF_\alpha$'s. The computation would go exactly the same for the action of $\CF_\alpha^{[1]}$ and left to the reader, and as it is the adjoint of $\CE_\alpha$ it concludes the proof. 
\end{proof}

\begin{coro}
By iteration of $\splits$, one obtains:
\[
\splits_{1,\ldots,n}: \CHbm(D_n) \simeq \CHbm(D_1)^{\otimes n},
\]
and hence we recover all tensor products of Verma modules. 
\end{coro}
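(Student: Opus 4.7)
The plan is a direct induction on $n$ using Theorem~\ref{T:monoidality}, with no new homological work needed. For $n=1$ there is nothing to prove. For the inductive step, I would apply $\splits_{n-1}$ to $\CHbm(D_n)$, obtaining a $\Uqg$-module isomorphism
\[
\CHbm(D_n) \simeq \CHbm(D_{n,n}) \otimes \CHbm(D_{1,n-1}).
\]
The first factor corresponds to a once-punctured disk, which is homeomorphic to $D_1$. Under this homeomorphism the writhe map $\CW_c(D_{n,n})$ and the local system $\Laurent_c(D_{n,n})$ pull back to $\CW_c(D_1)$ with formal variables exactly those of the list $L_n$ attached to $w_n$. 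This identification is equivariant for the $\Uqg$-action, since the action on both sides was defined in the same diagrammatic way (stacking from above for $\Uqgm$, and the connecting/boundary maps for the Borel half).

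For the second factor, $D_{1,n-1}$ is homeomorphic to $D_{n-1}$ with the punctures $w_1,\ldots,w_{n-1}$ and the variables $L_1,\ldots,L_{n-1}$ matching up, so the inductive hypothesis provides a $\Uqg$-module isomorphism $\CHbm(D_{1,n-1}) \simeq \CHbm(D_1)^{\otimes(n-1)}$. Composing these isomorphisms yields
\[
\splits_{1,\ldots,n} : \CHbm(D_n) \xrightarrow{\simeq} \CHbm(D_1)^{\otimes n},
\]
and coassociativity of $\Delta$ guarantees that the order in which the iterated splittings are performed does not affect the result. The final claim about Verma modules then follows by restricting to $\overline{\CH}$ and invoking the preceding proposition, which identifies $\overline{\CH}(D_1^{\circ})$ (and hence, via the injection $\CHbm(D_n^{\circ}) \hookrightarrow \CHbm(D_n)$ from Theorem~\ref{T:half_integral_modules}, also $\overline{\CH}(D_1)$) with the universal Verma module of $\Uqg$.

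The main (very mild) obstacle is purely bookkeeping: one has to check that the formal variables $L_k$ attached to the $k$-th puncture after iterated splittings indeed match those specifying the highest weight of the $k$-th tensor factor, and that the order of tensor factors produced by iterating $\splits_{n-1}, \splits_{n-2}, \ldots$ is consistent with the order on punctures. No further geometric input is required, as all the work has been done in Theorem~\ref{T:monoidality}.
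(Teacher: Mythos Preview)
Your proposal is correct and is exactly the approach the paper intends: the corollary carries no proof in the paper, being stated as an immediate consequence ``by iteration of $\splits$'' of Theorem~\ref{T:monoidality}, which is precisely the induction you spell out. Your extra care about matching the variables $L_k$ to tensor factors and invoking coassociativity of $\Delta$ is appropriate bookkeeping that the paper leaves implicit.
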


\subsubsection{Braiding}\label{S:braiding}

We recall that the braid group on $n$ strands has the following definition as an Artin group:
$$\Bn = \left\langle \sigma_1,\ldots,\sigma_{n-1} \Big| \begin{array}{ll} \sigma_i \sigma_j = \sigma_j \sigma_i & \text{ if } |i-j| \ge 2 \\ 
\sigma_i \sigma_{i+1} \sigma_i = \sigma_{i+1} \sigma_i \sigma_{i+1} & \text{ for } i=1,\ldots, n-2 \end{array} \right\rangle$$
and that it has another definition as the mapping class group of $D_n$:
\[
\Bn = \Mod(D_n) 
\]
namely the group of isotopy classes of orientation preserving homeomorphisms of $D_n$ that restrict to the identity on $\partial D_n$ and preserves the set of punctures setwise. Then the generator $\sigma_i$ is the isotopy class of the half Dehn twist that swaps punctures $w_i$ and $w_{i+1}$. For $c \in \Pi$, and $[f] \in \Bn$ (the isotopy class of $f$), we have:
\[
f^{m_c} : \begin{array}{ccc}
\Conf_c & \to & \Conf_c \\
(\boldsymbol{z}^{\alpha_1}, \ldots , \boldsymbol{z}^{\alpha_l}) & \mapsto & (f^{c(\alpha_1)}(\boldsymbol{z}^{\alpha_1}), \ldots , f^{c(\alpha_l)}(\boldsymbol{z}^{\alpha_l}))
\end{array}
\] 
is the Cartesian product passed to the unordered configuration space, namely it applies $f$ diagonally to each coordinate. Just like Lemmas~6.33, and 6.35 in \cite{Jules_Verma}, $f^{m_c}$ has a unique lift at $\CH^e_c(D_n)$ and acts on it. If $[f]$ is a pure braid (that permutes the punctures pointwise) it promotes to a $\Laurent_{\boldsymbol{L}}$-module transformation and hence to a representation:
\[
R_c : \PBn \to \Aut_{\Laurent_{\boldsymbol{L}}}(\CH^e_c(D_n))
\]
of the pure braid group in $n$ strands. If moreover $L_1=\ldots = L_n$, it yields representations of the braid groups:
\[
R_c: \Bn \to \Aut_{\Laurent_{\boldsymbol{L}}}(\CH^e_c(D_n)).
\]
We refer the reader to \cite{Jules_Verma} since the well definedness of these representations adapts to the colored configuration spaces cases. We call them the family of Lawrence representations for the $\slt$ case, we call the present ones $\fg$-decorated Lawrence representations. Notice that the representations preserve colorings. Hence for any $\alpha$, $R_{2\alpha}$ is faithful, since it is trivially isomorphic to the $\slt$-case, and it was shown to be faithful by independent work of the first author \cite{Stephen_linearity} and Krammer \cite{Kra}. It is also true for $\rho_{k\alpha}$ when $k>2$, while the case $\rho_\alpha$ is not faithful for $n>4$, the case $n=4$ remaining open, it is the Burau representations, for the case $L_1 = \ldots = L_n=L$. In the case of arbitrary $\boldsymbol{L}$, and $n=1$ it is the Gassner representations of pure braid group and it is still not known whether it is faithful. 

\begin{prop}
The action of a Dehn twist by $R_c$ defines an $R$-matrix on $\CH^e(D_n) \simeq (\CH^e(D_1))^{\otimes n}$. Namely:
\[
R_c: \Bn \to \Aut_{\Laurent_{\boldsymbol{L}},\Uqg^E}(\CH^e_c(D_n)),
\]
and:
\[
\splits_{1,\ldots,n} (\rho_c(\sigma_i)) = \Id^{i-1} \otimes \Lambda(q,L) \mathrm{R} \circ T \otimes \Id^{n-i+1}
\]
where $R$ is in $\Aut(\CH^e(D_1) \otimes \CH^e(D_1))$, it is computable from $R_c(\sigma_1)$ and is called the $R$-matrix, $\Lambda(q,L) \in \Laurent_L$  and $T$ permutes trivially the tensor copies. 
\end{prop}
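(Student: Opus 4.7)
The plan is to split the statement into two parts: the $\Uqg^E$-equivariance of $R_c(\sigma_i)$ and the tensor-factored form in the splitting coordinates.

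For the equivariance, I would choose a representative of $\sigma_i$ supported in a small open disk $U_i \subset D_n$ containing only $w_i, w_{i+1}$ and disjoint from $\partial D_n$ as well as from a collar neighborhood of the top part of the boundary where stacked diagrams live. The action of $\CF_\alpha$ defined in Section~\ref{S:action_of_Uqgm} consists in stacking a diagram above $D_n$ inside such a collar, so it commutes with any homeomorphism supported inside $U_i$. The operator $\CE_\alpha$ of Definition~\ref{D:homological_E_half_integral} is the composition $\del_{x_0} \circ \partial_*$, both steps being functorial under homeomorphisms that fix $\partial D_n$ pointwise, which $\sigma_i$ does. The $K_\alpha$'s act as diagonal scalars depending only on the coloring, hence commute with any color-preserving transformation. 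Together with the (automatic) divided-power and quantum Serre relations, this yields $\Uqg^E$-equivariance.

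For the tensor form, using the same local representative of $\sigma_i$, I would perform the iterated splitting of Theorem~\ref{T:monoidality} using vertical cuts placed strictly outside $U_i$. Diagrams in the tensor factors $\CH^e(D_1)$ corresponding to punctures $w_k$ with $k \notin \{i, i+1\}$ can be chosen disjoint from $U_i$ and are left untouched, producing identities on those tensor factors. What remains is an operator on $\CH^e(D_1)^{\otimes 2}$ arising from the half Dehn twist on $D_2$. Because the half twist exchanges $w_i$ and $w_{i+1}$, a diagrammatic class supported around $w_i$ is carried to one naturally attached to $w_{i+1}$; factoring out the trivial swap $T$ of the two tensor factors produces an endomorphism $R$, as claimed.

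The main obstacle is the explicit identification of the endomorphism $R$ and the scalar $\Lambda(q, L)$. Working in the basis $\{\CF^{\circ}_{(\br^1,\br^2)}\}$ of Proposition~\ref{P:structure_result_punctures}, the half twist distorts each nested arc around $w_i$ by a braid move that, via the handle rule (Remark~\ref{rmk_handle_rule}), produces monomials in $q$ from the writhe of the colored strands and in the $s_{j,\alpha}$'s from windings around the two punctures; the global scalar $\Lambda(q, L)$ comes from the writhe of the puncture strands themselves as they exchange. The resulting formula for $R$ is expected to match the restriction of the universal $R$-matrix of $\Uqg$ to the tensor square of Verma modules, by uniqueness of braidings compatible with both the coproduct and the pairing $\langle \cdot, \cdot \rangle$; the detailed verification reduces to a finite check on generating diagrams and is best deferred to a dedicated section. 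Invertibility of $R_c(\sigma_i)$ is automatic from $\sigma_i$ being a homeomorphism, so the $\Laurent_{\boldsymbol{L}}$-linear automorphism statement is immediate.
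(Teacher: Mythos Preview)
Your proposal is correct and follows essentially the same approach as the paper's proof. The paper's own argument is even terser: it simply observes that the operators defining the $\Uqg^E$-action come from chain maps and hence commute with the homological action of continuous maps, and that the half Dehn twist $\sigma_i$ is supported on a disk containing only $w_i, w_{i+1}$ so acts trivially on the other tensor factors in split position; the computation of $R$ and $\Lambda(q,L)$ is explicitly left to the reader. Your generator-by-generator argument for equivariance (stacking for $\CF_\alpha$, functoriality of $\partial_*$ and $\del_{x_0}$ for $\CE_\alpha$, diagonality for $\CK_\alpha$) is a more explicit unpacking of the same observation, and your locality argument for the tensor form is exactly the paper's.
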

\begin{proof}
It is straightforward to check that the operators defining the action of $\Uqg^E$ come from chain maps, commuting with actions of continuous maps at homology. Now for a diagram in a position to split, it is clear that the action of a Dehn twist $\sigma_i$ is supported on a disk that contains only the part of the diagram concerned with the two punctures $w_i$ and $w_{i+1}$, one can think of the bases elements to be convinced. We leave the computation of the $R$-matrix to the reader.  
\end{proof}

We will further investigate this $R$-matrix and use it to study knot invariants associated with quantum groups from a homological perspective. 

\appendix

\section{Homologies and dualities}\label{A:the_pairing}

We refer the reader to \cite[Appendix~A]{JulesMarco} for detailed definitions of twisted Borel--Moore homology of configuration spaces in the relative case also. In there the twisted structure is defined from a representation of the fundamental group of configuration spaces while here we first use a map to a product of circles (Sec.~\ref{S:local_system_empty_disk}) and we choose a representation of the fundamental group of this product of circles instead. It results in a construction not depending on a choice of a (single) base point for configuration spaces but rather only one for the product of circles. In configuration spaces then, any vertically aligned configuration plays the role of base point (in comparison with \cite{JulesMarco} for instance where the base point is always the same on diagrams) and it simplifies a lot the computations in these colored configuration spaces. This trick is explained in Julien Marché's lecture notes (in french) \cite[Sec~2.2]{Mar} who also taught it to the second author. 

Notice that Configuration spaces (even unordered) admit a homotopy equivalent compactification constructed by Dev Sinha \cite{sinha}. As a consequence the Borel--Moore homology can be think as the homology of that compactification relative to its boundary, even in the twisted case thanks to the homotopy equivalence. We refer the reader to the proof of \cite[Coro.~3.23]{Jules_Lefschetz} for precise notation, since we won't use it in the present paper, while thinking of the Borel--More homology as a relative homology of a compact manifold could help.

Now let $\mathbb{D}$ denote either an empty disk, a disk with punctures or a disk with holes namely $D, D_n$ or $D_n^\circ$ following the notation of the paper. They are the three surfaces studied, and they are delivered with a marked part of their boundary, denoted respectively $\partial^-D, \partial^-D_n$ and $\partial^-D_n$, we use $\partial^-\mathbb{D}$ to designate either of them. We let $\CHbm_c(\mathbb{D})$ designate $\Hlf_{m_c}(\Conf_c(\mathbb{D}), S(\mathbb{D}); \Laurent_c(\mathbb{D}))$, where $\Laurent_c(\mathbb{D})$ is the appropriate local system, hence homologies have coefficients in an integral ring of Laurent polynomials in several variables. In Propositions~\ref{structure_result},~\ref{P:structure_result_punctures} we show:
\begin{enumerate}
\item $\CHbm_c(\mathbb{D})$ is free on $\Laurent_c(\mathbb{D})$ (exhibiting a diagrammatic basis),
\item It is the only non vanishing module of the whole homology.
\end{enumerate}

We let $\CH^\dagger_c(\mathbb{D}) := \Hnot_c(\Conf_c(\mathbb{D}, T(\mathbb{D}); \Laurent_c(\mathbb{D}))$, where $T(\mathbb{D})$ is made of configurations with at least one point in $\partial^+\mathbb{D} := \partial\mathbb{D} \setminus \partial^-\mathbb{D}$. Poincaré duality systematically provides isomorphisms:
\begin{align}
\label{E:Poincaré_duality_BM} & \CHbm_c(\mathbb(D)) \simeq \Hnot^{m_c}(\Conf_c(\mathbb{D}),T(\mathbb{D}); \Laurent_c(\mathbb{D})) \\
\label{E:Poincaré_duality} & \CH_c^\dagger(\mathbb{D}) \simeq \Hnot_{\mathrm{compact}}^{m_c}(\Conf_c(\mathbb{D}),S(\mathbb{D}); \Laurent_c(\mathbb{D}))
\end{align}
where a shift must be considered if one wants to work elsewhere than the middle dimension. It is standard Poincaré duality for manifold with boundary, where here it is useful to think as Borel--Moore to be relative to part of the boundary, and $\Hnot_{\mathrm{compact}}$ is the compactly supported cohomology that is the one obtained from the dual complex as that of the Borel--Moore homology. 

\begin{prop}\label{P:standard_homology_is_free}
The module $\CH_c^\dagger(\mathbb{D})$ is free on $\Laurent_c(\mathbb{D})$. 
\end{prop}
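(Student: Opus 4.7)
The plan is to combine the two Poincaré dualities \eqref{E:Poincaré_duality_BM} and \eqref{E:Poincaré_duality} with a universal coefficient argument, exploiting the fact that $\CHbm_c(\mathbb{D})$ is already known to be free and concentrated in the single middle degree $m_c$. By the isomorphism \eqref{E:Poincaré_duality}, it is enough to show that the compactly supported twisted cohomology $\Hnot_{\mathrm{compact}}^{m_c}(\Conf_c(\mathbb{D}),S(\mathbb{D}); \Laurent_c(\mathbb{D}))$ is a free $\Laurent_c$-module.

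To compute this, I would use the description of twisted (co)homology via the regular cover recalled in Remark \ref{twisted_homology_from_cover}: compactly supported twisted cochains on $\Conf_c(\mathbb{D})$ relative to $S(\mathbb{D})$ are, up to passing to the cover, the $\Laurent_c$-dual of the Borel--Moore chain complex computing $\CHbm_c(\mathbb{D})$. Since $\CHbm_\bullet(\mathbb{D})$ vanishes outside degree $m_c$ and is free in that degree by Propositions \ref{structure_result} and \ref{P:structure_result_punctures}, the universal coefficient theorem applies with no $\operatorname{Ext}^1$ correction, and delivers
\[
\Hnot_{\mathrm{compact}}^{m_c}(\Conf_c(\mathbb{D}),S(\mathbb{D}); \Laurent_c(\mathbb{D})) \simeq \Hom_{\Laurent_c}\!\bigl(\CHbm_c(\mathbb{D}),\Laurent_c\bigr).
\]
The right-hand side is the $\Laurent_c$-dual of a free module of finite rank over a commutative ring, hence itself free of the same finite rank, and the result follows.

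The only delicate point, and where I would expect the work to lie, is justifying the universal coefficient theorem in this twisted Borel--Moore setting: one needs to represent the Borel--Moore complex by a complex of free (or at least projective) $\Laurent_c$-modules. This is handled by passing to a homotopy equivalent compact model of $\Conf_c(\mathbb{D})$, such as the Sinha compactification mentioned at the beginning of this appendix, so that Borel--Moore chains become ordinary relative singular chains on a compact manifold with boundary and dualisation makes sense term by term. With that framework in place the rest of the argument is a one-line dualisation, which is presumably why Godfard's version of the proof is as short as it is.
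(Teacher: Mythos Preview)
Your proposal is correct and follows essentially the same route as the paper: reduce via Poincar\'e duality \eqref{E:Poincaré_duality} to the compactly supported cohomology, then apply a universal coefficients argument using that $\CHbm_\bullet(\mathbb{D})$ is free and concentrated in the middle degree to identify it with $\Hom_{\Laurent_c}(\CHbm_c(\mathbb{D}),\Laurent_c)$. The only cosmetic difference is that the paper handles the non-PID issue by invoking the universal coefficients spectral sequence (which degenerates immediately here), whereas you justify the dualisation by passing to a compact model; both amount to the same one-line computation.
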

\begin{proof}
Since the Borel--Moore homology is concentrated in this middle dimension, all the lower terms vanish. The universal coefficient theorem in that case simply provides an isomorphism:
\[
\Hnot_{\mathrm{compact}}^{m_c}(\Conf_c(\mathbb{D}),S(\mathbb{D}); \Laurent_c(\mathbb{D})) \simeq \Hom_{\Laurent_c}(\CHbm_c(\mathbb{D}), \Laurent_c).
\]
Notice that while working with a ring of integral Laurent polynomials, which is not a PID, one must rather use the universal coefficients spectral sequence. However, in our case, since differentials have mixed degree, the spectral sequence converges immediately and provides the above. 

Since $\CHbm_c(\mathbb{D})$ is free, so is $\Hom_{\Laurent_c}(\CHbm_c(\mathbb{D}), \Laurent_c)$, so is $\Hnot_{\mathrm{compact}}^{m_c}(\Conf_c(\mathbb{D}),S(\mathbb{D}); \Laurent_c(\mathbb{D}))$. By Poincaré duality \eqref{E:Poincaré_duality}, $\CH_c^\dagger(\mathbb{D})$ is free too. 
\end{proof}

Now we introduce a pairing that finds a basis of $\CH_c^\dagger(\mathbb{D})$ dual to that of $\CHbm_c(\mathbb{D})$. Namely, there exists an intersection pairing which is a bilinear form:
\begin{equation}\label{E:the_pairing}
\langle \cdot ,\cdot \rangle_{\mathbb{D}}: \CH_c^\dagger(\mathbb{D}) \otimes \CHbm_c(\mathbb{D}) \to \Laurent_c(\mathbb{D}).
\end{equation}
It has been widely studied for these type of homologies, and its most famous strength is that it detects the geometric intersection of isotopy classes of curves which is the heart of the proof of the linearity of braid groups by the first author \cite{Stephen_linearity}. The way to compute it from diagrammatic classes has also been described many times, in \cite{Stephen_linearity} for instance and also in \cite[Sec.4.2]{Jules_Lefschetz}. We give a short summary of that protocole. For two given diagrams $\mathcal{D}^\dagger \in \CH_c^\dagger(\mathbb{D})$ and $\mathcal{D} \in \CHbm_c(\mathbb{D})$, an intersection configuration $p$ is a configuration in $\Conf_c(\mathbb{D})$ that lies on both manifolds described by the diagram. Then: 
\[
\langle \mathcal{D}^\dagger ,\mathcal{D} \rangle_{\mathbb{D}} = \sum_p \varepsilon_p \kappa_p
\]
where $p$ ranges on intersection configurations, $\varepsilon_p \in \{\pm 1\}$ and $\kappa_p \in \Laurent_c$. The sign is computed from the orientation of the manifolds and thus from that of arcs composing the diagram. More interesting is how to compute $\kappa_p$. It uses a loop denoted $\gamma_p$ of $\Conf_c(\mathbb{D})$. It is defined by a composition of paths:
\begin{itemize}
\item First a path going from a vertically aligned configuration to the manifold that $\mathcal{D}^\dagger$ describes following the handle of $\mathcal{D}^\dagger$. 
\item Then reaching $p$ following $\mathcal{D}^\dagger$, noticing that there is only one possible choice,
\item Then reaching the handle of $\mathcal{D}$ traveling along $\mathcal{D}$ again with one possible choice,
\item Finally reaching a vertically aligned configuration following the handle of $\mathcal{D}$. 
\end{itemize}
Then $\kappa_p$ is the image of this loop by the local system. We denote by $\rot^+$ the positive $\pi/2$ rotation of $\mathbb{D}$ and $\flip$ the flip map along the middle horizontal axis of $\mathbb{D}$. These transformations help passing from homologies relative to $S$, to those relative to $T$ without drawing too many diagrams, since $\rot^+(\partial^-D)=\partial^+D$ and $\flip(\partial^-D_n) = \partial^+D_n$ (sometimes with a little stretch of the boundary transparent in homology). We have:
\begin{enumerate}
\item For $(r'_1,\ldots,r'_k), (r_1,\ldots,r_k)$ both lists in $\CP_c$:
\[
\langle \rot^+(\CF^{[r'_1,\ldots,r'_k]}) , \CF_{(r_1,\ldots,r_k)} \rangle_D = \delta_{\left( (r'_1,\ldots,r'_k), (r_1,\ldots,r_k)  \right)}
\]
following notations for diagrams in Sec.~\ref{S:twisted_homology_bases_etc}. 
\item And for $(\br'^1, \ldots, \br'^n),(\br^1, \ldots, \br^n)$ both lists in $\CP_c$:
\[
\langle \flip\left( \CF^{[\br'^1, \ldots, \br'^n]} \right) , \CF_{(\br^1, \ldots, \br^n)} \rangle_{D_n} = \delta_{\left( (\br'^1, \ldots, \br'^n), (\br^1, \ldots, \br^n)  \right)}
\]
following notation from Sec.~\ref{S:twisted_homology_punctures_bases_etc}.
\end{enumerate}
In the above, $\delta$ denotes the Kronecker symbol for lists. In both cases the computation is easy, first if the two lists are different, then there is not any intersection configuration, and if the two lists are the same there is only one. Computing the sign is a matter of convention, with the one we chose for diagrams it is one. The monomial in the ring of coefficients is easy to compute, it is one in both cases. For the second one the computation is exactly that of \cite[Fig.~5.8]{Pierre} that was also done in \cite[Prop.~66]{JulesSonny}. The first case is even more trivial to compute. 

\begin{prop}\label{P:perfect_pairing}
The intersection pairing $\langle \cdot , \cdot \rangle_{\mathbb{D}}$ is a perfect pairing. 
\end{prop}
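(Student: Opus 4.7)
The plan is to reduce perfection to an explicit computation of the pairing matrix in the pair of diagrammatic bases already identified on both sides. By Proposition~\ref{structure_result} (and its punctured analog Proposition~\ref{P:structure_result_punctures}), $\CHbm_c(\mathbb{D})$ is a free $\Laurent_c(\mathbb{D})$-module with basis given by the pearl-necklace classes, and by Proposition~\ref{P:standard_homology_is_free} the same is true for $\CH^\dagger_c(\mathbb{D})$. Applying the symmetry $\rot^+$ (respectively $\flip$) to the nested-necklace classes of Propositions~\ref{T:structure_standard_homology} and \ref{P:structure_result_punctures_standard} produces a basis of $\CH^\dagger_c(\mathbb{D})$ whose elements are supported near $\partial^+\mathbb{D}$ and hence transverse to the supports of the basis of $\CHbm_c(\mathbb{D})$. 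Thus it suffices to show that the matrix of $\langle\cdot,\cdot\rangle_{\mathbb{D}}$ in this pair of bases is the identity.

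For this, I would extend the two Kronecker-delta computations already displayed before the statement to \emph{all} pairs of basis elements by the following transversality argument. Given a pearl-necklace basis element $\CF$ of $\CHbm_c(\mathbb{D})$ and a rotated/flipped basis element $\CF^\dagger$ of $\CH^\dagger_c(\mathbb{D})$, an intersection configuration is a point of $\Conf_c(\mathbb{D})$ lying simultaneously on the two embedded submanifolds. Reading the two transverse supports left-to-right at an intersection configuration forces the ordered list of pearls (colored by roots) encountered to coincide with the list that labels the other diagram; if the two indexing lists in $\CP_c$ differ, no intersection configuration exists and the pairing vanishes. When the lists agree, the transverse intersection consists of a single configuration $p$, with $\varepsilon_p = +1$ due to our orientation convention on necklaces, and with the loop $\gamma_p$ (composed of handle paths and arc-segments) contractible in $\Conf_c(\mathbb{D})$, hence $\kappa_p = 1$ under $\rho_c$. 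This yields $\langle \CF^\dagger, \CF\rangle_{\mathbb{D}} = \delta_{\CF^\dagger,\CF}$ for every pair in the two dual diagrammatic bases, so the pairing matrix is the identity and $\langle\cdot,\cdot\rangle_{\mathbb{D}}$ is perfect.

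As a cross-check (and an alternative if the transversality-by-diagram argument is deemed insufficient at a given intersection), I would also compare $\langle\cdot,\cdot\rangle_{\mathbb{D}}$ with the abstract evaluation pairing obtained by composing Poincaré duality \eqref{E:Poincaré_duality} with the universal coefficient isomorphism $\Hnot^{m_c}_{\mathrm{compact}}(\Conf_c(\mathbb{D}),S(\mathbb{D});\Laurent_c(\mathbb{D})) \simeq \Hom_{\Laurent_c}(\CHbm_c(\mathbb{D}),\Laurent_c)$ that was used in the proof of Proposition~\ref{P:standard_homology_is_free}. The latter is perfect by construction on free modules, and its identification with the geometric intersection pairing is standard for oriented manifolds with decomposed boundary (see e.g.\ the appendix references in \cite{JulesMarco} and \cite{Jules_Lefschetz}).

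The main obstacle will be a rigorous control of the intersection geometry in the colored configuration space: one must ensure that after applying $\rot^+$ or $\flip$ the two supports can be made simultaneously transverse as submanifolds of $\Conf_c(\mathbb{D})$, that no intersection configurations are lost at collisions or at the boundary $S \cup T$, and that the loop $\gamma_p$ is indeed homotopically trivial under $\rho_c \circ \CW_c$ (i.e.\ has vanishing writhe and winding numbers). These verifications are essentially those carried out in \cite{Stephen_linearity,Pierre,JulesSonny} in the uncolored or $\slt$ setting, and the colored extension is transparent since the coloring does not interfere with the positional/linking data measured by the local system $\Laurent_c(\mathbb{D})$.
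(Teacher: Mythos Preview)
Your approach is essentially the paper's: use that $\CHbm_c(\mathbb{D})$ is free with the pearl-necklace basis, that $\CH^\dagger_c(\mathbb{D})$ is free (Proposition~\ref{P:standard_homology_is_free}), and that the two diagram families pair to the identity matrix. From this the pairing is perfect and the standard-side diagrams are a basis \emph{a posteriori}.

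There is, however, a circularity in your write-up. You invoke Propositions~\ref{T:structure_standard_homology} and~\ref{P:structure_result_punctures_standard} to assert that the rotated/flipped nested-necklace classes form a basis of $\CH^\dagger_c(\mathbb{D})$, but in the paper those propositions are \emph{proved using} the perfect pairing you are trying to establish. The paper's own proof avoids this: it only uses that the Borel--Moore side has a known basis and that both modules are free (the latter from Proposition~\ref{P:standard_homology_is_free}, which is independent of the pairing). The Kronecker-delta computation then shows the map $\CH^\dagger_c(\mathbb{D})\to\Hom_{\Laurent_c}(\CHbm_c(\mathbb{D}),\Laurent_c)$ is surjective, hence an isomorphism between free modules of equal finite rank. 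Your argument is salvageable by simply dropping the claim that the standard-side diagrams are known to be a basis beforehand; your alternative route via Poincar\'e duality and universal coefficients (third paragraph) also closes the gap, and is in fact what underlies Proposition~\ref{P:standard_homology_is_free}.
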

\begin{proof}
Since the classes involved on the Borel--Moore side of those pairings are the bases of $\CHbm(D)$ resp $\CHbm(D_n)$ according to Propositions~\ref{structure_result} resp. \ref{P:structure_result_punctures}, and since both modules are free, it shows that the pairing is perfect, and it provides bases for the standard homology.
\end{proof}
 The bases it so furnished for the standard homologies are the ones involved in Propositions~\ref{T:structure_standard_homology} resp. \ref{P:structure_result_punctures_standard}.

\end{document}